\newcommand{\curry}{\bar}
\newcommand{\Lideal}{\mathcal{E}}
\newcommand{\Borel}{\mathcal{B}}
\newcommand{\floor}[1]{\lfloor #1 \rfloor}
\def\imod#1{\allowbreak\mkern10mu({\operator@font mod}\,\,#1)}
\newcommand{\zj}{{\emptyset'}}
\newcommand{\ProkBall}[1]{\ensuremath{P(#1)}}
\newcommand{\ProkBallS}[2]{\ensuremath{P_{#1}(#2)}}
\newcommand{\Measures}{\mathcal{M}}
\newcommand{\ProbMeasures}{\Measures_1}
\newcommand{\cA}[1][{}]{\mathcal{A}(#1)}
\newcommand{\cB}{\mathscr{B}}
\newcommand{\cD}{\mathcal{D}}
\newcommand{\defn}[1]{{\bf{#1}}}
\newcommand{\Ohm}{{\mathrm \Omega}}
\DeclareMathOperator{\supp}{supp}
\renewcommand{\L}{\\L}
\newcommand{\Naturals}{\mathbb{N}}
\newcommand{\Nats}{\Naturals}
\newcommand{\rv}[1]{\mathsf{#1}}
\DeclareMathOperator{\Bernoulli}{Bernoulli}
\renewcommand{\Pr}{\mathbf{P}}
\renewcommand{\Pr}{\mathbb{P}}
\newcommand{\defas}{:=}
\newcommand{\dee}{\mathrm{d}}
\newcommand{\given}{\mid}
\newcommand{\st}{\,:\,}
\newcommand{\Ind}{\mathbf 1}
\def\[#1\]{\begin{align}#1\end{align}}
\newcommand{\BasicEvents}{\mathscr{F}}
\newcommand{\PrCondProbFuncEval}[4]{{#3[#1| #2 = #4]}}
\newcommand{\PrCondProbFunc}[3]{\PrCondProbFuncEval {#1}{#2}{#3}{\cdot\;}}
\newcommand{\CondProbFuncEval}[3]{\PrCondProbFuncEval{#1}{#2}{\Pr}{#3}}
\newcommand{\CondProbFunc}[2]{\PrCondProbFunc{#1}{#2}{\Pr}}
\newcommand{\PrCondProb}[3]{{#3[#1 | #2]}}
\newcommand{\CondProb}[2]{\PrCondProb{#1}{#2}{\Pr}}
\newcommand{\EventInd}[1]{\Ind_{\{\rv X \in A\}}}
\newcommand{\RND}[2]{\frac {\dee #1}{\dee #2}}
\newcommand{\theset}[1]{\{#1\}}
\newcommand{\as}{\textrm{a.s.}}
\newcommand{\<}{\ensuremath{\langle}}
\renewcommand{\>}{\ensuremath{\rangle}}
\newcommand{\pars}{\,\cdot\;}
\newcommand{\spars}{\,\cdot\,}
\newcommand{\Reals}{\ensuremath{\mathbb{R}}}
\newcommand{\Rationals}{\ensuremath{\mathbb{Q}}}
\newcommand{\Cantor}{\{0,1\}^\infty}
\newcommand{\NNReals}{\Reals_{+}}
\title{On the Computability of Conditional Probability}
\author{Nathanael L.\ Ackerman}
\affiliation{%
  \institution{Harvard University}
  \streetaddress{Department of Mathematics, One Oxford St.,}
  \city{Cambridge}
  \state{MA}
  \postcode{02138--2901}}
\email{nate@math.harvard.edu}
\author{Cameron E.\ Freer}
\affiliation{%
  \institution{Massachusetts Institute of Technology}
  \streetaddress{Department of Brain and Cognitive Sciences, 77 Massachusetts Ave.,}
  \city{Cambridge}
  \state{MA}
  \postcode{02139--4301}}
\email{freer@mit.edu}
\author{Daniel M.\ Roy}
\affiliation{%
  \institution{University of Toronto}
  \streetaddress{Department of Statistical Sciences, 100 St.~George St.,}
  \city{Toronto}
  \state{ON}
  \postcode{M5S 3G3 and Vector Institute, MaRS Centre, West Tower, 661 University Ave., Suite 710, Toronto, ON M5G 1M1}}
\email{droy@utstat.toronto.edu}
\begin{abstract}
As inductive inference and machine learning methods in computer science see continued success, researchers are aiming to describe ever more complex probabilistic models and inference algorithms. 
It is natural to ask whether there is a universal computational procedure for probabilistic inference.
We investigate the computability of conditional probability, a fundamental notion in probability theory and a cornerstone of Bayesian statistics.
We show that there are computable joint distributions with noncomputable conditional distributions, ruling out the prospect of general inference algorithms, even inefficient ones.  Specifically, we construct a pair of computable random variables in the unit interval such that the conditional distribution of the first variable given the second encodes the halting problem.  Nevertheless, probabilistic inference is possible in many common modeling settings, and we prove several results giving broadly applicable conditions under which conditional distributions are computable.  In particular, conditional distributions become computable when measurements are corrupted by independent computable noise with a sufficiently smooth bounded density.
\end{abstract}
\begin{document}

\theoremstyle{acmdefinition}
\newtheorem{remark}[theorem]{Remark}

\maketitle
\tableofcontents
\addtocontents{toc}{\setcounter{tocdepth}{2}}



\section{Introduction}

The use of probability to reason about uncertainty 
has
wide-ranging applications
in science and engineering,
and some of the most important computational problems 
relate to
\emph{conditioning},
which is used to perform
Bayesian inductive reasoning
in probabilistic models.
As researchers have faced more complex phenomena, their representations have also increased in complexity,
which in turn has led to more complicated inference algorithms.
It is natural to ask whether there is a universal inference algorithm --- in other words,
whether it is possible
to automate probabilistic reasoning via a
general 
procedure
that can compute conditional probabilities for an \emph{arbitrary}
computable joint distribution.

We demonstrate that
there are computable joint distributions with noncomputable
conditional distributions.
As a consequence, no general algorithm for
computing conditional probabilities can exist.
Of course, the fact that generic algorithms cannot exist for computing conditional probabilities does not rule out the possibility that large classes of distributions may be amenable to automated inference.
The challenge for mathematical theory is to explain the widespread
success of probabilistic methods and characterize
the circumstances when conditioning is possible.
In this vein, we describe broadly
applicable conditions under which conditional probabilities are computable.

We begin by describing a setting, \emph{probabilistic programming},
that motivates the search for these results.
We proceed to describe the technical frameworks for our results,
\emph{computable probability theory} and the modern formulation of
\emph{conditional probability}. We then highlight related work, 
and end the introduction with a summary of results of the paper.

\subsection{Probabilistic Programming}

Within probabilistic artificial intelligence and machine learning,
\emph{probabilistic programming} 
provides
formal languages and algorithms for describing and computing answers from probabilistic models.
Probabilistic programming languages themselves
build on modern programming languages and their facilities for
recursion, abstraction, modularity, etc.,
to enable practitioners to define intricate,
in some cases infinite-dimensional, models by implementing a generative
process that produces an exact sample from the model's joint distribution.
Probabilistic programming languages have been the focus of a long tradition of research within programming languages, model checking, and formal methods. For some of the early approaches within the AI and machine learning community, see, e.g., the languages
PHA \citep{Poole}, 
IBAL \citep{Pfeffer01},
Markov Logic \citep{DBLP:journals/ml/RichardsonD06},  
$\lambda_\circ$~\citep{1452048}, Church \citep{GooManRoyBonTen2008},
\nobreak{HANSEI} \citep{DBLP:conf/dsl/KiselyovS09}, and
Infer.NET \citep{InferNET10}.

In many of these languages, one
can easily represent the higher-order stochastic processes (e.g.,
distributions on data structures, distributions on functions, and distributions on distributions) that are
essential building blocks in modern nonparametric Bayesian statistics.
In fact, the most expressive such languages
are each capable of
describing the same robust class as the others --- the class of
\emph{computable distributions},
which delineates those from which a probabilistic Turing machine can sample to arbitrary
accuracy.

Traditionally, inference algorithms for probabilistic models have
been derived and implemented by hand.  In contrast,
probabilistic programming systems have introduced varying degrees of
support for computing conditional distributions.
Given the rate of progress toward broadening the scope of these
algorithms, one might hope that there would eventually be
a generic algorithm supporting
the entire class of computable distributions.

Despite recent progress towards a general such algorithm,
support for conditioning
with respect to continuous random variables has
remained incomplete.
Our results explain why this is necessarily the case.

\subsection{Computable Probability Theory}
In order to study computable probability theory and the computability of conditioning, we work within the framework of Type-2 Theory of Effectivity (TTE) and
use appropriate representations for topological and measurable objects such as distributions, random variables, and maps between them. This framework builds upon and contains as a special case ordinary Turing computation on discrete spaces,
and gives us
a basis for precisely
describing the operations that
probabilistic programming languages
are capable of performing.

In particular, we
study the computability of distributions on 
computable Polish spaces
including, e.g.,
certain spaces of distributions on distributions.
In Section~\ref{compprob} we present the necessary definitions and results from computable probability theory.

\subsection{Conditional Probability}
For an experiment with a discrete set of outcomes, computing
conditional probabilities is, in principle,
straightforward as it is simply a ratio of probabilities.
However, in the case of conditioning on the value of a continuous random variable, this ratio is undefined.
Furthermore, in modern Bayesian statistics, and especially the probabilistic programming setting,
it is common to place distributions on
higher-order objects,
and so one is already in a
situation where elementary notions of conditional probability are insufficient
and more sophisticated measure-theoretic notions are necessary.

Kolmogorov \citeyearpar{MR0362415} gave
an axiomatic characterization of conditional probabilities and an abstract construction of them using Radon--Nikodym derivatives,
but this definition and construction do not yield a general recipe for their calculation.
There is a further problem: 
in this setting,
conditional probabilities are formalized as measurable functions that are defined
only up to measure zero sets.  Therefore, without additional assumptions, a conditional probability is
not necessarily well-defined for any particular value of the conditioning random variable.
This has long been understood as a challenge
for statistical applications,
in which one wants to evaluate conditional probabilities given particular values for observed random variables.
In this paper, we are therefore especially interested in situations where
it makes sense to ask for the conditional distribution given a particular point.  One of our main results is in the setting where 
there is a unique continuous conditional distribution.
In this case, conditioning yields a canonical answer, which is a natural desideratum for statistical applications.

A large body of work in probability and statistics 
is concerned with the derivation of conditional probabilities and distributions in special circumstances,
each situation often requiring some special insight into the structure of the answer, 
especially when it was desirable for conditional probabilities and distributions to be defined at points, as in Bayesian statistical applications.
This state of affairs motivated work on constructive definitions of conditioning (such as those due to Tjur \citeyearpar{MR0345151, MR0123456, MR595868}, Pfanzagl \citeyearpar{MR548898}, and Rao \citeyearpar{MR970965, MR2149673}), 
although this work has not been sensitive to issues of computability.

Under certain conditions, such as when conditional densities exist, conditioning can proceed using the classic Bayes' rule; however, it may not be possible to compute the density of a computable distribution (if the density exists at all),
as we describe
in Section~\ref{cds}.

We recall the basics of the measure-theoretic approach to conditional probability in Section~\ref{conddist}, and in Section~\ref{ccd2} we use
notions from computable probability theory to consider the sense in which
conditioning could be potentially computable.


\subsection{Other Related Work}
\label{related}

We now describe several other connections between conditional probability and computation.

\subsubsection{Complexity Theory of Finite Discrete Distributions}
Conditional probabilities for computable distributions on finite, discrete sets
are clearly computable, but may not be efficiently so.
In this finite discrete setting, there are already interesting questions of computational complexity, which
have been explored by a number of authors through extensions of Levin's theory of average-case complexity \citep{MR822205}.
For example, under cryptographic assumptions, it is difficult to sample
from the conditional
distribution of a uniformly distributed binary string of length $n$ given
its image under a one-way function.
This can be seen to follow from the work of Ben-David, Chor, Goldreich, and Luby \citeyearpar{MR1160461}
in their theory of polynomial-time samplable
distributions, which has since been extended by Yamakami
\citeyearpar{MR1736639} and others.
Other positive and negative complexity results have been obtained in the particular case of Bayesian networks
by Cooper \citeyearpar{MR1045483} and Dagum and Luby \citeyearpar{MR1457877, MR1216898}.
Extending these complexity results to the more general setting considered here
could bear on the practice of statistical AI and machine
learning.

\subsubsection{Computable Bayesian Learners}
Osherson, Stob, and Weinstein \citeyearpar{MR973115} study learning theory in the setting of \emph{identifiability in the limit} (see
\citep{Gold67} and \citep{MR0195725}
for more details on this setting) and prove that a certain type of ``computable Bayesian''
learner fails to identify the index of a (computably enumerable) set
that is ``computably identifiable''
in the limit.
More specifically,
a ``Bayesian learner'' is required to return an index for a set with the highest conditional probability
given a finite prefix of an infinite sequence of random draws from the unknown set.
An analysis by Roy \citeyearpar{roy-phd}
of their construction reveals that the conditional distribution of the
index given the infinite sequence is an everywhere discontinuous
function (on every measure one set), hence noncomputable
for much the same reason as our elementary construction
involving a mixture of measures concentrated on the rationals and on the irrationals (see Section~\ref{Sec:discontinuous}). 
As we argue,
in the context of statistical analysis,
it is more appropriate
to study the conditioning operator
when it is restricted to those
random variables whose conditional distributions admit versions that
are continuous everywhere, or at least on a measure one set.

\subsubsection{Induction with respect to Universal Priors}
Our work is
distinct
from the study
of conditional distributions
with respect to priors that are universal for partial computable
functions (as defined using Kolmogorov complexity)
by Solomonoff \citeyearpar{MR0172745}, Zvonkin and Levin \citeyearpar{MR0307889}, and Hutter \citeyearpar{MR2354220}.
The computability of conditional distributions also has a rather different character in
Takahashi's work on the algorithmic randomness of points defined using universal Martin-L\"of tests
\citep{MR2478961}.
The objects with respect to which one is conditioning in
these settings are typically not computable (e.g., the universal semimeasure is merely lower semicomputable).
In the present paper, we are interested in the problem of computing
conditional distributions of random variables that are \emph{computable},
even though the conditional distribution may itself be
noncomputable.

\subsubsection{Radon--Nikodym Derivatives}
In the abstract setting,
conditional probabilities are (suitably measurable) Radon--Nikodym derivatives.
In work motivated by questions in algorithmic randomness,
Hoyrup and Rojas~\citeyearpar{HR11}
study notions of computability for absolute continuity and for Radon--Nikodym
derivatives as elements in $L^1$, i.e., the space of integrable functions.
Hoyrup, Rojas, and Weihrauch \citeyearpar{DBLP:conf/cie/HoyrupRW11}
then show an equivalence between the problem of computing
Radon--Nikodym
derivatives as elements in $L^1$ and computing the characteristic function
of computably enumerable sets.
The noncomputability of the Radon--Nikodym derivative operator is demonstrated by 
a pair $\mu,\nu$ of
computable measures
whose Radon--Nikodym derivative $\dee \mu / \dee \nu$ is not computable as an element in $L^1(\nu)$.
However, 
the Radon--Nikodym derivatives they study
do not correspond to conditional probabilities,
and so the computability of the operator restricted to those maps arising in the construction of conditional probabilities is not addressed by this work.
The underlying notion of computability is another important difference. 
An element in $L^1(\nu)$ is an equivalence class of functions, every pair agreeing on a set of $\nu$-measure one. 
Thus one cannot, in general, evaluate these derivatives at points in a well-defined manner.
Most Bayesian statisticians would be unfamiliar and perhaps unsatisfied with this notion of computability,
especially in settings where their statistical models admit continuous versions of Radon--Nikodym derivatives that are unique, and thus well-defined pointwise.
Regardless, we will show that even in such settings,
computing conditional probabilities is not possible in general, even in the weaker $L^1$ sense.
On the other hand, our positive results do yield computable probabilities/distributions defined pointwise.


\subsection{Summary of Results}
\label{cond}

Following our presentation of computable probability theory and conditional probability
in Sections~\ref{compprob} through \ref{ccd2},
we provide our main positive and negative results about the computability of conditional probability, which we now summarize.
Recall that measurable functions are often defined only up to a measure-zero set; any two functions that agree almost everywhere are called \emph{versions} of each other.

In Proposition~\ref{discontinuouscond}, 
we construct random variables $\rv X$ and $\rv C$
that are computable
on a $\Pr$-measure one set,
such that every version
of the
conditional distribution map $\CondProbFunc{\rv C}{\rv X}$ 
(i.e., a probability-measure-valued function $f$ such that $f(\rv X)$ is a regular version of the conditional distribution
$\CondProb{\rv C}{\rv X}$)
is discontinuous everywhere, even when restricted to a $\Pr_{\rv X}$-measure one subset.
(We make these notions precise in Section~\ref{ccd2}.)
The construction makes use of the elementary fact that the indicator function for the rationals in the unit interval (the so-called Dirichlet function) is itself nowhere continuous.

Because every function computable on a domain $D$ is continuous on $D$,
discontinuity is a fundamental
barrier to computability, and so
this construction rules out the possibility of a completely general algorithm for conditioning.
A natural question is whether conditioning is a computable operation when
we restrict the operator to
random variables for which \emph{some} version of the conditional
distribution is continuous everywhere, or at least on a measure one set.

In fact, even under this restriction, conditioning is not even continuous, let alone computable, as we show in Section~\ref{cond-is-discon}.
We further demonstrate that 
if some computer program purports to state true facts about the conditional distribution of a computable joint distribution provided as input, 
then we can uniformly find some other
representation of the input distribution such that the program does not output any
nontrivial fact about the conditional distribution.

Our central result,
	Theorem~\ref{actualmainresult},
provides a pair of random variables that are computable on a measure one set,
but such that the conditional distribution of one variable given the other is not computable on any measure one set (though
some version 
is continuous on a measure one set).
The construction involves encoding the halting times of all Turing
machines into the conditional distribution map 
while ensuring that the joint distribution remains computable.
This result yields another proof of the noncomputability of the conditioning operation restricted to measures having conditional distributions that are continuous on a measure one set.

In 
Section~\ref{Sec:c.e. neg cont}
we extend our central result 
by constructing a pair of random
variables, again computable on a measure one set, whose conditional distribution map is noncomputable but has an everywhere continuous version with infinitely differentiable conditional probability maps.
This construction proceeds by smoothing out the distribution constructed in 
Section~\ref{Sec:c.e. neg},
but in such a way that one can still compute the halting problem relative to the conditional distribution.
This result implies that
conditioning is not a computable operation, even when we further restrict to the case where the conditional distribution
has an everywhere continuous version.

Despite the noncomputability of conditioning in general, conditional
distribution maps are often computable in practice. We provide some explanation
of this phenomenon by
characterizing several circumstances
in which conditioning \emph{is} a computable operation.
Under suitable computability hypotheses,
conditioning is computable in the discrete setting (Proposition~\ref{discreteconditioning})
and where there is a conditional density (Corollary~\ref{independentdensity}).

We also characterize a situation in which conditioning is possible in the presence of noisy data, capturing many natural models in science and engineering.
Let $\rv U$, $\rv V$, and $\rv E$ be computable random variables where $\rv U$ and $\rv E$ are real-valued,
and
suppose that $\Pr_{\rv E}$ is absolutely continuous with a bounded computable density
$p_{\rv E}$  and $\rv E$ is independent of $\rv U$
and $\rv V$.
We can think of $\rv U + \rv E$ as the corruption of an idealized measurement $\rv U$ by independent source of additive error $\rv E$.
In Corollary~\ref{indnoise}, we show that the conditional
distribution map $\CondProbFunc{(\rv U, \rv V)}{\rv U + \rv E}$ is computable (even
if $\CondProbFunc{(\rv U, \rv V)}{\rv U}$ is not).
Finally, we discuss how symmetry, in the form of \emph{exchangeability}, can contribute to the computability of
conditional distributions.




\section{Computable Probability Theory}
\label{compprob}

We now give some background on computable probability theory, which will enable us to formulate our results.  The foundations of the theory include notions of computability for probability measures developed by Edalat \citeyearpar{MR1461849}, Weihrauch \citeyearpar{MR1694441}, Schr\"oder \citeyearpar{MR2351942}, and G\'acs \citeyearpar{MR2159646}.
Computable probability theory itself builds off notions and results in computable analysis, specifically the Type-2 Theory of Effectivity.
For a general introduction to this approach to real computation,
see Weihrauch \citeyearpar{MR1795407}, Braverman \citeyearpar{DBLP:conf/focs/Braverman05}
or Braverman and Cook \citeyearpar{MR2208383}.

\subsection{Computable and Computable Enumerable Reals}
\label{cereal}
We first recall some elementary definitions from computability theory
(see, e.g., Rogers \citeyearpar[][Ch.~5]{MR886890}).
A set of natural numbers (potentially in some correspondence with, e.g., rationals, integers, or other finitely describable objects with an implicit enumeration)
is \emph{computable} when there is a computer program that, given $k$, outputs whether or not $k$ is in the set.
A set
is \emph{computably enumerable} (c.e.)\ when
there is a computer program that outputs every element of the set eventually.
Note that a set is computable when both it and its complement are c.e.
We say that a sequence of sets $\{B_n\}$ is computable \emph{uniformly in $n$} when there is a single computer program
that, given $n$ and $k$, outputs whether or not $k$ is in $B_n$. We say that the sequence is
c.e.\ \emph{uniformly in $n$} when there is a computer program that, on input $n$, outputs every element of $B_n$ eventually. 

We now recall basic notions of computability for real numbers (see,
e.g., \citep[][Ch.~4.2]{MR1795407} or \citep[][Ch.~1.8]{MR2548883}).
We say that a real $r$ is a \emph{c.e.\ real} (sometimes called a \emph{left-c.e.\ real})
when the set of rationals $\{ q \in \Rationals \st q < r \}$ is c.e.  
A real $r$ is \emph{computable} when both it and its negative are c.e.
Equivalently, a real is computable when there is a program that
approximates it to any given accuracy (e.g., given an integer $k$ as
input, the program reports a rational that is within $2^{-k}$ of the real).
A function $f\colon \Nats\to\Reals$ is lower semicomputable when $f(n)$ is a c.e.\ real, uniformly in $n$ (i.e., when the collection of rationals less than $f(n)$ is c.e.\ uniformly in $n$). Likewise, a function is upper semicomputable when its negative is lower semicomputable.
The function $f$ is computable if and only if it is both lower and upper semicomputable.

\subsection{Computable Polish Spaces}

Recall that a \defn{Polish space} is a topological space that admits a metric under which it is a complete separable metric space.
Computable Polish spaces, as developed in computable
analysis \citep{MR1923905,MR1222859} and effective domain theory \citep{Jens1997225,MR1600616},
provide a convenient framework for
formulating results in computable probability theory.
For consistency, we largely use definitions from
\citep{MR2519075} and \citep{MR2558734}.
Additional details about computable Polish spaces, sometimes called computable metric spaces or effective Polish spaces, can also be found
in \citep[][Ch.~8.1]{MR1795407}, \citep[][{\S}B.3]{MR2159646}, and \citep[][Ch.~3I]{MR2526093}.

\begin{definition}[Computable Polish space {\citep[][Def.~2.3.1]{MR2558734}}]
A \defn{computable Polish space} is a triple $(S,\delta,\cD)$ for which $\delta$ is a metric on the set $S$ satisfying
\begin{enumerate}
	\item $(S,\delta)$ is a complete separable metric space;
\item $\cD = \{s_i\}_{i\in\Naturals}$ is an enumeration of a dense subset of $S$, called \defn{ideal points}; and,
\item the real numbers $\delta(s_i,s_j)$ are computable, uniformly in $i$ and $j$.
\end{enumerate}
	In particular, note that condition (1) implies that the topological space determined by the metric space $(S, \delta)$ is a Polish space.

Let $B(s_i,q_j)$ denote the ball of radius $q_j$ centered at $s_i$.
We call the elements of the set
\[
	\cB_S \defas \{ B(s_i,q_j) \st s_i \in \cD \text{~and~} q_j
	\in \Rationals \text{~s.t.~} q_j > 0 \}
\]
the \defn{ideal balls of $S$}, and
fix the canonical enumeration of them induced by that of $\cD$ and $\Rationals$.
\end{definition}

Let $\Borel_S$ denote the Borel $\sigma$-algebra on a Polish space $S$, i.e.,
the $\sigma$-algebra generated by the open balls of $S$.
Let $\ProbMeasures(S)$ denote the set of Borel probability measures on $S$.

In this paper we primarily work with computable Polish spaces. As such, unless otherwise noted,
the $\sigma$-algebras will always be 
the Borel $\sigma$-algebras on such spaces --- in particular, making them standard Borel spaces.
Measurable functions between Polish spaces will always be measurable with respect to the Borel $\sigma$-algebras.
We will sometimes refer to measurable subsets of a probability space as \emph{events}.

\begin{example}
	The set $\{0,1\}$ is a computable Polish space under the discrete metric, where $\delta(0,1)=1$.

Cantor space, the set $\Cantor$  of infinite binary sequences, is a computable Polish space under its usual
metric and the dense set of eventually constant strings (under a standard enumeration of finite strings).

The set $\Reals$ of real numbers is a computable Polish space under the
Euclidean metric with the dense set $\Rationals$ of rationals (under its standard enumeration).
\end{example}

Suppose we are given a finite sequence $(T_0, \delta_0, \cD_0), \ldots, (T_{n-1},
\delta_{n-1}, \cD_{n-1})$
of computable Polish spaces.
Then the product metric space $\prod_{i = 0}^{n-1} T_i$ (with one of any of the equivalent standard product
metrics) is a computable Polish space where the
ideal points consist of all finite products of ideal points.
Furthermore, given a countably infinite such sequence 
$(T_0, \delta_0, \cD_0), (T_1, \delta_1, \cD_1), \ldots$ that is uniformly
computable and has a fixed bound on the diameter,  the product 
metric spaces
consists of the metric space
whose underlying space is $\prod_{i \in\Nats} T_i$ and
whose metric is given by $\delta(x,y) = \sum_{i\in\Nats}2^{-i}\delta_i(x, y)$;
this too can be made into a computable Polish space, by taking the ideal
points to be those sequences $(x_0, x_1,
\ldots)$ with each $x_i \in \cD_i$ such that for all but finitely many
terms $i$, the point $x_i$ is the first element in the enumeration of
$\cD_i$. Note that in both the finite and infinite case, all
projection maps are computable.

\begin{definition}[Computable point {\citep[][Def.~2.3.2]{MR2558734}}]
Let $(S,\delta,\cD)$ be a computable Polish space with $\cD = \{s_j\}_{j \in \Nats}$ and $x \in S$.  
	Given a sequence $\{i_k\}_{i \in \Nats}$ of natural numbers,
	we say that the sequence $\{s_{i_k}\}_{k \in \Nats}$ of elements of $\cD$ 
	is a \defn{representation} of the point $x$ if
	$\delta(s_{i_k},x) < 2^{-k}$ for all $k$.
	When $\{i_k\}_{i \in \Nats}$ is a computable sequence such that $\{s_{i_k}\}_{k \in \Nats}$ is a representation of $x$, we say that $\{s_{i_k}\}_{k \in \Nats}$ is a \defn{computable representation}, and that the point $x$ is \defn{computable}.
\end{definition}

\begin{remark}
A real $\alpha\in\Reals$ is computable (as in Section~\ref{cereal})
if and only if $\alpha$ is a computable point of $\Reals$ (as
a computable Polish space).
Although most of the familiar reals are computable, there are only
countably many computable reals, and so almost every real is not
computable.
\end{remark}

The notion of a c.e.\ open set (or $\Sigma^0_1$ class) is
fundamental in classical computability
theory, and admits a simple definition in an arbitrary computable
Polish space.
\begin{definition}[C.e.\ open set {\citep[][Def.~2.3.3]{MR2558734}}]
Let $(S,\delta,\cD)$ be a computable Polish space with the corresponding enumeration $\{B_i\}_{i\in\Naturals}$ of the ideal open balls $\cB_S$.
We say that $U \subseteq S$ is a \defn{c.e.\ open set} when there is some c.e.\ set $E \subseteq \Naturals$ such that $U = \bigcup_{i \in E} B_i$.
\end{definition}

Note that the class of c.e.\ open sets is closed under computable
unions and finite intersections.

A computable function can be thought of as a continuous function
whose local modulus of continuity is witnessed by a program.
It is important to consider the computability of \emph{partial}
functions, because many natural and important random variables are
continuous
only on a measure one subset of their domain.

\begin{definition}[Computable partial function {\citep[][Def.~2.3.6]{MR2558734}}]
\label{comp-partial-func}
Let $(S,\delta_S,\cD_S)$ and $(T,\delta_T,\cD_T)$ be computable Polish spaces, the latter with the corresponding enumeration $\{B_n\}_{n\in\Nats}$ of the ideal open balls $\cB_T$.
A function $f \colon S \to T$ is said to be \defn{continuous on $R \subseteq S$} when $f$ restricted to $R$ is continuous as a function from $R$, under the subspace topology to $T$.
A function $f \colon S \to T$ is said to be \defn{computable on $R \subseteq S$} when there is a computable sequence $\{U_n\}_{n\in\Naturals}$ of c.e.\ open sets $U_n \subseteq S$ such that $f^{-1}[B_n] \cap R = U_n \cap R$ for all $n \in \Naturals$. We call such a sequence $\{U_n\}_{n \in \Naturals}$ a \defn{witness} to the computability of $f$.
\end{definition}

Note that the notion of being computable on a set $R$ can be relativized to an oracle $A\subseteq \Nats$ in the obvious way. A function is continuous on $R$ if and only if it is $A$-computable on $R$ for some oracle $A$.


\begin{remark}
\label{comppointremark}
Let $S$ and $T$ be computable Polish spaces.
If $f\colon S\to T$  is computable on some subset $R\subseteq S$, then
for every \emph{computable} point $x\in R$, the
point $f(x)$ is also computable.
One can show that $f$
is computable  on $R$
when there is an oracle Turing machine that, upon being fed a
representations of points $x \in R$ on its oracle tape, computes representations of their images $f(x)\in S$. 
(For more details, see \citep[][Prop.~3.3.2]{MR2519075}.)
\end{remark}

\subsection{Notions of Computability for Functions on Probability Spaces}

The standard notion of computability of functions between computable Polish spaces is too restrictive in most cases when the 
inputs to these functions are points in a probability space. For example, the Heaviside function $f(x) = \Ind(x \ge 0)$ is not computable on any set containing a neighborhood of 0.  However, we can reliably compute the image of a Gaussian random variable under $f$, because the Gaussian random variable is nonzero with probability one, and $f$ is computable on $\Reals \setminus \{0\}$.

For a measure space $(\Ohm, \mathscr{G}, \mu)$, a set $E \in \mathscr
G$ is a \defn{$\mu$-null set} when $\mu (E) = 0$.
More generally, for $p \in [0,\infty]$, we say that $E$ is a \defn{$\mu$-measure $p$ set} when $\mu (E) = p$. 
A predicate $P$ on $\Ohm$ is said to hold
\defn{$\mu$-almost everywhere} (abbreviated \defn{$\mu$-a.e.}) if the event 
$E_P = \{ \varpi \in \Ohm \st P(\varpi) \text{ does not hold} )$ is a $\mu$-null set.  
When $E_P$ is a $\mu$-null set but $\mu$ is a probability measure,
we will instead say the event $P$ holds \defn{$\mu$-almost surely},
and we likewise say
that an event $E \in \mathscr G$ occurs \defn{$\mu$-almost surely}
(abbreviated \defn{$\mu$-a.s.}) when $\mu (E) = 1$.  In each case, we may
drop the prefix $\mu$ when it is clear from context  (in particular,
when it holds of $\Pr$). 

\begin{definition}\label{tnhethues}
	Let $S$ and $T$ be Polish spaces and $\mu$ a probability measure on $S$.
A measurable function $f\colon S \to T$ 
	is \defn{$\mu$-almost continuous} when it is continuous on a $\mu$-measure one set. 
	When $S$ and $T$ are computable Polish spaces, the measurable function $f$ is
	\defn{$\mu$-almost computable} when it is computable on a $\mu$-measure one set.  
\end{definition}

(See \citep{MR2519075} for further development of the theory of almost computable functions.)
The following result relates $\mu$-almost continuity to $\mu$-a.e. continuity, i.e., the set of continuity points being a $\mu$-measure one set.
The proofs of the following proposition and lemma are due to 
Fran\c{c}ois Dorais, Gerald Edgar, and Jason Rute
{\citeyearpar[][]{146063}}.

\begin{proposition}
\label{DER-theorem}
Let $X$ and $Y$ be Polish spaces,
let $f\colon X \to Y$ be a $\mu$-almost continuous function, and let $\mu$ be a 
probability measure on $X$.
	Then there is a $\mu$-a.e.\ continuous $g\colon X\to Y$ that agrees with $f$ $\mu$-a.e.
\end{proposition}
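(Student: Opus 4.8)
The plan is to start from the $\mu$-measure one set $C \subseteq X$ on which $f$ is continuous, and to build $g$ by redefining $f$ outside a suitable subset of $C$ while leaving it unchanged on a set of continuity points that still carries full measure. The basic difficulty is that the set of continuity points of $f$ itself may be small: $C$ being a continuity set for the restriction $f|_C$ does not make points of $C$ continuity points of $f$ as a function on all of $X$, because nearby points outside $C$ can spoil continuity. So the real content is to locate, inside $C$, a Borel set of full measure whose points genuinely are continuity points of a globally defined modification of $f$.

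First I would pass to a closed set of almost full measure. By inner regularity of the Borel probability measure $\mu$ on the Polish space $X$, for each $k$ there is a closed set $F_k \subseteq C$ with $\mu(F_k) > 1 - 2^{-k}$; replacing $F_k$ by $\bigcup_{j \le k} F_j$ we may take the $F_k$ increasing, and set $F = \bigcup_k F_k$, so $\mu(F) = 1$ and $F \subseteq C$. On each $F_k$, $f$ is continuous (being the restriction of $f|_C$), so $f|_{F_k} \colon F_k \to Y$ is a continuous map on a closed subspace. The next step is to extend each $f|_{F_k}$ to a continuous function $g_k \colon X \to Y$ defined on all of $X$; here I would invoke a Tietze-type extension theorem. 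For real-valued or $\Reals^n$-valued $Y$ (which is the setting relevant to the paper) ordinary Tietze extension from the closed set $F_k$ suffices; for general Polish $Y$ one can embed $Y$ into the Hilbert cube $[0,1]^{\Naturals}$, extend coordinatewise, and then retract — though one must be slightly careful that the extension lands back in (a set homeomorphic to) $Y$, which is why I would if possible just cite the known Dorais–Edgar–Rute argument or restrict attention to the Euclidean case actually used later.

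Now define $g$ by $g(x) = g_k(x)$ for $x$ in $F_k \setminus F_{k-1}$ (with $F_{-1} = \emptyset$), and $g(x) = g_0(x)$, say, for $x \notin F$. By construction $g = f$ on $F$, hence $\mu$-a.e., so $g$ agrees with $f$ $\mu$-a.e.\ and $g$ is Borel measurable (it is a countable gluing of continuous functions along Borel pieces). It remains to check that $g$ is continuous at $\mu$-a.e.\ point. The claim is that $g$ is continuous at every point of $\bigcup_k \mathrm{int}_X(F_{k+1})$ restricted appropriately — more precisely, at every point $x$ that lies in $F_k$ for some $k$ and that is a point of density / interior point relative to the increasing sequence. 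The cleanest route: let $A_k = F_k$ and note $\mu(X \setminus F_k) \to 0$; a point $x \in F_k$ at which $g$ fails to be continuous must have a sequence $x_n \to x$ with $x_n \notin F_{k}$ for infinitely many $n$ (since on $F_k$ the value of $g$ is one of finitely many continuous functions $g_0,\dots,g_k$ agreeing with the continuous $f|_{F_k}$ at $x$). So the bad set within $F_k$ is contained in the set of points of $F_k$ that are limits of points of $X \setminus F_k$, i.e.\ in $F_k \cap \overline{X \setminus F_k} = F_k \setminus \mathrm{int}_X(F_k)$ — wait, that is the topological boundary of $F_k$, which need not be $\mu$-null. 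I expect this to be the main obstacle, and the fix is to thicken: choose the $F_k$ so that $\mu$-a.e.\ point of $F$ lies in the \emph{interior} of some $F_k$, which can be arranged by a standard trick — first get closed $F_k'$ as above, then by outer regularity enlarge slightly to open $U_k \supseteq F_k'$ with $\mu(U_k \setminus F_k') < 2^{-k}$, and work with the pair: a point in $F_k'$ that is also in $U_k$ and at which all relevant data is continuous is a continuity point of $g$, and $\mu(\bigcup_k (F_k' )) = 1$ with $\mu$-a.e.\ such point interior to $U_k$. Pushing this Borel–Cantelli style bookkeeping through to show the exceptional set is $\mu$-null is the crux; once it is done, $g$ is the desired $\mu$-a.e.\ continuous function agreeing with $f$ $\mu$-a.e.
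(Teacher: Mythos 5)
You have correctly identified the central obstacle: patching together Tietze extensions over an exhaustion by closed sets $F_k$ only controls $g$ at interior points of the $F_k$, and the topological boundary $F_k \setminus \mathrm{int}\,F_k$ need not be $\mu$-null. But the fix you sketch, namely arranging that $\mu$-a.e.\ point of $F$ lies in the interior of some $F_k$, cannot in general be carried out, and this is not a bookkeeping issue. Take $X = [0,1]$ with Lebesgue measure and $f = \Ind_{\Rationals}$, which is continuous on the measure-one set $C$ of irrationals. Every closed $F \subseteq C$ is disjoint from the dense set of rationals and hence has empty interior in $[0,1]$, regardless of thickening; so no amount of bookkeeping can make the bad boundary set $\mu$-small. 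The proposition is nonetheless true in this example (take $g \equiv 0$), but the Tietze-gluing strategy has no way to discover that answer. You also rightly flag that Tietze extension for a general Polish target $Y$ is delicate; the paper's method sidesteps this entirely.

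The paper's proof takes a genuinely different route. First, by a classical result of Kuratowski (essentially Lavrentiev's theorem), one may assume after an a.e.\ modification that $f$ is continuous on a $\mu$-measure-one $G_\delta$ set $D$. The key step is then Lemma~\ref{DER-lemma}: one constructs a measurable retraction $h\colon X \to D$ with the property that $\lim_{x \to x_0} h(x) = x_0$ for every $x_0 \in D$, by writing $D = \bigcap_n U_n$ for a descending sequence of open sets with $U_n$ contained in the $1/(n+1)$-neighborhood of $D$, and choosing $h(x)$ from a fixed countable dense subset of $D$ to guarantee measurability. Setting $g = f \circ h$ gives a Borel function agreeing with $f$ on $D$, and for $x_0 \in D$ one has $g(x) = f(h(x)) \to f(x_0) = g(x_0)$ as $x \to x_0$, since $h(x) \to x_0$ inside $D$ and $f$ is continuous on $D$. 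The crucial difference from your approach is that one never extends $f$: the modification reroutes the argument of $f$ back into the good set $D$ rather than prescribing new values of $f$ outside it, and this is precisely what dissolves the boundary problem you ran into.
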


We will need the following technical lemma. Recall that a $G_\delta$ set is a countable intersection of open sets.
\begin{lemma}
\label{DER-lemma}
Let $X$ be a Polish space.
If $D \subseteq X$ is a nonempty $G_\delta$-set then there is a measurable map $h\colon X \to D$ such that $\lim_{x\to x_0} h(x) = x_0$ for every $x_0 \in D$.
\end{lemma}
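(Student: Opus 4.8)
The plan is to realize $D$ as a decreasing intersection $D = \bigcap_{n} V_n$ of open sets $V_n \supseteq V_{n+1} \supseteq D$ (one can always arrange nestedness by replacing $V_n$ with $V_0 \cap \cdots \cap V_n$), and then to build $h$ by assigning to each point $x \notin D$ a target in $D$ that is ``close'' in a sense controlled by how deeply $x$ penetrates the $V_n$. The key quantitative device is the function $n(x) = \sup\{ n : x \in V_n\} \in \Nats \cup \{\infty\}$, so that $n(x) = \infty$ exactly when $x \in D$; we will want $h$ to move points with large $n(x)$ only a small amount. Concretely, fix a complete metric $\rho$ on $X$ and a countable dense set; for $x \notin D$ with $n = n(x) < \infty$, let $h(x)$ be (a measurable selection of) a point of $D$ within distance, say, $2 \cdot \mathrm{dist}(x, D)$, or within $2^{-n}$ of $x$ — whichever makes the limit argument cleanest. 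For $x \in D$ set $h(x) = x$.

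The main work is twofold. First, measurability of $h$: the map $x \mapsto n(x)$ is Borel (each $\{x : n(x) \ge n\} = V_n$ is open), so it suffices to make a Borel selection on each piece $\{n(x) = n\} \setminus D$. Since $D$ is Polish in its subspace topology (being $G_\delta$ in a Polish space) and nonempty, and the condition ``$y \in D$ and $\rho(x,y) < \epsilon(x)$'' is Borel in $(x,y)$ with nonempty sections for suitably chosen $\epsilon(x)$, the Jankov--von Neumann uniformization theorem (or, more simply, an explicit selection using the fixed dense sequence and choosing the first dense point of $D$ meeting the constraint, then perturbing) yields a Borel $h$. I would lean on the explicit construction to keep things elementary: enumerate a countable dense subset $\{d_j\}$ of $D$ and on $\{x : n(x)=n\}\setminus D$ let $h(x) = d_{j}$ for the least $j$ with $\rho(x,d_j) < 2^{-n}$ — such $j$ exists because $d_j$'s are dense in $D$ and $x$ is in the open set $V_n$, which... here is the subtlety: being in $V_n$ does not by itself force a point of $D$ within $2^{-n}$ of $x$. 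So instead I would define $h(x)$ using $\mathrm{dist}(x,D)$: pick $d_j$ with $\rho(x, d_j) < 2\,\mathrm{dist}(x,D)$, which is measurable and well-defined for $x \notin D$ (and $\mathrm{dist}(x,D) > 0$ there).

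The second part is the limit condition: for $x_0 \in D$ we need $h(x) \to x_0$ as $x \to x_0$. Given $\epsilon > 0$, for $x$ close to $x_0$ (within $\epsilon/3$, say), if $x \in D$ then $h(x) = x$ is within $\epsilon/3$; if $x \notin D$ then $\mathrm{dist}(x, D) \le \rho(x, x_0) < \epsilon/3$, so $\rho(x, h(x)) < 2\,\mathrm{dist}(x,D) < 2\epsilon/3$, whence $\rho(h(x), x_0) \le \rho(h(x),x) + \rho(x,x_0) < \epsilon$. Thus the $G_\delta$ hypothesis is actually used only to guarantee that $D$ with the subspace metric is still reasonable; the limit argument itself only needs $D$ closed-in-the-relevant-sense near its points — but wait, $D$ need not be closed. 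This is the real obstacle: if $x_0 \in D$ but $D$ is not closed, there could be $x$ arbitrarily near $x_0$ with $\mathrm{dist}(x,D)$ not small relative to $\rho(x,x_0)$? No — $\mathrm{dist}(x,D) \le \rho(x, x_0)$ always since $x_0 \in D$. So the limit argument goes through with $D$ merely nonempty, and the $G_\delta$ hypothesis is what's needed for \emph{measurability} (ensuring $D$ is a Borel set so the selection lands in a Borel target and $h$ is Borel into $X$ with range in $D$). I expect the delicate point to be stating the measurable-selection step cleanly — I would either cite Jankov--von Neumann/Kuratowski--Ryll-Nardzewski or give the explicit first-index construction on each Borel piece $\{ 2^{-(n+1)} \le \mathrm{dist}(x,D) < 2^{-n}\}$, on which one can uniformly find $d_j$ with $\rho(x,d_j) < 2\cdot 2^{-n}$ by searching $j$ in increasing order, a Borel operation.
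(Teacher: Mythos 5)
Your construction breaks down on $\bar D \setminus D$, and this is fatal rather than a corner case. You assert that $\mathrm{dist}(x,D) > 0$ for $x \notin D$, but this holds only when $D$ is \emph{closed}; for a general $G_\delta$ set the closure may be strictly larger (e.g.\ $D$ the irrationals in $\Reals$, where $\bar D \setminus D = \Rationals$ is dense). Your explicit pieces $\{2^{-(n+1)} \le \mathrm{dist}(x,D) < 2^{-n}\}$ cover only $X \setminus \bar D$, so $h$ is left undefined on $\bar D \setminus D$; any fixed assignment there destroys the limit condition at every $x_0 \in D$ having points of $\bar D \setminus D$ arbitrarily nearby, as in the irrationals example. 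The underlying problem is that $\mathrm{dist}(x,D)$ cannot serve as the shrinking threshold, because it vanishes on $\bar D \setminus D$ without $x$ being in $D$.

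You did flag the right candidate for a fix — the depth function $n(x)$ — and then abandoned it because membership in $V_n$ alone gives no metric bound. The paper's key move, which you missed, is to replace the given nested open sets by
\[
U_n \defas V_0 \cap \cdots \cap V_n \cap \bigcup_{x_0 \in D} B\bigl(x_0, \tfrac{1}{n+1}\bigr),
\]
i.e.\ to intersect with the $\frac{1}{n+1}$-neighborhood of $D$. These $U_n$ still descend with $\bigcap_n U_n = D$, but now $x \in U_n$ \emph{does} force $\mathrm{dist}(x,D) < \frac{1}{n+1}$, so on each shell $U_n \setminus U_{n+1}$ one can choose $h(x) \in D$ with $\rho(h(x),x) < \frac{1}{n+1}$ (taking the first suitable point of a fixed dense sequence in $D$, exactly as you proposed, gives measurability). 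Every $x \in U_0 \setminus D$, including points of $\bar D \setminus D$, lies in a unique shell; and as $x \to x_0 \in D$ the shell index tends to infinity because each $U_{N+1}$ is an open neighborhood of $x_0$. This simultaneously controls the metric and the topological depth, which is what $\mathrm{dist}(x,D)$ alone cannot do.

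Your limit estimate and the first-index measurable-selection device are both correct and match the paper; only the choice of what quantity to shrink is wrong, and repairing it requires exactly the stronger nesting condition above.
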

\begin{proof}
Suppose $D = \bigcap_{n \in \Nats} U_n$, where $(U_n)_{n\in\Nats}$ is a
descending sequence of open sets such that $U_n \subseteq \bigcup_{x_0 \in
D} B(x_0,1/(n+1))$. Any measurable retraction $h\colon X \to D$ with the property that
if $x \in U_n \setminus U_{n+1}$ then $d(h(x),x) < 1/(n+1)$ will be as
required. By definition, it is always possible to find a suitable $h(x) \in
D$ for each $x \in U_0 \setminus D$. To ensure that $h$ is measurable, fix an
enumeration $(d_i)_{i \in \Nats}$ of a countable dense subset of $D$ and, if $x \in U_0 \setminus D$, define $h(x)$ to be the first element in this list that matches all the necessary requirements. (We must have $h(x) = x$ for $x \in D$ and it does not matter how $h(x)$ is defined when $x \notin U_0$ so long as the end result is measurable.)
\end{proof}

\begin{proof}[Proof of Proposition~\ref{DER-theorem}]
By a classical result of Kuratowski \citep[][I.3.B, Thm.~3.8]{MR1321597}, we may assume (after first possibly changing its value on a $\mu$-null set) that $f$ is
continuous on a $\mu$-measure one $G_\delta$ set $D \subseteq X$.

Let $h$ be as in Lemma~\ref{DER-lemma}. Then
$g = f\circ h$ is a measurable function that agrees with $f$ on $D$ and $$\lim_{x \to x_0} g(x) = f\bigl(\lim_{x \to x_0} h(x)\bigr) = f(x_0) = g(x_0)$$ for all $x_0 \in D$.
\end{proof}

\begin{remark}
	Let $S$ and $T$ be computable Polish spaces.
	A set $X\subseteq S$ is an effective $G_\delta$ set (or $\Pi^0_2$ class) when it is the
	intersection of a uniformly computable sequence of c.e.\ open sets.
Suppose that $f\colon S\to T$ is computable on $R\subseteq S$ with $\{U_n\}_{n \in \Naturals}$ a witness to the computability of $f$. One can show 
that there is an effective $G_\delta$ set 
	$R'\supseteq R$ and a function $f'\colon S \to T$ such that
$f'$ is computable on $R'$, the restriction of $f'$ to $R$ and $f$ are equal as functions, and $\{U_n\}_{n \in \Naturals}$ is a witness to the computability of $f'$. Furthermore, a $G_\delta$-code for some such $R'$ can be computed uniformly from a code for the witness $\{U_n\}_{n\in \Naturals}$.
For details, see \citep[][Thm.~1.6.2.1]{hoyrupthesis}; this generalizes a classical
result of Kuratowski \citep[][I.3.B, Thm.~3.8]{MR1321597}.
In conclusion, one can always assume that the set $R$ is an effective $G_\delta$ set. 
\end{remark}

We will introduce a weaker notion of computability for functions in Section~\ref{l1funcs}.

\subsection{Computable and Almost Computable Random Variables}
\label{comprvanddist}

Intuitively, a random variable maps an input source of
randomness to an output, inducing a distribution on the output space.
Here we will use a sequence of independent fair coin flips as our
source of randomness.
We formalize this via the probability space $(\Cantor, \BasicEvents, \Pr)$, where $\Cantor$ is
the product space of infinite binary sequences, $\BasicEvents$ is its Borel
$\sigma$-algebra (generated by the set of basic clopen cylinders extending
each finite binary sequence), and $\Pr$ is the
product measure formed from the uniform distribution on $\{0,1\}$.
Throughout the rest of the paper we will take 
$(\Cantor, \BasicEvents, \Pr)$ to be the basic probability space.
We will use a $\mathsf{SANS~SERIF}$ font for random variables.

\begin{definition}[Random variable and its distribution]
Let $S$ be a Polish space.
A \defn{random variable in $S$} is a measurable function
$\rv X \colon \Cantor \to S$.
For a measurable subset $A \subseteq
S$, we let $\{ \rv X \in A \}$ denote the inverse image 
$\rv X^{-1}[A] = \{ \varpi \in \Cantor \st \rv X(\varpi) \in A \}$, 
and for $x \in S$
we similarly define
the event $\{ \rv X = x \}$.
We will write $\Pr_{\rv X}$ for the \defn{distribution of $\rv X$}, which is the measure on $S$ defined by
$\Pr_{\rv X}(\pars) \defas \Pr\{\rv X \in \cdot \, \}$.
\end{definition}

If $S$ is a computable Polish space then we say a
random variable $\rv X$ in $S$ is a
\defn{$\Pr$-almost computable random variable} it is $\Pr$-almost computable as a measurable function.
Intuitively,
$\rv X$ is a $\Pr$-almost computable random variable when there is a program
that, given access to an oracle bit tape $\varpi \in \Cantor$, outputs a
representation of the point $\rv X(\varpi)$ (i.e., enumerates a sequence
$\{x_i\}$ in $\cD$ where $\delta(x_i,\rv X(\varpi)) < 2^{-i}$ for all $i$),
for all but a $\Pr$-measure zero subset of bit tapes $\varpi \in \Cantor$.

Even though the source of randomness is a sequence of discrete bits, there are $\Pr$-almost computable random variables with \emph{continuous} distributions, such as a uniform random variable (gotten by subdividing the unit interval according to the random bit tape) or an i.i.d.\ sequence of uniformly distributed random variables (by splitting up the given element of $\Cantor$ into countably many disjoint subsequences and dovetailing the constructions).  (For explicit constructions, see, e.g., \citep[][Ex.~3, 4]{FreerRoyAISTATS2010}.) 

It is crucial that we consider
random variables that
are merely computable on a $\Pr$-measure one subset of $\Cantor$.
To see why, consider the following example, which was communicated to us by Mart\'in Escard\'o.
For a real $\alpha\in[0,1]$,
we say that a binary random variable
$\rv X\colon \Cantor \to \{0,1\}$
is a \defn{Bernoulli}($\alpha$)
random variable when $\Pr_{\rv X}\{1\} = \alpha$.
There is a $\Bernoulli(\frac12)$ random variable
that is computable on all of $\Cantor$, given by the program that
simply outputs the first bit of the input sequence.
Likewise, when $\alpha$ is \defn{dyadic}
(i.e., a rational whose denominator is a power of 2),
there is a $\Bernoulli(\alpha)$ random variable
that is computable on all of $\Cantor$. However, this is not possible for
any other choices of $\alpha$ (e.g., $\frac13$).

\begin{lemma}\label{Bernoulliproof}
Let $\alpha\in[0,1]$ be a nondyadic real.
Every $\Bernoulli(\alpha)$ random variable $\rv X \colon \Cantor \to \{0,1\}$ is discontinuous, hence not computable on all of $\Cantor$.
\end{lemma}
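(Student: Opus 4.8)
The plan is to argue by contradiction from the topological structure of $\Cantor$. Suppose some $\Bernoulli(\alpha)$ random variable $\rv X\colon\Cantor\to\{0,1\}$ were continuous on all of $\Cantor$. Since $\{1\}$ is clopen in the discrete space $\{0,1\}$, the set $A \defas \rv X^{-1}[\{1\}] = \{\rv X = 1\}$ would then be a clopen subset of $\Cantor$. The key structural fact I would invoke is that every clopen subset of $\Cantor$ is a finite union of basic cylinders: being open, $A$ is a union of basic clopen cylinders; being closed in the compact space $\Cantor$, $A$ is itself compact, so finitely many of those cylinders already cover it. After refining to a common length, $A$ is a finite \emph{disjoint} union of cylinders $[\sigma_1],\dots,[\sigma_m]$, each of the form $\{\varpi \st \varpi \text{ extends }\sigma_i\}$ with $\sigma_i$ a finite binary string.

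Next I would compute the measure. Under the fair-coin product measure $\Pr$ on $\Cantor$, a cylinder determined by a string of length $n$ has measure $2^{-n}$, so $\Pr(A) = \sum_{i=1}^m 2^{-|\sigma_i|}$ is a dyadic rational. But $\Pr(A) = \Pr\{\rv X = 1\} = \Pr_{\rv X}\{1\} = \alpha$, which by hypothesis is nondyadic --- a contradiction. Hence no $\Bernoulli(\alpha)$ random variable can be continuous on all of $\Cantor$, establishing the discontinuity claim.

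Finally, for the ``hence not computable on all of $\Cantor$'' clause, I would appeal to the general principle (recorded in the excerpt, e.g.\ in the discussion following Definition~\ref{comp-partial-func} and in Remark~\ref{comppointremark}) that any function computable on a domain $D$ is continuous on $D$; taking $D = \Cantor$, computability on all of $\Cantor$ would force continuity on all of $\Cantor$, which we have just ruled out.

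\textbf{Anticipated main obstacle.} There is no serious obstacle here; the only step requiring care is the claim that clopen subsets of $\Cantor$ are finite unions of cylinders, which rests on the compactness of $\Cantor$ and the fact that the basic cylinders form a basis of clopen sets. Everything else is a routine measure computation and an appeal to the already-established link between computability and continuity. One minor point to handle cleanly is the reduction of a finite union of cylinders of varying lengths to a finite \emph{disjoint} union of cylinders of a common length, so that the measure is manifestly a finite sum of dyadic numbers $2^{-n}$ and hence dyadic.
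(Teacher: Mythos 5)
Your proposal is correct and follows essentially the same route as the paper's proof: assume continuity, use compactness of $\Cantor$ to write the preimage as a finite union of basic clopen cylinders, compute its $\Pr$-measure as a dyadic rational, and contradict the nondyadicity of $\alpha$. The only cosmetic difference is that the paper names both $Z_0 = \rv X^{-1}(0)$ and $Z_1 = \rv X^{-1}(1)$ while you work with just $\rv X^{-1}[\{1\}]$, which changes nothing.
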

\begin{proof}
Assume $\rv X$ is continuous.
Let $Z_0 \defas \rv X^{-1}(0)$ and $Z_1 \defas \rv X^{-1}(1)$.
Then $\Cantor = Z_0 \cup Z_1$, and so both are
closed (as well as open).  The compactness of $\Cantor$ implies that
these closed subspaces are also compact, and so $Z_0$ and $Z_1$ can
each be written as the finite disjoint union of clopen basis
elements.  But each of these
elements has dyadic measure, hence their sum cannot be either
$\alpha$ or $1-\alpha$, contradicting the fact that
$\Pr(Z_1)= 1- \Pr(Z_0) = \alpha$.
\end{proof}

On the other hand, for an arbitrary computable $\alpha\in[0,1]$, consider
the random variable $\rv X_\alpha$ given by $\rv X_\alpha(x) = 1$ if $\sum_{i=0}^\infty x_i 2^{-i-1} < \alpha$ and $0$ otherwise.  This construction, due to \citet{MR0322920}, is a
$\Bernoulli(\alpha)$ random variable and is computable on every point of $\Cantor$ other than a
binary expansion of $\alpha$.
Not only are these random variables $\Pr$-almost computable, but they can be shown to be optimal in their use of input bits,
via the classic analysis of rational-weight coins by Knuth and Yao
\citeyearpar{MR0431601}. 
Hence it is natural to focus our attention on random variables that are merely $\Pr$-almost computable.

The setting of $\Pr$-almost computable random variables is a natural one for probability theory, and the standard operations
on random variables preserve $\Pr$-almost computability, including, e.g., addition and multiplication of $\Pr$-almost computable real random variables, 
composition with $\Pr$-almost computable measurable functions, and cartesian products.

\subsection{Computable Probability Measures}
\label{sec:compdists}

We now introduce the class of computable probability measures on computable Polish spaces.

Let $(S,\delta_S,\cD_S)$ be a computable Polish space, and recall that
$\Borel_S$ denotes its Borel sets and $\ProbMeasures(S)$ its Borel
probability measures.  
Consider the subset $\cD_{P, S} \subseteq \ProbMeasures(S)$ comprised of those probability measures that are concentrated on a finite subset of $\cD_S$ and where the measure of each atom is rational,
i.e.,  $\nu \in \cD_{P, S}$ if and only if
$\nu = q_1 \mbox{\boldmath$\delta$}_{t_1} + \dotsb + q_k \mbox{\boldmath$\delta$}_{t_k}$ for some rationals $q_i \ge 0$ such that
$q_1 + \dotsb + q_k = 1$ and some points $t_i \in \cD_S$,
where for $t\in S$ the $\{0,1\}$-valued Dirac measure $\mbox{\boldmath$\delta$}_{t}$ satisfies $\mbox{\boldmath$\delta$}_{t}(A) = 1$ if and only if $t \in A$ for all measurable sets $A$.
It is a standard fact (see, e.g., G\'acs \citeyearpar[][{\S}B.6.2]{MR2159646}) that $\cD_P$ is dense in the
Prokhorov metric $\delta_P$ given by
\[
\delta_P (\mu, \nu)
&\defas \inf \left\{ \varepsilon > 0 \st \forall A \in \Borel_S,\  \mu(A) \leq \nu (A^{\varepsilon}) + \varepsilon \right \},
\]
where
\[
A^{\varepsilon} \defas \{ p \in S \st \exists q \in A, \ \delta_S(p, q) < \varepsilon \} = \textstyle \bigcup_{p \in A} B_{\varepsilon} (p)
\]
is the $\varepsilon$-neighborhood of $A$ and $B_\varepsilon(p)$ is the open ball of radius $\varepsilon$ about $p$.  Moreover, $(\ProbMeasures(S),\delta_P,\cD_{P, S})$ is a computable Polish space.  (See \citep[][Prop.~4.1.1]{MR2519075}.)  We say that $\mu \in \ProbMeasures(S)$ is a computable probability measure when $\mu$ is a computable point in $\ProbMeasures(S)$ as a computable Polish space. Note when the space $S$ is clear from context we will refer to $\cD_{P, S}$ simply as $\cD_P$.

One can define computability on the space of probability measures in other natural ways.
Early work by Weihrauch \citeyearpar{MR1694441} and M\"uller \citeyearpar{MR1694435} formalized the computability of probability measure in terms of the lower semicomputability of the measure as a function on the set of open sets and in terms of the computability of the measure as a linear operator acting on bounded continuous functions; these notions are equivalent. (See Schr\"oder \citeyearpar{MR2351942} for a more general setting.)  These notions of computability also agree with the notion of computability defined here in terms of the Prokhorov metric.

\begin{proposition}[{\citep[][Thm.~4.2.1]{MR2519075}}]
\label{lowerbounds}
Let $S$ be a computable Polish space.
A probability measure $\mu \in \ProbMeasures(S)$ is computable
if and only if
the measure $\mu(A)$ of a c.e.\ open set $A \subseteq S$ is a c.e.\ real, uniformly in $A$.
\qed
\end{proposition}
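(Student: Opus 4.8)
The plan is to establish the two implications separately. The forward implication ($\mu$ computable $\Rightarrow$ the map $A \mapsto \mu(A)$ is lower semicomputable, uniformly in a name for $A$ as a c.e.\ open set) is essentially bookkeeping with the Prokhorov metric. Fix a computable representation $\{\nu_k\}_{k \in \Nats}$ of $\mu$ as a point of $(\ProbMeasures(S),\delta_P,\cD_{P,S})$, so each $\nu_k$ is a finite rational combination of point masses at ideal points, computable uniformly in $k$, with $\delta_P(\nu_k,\mu) < 2^{-k}$. Every c.e.\ open $U$ is the increasing union of the finite unions $V_N$ of ideal balls coming from an enumeration of its index set, and $\mu(U) = \sup_N \mu(V_N)$, so it suffices to lower-semicompute $\mu(V)$ uniformly for a finite union $V = \bigcup_{i \le n} B(a_i,r_i)$ of ideal balls. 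For rational $\varepsilon > 0$ let $V^{(\varepsilon)} = \bigcup_{i \le n} B(a_i, r_i - \varepsilon)$ be the ``eroded'' set. Whenever $\varepsilon \ge 2^{-k}$, the $2^{-k}$-neighborhood of $V^{(\varepsilon)}$ is contained in $V$, so the Prokhorov inequality gives $\nu_k(V^{(2^{-k})}) \le \mu(V) + 2^{-k}$, i.e.\ $\mu(V) \ge \nu_k(V^{(2^{-k})}) - 2^{-k}$; and $\nu_k(V^{(2^{-k})})$ is a c.e.\ real uniformly, being the sum of the rational atom-weights of $\nu_k$ over those atoms at distance less than $r_i - 2^{-k}$ from some $a_i$ --- a $\Sigma^0_1$ condition, as distances between ideal points are uniformly computable. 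Finally, since $V^{(\varepsilon)} \uparrow V$ as $\varepsilon \downarrow 0$, the reverse Prokhorov inequality shows $\sup_k\bigl(\nu_k(V^{(2^{-k})}) - 2^{-k}\bigr) = \mu(V)$; taking this supremum, then the one over $N$, exhibits $\mu(U)$ as a c.e.\ real uniformly in a name for $U$.

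For the reverse implication ($\mu$ lower semicomputable on c.e.\ opens $\Rightarrow$ $\mu$ computable) I must compute, uniformly in $k$, some $\nu \in \cD_{P,S}$ with $\delta_P(\nu,\mu) < 2^{-k}$. The key observation is a one-sided duality: if $A$ is c.e.\ open then $\mu(A^\Complement) = 1 - \mu(A)$ is \emph{upper} semicomputable, and for an ideal point $a$ and rational $r > 0$ the set $\{x : \delta_S(x,a) > r\}$ is itself c.e.\ open (a standard argument, using that $\delta_S(\cdot,a)$ is computable), so $\mu\{x : \delta_S(x,a) \le r\}$ is upper semicomputable uniformly in $(a,r)$. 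From this I get a pinching step: for an ideal point $a$ and a rational $\rho > 0$, by searching over rational radii $r \in (\rho/2, \rho)$ and comparing the increasing lower bounds on $\mu(B(a,r))$ with the decreasing upper bounds on $\mu\{x : \delta_S(x,a) \le r\}$, I can output some such $r$ together with an arbitrarily good rational approximation to $\mu(B(a,r))$ --- the search terminates because the spheres $\{x : \delta_S(x,a) = r\}$ are pairwise disjoint, so all but finitely many carry mass below any prescribed threshold. Then, for fixed $k$: the radius-$2^{-k-2}$ ideal balls cover $S$, so the measures of the unions $\bigcup_{i < N} B(s_i, 2^{-k-2})$ increase to $1$ and are uniformly c.e., and I can find $N$ with such a union of measure $> 1 - 2^{-k-2}$. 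Using the pinching step I replace each ball by a concentric $B(a_i, r_i)$ with $r_i < 2^{-k}$, boundary sphere of negligible mass, and $\mu(B(a_i,r_i))$ approximable; I disjointify these into the pairwise-disjoint c.e.\ open cells $C_j = B(a_j, r_j) \setminus \bigcup_{i < j}\{x : \delta_S(x,a_i) \le r_i\}$. Since the chosen radii make the relevant spheres $\mu$-null, each $C_j$ agrees up to a $\mu$-null set with an effectively closed set, so $\mu(C_j)$ is both lower and upper semicomputable, hence computable to within $2^{-k}/(2N)$. Setting $\nu = \sum_{j \le N} \hat q_j \, \mbox{\boldmath$\delta$}_{a_j}$ with $\hat q_j$ a rational approximation to $\mu(C_j)$ and one weight nudged so the total is $1$, a direct estimate --- $\nu$ displaces each cell's mass by less than $2^{-k}$, and the cells omit $\mu$-mass only slightly above $2^{-k}$ --- bounds $\delta_P(\nu,\mu)$; running the construction at resolution $k+2$ then yields $\delta_P(\nu,\mu) < 2^{-k}$.

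The forward direction is routine once the erosion trick is in place, so I expect the main obstacle to be the reverse direction, and specifically the two points where two-sided information about $\mu$ must be manufactured from the purely lower-semicomputable hypothesis: the pinching step for ideal balls, and the fact that the disjointified cells $C_j$ --- assembled from open and closed ideal balls --- have computable $\mu$-measure precisely because their boundary spheres are arranged to be $\mu$-negligible. Getting the open/closed combinatorics of these cells right, and checking that the resulting discrete measure really is Prokhorov-close to $\mu$, is where the care will be needed.
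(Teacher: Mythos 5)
The paper does not prove this statement itself; it cites Hoyrup--Rojas, Thm.~4.2.1, so there is no internal proof to compare against. Your argument is, however, a correct and self-contained proof of the equivalence.

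The forward direction, via erosion and the two-sided Prokhorov inequality applied to a finitely-supported rational Cauchy sequence, is clean and correct: $\nu_k(V^{(2^{-k})}) - 2^{-k}$ is a uniformly c.e.\ lower bound on $\mu(V)$, and the sup over $k$ recovers $\mu(V)$ exactly because $V^{(\varepsilon)}\uparrow V$. (You implicitly use the symmetry of the Lévy--Prokhorov metric to get both directions of the inequality; worth a word, since the paper's definition is the asymmetric form.)

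The reverse direction is also right in substance, and the key ideas --- that $\{x : \delta_S(x,a) > r\}$ is c.e.\ open, hence closed-ball masses are upper semicomputable; the pinching search for radii of small boundary mass, which terminates because only finitely many spheres carry mass above any threshold; the disjointification into c.e.\ open cells $C_j$ --- are all correct. But one sentence misstates the mechanism: you write that ``the chosen radii make the relevant spheres $\mu$-null, \ldots\ so $\mu(C_j)$ is both lower and upper semicomputable.'' The pinching step cannot, and does not need to, produce spheres of exactly zero mass; it produces spheres of mass below any prescribed threshold $\eta$, and one cannot in general computably locate an $r$ with a genuinely $\mu$-null sphere. Consequently $\mu(C_j)$ is not literally upper semicomputable: what one has is that $\mu(C_j)$ is c.e., and that the c.e.\ open set $W_j = \{x: \delta(x,a_j) > r_j\}\cup\bigcup_{i<j} B(a_i,r_i)$ (which is disjoint from $C_j$) satisfies $\mu(C_j) \le 1-\mu(W_j) \le \mu(C_j) + N\eta$, so $\mu(C_j)$ is \emph{approximable} to within $N\eta$ once the lower enumeration of $\mu(C_j)$ and of $\mu(W_j)$ has converged. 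Since you must choose the pinching threshold $\eta$ after knowing $N$ (e.g.\ $\eta < 2^{-k}/(2N^2)$) anyway, this is exactly what your later phrase ``computable to within $2^{-k}/(2N)$'' needs; but ``$\mu$-null'' and ``upper semicomputable'' should be replaced by ``mass $<\eta$'' and the two-sided sandwich just described. With that repair the Prokhorov estimate at the end goes through as you sketch it.
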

Note that the measure $\Pr$ on $\Cantor$ is a computable probability measure.

We can also characterize the class of computable probability measures in terms of the uniform computability of the integrals of bounded continuous functions:

\begin{proposition}[{\citep[][Cor.~4.3.1]{MR2519075}}]\label{intcomp}
Let $S$ be a computable Polish space, 
let $\mu$ be a probability measure on $S$,
and let $\mathcal F$ be the set of computable functions from $S$ to $\Reals^+$.
Then $\mu$ is computable if and only if $\int f\, d \mu$ is a c.e.\ real, uniformly in $f \in \mathcal F$.
\qed
\end{proposition}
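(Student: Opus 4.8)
The plan is to prove the two implications separately, in each case reducing to the characterization of computable probability measures in terms of c.e.\ open sets supplied by Proposition~\ref{lowerbounds}.

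For the direction ``$\mu$ computable $\Rightarrow$ integrals c.e.\ uniformly,'' suppose $\mu$ is computable and let $f \in \mathcal F$, presented by a witness to its computability on all of $S$. First I would observe that, directly from the definition of computability on $S$, the preimage $\{f > q\} = f^{-1}[(q,\infty)]$ is a c.e.\ open subset of $S$, uniformly in the rational $q > 0$ and in the witness for $f$; hence by Proposition~\ref{lowerbounds} the real $\mu\{f > q\}$ is c.e., uniformly in $q$. Then I would invoke the standard dyadic layer-cake approximation: the functions $s_m \defas \sum_{k=1}^{m 2^m} 2^{-m}\,\Ind_{\{f > k 2^{-m}\}}$ form a nondecreasing sequence with $s_m \le f$ and $\sup_m s_m = f$ pointwise, so by the monotone convergence theorem $\int f\,d\mu = \sup_m \int s_m\,d\mu$. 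Each $\int s_m\,d\mu = \sum_{k=1}^{m 2^m} 2^{-m}\,\mu\{f > k 2^{-m}\}$ is a finite sum of c.e.\ reals, hence itself a c.e.\ real, uniformly in $m$ and in the witness for $f$; and the supremum of a computably enumerated, nondecreasing sequence of uniformly c.e.\ reals is again a c.e.\ real. This exhibits $\int f\,d\mu$ as a c.e.\ real, uniformly in $f \in \mathcal F$.

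For the converse, suppose $\int f\,d\mu$ is a c.e.\ real uniformly in $f \in \mathcal F$. By Proposition~\ref{lowerbounds} it suffices to show that $\mu(A)$ is a c.e.\ real, uniformly in a c.e.\ index for an arbitrary c.e.\ open set $A = \bigcup_{i \in E} B(s_i, q_i)$. The key device is a family of Urysohn-type bump functions: for an ideal point $s$ and rationals $0 < q' < q$, put $\varphi_{s,q,q'}(x) \defas \max\bigl(0,\, \min(1,\, (q - \delta(s,x))/(q - q'))\bigr)$, which is a total computable function of $x$ (uniformly in $s, q, q'$, since $\delta(s,\cdot)$ is computable on $S$), takes values in $[0,1] \subseteq \Reals^+$, vanishes off $B(s,q)$, and is identically $1$ on $B(s,q')$. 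For a finite $F \subseteq E$ and rationals $q_i' < q_i$ for $i \in F$, the function $g \defas \max_{i \in F} \varphi_{s_i, q_i, q_i'}$ lies in $\mathcal F$ (a finite maximum of total computable functions is total computable) and satisfies $\Ind_{\bigcup_{i \in F} B(s_i, q_i')} \le g \le \Ind_A$; hence $\int g\,d\mu$ is a c.e.\ real by hypothesis, uniformly in the finite data $(F, (q_i')_{i \in F})$, and $\int g\,d\mu \le \mu(A)$. Conversely, given $\varepsilon > 0$, continuity of $\mu$ from below lets me choose $F$ finite with $\mu\bigl(\bigcup_{i \in F} B(s_i, q_i)\bigr) > \mu(A) - \varepsilon/2$ and then rationals $q_i' \uparrow q_i$ with $\mu\bigl(\bigcup_{i \in F} B(s_i, q_i')\bigr) > \mu(A) - \varepsilon$, so that $\int g\,d\mu \ge \mu\bigl(\bigcup_{i \in F} B(s_i, q_i')\bigr) > \mu(A) - \varepsilon$. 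Therefore $\mu(A) = \sup_{(F, (q_i'))} \int g\,d\mu$, and dovetailing the enumerations of rationals below the various $\int g\,d\mu$ over all finite choices $(F, (q_i'))$ drawn from the enumeration of $E$ enumerates exactly the rationals below $\mu(A)$, uniformly in the c.e.\ index for $A$; by Proposition~\ref{lowerbounds}, $\mu$ is computable.

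Setting aside the routine monotone-convergence bookkeeping, I expect the converse direction to be the main obstacle: one must approximate $\mu(A)$ from below using integrals of \emph{total} computable functions while knowing only a c.e.\ enumeration of ideal balls whose union is $A$, and it is precisely the bump functions $\varphi_{s,q,q'}$ together with two applications of continuity from below (over the finite set $F$ of balls and over their radii) that make this go through while keeping every estimate uniform.
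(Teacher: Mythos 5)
The paper states this proposition only by citation to Hoyrup and Rojas (their Cor.~4.3.1) and gives no in-paper proof, so there is no argument of the paper's own to compare against. Your proof is correct, and it is the standard argument one would expect: the forward direction reduces integration to measures of superlevel sets $\{f>q\}$ (which are c.e.\ open, uniformly, by computability of $f$) via the dyadic layer-cake sums, and then appeals to Proposition~\ref{lowerbounds} and closure of c.e.\ reals under finite sums and monotone suprema; the converse direction approximates $\Ind_A$ from below by finite maxima of the Urysohn-type bumps $\varphi_{s,q,q'}$, uses two applications of continuity from below (over the finite subfamily of balls and over their radii) to show the supremum of $\int g\,d\mu$ equals $\mu(A)$, and dovetails to get $\mu(A)$ c.e.\ uniformly, then invokes Proposition~\ref{lowerbounds} again. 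The inequalities $\Ind_{\bigcup_{i\in F}B(s_i,q_i')}\le g\le\Ind_A$, the total computability of the bumps (via computability of $\delta(s,\cdot)$ for ideal $s$), and the uniformity claims all check out. The only point worth flagging explicitly is that the argument relies on reading $\Reals^+$ as the \emph{nonnegative} reals $[0,\infty)$ so that the $[0,1]$-valued bumps lie in $\mathcal F$; this is consistent with the paper's usage (see the proof of Corollary~\ref{intcomp2}), so it is not a gap. Incidentally, the bump functions you construct are essentially the functions $g_{u,r,\epsilon}$ that the paper itself introduces in Definition~\ref{Eideal} as the ideal points for $L^1$, so your converse direction is very much in the spirit of the cited Hoyrup--Rojas development.
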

\begin{corollary}
\label{intcomp2}
Let $S$ be a computable Polish space, 
let $\mu$ be a probability measure on $S$,
and let $\mathcal F$ be the set of computable functions from $S$ to $[0,1]$.
Then $\mu$ is computable if and only if $\int f\, d \mu$ is computable, uniformly in $f \in \mathcal F$.
\end{corollary}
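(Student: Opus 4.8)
The plan is to deduce the corollary from Proposition~\ref{intcomp} in both directions, using the fact that $\mu$ is a \emph{probability} measure in an essential way. For the forward direction, suppose $\mu$ is computable and let $f\colon S\to[0,1]$ be computable. Since $[0,1]\subseteq\Reals^+$, Proposition~\ref{intcomp} already gives that $\int f\,d\mu$ is a c.e.\ real, uniformly in a code for $f$; it remains to check that $\int f\,d\mu$ is also upper semicomputable, uniformly. For this, note that $1-f\colon S\to[0,1]$ is again computable, uniformly in a code for $f$, and that $\int(1-f)\,d\mu = \mu(S)-\int f\,d\mu = 1-\int f\,d\mu$, since $\mu(S)=1$ and both integrands are bounded. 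Applying Proposition~\ref{intcomp} to $1-f$ shows $\int(1-f)\,d\mu$ is a c.e.\ real uniformly, hence $\int f\,d\mu = 1-\int(1-f)\,d\mu$ is upper semicomputable uniformly. Being uniformly both lower and upper semicomputable, $\int f\,d\mu$ is computable, uniformly in $f\in\mathcal F$.

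For the reverse direction, suppose $\int f\,d\mu$ is computable (in particular a c.e.\ real) uniformly in $f$ ranging over the computable functions $S\to[0,1]$. To invoke Proposition~\ref{intcomp} we must establish the same uniform c.e.-ness for every computable $g\colon S\to\Reals^+$, not just bounded ones. Given such a $g$, for each integer $n\ge 1$ put $g_n\defas\min(g,n)$; then $g_n/n\colon S\to[0,1]$ is computable, uniformly in $n$ and in a code for $g$ (minimum with a constant and division by a positive integer preserve uniform computability). By hypothesis $\int (g_n/n)\,d\mu$ is computable uniformly, so $\int g_n\,d\mu = n\int(g_n/n)\,d\mu$ is computable uniformly in $n$ and in a code for $g$. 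Since $g_n\uparrow g$ pointwise, the monotone convergence theorem gives $\int g\,d\mu = \sup_{n\ge 1}\int g_n\,d\mu$, and the supremum of a uniformly computable sequence of reals is a c.e.\ real; hence $\int g\,d\mu$ is a c.e.\ real, uniformly in a code for $g$. Proposition~\ref{intcomp} then yields that $\mu$ is computable.

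No genuine obstacle arises: the argument is a routine two-sided reduction to Proposition~\ref{intcomp}. The only points requiring care are the bookkeeping of uniformity in the codes for $f$ (respectively $g$), and the truncation step, where one must verify that $\min(g,n)$ and $g_n/n$ remain computable uniformly and that passing to the supremum in monotone convergence preserves \emph{uniform c.e.-ness} (which is all that is needed on the $\Reals^+$ side) rather than full computability.
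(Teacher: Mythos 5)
Your forward direction is essentially identical to the paper's proof: apply Proposition~\ref{intcomp} to both $f$ and $1-f$ (both nonnegative, the latter using $\mu(S)=1$) to get the integral of $f$ lower and upper semicomputable. The paper in fact \emph{only} proves this direction, leaving the converse implicit. You spell out the converse, and your concern there is well-placed: the hypothesis of the corollary quantifies over computable $[0,1]$-valued functions, whereas invoking the ``if'' half of Proposition~\ref{intcomp} requires uniform c.e.-ness of $\int g\,d\mu$ over all computable $g\colon S\to\Reals^+$, so one really does need a reduction. Your truncation-and-rescaling step ($g_n = \min(g,n)$, then $g_n/n\in[0,1]$, then $\int g\,d\mu = \sup_n n\int(g_n/n)\,d\mu$ is c.e.\ by monotone convergence plus the fact that a uniformly computable increasing sequence has c.e.\ supremum) does this correctly and uniformly. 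An equally short alternative would go through Proposition~\ref{lowerbounds} by approximating the indicator of a c.e.\ open set from below by uniformly computable $[0,1]$-valued functions, but your route via Proposition~\ref{intcomp} is just as good. In short: same proof as the paper for the stated half, and a clean, correct completion of the half the paper leaves to the reader.
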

\begin{proof}
	First observe that both $f$ and $1-f$ are non-negative functions. 
Therefore,
	by Proposition~\ref{intcomp},
	the reals $\int f\, d\mu$ and $\int (1-f)\, d \mu$ are both c.e., and hence the real $\int f\, d \mu$ is computable. 
\end{proof}

Having explained the computability of probability measures in terms of integration, we now relate it to the computability of random variables defined on computable Polish spaces.

\begin{definition}[Computable probability space {\citep[][Def.~2.4.1]{MR2558734}}]
A \defn{computable probability space} is a pair $(S,\mu)$ where $S$ is a computable Polish space and $\mu$ is a computable probability measure on $S$.
\end{definition}


The distribution of a $\Pr$-almost computable random variable in a computable Polish space is computable.

\begin{proposition}[{\citep[][Prop.~2.4.2]{MR2558734}}]
\label{rvtodist}
Let $\rv X$ be a $\Pr$-almost computable random variable in a computable Polish space $S$.
Then its distribution
is a computable point in the computable Polish space $\ProbMeasures(S)$.
\qed
\end{proposition}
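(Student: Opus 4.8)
The plan is to verify the criterion of Proposition~\ref{lowerbounds}: it suffices to show that $\Pr_{\rv X}(A)$ is a c.e.\ real, uniformly in a c.e.\ open set $A \subseteq S$. Since $\rv X$ is $\Pr$-almost computable, I would fix a $\Pr$-measure one set $R \subseteq \Cantor$ on which $\rv X$ is computable, together with a witness $\{U_n\}_{n\in\Nats}$ to its computability on $R$, so that $\rv X^{-1}[B_n] \cap R = U_n \cap R$ for every ideal ball $B_n$ of $S$ (all data being fixed in advance, so that everything below is uniform relative to it).

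First I would reduce an arbitrary c.e.\ open $A$ to the ideal balls. Writing $A = \bigcup_{i\in E} B_i$ for a c.e.\ set $E \subseteq \Nats$, set $U \defas \bigcup_{i \in E} U_i$. Because $\{U_n\}_{n\in\Nats}$ is a computable sequence of c.e.\ open sets and $E$ is c.e., the set $U$ is again a c.e.\ open subset of $\Cantor$, and a c.e.\ index for $U$ can be computed uniformly from one for $A$. Intersecting with $R$ gives $\rv X^{-1}[A] \cap R = \bigcup_{i\in E}(\rv X^{-1}[B_i]\cap R) = \bigcup_{i\in E}(U_i \cap R) = U \cap R$.

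Next I would dispatch the almost-everywhere discrepancy. Since $R$ carries full measure and $\rv X$ is measurable (so $\rv X^{-1}[A]$ is an event), we have $\Pr_{\rv X}(A) = \Pr(\rv X^{-1}[A]) = \Pr(\rv X^{-1}[A] \cap R) = \Pr(U \cap R) = \Pr(U)$, where the first and last equalities hold because the symmetric difference of each set with its intersection with $R$ is $\Pr$-null. Finally, $\Pr$ is a computable probability measure on $\Cantor$, so by Proposition~\ref{lowerbounds} applied to $\Pr$ the quantity $\Pr(U)$ is a c.e.\ real uniformly in the c.e.\ open set $U$; composing with the uniform passage $A \mapsto U$ shows $\Pr_{\rv X}(A) = \Pr(U)$ is a c.e.\ real uniformly in $A$, and Proposition~\ref{lowerbounds} then yields that $\Pr_{\rv X}$ is a computable point of $\ProbMeasures(S)$.

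I do not expect a serious obstacle here; the one point requiring care is precisely that $\rv X$ need only be computable on the measure one set $R$, so that the preimage $\rv X^{-1}[A]$ and the c.e.\ open set $U$ agree only on $R$ rather than globally. Since $R$ has $\Pr$-measure one this costs nothing in measure (indeed, by the earlier remark one may even take $R$ to be an effective $G_\delta$ set), and in the special case where $\rv X$ is computable on all of $\Cantor$ one simply has $\rv X^{-1}[A] = U$ outright.
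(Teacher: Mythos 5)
Your proof is correct. The paper itself does not prove this proposition but instead cites it to Galatolo--Hoyrup--Rojas (Prop.~2.4.2), so there is no in-paper argument to compare against; however, your argument is precisely the standard one that appears in that reference and its antecedents (e.g.\ Hoyrup--Rojas). In particular, you correctly invoke the characterization of Proposition~\ref{lowerbounds}, correctly pass from an arbitrary c.e.\ open set $A$ to a c.e.\ open $U\subseteq\Cantor$ via the fixed witness $\{U_n\}$ and the identity $\rv X^{-1}[A]\cap R = U\cap R$, and correctly observe that the $\Pr$-null discrepancy outside $R$ does not affect measures, giving $\Pr_{\rv X}(A)=\Pr(U)$. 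The uniformity claims are also handled properly: the passage $A\mapsto U$ is effective because $E$ is given by a c.e.\ index and $\{U_n\}$ is a fixed computable sequence, and $\Pr$ is a computable measure on $\Cantor$, so $\Pr(U)$ is a c.e.\ real uniformly in $U$. No gaps.
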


On the other hand, given a computable measure,
there is a $\Pr$-almost computable random variable with that distribution.

\begin{proposition}[{\citep[][Thm.~5.1.1]{MR2519075}}]
\label{disttorv}
Let $\mu$ be a computable probability measure on a computable Polish space $S$.
Then there is a $\Pr$-almost computable random variable in $S$ whose distribution is $\mu$.
\qed
\end{proposition}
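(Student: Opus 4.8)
\textbf{The plan} is to effectivize the classical presentation of a Borel probability measure on a Polish space as a pushforward of the uniform measure: I will build $\rv X$ so that $\rv X(\varpi)$ is obtained by using the bit tape $\varpi$ to descend, with the correct branching probabilities, through a sequence of ever‑finer measurable partitions of $S$, and then taking the point the nested cells converge to. Concretely, I would first fix, uniformly in $n$, a countable Borel partition $\mathcal P_n = \{C^n_j\}_j$ of $S$ into sets of diameter less than $2^{-n}$, with $\mathcal P_{n+1}$ refining $\mathcal P_n$ and with $\mu(C^n_j)$ computable uniformly in $n$ and $j$. The one step here that is not bookkeeping is obtaining such partitions with \emph{computable} (not merely c.e.) cell measures; this rests on the standard fact that a computable probability measure on a computable Polish space admits a uniformly computable basis of ideal balls $B(s_i,q_j)$ whose topological boundaries are $\mu$‑null, so that $\mu(B(s_i,q_j))$ is computable, uniformly (see \citep{MR2519075} and the references therein). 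Given such a basis, the partitions come from the usual disjointification: to refine a cell $C'$ of $\mathcal P_{n-1}$, list the basis balls of radius $< 2^{-n-1}$ that meet $C'$ as $B_0,B_1,\dots$ and take the new cells to be the nonempty sets $\bigl(B_i \setminus \bigcup_{k<i} B_k\bigr)\cap C'$. Every set and boundary that appears is a finite Boolean combination of these $\mu$‑continuity balls, so all relevant measures stay computable and the cell measures at each level sum to $1$.

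\textbf{Defining $\rv X$.} A branch of the partition tree reaches a cell $C\in\mathcal P_n$ with probability exactly $\mu(C)$ by construction, so $\Pr$‑almost surely we only pass through positive‑measure (hence nonempty) cells; fix an ideal point $c_C\in C$ for each such cell. Given $\varpi$, I would dovetail $\varpi$ into countably many disjoint infinite subsequences, one per level, and at level $n$ --- having reached a cell $C$ of $\mathcal P_{n-1}$ --- sample a child $C'\in\mathcal P_n$ with probability $\mu(C')/\mu(C)$ by extracting a uniform $[0,1]$ draw from the level‑$n$ subsequence and comparing it to the (uniformly computable) partial sums of these conditional probabilities; the comparison succeeds off a $\Pr$‑null set (the tapes whose level‑$n$ draw lands exactly on some partial sum). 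The chosen cells $C_0\supseteq C_1\supseteq\cdots$ have diameters $< 2^{-n}$, so their centers form an effectively Cauchy sequence; set $\rv X(\varpi):=\lim_n c_{C_n}$, and set $\rv X$ to $s_0$, say, on the excluded $\Pr$‑null (Borel) set. By construction $\rv X$ is computable on a $\Pr$‑measure‑one set, i.e.\ $\Pr$‑almost computable.

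\textbf{Identifying the distribution.} It suffices to check $\int f\,d\Pr_{\rv X}=\int f\,d\mu$ for every bounded Lipschitz $f\colon S\to\Reals$, since two Borel probability measures on a metric space agreeing on all such $f$ coincide. Writing $c_{C_n}$ for the center of the level‑$n$ cell reached by the branch and using that this cell equals $C\in\mathcal P_n$ with probability $\mu(C)$, we get $\EE[f(c_{C_n})]=\sum_{C\in\mathcal P_n}\mu(C)\,f(c_C)$; since $c_{C_n}\to\rv X$ pointwise $\Pr$‑a.s.\ and $f$ is bounded and continuous, dominated convergence gives $\int f\,d\Pr_{\rv X}=\lim_n\sum_{C\in\mathcal P_n}\mu(C)\,f(c_C)$. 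Finally $\bigl|\sum_{C\in\mathcal P_n}\mu(C)f(c_C)-\int f\,d\mu\bigr|\le\sum_{C\in\mathcal P_n}\int_C|f(c_C)-f(x)|\,\mu(dx)\le\mathrm{Lip}(f)\,2^{-n}\to 0$, so $\int f\,d\Pr_{\rv X}=\int f\,d\mu$ as required.

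\textbf{Main obstacle.} The real content is the first paragraph --- securing a uniformly computable basis of $\mu$‑continuity balls and thereby keeping every probability that enters the branching \emph{computable} rather than merely semicomputable; once the partitions are in hand the rest is routine. One can instead bypass explicit partitions: take a fast Prokhorov‑approximating computable sequence $\nu_k\to\mu$ of finitely supported rational measures, couple consecutive $\nu_k,\nu_{k+1}$ near‑optimally by solving a rational linear program, realize the resulting ``chain'' as uniformly $\Pr$‑almost computable random variables $\rv X_k$ with $\sum_k\Pr\{\delta_S(\rv X_k,\rv X_{k+1})\ge 2^{-k}\}<\infty$, and set $\rv X=\lim_k\rv X_k$; Borel--Cantelli gives the limit $\Pr$‑a.s., weak convergence gives $\Pr_{\rv X}=\lim_k\nu_k=\mu$, and $\Pr$‑almost computability of $\rv X$ follows from closure of $\Pr$‑almost computable functions under effectively‑almost‑sure limits. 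This route trades the continuity‑basis lemma for that closure property.
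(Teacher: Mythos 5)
The paper does not prove Proposition~\ref{disttorv} itself; it simply cites Theorem~5.1.1 of Hoyrup and Rojas, and your partition construction is essentially that argument: use a $\mu$-almost decidable basis (the paper's Lemma~\ref{almdecbasis} together with Lemma~\ref{almostdec-comp} is precisely the ``standard fact'' your first paragraph leans on) to build a refining tree of Borel cells of shrinking diameter with uniformly computable $\mu$-measures, descend the tree by extracting independent uniform draws from the bit tape, and take the limit of the centers, then identify the distribution via bounded Lipschitz test functions. One small repair is needed: ``fix an ideal point $c_C\in C$'' cannot be taken literally, because $C=\bigl(B_i\setminus\bigcup_{k<i}B_k\bigr)\cap C'$ is a non-open Borel set that need not contain any ideal point; what you want, and in fact use later when you refer to ``their centers,'' is $c_C=d_i$, the ideal center of the ball $B_i$ of radius $<2^{-n-1}$ that covers $C$, for which $\delta(c_C,x)<2^{-n}$ still holds for every $x\in C$, so the Cauchy and Lipschitz estimates go through unchanged. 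Your alternative sketch via Prokhorov couplings is the less reliable of the two: Borel--Cantelli from $\sum_k\Pr\{\delta_S(\rv X_k,\rv X_{k+1})\ge 2^{-k}\}<\infty$ only yields that the deviation bound holds \emph{eventually}, with a threshold that depends noncomputably on $\varpi$, so the resulting limit is layerwise/$L^1$-computable in the sense of Lemma~\ref{layer} but is not obviously computable on a single measure-one set; the ``closure under effectively-a.s.\ limits'' you invoke is not available in the form you need without building in uniform-in-$\varpi$ control on the tail, which is exactly what the nested-partition construction supplies for free. Keep the partition proof as the primary one.
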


In summary,
the computable probability measures 
on a computable Polish space 
are precisely
the distributions of $\Pr$-almost computable random variables in that space. For this result in a more general setting, see \citep[][Prop.~4.3]{MR2351942}.

Further,
if $\mu$ is a computable probability measure and
and $f$ is computable on a $\mu$-measure one set,
then the pushforward $\mu \circ f^{-1}$ is a computable distribution.
This fact, along with 
Proposition~\ref{disttorv}, shows that we have
lost no generality in taking
$(\Cantor, \BasicEvents, \Pr)$ to be our basic probability space.

All of the standard distributions (e.g., normal, uniform, geometric, exponential) found in probability textbooks, and then all the transformations of these distributions by $\Pr$-almost computable functions, are easily shown to be computable distributions.

\subsection{Weaker Notions of Computability for Functions on Probability Spaces}
\label{l1funcs}

Another important class of functions on a probability space is the class of $L^1$-computable functions.
For more details, including some of the history of $L^1$-computability, see Hoyrup and Rojas \citeyearpar[][{\S}3.1]{HRlayer1} and Miyabe \citeyearpar{MR3123251}.
\begin{definition}[{The metric space of $L^1(\mu)$ functions \citep[][{\S}3.1]{HRlayer1}}]
Let $\mu$ be a probability measure on a Polish space $S$, and
let $\mathcal{F}$ be the set of $\mu$-integrable functions from $S$ to $\Reals$.
Then $\delta(f, g) \defas \int|f -g|\,d\mu$ is a metric on the quotient space of
$\mathcal{F}$ defined by the equivalence relation $f\sim g$ iff $\int|f -g|\,d\mu = 0$.
This metric space is called the \defn{space of $L^1(\mu)$
functions on $S$}, and we will often speak interchangeably of a $\mu$-integrable function
$S\to\Reals$ and its equivalence class. 
\end{definition}
We will make use of the following set of $L^1$ functions.
\begin{definition}[{Ideal points for $L^1$ {\citep[][{\S}2]{MR2159646}}}]
\label{Eideal}
Let $(S, \delta, \cD)$ be a computable Polish space.
Define $\Lideal$ to be the smallest set of functions containing
the constant function $1$ and the 
functions $\{ g_{u, r, 1/n} \st u\in S,\,  r \in \Rationals,\, n \ge 1\}$,
where
\[
g_{u, r, \epsilon}(x) \defas \max\bigl (0, \, 1 - \max(0,\, \delta(x, u) - r) / \epsilon\bigr ),
\]
and closed under $\max$, $\min$, and rational linear combinations.
\end{definition}
Such functions can be thought of as continuous
analogues of step functions having a finite number of steps, each step of
which corresponds to a basic open ball with rational radius and ideal
center.
\begin{lemma}[{\citep[][Prop.~3]{HRlayer1}}]
\label{EidealDense}
Let $\mu$ be a computable probability measure on a computable Polish space
$(S,\delta, \cD)$.
The set $\Lideal$ is dense in the $L^1(\mu)$ functions on $S$,
and the distances between points in $\Lideal$ are computable under the
standard enumeration, making this space into a computable Polish space.
\qed
\end{lemma}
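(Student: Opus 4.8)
The plan is to establish the two assertions in turn: first that $\Lideal$ is dense in $L^1(\mu)$ — this uses only that $\mu$ is a Borel probability measure — and then that, under the natural enumeration of $\Lideal$, the resulting metric space is a computable Polish space, which is where computability of $\mu$ enters (via Corollary~\ref{intcomp2}).

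For density, let $K$ denote the closure of $\Lideal$ in the $L^1(\mu)$ metric. Because $\Lideal$ is closed under rational linear combinations, $\max$, and $\min$, and each of these operations is continuous for the $L^1(\mu)$ metric (e.g.\ $\int\lvert\max(f,g)-\max(f',g')\rvert\,d\mu\le\int\lvert f-f'\rvert\,d\mu+\int\lvert g-g'\rvert\,d\mu$), the set $K$ is a closed real-linear subspace of $L^1(\mu)$ that is itself closed under finite $\max$ and $\min$. I would then argue in three steps that $\Ind_A\in K$ for every $A\in\Borel_S$. (i) For an ideal ball center $s_i\in\cD$ and rational $q>0$, the functions $g_{s_i,q,1/n}\in\Lideal$ take values in $[0,1]$ and decrease pointwise to $\Ind_{\overline B(s_i,q)}$ as $n\to\infty$, so by dominated convergence (using that $\mu$ is a probability measure) $\Ind_{\overline B(s_i,q)}\in K$. (ii) Since $\Ind_{F_1\cup\dots\cup F_N}=\max(\Ind_{F_1},\dots,\Ind_{F_N})$ and $K$ is closed under finite $\max$, $\Ind_F\in K$ for every finite union $F$ of closed ideal balls; and since every open $U\subseteq S$ is a countable union of closed ideal balls (an open ideal ball of rational radius $q$ being the union of the closed ideal balls with the same center and rational radius less than $q$), dominated convergence together with closedness of $K$ gives $\Ind_U\in K$ for every open $U$. (iii) The family $\{A\in\Borel_S:\Ind_A\in K\}$ contains $S$, is closed under complements (since $1\in\Lideal$ and $K$ is a linear subspace) and under countable disjoint unions (dominated convergence plus closedness of $K$), hence is a $\lambda$-system; as it contains the $\pi$-system of open sets, Dynkin's theorem shows it equals $\Borel_S$. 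Thus $K$ contains every Borel indicator; since the $L^1$-closed linear span of the Borel indicators is all of $L^1(\mu)$ (simple functions are dense), $K=L^1(\mu)$, which is the asserted density.

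For the computable Polish space structure, fix the standard enumeration of $\Lideal$ in which an element is coded by a finite expression tree whose leaves carry the symbol $1$ or a symbol for some generator $g_{s_i,q_j,1/n}$ and whose internal nodes are $\max$, $\min$, $+$, or multiplication by a fixed rational. From such a code one computes, uniformly, (a) a rational bound $M$ on the supremum norm of the function it denotes (each leaf is bounded by $1$, and bounds transform under the operations in the obvious way), and (b) a program computing that function as a total map $S\to\Reals$ (the generators are computable because $\delta$ and the ideal points are, and $\max$, $\min$, $+$, and rational scaling are computable operations on functions). Given codes for $f,g\in\Lideal$, the function $\lvert f-g\rvert=\max(f-g,\,g-f)$ again lies in $\Lideal$ and carries a uniformly computable sup-bound $M$; hence $\lvert f-g\rvert/M$ is a uniformly computable map $S\to[0,1]$, so by Corollary~\ref{intcomp2} the real $\int(\lvert f-g\rvert/M)\,d\mu$ is computable uniformly in the two codes, and therefore so is $\delta(f,g)=M\int(\lvert f-g\rvert/M)\,d\mu$. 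Combined with the classical completeness of $L^1(\mu)$ and the density established above, this verifies the three conditions defining a computable Polish space.

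The step I expect to be the real work is the density argument: the generators $g_{s_i,q_j,1/n}$ only yield indicators of single closed ideal balls directly, and one must pass through the closure of $\Lideal$ under $\max$ (for finite unions) and under $L^1$-limits — and then a Dynkin-class argument (equivalently, outer regularity of $\mu$) — before obtaining a dense set. The computable structure is then routine, given the uniform computability of the generators and Corollary~\ref{intcomp2}.
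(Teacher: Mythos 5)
The paper does not actually prove this lemma; it cites it directly from Hoyrup and Rojas's Proposition~3 and closes the statement with a $\qed$, so there is no in-paper argument to compare yours against. Judged on its own terms, your proof is correct and supplies the missing details in the natural way. The density half is carried by two facts you correctly establish: that the $L^1$-closure $K$ of $\Lideal$ remains a real-linear subspace closed under finite $\max$ and $\min$ (using the $L^1$-Lipschitz estimate for $\max$), and that the generators $g_{s_i,q,1/n}$ decrease pointwise and hence in $L^1$ to $\Ind_{\overline B(s_i,q)}$. From there the passage to finite unions of closed ideal balls via $\max$, to open sets via increasing countable unions of such closed balls, and to all Borel sets via a $\pi$--$\lambda$ argument is sound, and the conclusion follows because simple functions are dense in $L^1(\mu)$. (An equivalent organization would invoke outer regularity of Borel probability measures on Polish spaces rather than Dynkin's theorem, but nothing is gained.) The computability half is also right: each element of $\Lideal$ carries a uniformly computable sup-bound read off from its expression tree, $|f-g|=\max(f-g,g-f)$ stays in $\Lideal$, and Corollary~\ref{intcomp2} then makes $\delta(f,g)=\int|f-g|\,d\mu$ a computable real uniformly in the two codes; completeness of $L^1(\mu)$ is classical and density is what you just proved. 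The only cosmetic point worth flagging is that in Definition~\ref{Eideal} as printed the generators are indexed by $u\in S$, which would be uncountable; you correctly read this as $u\in\cD$, which is what a "standard enumeration" presupposes and what the computability of $\delta(\cdot,u)$ requires.
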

We say that an $L^1(\mu)$ function on a computable Polish space $S$ is
\defn{$L^1(\mu)$-computable} when it is a computable point in the
$L^1(\mu)$ functions on $S$.

\begin{lemma}[{{\citet[][Thm.~4 Claim~2 and Thm.~5 Claim~2]{HRlayer1}}}]
\label{layer}
Let $(S,\mu)$ be a computable probability space and let $T$ be a computable Polish space.
A function $f\colon S \to T$ is $L^1(\mu)$-computable if and only if
$\int f\, d \mu$ is a computable real and
for each $r \in \Nats$, the function
$f$ is computable on some set of $\Pr_{\rv X}$-measure at least $1-2^{-r}$, uniformly in $r$.
\qed
\end{lemma}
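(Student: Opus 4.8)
The plan is to prove the two implications separately. The statement involves $\int f\,d\mu$, so I take $f$ to be real-valued (the half of the equivalence asserting that $f$ is computable on a set of $\mu$-measure at least $1-2^{-r}$, uniformly in $r$, is meaningful and goes through unchanged for an arbitrary computable Polish target $T$, so nothing is lost). Write (A) for ``$\int f\,d\mu$ is a computable real'' and (B) for ``for each $r\in\Nats$, $f$ is computable on a set of $\mu$-measure at least $1-2^{-r}$, uniformly in $r$'', so the goal is to show $f$ is $L^1(\mu)$-computable iff (A) and (B) both hold. Throughout I would freely use: $\mu$ is computable; $\int h\,d\mu$ is computable uniformly over bounded computable $h\colon S\to\Reals$ (Corollary~\ref{intcomp2}, after an affine rescaling into $[0,1]$); $\Lideal$ is a dense set of \emph{bounded} computable functions making $L^1(\mu)$ a computable Polish space (Definition~\ref{Eideal}, Lemma~\ref{EidealDense}); and any set on which $f$ is computable may be taken to be an effective $G_\delta$ set (the Remark on effective $G_\delta$ sets following Proposition~\ref{DER-theorem}).

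For the forward implication, fix a computable representation $(g_n)_n$ of $f$ by ideal points of $\Lideal$ with $\int|f-g_n|\,d\mu<2^{-n}$. Each $\int g_n\,d\mu$ is then computable uniformly in $n$, and $\bigl|\int f\,d\mu-\int g_n\,d\mu\bigr|\le\int|f-g_n|\,d\mu<2^{-n}$, giving (A). For (B), first thin $(g_n)$ once to a subsequence $(h_k)$ whose consecutive $L^1$-distances $a_k\defas\int|h_{k+1}-h_k|\,d\mu$ decay fast (say $a_k\le 4^{-k}$). Given $r$, choose positive rationals $t_{k,r}$, computable in $k,r$, with $\sum_k t_{k,r}$ finite and $\sum_k a_k/t_{k,r}\le 2^{-r}$; by Markov's inequality $\mu\{x:|h_{k+1}(x)-h_k(x)|>t_{k,r}\}\le a_k/t_{k,r}$, so $R_r\defas\{x:|h_{k+1}(x)-h_k(x)|\le t_{k,r}\text{ for all }k\}$ has $\mu(R_r)\ge 1-2^{-r}$. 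On $R_r$ the tails $\sum_{k\ge N}|h_{k+1}(x)-h_k(x)|$ are bounded by $\sum_{k\ge N}t_{k,r}$, so $(h_k)$ converges on $R_r$ uniformly at a rate computable from $r$ to a version $\tilde f$ of $f$; since each $h_k$ is computable, $\tilde f$ is computable on $R_r$, uniformly in $r$, establishing (B).

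For the converse, assume (A) and (B), and (after intersecting finitely many of the given sets) fix an increasing family of effective $G_\delta$ sets $R_r$ with $\mu(R_r)\ge 1-2^{-r}$ on which $f$ is computable, uniformly in $r$. I would proceed in two stages. \emph{Stage 1 (bounded case).} If $g\colon S\to\Reals$ is bounded and computable on each $R_r$ uniformly in $r$, then $g$ is $L^1(\mu)$-computable and $\int g\,d\mu$ is computable (uniformly over such $g$): one approximates $g$ directly in $L^1(\mu)$ by rational linear combinations of the bump functions generating $\Lideal$, the only data needed being the $\mu$-measures of preimages under $g$ of rational intervals, which can be computed up to the $2^{-r}$ discrepancy between $R_r$ and $S$, and that slack is absorbed into the target tolerance. \emph{Stage 2 (general case).} Apply Stage 1 to the truncations $\psi_M\defas f\cdot\phi_M(f)$, where $\phi_M\colon\Reals\to[0,1]$ is a fixed computable bump equal to $1$ on $[-M,M]$ and to $0$ off $[-M-1,M+1]$; each $\psi_M$ is bounded and, as a composition, computable on each $R_r$, so $\psi_M$ is $L^1(\mu)$-computable and $\int|\psi_M|\,d\mu$ is computable, uniformly in $M$. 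Since $\int|\psi_M|\,d\mu$ increases to $\int|f|\,d\mu$ and $\int f\,d\mu$ is given, one extracts an effective modulus of integrability, i.e.\ a computable $k\mapsto M_k$ with $\int_{\{|f|>M_k\}}|f|\,d\mu<2^{-k}$; then $f=\lim_k\psi_{M_k}$ is a computable limit in $L^1(\mu)$, so $f$ is $L^1(\mu)$-computable.

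The step I expect to be the main obstacle is the extraction of the effective modulus of integrability in Stage 2: this is exactly where hypothesis (A) is essential, and for \emph{signed} $f$ it requires care, since one must upgrade ``$\int f\,d\mu$ is computable'' to ``$\int|f|\,d\mu$ is computable''. The natural route is through $f^+$ and $f^-$: their integrals are lower semicomputable by the Stage-1/Stage-2 machinery, their difference $\int f\,d\mu$ is computable by (A), and one must argue, using (B), that this forces each of $\int f^+\,d\mu$ and $\int f^-\,d\mu$ --- hence $\int|f|\,d\mu$ --- to be computable. A secondary nontrivial point is the $G_\delta$/measure bookkeeping underlying the Stage-1 claim that a bounded function computable on each $R_r$ has computable integral. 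These are in effect the two cited claims of Hoyrup and Rojas, which I would reconstruct along the above lines.
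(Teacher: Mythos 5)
The paper does not prove this lemma: it cites it verbatim from Hoyrup and Rojas \citep{HRlayer1} and closes the environment with a tombstone, so there is no in-paper argument to compare yours against. (There is also a typo in the statement --- ``$\Pr_{\rv X}$-measure'' should read ``$\mu$-measure'' --- which you read correctly.) Your forward direction is the standard Markov/Borel--Cantelli upgrade of an effective $L^1$-Cauchy sequence to one converging a.e.\ at a computable rate and is sound; the specific thresholds $t_{k,r}$ need a small adjustment (with $a_k\le 4^{-k}$ one can simply take $t_{k,r}=2^{-k}$ and intersect only over $k\ge r+1$), but the idea is exactly right.

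The converse contains a genuine gap, at precisely the step you flag as the main obstacle, and I do not believe it can be closed from (A) and (B) as written. Your proposed route --- $\int f^+\,d\mu$ and $\int f^-\,d\mu$ are lower semicomputable with computable difference --- does not yield their computability: two left-c.e.\ reals with a computable difference need not both be computable (take them equal and noncomputable). Worse, (A) together with (B) in the literal form given does not force $L^1(\mu)$-computability. Take $\mu$ to be Lebesgue measure on $[0,1]$, let $(k_n)_n$ be a computable enumeration without repetition of a c.e.\ noncomputable set $K\subseteq\Nats$, set $c_n\defas 2^{\,n+2-k_n}$ (a dyadic rational, uniformly computable in $n$), tile $(1/2,1)$ and $(0,1/2)$ by dyadic intervals $J_n$ and $L_n$ of length $2^{-n-2}$, and put $f\defas\sum_n c_n\Ind_{J_n}-\sum_n c_n\Ind_{L_n}$. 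Then $f$ is computable on the $\mu$-measure-one effective $G_\delta$ set of nondyadics, so (B) holds with the same witness for every $r$; and $\int f\,d\mu=0$, so (A) holds; yet $\int|f|\,d\mu=2\sum_{k\in K}2^{-k}$ is left-c.e.\ and noncomputable, which rules out $L^1(\mu)$-computability of $f$ because $\int|f|\,d\mu$ is the $L^1$-distance from $f$ to the ideal point $0\in\Lideal$. What is missing from (B) is that the domains be taken effectively closed/compact (the layers of a universal Martin-L\"of test), which is the setting Hoyrup and Rojas actually work in: on each such layer $f$ is bounded with a computable bound, and that bound is what supplies the effective modulus of integrability your Stage~2 requires. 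Any reconstruction of the cited Theorems~4 and~5 therefore has to start from this stronger, layerwise form of the hypothesis rather than from the literal text of the lemma as transcribed here.
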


In particular, note that every integrable $\mu$-almost computable function is $L^1(\mu)$-computable.

We obtain the following immediate corollary of Lemma~\ref{layer} using the fact that if a function is $\mu$-almost computable with a computable $\mu$-integral, then we can uniformly find a collection of ideal points that converge to it in $L^1(\mu)$.

\begin{corollary}\label{L1aslimit} 
Let $(S,\mu)$ be a computable probability space and let $T$ be a computable Polish space.
Let $f_0,f_1,\dotsc \colon S \to T$ be a sequence of uniformly $\mu$-almost computable functions taking values in a computable Polish space $T$ that converge effectively in $L^1(\mu)$ to
a function $f \in L^1(\mu)$. Then $f$ is $L^1(\mu)$-computable.
\qed
\end{corollary}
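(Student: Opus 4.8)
The plan is to produce, uniformly, a computable representation of $f$ by the ideal points $\Lideal$ of the computable Polish space of $L^1(\mu)$ functions, thereby exhibiting $f$ as a computable point of that space, which is exactly the assertion that $f$ is $L^1(\mu)$-computable.

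First I would observe that each $f_n$ is not merely $\mu$-almost computable but in fact $L^1(\mu)$-computable, uniformly in $n$. The hypothesis that the $f_n$ converge in $L^1(\mu)$ to $f \in L^1(\mu)$ forces each $f_n$ to be a point of the $L^1(\mu)$ metric space, hence integrable; being integrable and $\mu$-almost computable, $f_n$ is $L^1(\mu)$-computable by the observation following Lemma~\ref{layer}, and an inspection of that argument shows it applies uniformly given the witnessing data for the sequence. Since $f_n$ is thus a computable point of the space of $L^1(\mu)$ functions, whose ideal points are $\Lideal$ (Lemma~\ref{EidealDense}), I can uniformly enumerate, for each $n$, a computable representation $(g_{n,k})_{k \in \Nats}$ of $f_n$ by elements of $\Lideal$, i.e.\ with $\int |g_{n,k} - f_n|\, d\mu < 2^{-k}$, the enumeration being uniform in both $n$ and $k$.

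Next, using that the convergence $f_n \to f$ in $L^1(\mu)$ is effective --- i.e.\ comes with a computable modulus --- I would pass to a computable subsequence and relabel so that $\int |f_n - f|\, d\mu < 2^{-n}$ for all $n$. Setting $h_n := g_{n,\,n+1}$, the triangle inequality in $L^1(\mu)$ gives $\int |h_n - f|\, d\mu \le \int |g_{n,n+1} - f_n|\, d\mu + \int |f_n - f|\, d\mu < 2^{-(n+1)} + 2^{-n} < 2^{-(n-1)}$. Thus $(h_n)$ is a sequence of ideal points, computable uniformly in $n$, converging to $f$ at a known rate; since the distances between points of $\Lideal$ are computable under the standard enumeration (Lemma~\ref{EidealDense}), a harmless reindexing turns $(h_n)$ into a computable representation of $f$, so $f$ is a computable point of the $L^1(\mu)$ functions on $S$, i.e.\ $L^1(\mu)$-computable. (Alternatively, one can bypass $\Lideal$ and verify the two conditions of Lemma~\ref{layer} directly: Markov's inequality applied to $\int |f_n - f|\, d\mu < 2^{-n}$ yields, uniformly in $r$, a set of $\mu$-measure at least $1 - 2^{-r}$ on which a computable subsequence of the $f_n$ converges to $f$ uniformly with a computable rate, hence on which $f$ is computable, while $\int f\, d\mu$ is recovered as the effective limit of the computable reals $\int f_n\, d\mu$.)

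Since this is an immediate corollary, I do not expect a genuine obstacle; the two points that need care are (i) verifying that ``integrable and $\mu$-almost computable implies $L^1(\mu)$-computable'' can be invoked uniformly across the sequence, so that the approximants $(g_{n,k})$ are produced by a single algorithm, and (ii) fixing the precise reading of ``converges effectively in $L^1(\mu)$'' as the existence of a computable modulus of convergence, which is what legitimizes both the passage to a subsequence and the diagonal choice $h_n = g_{n,\,n+1}$.
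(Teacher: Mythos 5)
Your argument is exactly the route the paper has in mind: the preamble to the corollary points to Lemma~\ref{layer} together with the observation that a $\mu$-almost computable function with computable $\mu$-integral admits a uniformly computable sequence of $\Lideal$-approximants, and your two-step construction --- produce $\Lideal$-approximants $g_{n,k}$ of each $f_n$ uniformly, then diagonalize against the computable modulus of $L^1(\mu)$-convergence --- is what that one-sentence preamble sketches. Your parenthetical alternative is just the other direction of the same equivalence in Lemma~\ref{layer}, so it is not a genuinely different route.

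One soft spot worth recording, which you inherit from the paper's terseness rather than introduce yourself: the step asserting that each $f_n$ is $L^1(\mu)$-computable \emph{uniformly in $n$} tacitly requires that $\int f_n\, d\mu$ be computable uniformly in $n$ (Lemma~\ref{layer} makes this condition explicit, and the paper's own preamble writes ``with a computable $\mu$-integral''). This is not literally among the corollary's stated hypotheses --- uniform $\mu$-almost computability of a non-negative integrable function only gives that its integral is a c.e.\ real, not that it is computable. In the paper's sole application of this corollary (Proposition~\ref{thm:merelyl1comp}) the $f_k$ are $\{0,1\}$-valued indicators of almost decidable sets, so they are bounded and the integrals are computable by Corollary~\ref{intcomp2}, and the issue evaporates; but in the stated generality you should either justify why the hypotheses deliver uniformly computable integrals, or note that ``uniformly $\mu$-almost computable'' is being read as carrying that extra data, as the paper's preamble seems to intend.
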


\subsection{Almost Decidable Sets and Bases}

Let $(S,\mu)$ be a computable probability space.  We know that the $\mu$-measure
of a c.e.\ open set $A \subseteq S$ is a c.e.\ real. 
In general, the measure of a c.e.\ open set is not a computable real.
On the other hand, if $A$ is a decidable subset (i.e., $S \setminus A$ is c.e.\ open) then $\mu(S \setminus A)$ a c.e.\ real, and therefore, by the identity $\mu(A)+\mu(S\setminus A)=1$, we have that $\mu(A)$ is a computable real.  In connected spaces, the only decidable subsets are the empty set and the whole space. However, there exists a useful surrogate when dealing with measure spaces.

\begin{definition}[Almost decidable set {\citep[][Def.~3.1.3]{MR2558734}}]
Let $(S, \mu)$ be a computable probability space.
A measurable subset $A \subseteq S$ is said to be \defn{$\mu$-almost decidable} when there are two c.e.\ open sets $U$ and $V$ such that
$
U \subseteq A$ and $V \subseteq S \setminus A$ and $\mu(U) + \mu(V) = 1.
$
In this case we say that $(U, V)$ witnesses the $\mu$-almost
decidability of $A$.
\end{definition}

The following lemma is immediate.

\begin{lemma}[{\citep[][Prop.~3.1.1]{MR2558734}}]\label{almostdec-comp}
Let $(S,\mu)$ be a computable probability space, and let $A$ be $\mu$-almost decidable.  Then $\mu(A)$ is a computable real.
\qed
\end{lemma}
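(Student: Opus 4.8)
The plan is to read off a two-sided effective approximation of $\mu(A)$ directly from a witness $(U,V)$ to its $\mu$-almost decidability. First I would record the elementary consequences of the defining conditions: from $U\subseteq A$ we get $\mu(U)\le\mu(A)$, and from $V\subseteq S\setminus A$ we get $\mu(V)\le 1-\mu(A)$, hence $\mu(U)\le\mu(A)\le 1-\mu(V)$; but the hypothesis $\mu(U)+\mu(V)=1$ collapses this sandwich, yielding the exact identities $\mu(A)=\mu(U)=1-\mu(V)$. So $\mu(A)$ is not merely squeezed but pinned down by either half of the witness.

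The second step is to feed each of $U$ and $V$ into Proposition~\ref{lowerbounds}. Since $(S,\mu)$ is a computable probability space and $U,V$ are c.e.\ open, both $\mu(U)$ and $\mu(V)$ are c.e.\ reals, uniformly in codes for $U$ and $V$. Thus $\mu(A)=\mu(U)$ exhibits $\mu(A)$ as a c.e.\ real, while $\mu(A)=1-\mu(V)$ exhibits $-\mu(A)=\mu(V)-1$ as a c.e.\ real (the rationals below $\mu(V)-1$ form a c.e.\ set because those below $\mu(V)$ do). By the characterization of computable reals in Section~\ref{cereal}, $\mu(A)$ is computable, and the whole procedure is uniform in the witness $(U,V)$.

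There is essentially no obstacle: the only point requiring any care is that one must use \emph{both} components of the witness — $U$ supplies the lower semicomputable bound on $\mu(A)$ and $V$ the upper semicomputable bound — and it is precisely the equality $\mu(U)+\mu(V)=1$ that forces these two one-sided approximations to meet. Equivalently, this is the ``almost'' analogue of the observation recalled just before the lemma, namely that for a genuinely decidable set $B$ one has $\mu(B)=1-\mu(S\setminus B)$ with $S\setminus B$ c.e.\ open, hence $\mu(B)$ computable.
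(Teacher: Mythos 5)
Your proof is correct and is exactly the argument the paper has in mind: the paper marks this lemma as immediate, with the preceding paragraph sketching the analogous argument for genuinely decidable sets, and your proof simply fills in the details by noting $\mu(A)=\mu(U)=1-\mu(V)$ and applying Proposition~\ref{lowerbounds} to each of the two c.e.\ open halves of the witness to get lower and upper semicomputability simultaneously.
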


While we may not be able to compute the probability measure of ideal balls, we can compute a new basis of ideal balls for which we can.  (See also Bosserhoff~\citeyearpar[][Lem.~2.15]{MR2410914}.)

\begin{lemma}[{\cite[Thm.~3.1.2]{MR2558734}}]\label{almdecbasis}
Let $(S, \mu)$ be a computable probability space, and let $\cD_S$ be the ideal points of $S$ with standard enumeration $\{d_i\}_{i\in\Nats}$.
There is a computable sequence $\{r_j\}_{j\in\Naturals}$ of reals,
dense in the positive reals, such that the balls
$\{B(d_i,r_j)\}_{i,j\in\Naturals}$ form a basis of $\mu$-almost
decidable sets,
which we call a \defn{$\mu$-almost decidable basis}.
\qed
\end{lemma}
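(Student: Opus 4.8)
Lemma stated as \citep[Thm.~3.1.2]{MR2558734}: for a computable probability space $(S,\mu)$, there is a computable sequence $\{r_j\}$ of reals, dense in the positive reals, such that $\{B(d_i,r_j)\}_{i,j}$ is a basis of $\mu$-almost decidable sets.

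The plan is to build, for each ideal center $d_i$, a countable dense-in-$(0,\infty)$ family of radii for which the sphere $\{x : \delta(x,d_i)=r\}$ has $\mu$-measure zero, and to produce these radii uniformly and computably. The underlying measure-theoretic fact is that for fixed $d_i$ the function $r\mapsto \mu(\overline{B(d_i,r)}) - \mu(B(d_i,r)) = \mu\{x:\delta(x,d_i)=r\}$ is nonnegative and summable over any countable set of distinct $r$'s (the spheres are disjoint), hence is nonzero for only countably many $r$; so "good" radii — those giving a null sphere — are dense. When the sphere is null, $B(d_i,r)$ is $\mu$-almost decidable: take $U = B(d_i,r)$ itself (c.e.\ open, given a rational/computable radius) and $V = \{x : \delta(x,d_i) > r\}$, which is c.e.\ open uniformly in $r$ provided $r$ is a computable real, and $\mu(U)+\mu(V) = 1 - \mu\{\delta(x,d_i)=r\} = 1$.

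The work is to make the choice of good radii effective and uniform in $i$, since naively the good radii depend on $\mu$ and on $i$. Here I would use the computability of $\mu$ together with an interleaving/search argument. Fix a rational target interval $(a,b)$ with $0<a<b$; I want to compute a real $r\in(a,b)$ with $\mu\{\delta(x,d_i)=r\}=0$. The quantities $\mu(B(d_i,q))$ for rational $q$ are c.e.\ reals (Proposition~\ref{lowerbounds}), and $\mu(\overline{B(d_i,q)})$ is the infimum of $\mu(B(d_i,q'))$ over rational $q'>q$, hence upper semicomputable; so $r\mapsto$ (sphere mass at rational $r$) is upper semicomputable. More usefully, I can run a binary-subdivision procedure: repeatedly bisect the current subinterval, and for each half compute increasingly good lower bounds on the $\mu$-mass of the corresponding open annulus $\{q_1 < \delta(x,d_i) < q_2\}$ (a c.e.\ open set, so its measure is a c.e.\ real); since the total mass of the whole annulus $(a,b)$ is at most $1$, at each stage one of the two halves has annulus-mass that is provably close to the full current mass, and I recurse into it while recording the nested rational endpoints. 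This produces a nested sequence of rational intervals of lengths $\to 0$ whose intersection is a single computable real $r^\ast$; and by construction the "leftover" sphere mass that could be hiding at $r^\ast$ is squeezed to $0$, so $\mu\{\delta(x,d_i)=r^\ast\}=0$. Carrying this out with rational targets $(a,b)$ ranging over a standard enumeration, and dovetailing over $i$, yields the desired computable double sequence $\{r_j\}$ that works simultaneously for every center (one can even arrange a single radius sequence independent of $i$ by choosing, at stage $j$, a radius avoiding the spheres of centers $d_0,\dots,d_j$, which remains possible since finitely many sphere-mass functions still vanish on a dense set).

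The main obstacle is precisely this uniform effectivity: one cannot simply "pick a radius where the sphere is null," because testing nullity of a specific sphere is not decidable — sphere mass is only upper semicomputable, not computable. The binary-search-with-annulus-masses device circumvents this by never committing to a radius in advance; instead it commits to a shrinking interval and certifies (via c.e.\ lower bounds on annulus masses, which sum to at most the total mass) that essentially all the mass in the current interval lies strictly between the endpoints, so in the limit the boundary carries no mass. Verifying that the resulting $r^\ast$ is genuinely computable (a rapidly converging sequence of rationals) and that the $\mu$-almost decidability witnesses $(U,V)$ are c.e.\ open uniformly in $i$ and $j$ is then routine, using that $\delta(d_i,d_k)$ is computable uniformly in $i,k$ (the defining property of a computable Polish space) to enumerate the ideal balls contained in $B(d_i,r^\ast)$ and in $\{x:\delta(x,d_i)>r^\ast\}$. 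Density of $\{r_j\}$ in $(0,\infty)$ is immediate from running the construction with target intervals forming a basis of $(0,\infty)$. For full details we refer to \citep[][Thm.~3.1.2]{MR2558734}.
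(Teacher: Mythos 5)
Your plan---compute radii $r$ at which every ideal sphere $\{x:\delta(x,d_i)=r\}$ is $\mu$-null, then witness $\mu$-almost decidability of $B(d_i,r)$ by $U=B(d_i,r)$ and $V=\{x:\delta(x,d_i)>r\}$---is the right one, and your measure-theoretic observations (disjoint spheres have summable masses; $\mu(\overline{B(d_i,q)})$ is upper semicomputable) are correct. But the binary-search device runs backwards. You recurse into the half whose \emph{open}-annulus mass is ``provably close to the full current mass'' and claim the sphere mass at the limit $r^\ast$ is ``squeezed to $0$.'' It is not: $r^\ast$ lies \emph{inside} every interval you commit to, so if the pushforward $\nu_i=\mu\circ\delta(\cdot,d_i)^{-1}$ has an atom of mass $\epsilon$ at $r^\ast$, then every open annulus you descend into has mass $\ge\epsilon$ and your procedure converges exactly to the bad radius. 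What does go to zero along your construction is the mass at the discarded rational split points, which has nothing to do with $r^\ast$. (It is also false that one half is always close to the full mass---when the split point is a non-atom both halves can carry half.)

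The effective step needs to \emph{certify smallness}, not largeness. The closed annulus $\{q_1\le\delta(x,d_i)\le q_2\}$ is the complement of a c.e.\ open set, so its $\mu$-measure is \emph{upper} semicomputable. At stage $n$, with a rational interval of closed-annulus mass $<2^{-n+1}$, cover it by enough short rational subintervals that at least one has closed-annulus mass $<2^{-n}$; enumerating upper bounds for all candidates in parallel, you will eventually witness one below $2^{-n}$ and can recurse into it. The limit point's sphere is contained in every chosen closed annulus and hence is null. Lower bounds on open-annulus masses cannot certify smallness, so they cannot drive this construction. Finally, to get one sequence $\{r_j\}$ that works for \emph{all} $i$ simultaneously (the lemma requires every $B(d_i,r_j)$ to be almost decidable, not only those with $i\le j$), run the argument once against the single computable probability measure $\nu=\sum_i 2^{-i-1}\nu_i$ on $(0,\infty)$: a non-atom of $\nu$ is a non-atom of every $\nu_i$. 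Note the paper itself gives no proof---the lemma is cited from \citep{MR2558734}---so the comparison here is only against the internal correctness of your argument.
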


We now show that every c.e.\ open set of a computable probability space $(S, \mu)$ is the union of a computable sequence of $\mu$-almost decidable subsets.

\begin{lemma}[Almost decidable subsets]
\label{comppair}
Let $(S, \mu)$ be a computable probability space 
with ideal points $\{d_i\}_{i\in\Nats}$, and let $\{r_j\}_{j\in\Naturals}$ be a computable sequence of reals such that
$\{B(d_i,r_j)\}_{i,j\in\Naturals}$ is a $\mu$-almost decidable basis.
Let $V$ be a c.e.\ open set.  Then, uniformly in
$\{r_j\}_{j\in\Naturals}$ and $V$,
we can compute a sequence of $\mu$-almost decidable sets $\{V_k\}_{k\in\Naturals}$ such that
	$V_k \subseteq V_{k+1}$
	for each $k$, and
$\bigcup_{k\in\Naturals} V_k = V$.
\end{lemma}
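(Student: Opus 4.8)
The plan is to write $V$ as a c.e.\ union of basis balls from the $\mu$-almost decidable basis, and then take finite unions of prefixes while repairing the fact that finite unions of almost decidable sets need not be almost decidable. First I would use the definition of a c.e.\ open set together with Lemma~\ref{almdecbasis}: since $\{B(d_i,r_j)\}_{i,j}$ is a basis, every c.e.\ open set $V$ is a union of such balls, and by reindexing we can write $V = \bigcup_{m\in\Nats} A_m$ where each $A_m$ is one of the $\mu$-almost decidable basis balls, with the enumeration $m\mapsto A_m$ computable uniformly in $V$ (and in the sequence $\{r_j\}$). The naive candidate $V_k = \bigcup_{m<k} A_m$ is ascending with union $V$, but a finite union of almost decidable sets is in general not almost decidable, since the ``boundary'' witnesses $V$ for the complement need not cover the complement of the union. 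So this requires a fix.

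The key step is to handle finite unions. Suppose $A$ and $A'$ are $\mu$-almost decidable, witnessed by $(U_A,W_A)$ and $(U_{A'},W_{A'})$ respectively, so $U_A\subseteq A$, $W_A\subseteq S\setminus A$, $\mu(U_A)+\mu(W_A)=1$, and similarly for $A'$. For the union $A\cup A'$ the natural inner approximation is the c.e.\ open set $U_A\cup U_{A'}\subseteq A\cup A'$, and the natural outer approximation is the c.e.\ open set $W_A\cap W_{A'}\subseteq (S\setminus A)\cap(S\setminus A') = S\setminus(A\cup A')$. We then need $\mu(U_A\cup U_{A'})+\mu(W_A\cap W_{A'})=1$. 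Using that $(U_A,W_A)$ witnesses almost decidability, the set $S\setminus(U_A\cup W_A)$ is $\mu$-null, and likewise $S\setminus(U_{A'}\cup W_{A'})$ is $\mu$-null; hence off a $\mu$-null set, every point lies in $U_A$ or $W_A$ and in $U_{A'}$ or $W_{A'}$, so it lies in $U_A\cup U_{A'}$ unless it lies in $W_A$ and ($W_{A'}$ or $U_{A'}$), i.e. unless it lies in $W_A\cap(W_{A'}\cup U_{A'})$; but the $U_{A'}$ part is (off a null set) absorbed because $U_{A'}\subseteq A'\subseteq A\cup A'$ while $W_A$ only fails to be in the complement of $A\cup A'$ on... more carefully: modulo a $\mu$-null set the four sets $U_A\cap U_{A'}$, $U_A\cap W_{A'}$, $W_A\cap U_{A'}$, $W_A\cap W_{A'}$ partition $S$, the first three are contained in $U_A\cup U_{A'}$, and the last is contained in $W_A\cap W_{A'}$; therefore $\mu(U_A\cup U_{A'})+\mu(W_A\cap W_{A'})=1$. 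So $A\cup A'$ is $\mu$-almost decidable with an explicitly computable witness, and this construction is uniform. Iterating, $V_k=\bigcup_{m<k}A_m$ is $\mu$-almost decidable with a witness computable uniformly in $k$, these are ascending, and $\bigcup_k V_k=\bigcup_m A_m=V$, giving the conclusion.

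I expect the main obstacle to be precisely this bookkeeping in the finite-union step: verifying that $\mu(U_A\cup U_{A'})+\mu(W_A\cap W_{A'})=1$ genuinely uses that the complements of $U_A\cup W_A$ and of $U_{A'}\cup W_{A'}$ are $\mu$-null (which follows from $\mu(U_A)+\mu(W_A)=1$ and that $U_A$, $W_A$ are disjoint), and keeping track that all the operations involved (union and intersection of c.e.\ open sets, finite prefixes of a c.e.\ enumeration) are effective and uniform in the given data. Everything else --- extracting the basis-ball enumeration of $V$, taking ascending prefixes, and checking that the union is all of $V$ --- is routine given Lemma~\ref{almdecbasis} and the fact that c.e.\ open sets are closed under computable unions and finite intersections.
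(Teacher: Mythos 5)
Your proof is correct and follows the same approach as the paper: express $V$ as a c.e.\ union of $\mu$-almost decidable basis balls and take ascending finite-union prefixes. The paper spells out the re-expression step explicitly (writing each ideal ball $B(d_{m_k},q_{l_k})$ as $\bigcup\{B(d_i,r_j) : \delta_S(d_i,d_{m_k})+r_j<q_{l_k}\}$, using density of $\{d_i\}$ and $\{r_j\}$) and merely asserts that the finite unions are $\mu$-almost decidable, whereas you compress the former into ``by reindexing'' and instead expand the latter with the explicit witness $(U_A\cup U_{A'},\,W_A\cap W_{A'})$ together with the null-complement argument --- complementary choices of where to put the detail, both correct.
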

\begin{proof}
Let $\{B_k\}_{k\in\Naturals}$ be a standard enumeration of the ideal balls of $S$ where $B_k = B(d_{m_k}, q_{l_k})$, and let
$E \subseteq \Naturals$ be a c.e.\ set such that $V = \bigcup_{k\in E} B_k$.
\ Consider the c.e.\ set
\[
F_k \defas \{ (i,j) \st \delta_S(d_i, d_{m_k}) + r_j < q_{l_k}\}.
\]
Because $\{d_i\}_{i\in \Naturals}$ is dense in $S$ and $\{r_j\}_{j \in \Naturals}$ is dense in the positive reals we have for each $k \in \Naturals$ that $B_k = \bigcup_{(i, j) \in F_k} B(d_i, r_j)$. In particular this implies that the set $F \defas \bigcup_{k \in E} F_k$ is a c.e.\ set 
	with $V = \bigcup_{(i,j) \in F} B(d_i,r_j)$. Let $\{(i_n, j_n)\}_{n \in \Naturals}$ be a computable enumeration of $F$ and let $V_k \defas \bigcup_{n\le k} B(d_{i_n},r_{j_n})$, which is $\mu$-almost decidable.  By construction, 
	$V_k \subseteq V_{k+1}$ for each $k$, 
	and $\bigcup_{k\in\Naturals} V_k = V$.
\end{proof}

Using the notion of an almost decidable set, we have the following characterization of computable measures.

\begin{corollary}\label{almostcomplementary-to-compmeas}
Let $(S,\mu)$ be a computable probability space 
with ideal points $\{d_i\}_{i\in\Nats}$, 
and let $\{r_j\}_{j\in\Naturals}$ be a computable sequence of reals such that
$\{B(d_i,r_j)\}_{i,j\in\Naturals}$ is a $\mu$-almost decidable basis.
Let $\nu \in
\ProbMeasures(S)$ be a probability measure on $S$ that is absolutely
continuous with respect to $\mu$.
  Then $\nu$ is computable uniformly in the sequence
$\{\nu(B(d_i,r_j))\}_{i,j\in\Naturals}$.
\end{corollary}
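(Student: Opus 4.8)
The plan is to verify the criterion of Proposition~\ref{lowerbounds}: that $\nu(A)$ is a c.e.\ real, uniformly in a c.e.\ open set $A$ and in the given data $\{\nu(B(d_i,r_j))\}_{i,j}$. The first observation is that absolute continuity upgrades the $\mu$-almost decidability of the basis to $\nu$-almost decidability: if $(U,V)$ witnesses the $\mu$-almost decidability of $B=B(d_i,r_j)$, then $\mu(U)+\mu(V)=1$ forces $\mu(B\setminus U)=\mu((S\setminus B)\setminus V)=0$, so by $\nu\ll\mu$ these sets are $\nu$-null as well, $(U,V)$ also witnesses $\nu$-almost decidability, and $\nu(B)=\nu(U)$. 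Intersecting a witness of one almost-decidable set with the complementary witness of another shows the $\mu$-almost decidable sets form an effective Boolean algebra; every finite Boolean combination of basis balls thus lies in it, is automatically $\nu$-almost decidable, and has $\mu$-null --- hence $\nu$-null --- topological boundary (so, e.g., a closed basis ball carries the same $\nu$-mass as the corresponding open one).

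Next, by Lemma~\ref{comppair} I can write an arbitrary c.e.\ open $A$ as an increasing computable union $A=\bigcup_k V_k$ of $\mu$-almost decidable sets $V_k$, each a finite union of basis balls; since $\nu$ is a measure, $\nu(A)=\sup_k\nu(V_k)$, so it suffices to compute $\nu(V_k)$ uniformly in $k$ and the data. I would handle this --- the crux --- by passing to the Radon--Nikodym density $h:=\dee\nu/\dee\mu\in L^1(\mu)$, which exists because $\nu\ll\mu$. Since $\mu$ is computable, $\mu$ is computable on the effective Boolean algebra of almost-decidable sets (Lemma~\ref{almostdec-comp}); combining this with the given values $\nu(B(d_i,r_j))$ and the density of $\{r_j\}$ in the positive reals, one builds a sequence of uniformly $\mu$-almost computable step functions $h_n$ --- morally the $\mu$-conditional expectations of $h$ relative to a refining sequence of finite almost-decidable partitions adapted to the basis and generating the Borel $\sigma$-algebra --- with $h_n\to h$ effectively in $L^1(\mu)$. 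Corollary~\ref{L1aslimit} then yields that $h$ is $L^1(\mu)$-computable, and since $\mu$ is computable it follows by Corollary~\ref{intcomp2} --- writing $\int f\,\dee\nu=\int f h\,\dee\mu$, approximating $h$ in $L^1(\mu)$, and controlling the error by the sup norm of $f$ --- that $\nu$ is a computable measure; in particular each $\nu(V_k)$ is computable and $\nu(A)=\sup_k\nu(V_k)$ is a c.e.\ real, all uniformly in the data.

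The step I expect to be the main obstacle is the effectivity of the construction of the $h_n$: knowing $\nu$ on the individual basis balls does \emph{not}, by finite combinatorics alone, determine $\nu$ on an intersection of two basis balls with distinct centers, so this step must genuinely exploit $\nu\ll\mu$ together with the computability of $\mu$, and in particular must recover --- uniformly from the data --- enough of a modulus of absolute continuity of $\nu$ with respect to $\mu$ to make the $L^1(\mu)$-approximation effective. This is exactly the sort of statement the layerwise-computability machinery behind Lemma~\ref{layer} and Corollary~\ref{L1aslimit} is built to provide, so I would expect the author's argument ultimately to come down to a careful invocation of that machinery (or of the corresponding results of Hoyrup--Rojas and G\'acs cited above).
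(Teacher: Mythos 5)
The paper's own argument is much shorter and never leaves the ground: by Proposition~\ref{lowerbounds} it suffices that $\nu(V)$ be a c.e.\ real uniformly in a c.e.\ open $V$; Lemma~\ref{comppair} produces a computable nested sequence $\{V_k\}$ of $\mu$-almost decidable finite unions of basis balls with $\bigcup_k V_k = V$, these $V_k$ are also $\nu$-almost decidable because $\nu\ll\mu$, and $\nu(V) = \sup_k \nu(V_k)$ with the $\nu(V_k)$ read off from the supplied data. Your first two paragraphs reproduce this setup; it is at the step you call the crux that you diverge.

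Your proposed resolution --- first establish that $h=\dee\nu/\dee\mu$ is $L^1(\mu)$-computable from the data, then conclude via $\int f\,\dee\nu = \int fh\,\dee\mu$ --- cannot succeed, and the paper itself tells you why. As recalled in the related-work discussion and again just before Proposition~\ref{nonl1comp}, \citep[][Prop.~3]{HR11} constructs computable measures $\nu\ll\mu$ on a computable Polish space whose Radon--Nikodym derivative is \emph{not} $L^1$-computable. For such a $\nu$, every basis ball is $\nu$-almost decidable (since $\nu\ll\mu$), so the sequence $\{\nu(B(d_i,r_j))\}_{i,j}$ is computable and the hypotheses of the present corollary hold; yet your intermediate target fails outright. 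The quantity you hope the layerwise machinery of Lemma~\ref{layer}/Corollary~\ref{L1aslimit} will supply --- an effective modulus of absolute continuity of $\nu$ with respect to $\mu$ --- is precisely what is not recoverable from the data, so the $L^1(\mu)$ convergence of your $h_n$ cannot be made effective. (Indeed the $h_n$ cannot even be written down: the conditional expectation of $h$ on a finite almost-decidable partition takes value $\nu(A)/\mu(A)$ on a cell $A$ that is a Boolean combination of basis balls, and you have yourself correctly noted that the data do not give $\nu(A)$ for such $A$.) Computing $h$ in $L^1(\mu)$ is strictly harder than computing $\nu$, so any proof of the corollary must stay with the weaker goal --- as the paper's direct supremum argument does, and as your concern about finite unions versus single balls must also be handled directly rather than routed through the derivative.
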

\begin{proof}
Let $V$ be a c.e.\ open set of $S$.
By Proposition~\ref{lowerbounds}, it suffices to show that $\nu(V)$ is a c.e.\ real, uniformly in $V$.
By Lemma~\ref{comppair}, we can compute a nested sequence $\{V_k\}_{k\in\Naturals}$
of $\mu$-almost decidable sets whose union is $V$. By the absolute
continuity of $\nu$ with respect to $\mu$, these sets are also
$\nu$-almost decidable.
Because $V$ is open, $\nu(V) = \sup_{k\in\Naturals} \nu(V_k)$,
which is the supremum of a sequence of reals that is computable uniformly in the
sequence $\{\nu(B(d_i,r_j))\}_{i,j\in\Naturals}$.
\end{proof}

We close with the following extension of Corollary~\ref{intcomp2}.

\begin{proposition}
\label{inttwovartoonever}
Let $S$, $T$ be computable Polish spaces, $\mu$ a probability measure on $T$, $B$ a $\mu$-almost decidable subset of $\Reals$, 
and $f\colon S \times T \to \Reals$ a bounded function, computable on $R \times T$ with $R \subseteq S$.
Then the map $s \mapsto \int_B f(s, t) \mu(\dee t)$ is a computable function, uniformly in $f$ and $B$.
\end{proposition}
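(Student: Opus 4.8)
The plan is to reduce the whole statement to a single lower-semicomputability fact---that for every c.e.\ open $U\subseteq T$ the real $\int_U f(s,\cdot)\,\dee\mu$ is a c.e.\ real, uniformly in a representation of $s\in R$---and then to extract two-sided bounds from it by playing $U=B$ against its complement inside $T$. (We assume $(T,\mu)$ is a computable probability space, so that Proposition~\ref{lowerbounds} applies.) First I would make two routine reductions. Since $B$ is $\mu$-almost decidable, $\mu(B)$ is a computable real by Lemma~\ref{almostdec-comp}; fixing a rational $M\ge|f|$ and replacing $f$ by $(f+M)/(2M)$, which is still computable on $R\times T$, the identity $\int_B f(s,t)\,\mu(\dee t)=2M\int_B\frac{f(s,t)+M}{2M}\,\mu(\dee t)-M\,\mu(B)$ reduces the proposition to the case $f\colon S\times T\to[0,1]$, which we assume henceforth. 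I would also record the elementary fact that for a c.e.\ open $W\subseteq S\times T$ the slice $W_s\defas\{t\in T\st(s,t)\in W\}$ is a c.e.\ open subset of $T$, uniformly in a code for $W$ and a representation of $s$: enumerate the basic rectangles $P\times Q\subseteq W$, semidecide $s\in P$ for each, and output the corresponding $Q$'s.

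For the key fact, fix a witness $\{U_n\}$ to the computability of $f$ on $R\times T$ and a c.e.\ open $U\subseteq T$. For $N\ge1$ and $1\le j\le N$, the rational interval $(j/N,2)$ is an ideal ball of $\Reals$, so from $\{U_n\}$ one computes, uniformly in $N$ and $j$, a c.e.\ open $W_{N,j}\subseteq S\times T$ with $W_{N,j}\cap(R\times T)=f^{-1}[(j/N,2)]\cap(R\times T)$; since $f\le1$, this gives $(W_{N,j})_s=\{t\in T\st f(s,t)>j/N\}$ for every $s\in R$. Put $h_N(s,t)\defas\frac1N\sum_{j=1}^N\Ind\{f(s,t)>j/N\}$ and $L_N(s)\defas\frac1N\sum_{j=1}^N\mu\bigl(U\cap(W_{N,j})_s\bigr)$. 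Then $L_N(s)=\int_U h_N(s,\cdot)\,\dee\mu$ for $s\in R$, while $0\le f(s,t)-h_N(s,t)\le1/N$ for all $(s,t)$; hence $0\le\int_U f(s,\cdot)\,\dee\mu-L_N(s)\le1/N$ for $s\in R$, so $\int_U f(s,\cdot)\,\dee\mu=\sup_N L_N(s)$. By the slice fact together with Proposition~\ref{lowerbounds}, each $\mu\bigl(U\cap(W_{N,j})_s\bigr)$ is a c.e.\ real uniformly in $N$, $j$, and a representation of $s$; hence $L_N(s)$ is, and therefore so is the supremum $\int_U f(s,\cdot)\,\dee\mu$---all uniformly in $\{U_n\}$ and a code for $U$.

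To conclude, let $(U_B,V_B)$ witness the $\mu$-almost decidability of $B$, so $U_B\subseteq B$ and $V_B\subseteq T\setminus B$ are c.e.\ open with $\mu(U_B)+\mu(V_B)=1$; then $\mu(U_B)=\mu(B)$, and since $\mu(B\setminus U_B)=\mu\bigl(T\setminus(U_B\cup V_B)\bigr)=0$ we have, for every $s$, $\int_B f(s,\cdot)\,\dee\mu=\int_{U_B}f(s,\cdot)\,\dee\mu$ and $\int_T f(s,\cdot)\,\dee\mu=\int_{U_B}f(s,\cdot)\,\dee\mu+\int_{V_B}f(s,\cdot)\,\dee\mu$. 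Applying the key fact with $U=T$ to $f$ and to $1-f$ shows $\int_T f(s,\cdot)\,\dee\mu$ is computable, uniformly in a representation of $s\in R$; with $U=U_B$ it shows $g(s)\defas\int_B f(s,\cdot)\,\dee\mu$ is a c.e.\ real; and with $U=V_B$ it shows $\int_T f(s,\cdot)\,\dee\mu-g(s)$ is a c.e.\ real, so $-g(s)$ is a c.e.\ real. Thus $g(s)$ is computable, uniformly in a representation of $s\in R$, so by Remark~\ref{comppointremark} the map $s\mapsto\int_B f(s,t)\,\mu(\dee t)$ is computable on $R$; undoing the initial reduction supplies the uniformity in $f$ and $B$.

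I expect the key fact to be the only real obstacle. The tension there is that $L_N(s)$ must be simultaneously (i) manifestly a c.e.\ real in a representation of $s$, which forces the relevant slices to be presented as the c.e.\ open sets $(W_{N,j})_s$ rather than as superlevel sets of $f(s,\cdot)$, and (ii) an honest lower approximant that exhausts $\int_U f(s,\cdot)\,\dee\mu$, which needs the identification $(W_{N,j})_s=\{t\in T\st f(s,t)>j/N\}$ that holds only on $R$. The two reductions, the slice lemma, and the arithmetic of c.e.\ reals are routine.
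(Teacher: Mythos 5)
Your proof is correct, and it supplies a self-contained argument where the paper gives only a one-line citation to Propositions~3.2.3 and 4.3.1 of Hoyrup and Rojas \citeyearpar{MR2519075}. The ingredients you use --- reduction to $f$ with values in $[0,1]$ via a rational bound $M$ and the computability of $\mu(B)$ (Lemma~\ref{almostdec-comp}); the slice fact that $W_s$ is c.e.\ open in $T$ uniformly in a code for the c.e.\ open $W\subseteq S\times T$ and a representation of $s$; the layer-cake Riemann sums $L_N(s)=\frac1N\sum_{j=1}^N\mu(U\cap(W_{N,j})_s)$, which exhaust $\int_U f(s,\cdot)\,\dee\mu$ from below with error at most $1/N$ and are each c.e.\ by Proposition~\ref{lowerbounds}; and the two-sided squeeze from running the lower-semicomputability fact on $U_B$, on $V_B$, and on $T$ with both $f$ and $1-f$ --- are essentially the standard proofs that the cited propositions encapsulate, so while your route is ``different'' in the sense of being unpacked rather than cited, it is the argument one would write if asked to prove those references from scratch. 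One small point you should make explicit for the claimed uniformity: a rational bound $M\ge|f|$ must be included as part of the data naming $f$, since the statement only hypothesizes that $f$ is bounded, and your first reduction needs $M$ in hand. You also implicitly correct the paper's typo $B\subseteq\Reals$ to $B\subseteq T$, which is the only reading compatible with $\int_B\cdot\,\mu(\dee t)$, and you note (correctly) that $\mu$ must be a computable measure for the result to make sense, which is indeed how the proposition is invoked in Proposition~\ref{denstokerncomp}. Your slice lemma is stated somewhat informally (``enumerate the basic rectangles $P\times Q\subseteq W$''), but the idea is standard: the ideal rectangles form a base computably interchangeable with the ideal balls of the product, so one can enumerate those contained in $W$, semidecide membership of $s$ in the $S$-factor, and output the $T$-factors.
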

\begin{proof}
This follows immediately from Propositions~3.2.3 and 4.3.1 of \citep{MR2519075}.
\end{proof}


\section{Conditional Probabilities and Distributions}
\label{conddist}

Let $\mu$ be a probability measure on a measurable space of outcomes $S$,
and let $A,B \subseteq S$ be events.
Informally, given that event $A$ has occurred, the probability that event $B$ also occurs, written $\mu(B|A)$, 
must satisfy $\mu(A) \, \mu(B|A) = \mu (A \cap B)$.
Clearly $\mu(B|A)$ is uniquely defined if and only if $\mu(A) > 0$, which leads to the following definition.

\begin{definition}[Conditioning on positive-measure events]
\label{Def:conditional-prob-set}
Suppose that
$\mu(A)>0$. Then the
\defn{conditional probability of $B$ given $A$}, written $\mu(B | A)$, is defined by
\[
\mu(B | A) = \frac{\mu(B \cap A)}{\mu(A)}.
\]
\end{definition}
It is straightforward to check that, for any fixed event $A\subseteq S$ with $\mu(A)>0$,
the set function $\mu(\pars |A)$ is a probability measure.  

We will often be interested in the case where $B$ and $A$ are
events of the form  $\{ \rv Y \in D \}$ and $\{ \rv X \in C\}$.
 In this case, we define the abbreviation
\[\Pr\{\rv Y \in D \given \rv X \in C \} \defas \Pr\bigl(\{\rv Y \in D\}
\given  \{\rv X \in C \}\bigr).\]
Again, this is well-defined when $\Pr\{X \in C\} > 0$.
When $\Pr\{ \rv X = x\} >0 $, we may simply write
\[\Pr\{\rv Y \in D \given \rv X =x \}\]
for $\Pr\{\rv Y \in D \given \rv X \in \{x\} \}$.

This elementary notion of conditioning is undefined when the conditioning event has zero measure, such as when a \emph{continuous} random variable
takes a particular value.  
In the modern formulation of conditional probability due to Kolmogorov \citeyearpar{MR0362415},
one defines conditioning with respect to (the $\sigma$-algebra generated by) a \emph{random variable} rather than an individual event. 
In theory, this yields a consistent solution to the problem of conditioning on the value of general (and in particular, continuous) random variables,
although we will see that other issues arise. (See Kallenberg~\citeyearpar[][Ch.~6]{MR1876169} for a rigorous treatment.)

In order to bridge the divide between the elementary notion of conditioning on events and the abstract approach of conditioning on random variables, consider the case of conditioning on a random variable $\rv X$ taking values in a countable discrete set $S$ and satisfying $\Pr \theset {\rv X = x} > 0$ for all $x \in S$.  
Let $\{ \rv Y \in B \}$ be an event.
Then the conditional probability that $\rv Y \in B$ given $\rv X$, written $\CondProb {\rv Y \in B} {\rv X}$, is 
the \emph{random variable}
satisfying $\CondProb {\rv Y \in B} {\rv X} = \Pr \{ \rv Y \in B \given \rv X = x \}$ when $\rv X = x$.
Note that there is a measurable function $f_B\colon S \to [0,1]$ satisfying
\[
	\label{abstracteq}
\Pr \theset{ \rv Y \in B, \ \rv X \in A}  = \int_{A} f_B(x) \,\Pr_{\rv X}(\dee x)
\]
for all measurable subsets $A \subseteq S$.
For sets $A$ of the form $\{x\}$, for $x \in S$,
we have
$\Pr \theset{\rv Y \in B, \ \rv X = x} = f_B(x) \Pr\{\rv X = x\}$, hence $f_B(x) = \Pr \{ \rv Y \in B | \rv X = x \}$.
In summary,
$\CondProb{\rv Y \in B}{\rv X} = f_B(\rv X)$,
and so \eqref{abstracteq}
yields a more abstract characterization of elementary conditional probability for positive-measure events.

The general case is captured by the same defining property.
Let $X$ be a random variable in a measurable space $S$.
Then the \defn{conditional probability that $\rv Y \in B$ given $\rv X$},
written $\CondProb{\rv Y \in B}{\rv X}$,
is defined to be a random variable in $[0,1]$ of the form $f_B(\rv X)$ where 
again $f_B\colon S \to [0,1]$ is such that
\eqref{abstracteq} holds for all measurable subsets $A \subseteq S$.
In many situations, such a function $f_B$ 
is itself the object of interest and so we will let $\CondProbFunc{\rv Y \in B}{\rv X}$ denote an arbitrary such function.  We may then re-express 
its defining property
in the following more intuitive form:
\[\label{intdefnofcondprob}
\Pr \theset{\rv Y \in B,\ \rv X \in A}  = \int_{A} \CondProbFuncEval{\rv Y \in B}{\rv X}{x}\, \Pr_{\rv X}(\dee x)
\]
for all measurable subsets $A \subseteq S$.

The existence of the conditional probability $\CondProb{\rv Y \in B}{\rv X}$, or equivalently, the existence of \linebreak
$\CondProbFunc{\rv Y \in B}{\rv X}$, 
follows from the Radon--Nikodym theorem.  Recall that a measure $\mu$ on a measurable space $S$ is \defn{absolutely continuous} with respect to another measure $\nu$ on the same space, written $\mu \ll \nu$, 
if $\nu(A) = 0$ implies $\mu(A) = 0$ for all measurable sets $A \subseteq S$.
\begin{theorem}[Radon--Nikodym]\label{rnthm}
Let $S$ be a measurable space and let $\mu$ and $\nu$ be $\sigma$-finite measures on $S$ such that $\mu \ll \nu$.  Then there exists a nonnegative measurable function $\RND \mu \nu \colon S \to \NNReals$ such that
\[\label{rndefn}
	\mu (A) = \int_A \RND \mu \nu \, \dee\nu
\]
	for all measurable subsets $A \subseteq S$.
\qed
\end{theorem}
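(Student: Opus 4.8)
The plan is to give von Neumann's Hilbert-space proof, reducing first to the case of finite measures. Using $\sigma$-finiteness, write $S$ as a countable disjoint union of measurable sets on each of which \emph{both} $\mu$ and $\nu$ are finite (intersect the two exhausting sequences). A density on each piece, multiplied by the indicator of that piece and summed, gives a density on all of $S$ by countable additivity of the integral, so I may assume $\mu(S), \nu(S) < \infty$.

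Set $\lambda = \mu + \nu$. The map $f \mapsto \int f\,\dee\mu$ is a bounded linear functional on $L^2(\lambda)$ (by Cauchy--Schwarz together with $\mu \le \lambda$ and $\lambda(S) < \infty$), so by the Riesz representation theorem there is $g \in L^2(\lambda)$ with $\int f\,\dee\mu = \int f g\,\dee\lambda$, equivalently $\int f(1-g)\,\dee\mu = \int f g\,\dee\nu$, for all $f \in L^2(\lambda)$. Testing against indicators pins down $g$: $f = \Ind_{\{g < 0\}}$ forces $\lambda(\{g < 0\}) = 0$, while $f = \Ind_{\{g \ge 1\}}$ forces $\nu(\{g \ge 1\}) = 0$ and $\mu(\{g > 1\}) = 0$, hence $\mu(\{g \ge 1\}) = 0$ using $\mu \ll \nu$; after modifying $g$ on a $\lambda$-null set, $0 \le g < 1$ everywhere. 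Now fix a measurable $A$ and substitute $f = (1 + g + \dots + g^n)\,\Ind_A$: the identity telescopes to $\int_A (1 - g^{n+1})\,\dee\mu = \int_A (g + \dots + g^{n+1})\,\dee\nu$, and letting $n \to \infty$ (dominated convergence on the left, since $g < 1$ $\mu$-a.e.; monotone convergence on the right) gives $\mu(A) = \int_A \frac{g}{1-g}\,\dee\nu$. So $\RND\mu\nu \defas \frac{g}{1-g}$ --- a nonnegative measurable function, finite $\nu$-a.e. --- is the desired density.

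The only delicate point is the location of $g$: the hypothesis $\mu \ll \nu$ is used \emph{exactly once}, to exclude $\mu(\{g = 1\}) > 0$ --- without it the statement fails, e.g. $\nu$ Lebesgue measure and $\mu = \nu + \mbox{\boldmath$\delta$}_0$ --- and one must carry out the $\sigma$-finite gluing \emph{before} invoking $L^2$, which genuinely needs finiteness. If a Hilbert-space-free route is wanted, I would instead take $f$ maximizing $\int h\,\dee\nu$ over $\{h \ge 0 : \int_A h\,\dee\nu \le \mu(A)\text{ for all measurable }A\}$ (the family is closed under binary maxima, so a maximizer exists by monotone convergence), and then apply the Hahn decomposition to the signed measure $B \mapsto \mu(B) - \int_B f\,\dee\nu - \epsilon\,\nu(B)$ to show that if $\mu(\cdot) - \int_{(\cdot)} f\,\dee\nu$ were not identically zero one could strictly enlarge $f$ within the family, a contradiction --- this stays entirely within elementary measure theory.
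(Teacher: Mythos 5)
The paper recalls the Radon--Nikodym theorem as a classical result and gives no proof of its own (the theorem environment ends immediately with a tombstone, and the surrounding text merely invokes it to justify the existence of conditional probabilities), so there is no internal argument to compare yours against. Your write-up is the standard von Neumann $L^2$ proof and is correct as far as I can tell: the reduction to the case $\mu(S),\nu(S)<\infty$ by intersecting two exhausting sequences and gluing densities by countable additivity; the boundedness of $f\mapsto\int f\,\dee\mu$ on $L^2(\lambda)$ with $\lambda=\mu+\nu$ via Cauchy--Schwarz and $\lambda(S)<\infty$; the Riesz representation producing $g$; the normalization $0\le g<1$ off a $\lambda$-null set, with $\mu\ll\nu$ used precisely to rule out $\mu(\{g=1\})>0$ once $\nu(\{g\ge1\})=0$ is in hand; and the telescoping identity with dominated convergence on the $\mu$-side (the integrand is bounded by $1$ and $g^{n+1}\to0$ pointwise since $g<1$) and monotone convergence on the $\nu$-side, yielding $\RND\mu\nu=g/(1-g)$, which is nonnegative, measurable, and finite everywhere after the modification. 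Your remark on where $\sigma$-finiteness must enter (before, not after, the Hilbert-space step) and your counterexample $\nu=\text{Lebesgue}$, $\mu=\nu+\mbox{\boldmath$\delta$}_0$ showing why $\mu\ll\nu$ cannot be dropped are both correct. The alternative you sketch --- maximize over the sub-density family (closed under binary maxima, so a maximizer exists by monotone convergence) and use Hahn decomposition of $B\mapsto\mu(B)-\int_B f\,\dee\nu-\epsilon\,\nu(B)$ to improve a non-maximal $f$ --- is likewise a standard and valid route, and it has the advantage of avoiding $L^2$ theory entirely; the $L^2$ proof is shorter once Riesz representation is available, whereas the maximization argument stays within elementary measure theory but requires a bit more care to set up the contradiction cleanly.
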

We call any function $\RND \mu \nu$ satisfying Equation~\eqref{rndefn} for all measurable subsets $A \subseteq S$ a \defn{Radon--Nikodym derivative} (of $\mu$ with respect to $\nu$).  

Note that if $g$ is also a Radon--Nikodym derivative of $\mu$ with respect to $\nu$, then $g = \RND \mu \nu$ 
outside a $\nu$-null set, and so \emph{Radon--Nikodym derivatives are unique up to a null set}.
 (Functions that agree a.e.\ are called \defn{versions}.)
We may safely refer to \emph{the} Radon--Nikodym derivative when we want to ignore such differences, but in some cases these differences are important.

It is straightforward to verify that the function $\CondProbFunc{\rv Y \in B}{\rv X}$ is a Radon--Nikodym derivative of 
$\Pr \{\rv Y \in B, \ \rv X \in \pars\}$
with respect to 
$\Pr \theset {\rv X \in \pars} = \Pr_{\rv X}$,
both considered as measures on $S$, 
and so a function $f_B$ satisfying \eqref{abstracteq} for all measurable subsets $A \subseteq S$
always exists, but it is only defined up to a null set.  This is inconsequential when the conditional probability
$\CondProb{\rv Y \in B}{\rv X}$ is the object of interest.
In applications, especially statistical ones, however, 
the function $\CondProbFunc{\rv Y \in B}{\rv X}$ mapping values in $S$ to probabilities is the object of interest,
and, moreover, one typically wants to evaluate this function at particular observed values $x \in S$.
Because
 $\CondProbFunc{\rv Y \in B}{\rv X}$ 
 is merely determined up to a $\Pr_{\rv X}$-null set,
 interpreting its values at individual points is problematic.

As mentioned in the introduction, 
the fact that general conditional probabilities are not uniquely defined at points 
is the subject of a large literature.
However, in some circumstances, two versions of $\CondProbFunc{\rv Y \in B}{\rv X}$ must agree at individual points.  
In particular, if two versions are continuous at a point in the support of the distribution $\Pr_{\rv X}$,
then they agree on the value at that point. In order to state this claim formally, we first recall the definition of the support of a distribution:

\begin{definition}
Let $\mu$ be a measure on a topological space $S$ with open sets $\mathcal S$.  Then the \defn{support of $\mu$}, written $\supp(\mu)$, is defined to be the set of points $x\in S$ such that all open neighborhoods of $x$ have positive measure, i.e.,
\[ \supp(\mu) \defas \{ x \in S \st \forall B \in \mathcal S\  ( x \in B
\implies \mu(B)>0)\}.\]
\end{definition}
Note that the support of $\mu$ can equivalently be defined as the smallest closed set of $\mu$-measure one.
We now state our claim formally:
\begin{lemma}\label{supportlemma}
	Let $S$ be a Polish space.
Suppose $f_1,f_2 \colon S \to [0,1]$ satisfy
$\CondProb{\rv Y \in B}{\rv X} = f_1(\rv X) = f_2(\rv X)$ a.s.
If $x \in S$ is a point of continuity of $f_1$ and $f_2$, and 
$x \in \supp(\Pr_{\rv X})$, then $f_1(x) = f_2(x)$.  In particular, if $f_1$ and $f_2$ are continuous on 
a $\Pr_{\rv X}$-measure one set $D \subseteq S$, then they agree everywhere in $D \cap \supp(\Pr_{\rv X})$.
\qed
\end{lemma}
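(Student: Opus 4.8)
The plan is to prove the contrapositive-flavored statement directly: suppose $f_1(x) \neq f_2(x)$ at some point $x$ where both are continuous and $x \in \supp(\Pr_{\rv X})$, and derive a contradiction with the almost-sure equality $f_1(\rv X) = f_2(\rv X)$. First I would set $\varepsilon \defas |f_1(x) - f_2(x)| > 0$. By continuity of $f_1$ and $f_2$ at $x$, there is an open ball $B = B(x, r)$ such that for all $y \in B$ we have $|f_1(y) - f_1(x)| < \varepsilon/3$ and $|f_2(y) - f_2(x)| < \varepsilon/3$. By the triangle inequality, $|f_1(y) - f_2(y)| > \varepsilon/3 > 0$ for every $y \in B$, so in particular $f_1(y) \neq f_2(y)$ for all $y \in B$.

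Next I would invoke the hypothesis $x \in \supp(\Pr_{\rv X})$: since $B$ is an open neighborhood of $x$, we have $\Pr_{\rv X}(B) > 0$, i.e., $\Pr\{\rv X \in B\} > 0$. But on the event $\{\rv X \in B\}$ we have just shown $f_1(\rv X) \neq f_2(\rv X)$ pointwise, so $\{f_1(\rv X) \neq f_2(\rv X)\} \supseteq \{\rv X \in B\}$ has probability at least $\Pr_{\rv X}(B) > 0$. This contradicts the assumption that $f_1(\rv X) = f_2(\rv X)$ almost surely. Hence $f_1(x) = f_2(x)$, proving the first assertion.

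For the ``in particular'' clause, suppose $f_1$ and $f_2$ are both continuous on a $\Pr_{\rv X}$-measure one set $D$, and let $x \in D \cap \supp(\Pr_{\rv X})$. Then $x$ is a point of continuity of $f_1$ and of $f_2$ (continuity on $D$ means continuity of the restrictions $f_i|_D$ — note one should be slightly careful here: the displayed ball argument above uses continuity of $f_i$ \emph{as a function on $S$} at the point $x$). To handle this cleanly, I would rerun the ball argument using only the subspace topology on $D$: continuity of $f_i|_D$ at $x$ gives an open ball $B$ in $S$ with $|f_i(y) - f_i(x)| < \varepsilon/3$ for all $y \in B \cap D$; then $f_1(y) \neq f_2(y)$ for all $y \in B \cap D$, while $\Pr_{\rv X}(B \cap D) = \Pr_{\rv X}(B) > 0$ since $\Pr_{\rv X}(D) = 1$ and $\Pr_{\rv X}(B) > 0$. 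The same contradiction with the a.s.\ equality then applies.

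The main obstacle — really a subtlety rather than a genuine difficulty — is exactly this interaction between the two senses of ``point of continuity'': continuity of $f_i$ as a map $S \to [0,1]$ versus continuity of the restriction $f_i|_D$. For the first assertion the hypothesis is the stronger (global) notion, so the straightforward ball argument works verbatim; for the ``in particular'' clause one only has continuity on $D$, so the argument must be carried out within $D$, and the step $\Pr_{\rv X}(B \cap D) = \Pr_{\rv X}(B)$ (which uses $\Pr_{\rv X}(D^\complement) = 0$) is the one place where care is needed. Everything else is elementary: a triangle-inequality estimate plus the definition of support.
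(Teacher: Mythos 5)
Your proof is correct and follows essentially the same argument as the paper: the paper defers to the more general Lemma~\ref{equalversions} (for measurable functions between arbitrary Polish spaces), whose proof uses exactly your idea --- continuity plus the support condition forces a disagreement on a positive-$\Pr_{\rv X}$-measure set, contradicting a.s.\ equality. Your care about continuity on $D$ versus continuity on $S$ in the ``in particular'' clause mirrors how the paper's lemma is phrased (``continuity on $D$'') and proved (via $\mu(B \cap D) = \mu(B) > 0$).
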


The proof is immediate from the following elementary result. 
\begin{lemma}\label{equalversions}
Let $f_1, f_2\colon S \to T$ be two measurable functions between Polish spaces $S$ and $T$,
and suppose that $f_1=f_2$ almost everywhere with respect to some measure $\mu$ on $S$.   
Let $D\subseteq S$ be a set of $\mu$-measure one.
If $x \in S$ is a point of continuity of $f_1$ and $f_2$ on $D$,
and 
$x \in \supp(\mu)$, then $f_1(x) = f_2(x)$.
In particular, if $f_1$ and $f_2$ are continuous on $D$,
then they agree everywhere in $D \cap \supp(\mu)$.
\end{lemma}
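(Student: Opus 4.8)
The plan is to prove this by a short separation argument, arguing by contradiction. Suppose $f_1(x)\neq f_2(x)$. Since $T$ is Polish, hence metrizable and in particular Hausdorff, I can choose disjoint open sets $V_1,V_2\subseteq T$ with $f_i(x)\in V_i$ for $i=1,2$ --- for instance the open balls of radius $\tfrac{1}{3}d(f_1(x),f_2(x))$ about the two points. The hypothesis that $x$ is a point of continuity of $f_i$ on $D$ (that is, that the restriction of $f_i$ to $D$ is continuous at $x$) then yields an open set $U_i\subseteq S$ with $x\in U_i$ and $f_i(U_i\cap D)\subseteq V_i$; setting $U\defas U_1\cap U_2$ gives a single open neighborhood of $x$ with $f_1(U\cap D)\subseteq V_1$ and $f_2(U\cap D)\subseteq V_2$.

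The crux is then to exhibit a point $y\in U\cap D$ at which $f_1$ and $f_2$ agree. Let $N\defas\{s\in S\st f_1(s)\neq f_2(s)\}$, which is $\mu$-null by hypothesis, and note that $S\setminus D$ is $\mu$-null since $D$ has full measure. Because $x\in\supp(\mu)$ we have $\mu(U)>0$, and hence $\mu(U\cap D\setminus N)\geq \mu(U)-\mu(S\setminus D)-\mu(N)=\mu(U)>0$; in particular this set is nonempty, so such a $y$ exists. But $y\in U\cap D$ gives $f_1(y)\in V_1$ and $f_2(y)\in V_2$, while $y\notin N$ gives $f_1(y)=f_2(y)$, so $f_1(y)=f_2(y)\in V_1\cap V_2=\emptyset$, a contradiction. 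Hence $f_1(x)=f_2(x)$. The final assertion then follows immediately: when $f_1$ and $f_2$ are continuous on all of $D$, every point of $D$ is a point of continuity of both on $D$, so the conclusion applies at each $x\in D\cap\supp(\mu)$.

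I do not expect a serious obstacle, since the statement is genuinely elementary; but two points deserve care. The first is the bookkeeping with full-measure sets: one must verify that after intersecting the neighborhood $U$ with both $D$ and the agreement set $S\setminus N$ a nonempty (indeed positive-measure) set remains, and this is precisely where the support hypothesis is indispensable --- it is also the reason the conclusion can fail at points outside $\supp(\mu)$, even for continuous versions. The second is fixing the reading of ``point of continuity of $f_i$ on $D$'' as continuity at $x$ of the restriction of $f_i$ to $D$; the argument needs nothing more, and in fact does not even require $x\in D$, since $x\in\supp(\mu)$ together with $D$ being $\mu$-conull already forces $x\in\closure{D}$, which is all that is used when pulling back through continuity on $D$.
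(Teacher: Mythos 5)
Your proof is correct and takes essentially the same approach as the paper's: argue by contradiction, use continuity on $D$ to get a neighborhood of $x$ on which $f_1$ and $f_2$ stay separated, then use $x\in\supp(\mu)$ together with $D$ being conull to find positive measure inside that neighborhood, contradicting $f_1=f_2$ $\mu$-a.e. The only cosmetic difference is that the paper packages the separation through the scalar function $g=\delta_T(f_1,f_2)$ and tracks a level set of $g$, whereas you separate $f_1(x)$ and $f_2(x)$ directly by disjoint open sets in $T$.
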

\begin{proof}
	Let $\delta_T$ be any metric under which $T$ is complete.
Define the measurable function $g\colon S\to\Reals$ by
\[g(x)=\delta_T\bigl(f_1(x), f_2(x)\bigr).\]
We know
that $g = 0$ $\mu$-a.e., and also that $g$ is continuous at
$x$ on $D$, because $f_1$ and $f_2$ are continuous at $x$ on $D$ and $\delta_T$ is
continuous (on all of $T$).  Assume, for the purpose of contradiction, that
$g(x) = \varepsilon > 0$.
By the continuity of $g$ on $D$, there is an open neighborhood $B$ of $x$ such that
$g(B\cap D) \subseteq (\frac \varepsilon 2, \frac {3\varepsilon} 2)$.
But $x \in \supp(\mu)$, hence $\mu(B\cap D) = \mu(B) > 0$,
contradicting $g=0$ $\mu$-a.e.
\end{proof}

The observation that continuity gives a unique answer to conditioning on zero-measure events of the form $\{\rv X = x\}$ is an old one, going back to at least Tjur \citeyearpar{MR0345151}.

\subsection{Conditional Distributions}

For a pair of random variables $\rv X$ and $\rv Y$ taking values in a pair of measurable space $S$ and $T$, respectively, 
it is natural to consider not just individual conditional probabilities $\CondProb{\rv Y \in B}{\rv X}$, for measurable subsets $B \subseteq T$,
but the entire \defn{conditional distribution} 
$\CondProb{\rv Y}{\rv X} \defas \CondProb{\rv Y \in \pars}{\rv X} $.  Unfortunately, the fact that
Radon--Nikodym derivatives are only defined up to a null set
can cause problems.
In particular, while it is the case that 
\[
{\textstyle \sum_j } \CondProb{\rv Y \in B_j}{\rv X} =  \CondProb{\rv Y \in B}{\rv X} \ \as
\]
for every countable measurable partition $B_0,B_1,\dots$ of a measurable set $B \subseteq T$, the random set function given by $B \mapsto \CondProb{\rv Y \in B}{\rv X}$ need not be a measure in general because the exceptional null set may depend on the sequence.  However, when $T$ is Polish, we can construct versions of the conditional probabilities that combine to produce a measure.
In order to make this definition precise, we recall the notion of a probability kernel. 

\begin{definition}[Probability kernel]
Let $S$ and $T$ be Polish spaces.
A function $\kappa\colon S \times \Borel_T \to [0,1]$ is called a \defn{probability kernel (from $S$ to $T$)} when
\begin{enumerate}
\item for every $s \in S$, the function $\kappa(s,\pars)$ is a probability measure on $T$; and
\item for every $B \in \Borel_T$, the function $\kappa(\pars,B)$ is measurable.
\end{enumerate}
\end{definition}
For every $\kappa\colon S \times \Borel_T \to [0,1]$, let $\curry \kappa$ be the map $s \mapsto \kappa(s,\pars)$.
It can be shown that $\kappa$ is a probability kernel from $S$ to $T$ if
and only if $\curry \kappa$ is a (Borel) measurable 
function from
$S$ to $\ProbMeasures(T)$ {\citep[][Lem.~1.40]{MR1876169}}, where we adopt the weak topology on $\ProbMeasures(T)$, which is Polish because $T$ is.

We say that a conditional distribution $\CondProb{\rv Y}{\rv X} $
\defn{has a regular version} when, 
for some probability kernel $\kappa$ from $S$ to $T$,
\[\label{regdefn}
\CondProb{\rv Y \in B}{\rv X}  = \kappa(\rv X, B) \ \as
\]
for every measurable subset $B \subseteq T$.  In this case, we would say that $\curry \kappa(\rv X)$ is a regular version of the conditional distribution.  

\begin{proposition}[Regular versions {\citep[][Lem.~6.3]{MR1876169}}] \label{conddef}
Let $\rv X$ and $\rv Y$ be random variables in a Polish space $S$ and a measurable space $T$, respectively.
Then there is a regular version of the conditional distribution $\CondProb{\rv Y}{\rv X} $, which is, moreover, determined by the joint distribution of $\rv X$ and $\rv Y$.
\qed
\end{proposition}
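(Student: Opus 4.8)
The plan is to construct the kernel $\kappa$ explicitly from conditional distribution functions after reducing to a one-dimensional target, and then to observe that the defining relation pins $\kappa$ down $\Pr_{\rv X}$-almost everywhere purely in terms of the joint law. Theorem~\ref{rnthm} already supplies, for each fixed $B \in \Borel_T$, a measurable function $f_B = \CondProbFunc{\rv Y \in B}{\rv X}$ satisfying \eqref{intdefnofcondprob}; the whole difficulty is that this family is uncountable and each $f_B$ is only defined up to a $\Pr_{\rv X}$-null set, so one cannot merely set $\kappa(x,B) \defas f_B(x)$ and expect $\kappa(x,\pars)$ to be countably additive at a fixed $x$.

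First I would reduce to the case where $\rv Y$ takes values in $[0,1]$: since $T$ is Polish it is Borel isomorphic to a Borel set $T' \subseteq [0,1]$ via some $\varphi$, and a regular version for $\varphi(\rv Y)$, once made to be concentrated on $T'$ (which costs only a redefinition off the $\Pr_{\rv X}$-null set where $\kappa(\rv X, T') < 1$), transports back to a regular version for $\rv Y$ through $\varphi^{-1}$. Assuming then $\rv Y \in [0,1]$, fix for each rational $q$ a version $F_q \defas \CondProbFunc{\rv Y \le q}{\rv X}$. Monotonicity ``$F_q \le F_{q'}$ for $q \le q'$'', right-continuity ``$\inf_{q > q_0} F_q = F_{q_0}$'', and the boundary normalizations each hold $\Pr_{\rv X}$-a.s.\ and number only countably many, so they hold simultaneously on a single Borel set $D$ of full $\Pr_{\rv X}$-measure. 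For $x \in D$ the map $q \mapsto F_q(x)$ extends uniquely to a distribution function, hence determines a genuine probability measure $\kappa(x,\pars)$ on $[0,1]$ with $\kappa(x,[0,q]) = F_q(x)$; for $x \notin D$ let $\kappa(x,\pars)$ be an arbitrary fixed probability measure.

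Next I would verify that $\kappa$ is a probability kernel and that $\curry\kappa(\rv X)$ is a regular version. Each $\kappa(x,\pars)$ is a probability measure by construction. The collection of $B \in \Borel_{[0,1]}$ for which $x \mapsto \kappa(x,B)$ is measurable is a $\lambda$-system --- closed under complements and monotone countable unions exactly because each $\kappa(x,\pars)$ is a measure --- and contains the $\pi$-system of rational sub-intervals $[0,q]$ (on which $\kappa(\pars,[0,q])$ agrees with the measurable $F_q$ on $D$ and with a constant off $D$), so by Dynkin's $\pi$-$\lambda$ theorem it is all of $\Borel_{[0,1]}$; then $\curry\kappa$ is a measurable map into $\ProbMeasures([0,1])$ by the kernel characterization recalled just before the proposition, i.e.\ $\kappa$ is a probability kernel. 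By the same device, the collection of $B$ for which $\kappa(\pars,B)$ satisfies the defining identity \eqref{intdefnofcondprob} is a $\lambda$-system (using countable additivity of $\kappa(x,\pars)$ for the monotone limits, and $\kappa(x,B^\Complement) = 1 - \kappa(x,B)$ for complements) containing the rational sub-intervals, on which the identity holds since $\kappa(\rv X,[0,q]) = F_q(\rv X)$ a.s.; hence it is all of $\Borel_{[0,1]}$, so $\curry\kappa(\rv X)$ is a regular version, and transporting back through $\varphi$ finishes the existence part for general $T$.

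Finally, for the ``determined by the joint distribution'' clause: \eqref{regdefn} together with \eqref{intdefnofcondprob} says precisely that $\int_A \kappa(x,B)\,\Pr_{\rv X}(\dee x) = \Pr\theset{\rv Y \in B,\ \rv X \in A}$ for all $A \in \Borel_S$ and $B \in \Borel_T$, a requirement referring only to the joint law of $(\rv X,\rv Y)$. If $\kappa_1,\kappa_2$ both satisfy it, then for each fixed $B$ the functions $\kappa_1(\pars,B)$ and $\kappa_2(\pars,B)$ are Radon--Nikodym derivatives of $\Pr\theset{\rv Y \in B,\ \rv X \in \pars}$ with respect to $\Pr_{\rv X}$, hence agree off a $\Pr_{\rv X}$-null set; letting $B$ range over a countable generating $\pi$-system of $\Borel_T$ (available since $T$ is second countable) collapses these into one null set off which $\kappa_1(x,\pars)$ and $\kappa_2(x,\pars)$ agree on that $\pi$-system and so are equal as measures, giving $\Pr_{\rv X}$-a.s.\ uniqueness and dependence only on the joint law. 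The main obstacle is the passage carried out in the second and third paragraphs: turning an uncountable family of individually-a.e.-defined Radon--Nikodym derivatives into one everywhere-defined kernel. The trick that makes it work --- and the only place Polishness of the target is genuinely used --- is to isolate a countable \emph{spanning} subfamily (here the conditional distribution-function values at rationals of a linearly ordered Borel copy of $T$), intersect the countably many exceptional null sets into one, build a bona fide measure pointwise on the good set, and then climb back to all Borel sets by a monotone-class argument.
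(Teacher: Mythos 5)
Your proof is correct, and it is essentially the argument behind Kallenberg's Lemma~6.3, which the paper cites without supplying a proof of its own: transport to $[0,1]$ via a Borel isomorphism, fix conditional distribution-function values at the rationals, discard a single $\Pr_{\rv X}$-null set outside which the countably many monotonicity, right-continuity, and normalization conditions hold simultaneously, extend pointwise to a genuine probability measure on the good set, and then bootstrap from the generating $\pi$-system of rational intervals to all Borel sets by Dynkin's lemma --- once for measurability in $x$ and once for the defining integral identity --- with a.e.\ uniqueness following from agreement on a countable generating $\pi$-system. One observation your proof makes explicit and which is worth flagging: it is the \emph{target} space $T$, in which $\rv Y$ lives, that must be standard Borel (equivalently, Polish up to Borel isomorphism) so that the embedding into $[0,1]$ exists; $\rv X$ may take values in an arbitrary measurable space. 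The paper's statement attaches the Polish hypothesis to $S$ and calls $T$ merely a measurable space, which is the reverse of what the proof requires and of what Kallenberg's lemma actually asserts; the swap is harmless in every downstream use in the paper, where both $S$ and $T$ are always computable Polish, but as literally stated the hypotheses do not imply the conclusion, and your proof correctly uses the hypothesis on $T$.
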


As with the derivatives underlying conditional probabilities, $\curry\kappa$ is only defined up to a $\Pr_{\rv X}$-null set.  When such a kernel $\kappa$ exists, i.e., when there is a regular version of the conditional distribution $\CondProb{\rv Y}{\rv X}$, we define $\CondProbFunc{\rv Y}{\rv X}$ to be equal to some arbitrary version of $\curry \kappa$.

Despite the fact that the kernels underlying regular versions of conditional distributions are defined only up to sets of measure zero,
it follows immediately from Lemma~\ref{equalversions} that when $S$ and $T$ are Polish,
any two versions of 
$\CondProbFunc{\rv Y}{\rv X}$
that are continuous on some subset of the support of $\Pr_{\rv X}$ must agree on that subset. 
More carefully, let $\curry \kappa_1(\rv X)$ and $\curry \kappa_2(\rv X)$ be regular versions of the conditional distribution 
$\CondProb{\rv Y}{\rv X} $.
If $x \in S$ is a point of continuity
of $\curry \kappa_1$ and $\curry \kappa_2$, and
$x \in \supp(\Pr_{\rv X})$, then $\curry \kappa_1(x) = \curry\kappa_2(x)$.
In particular, if both maps are continuous on 
a set $D \subseteq S$, then they agree everywhere in $D \cap \supp(\Pr_{\rv X})$.


{When conditioning on a random variable whose distribution concentrates on a countable set, it is well known that a regular version of the
conditional distribution can be built by elementary conditioning with respect to single events.  This includes the special case of conditioning on \defn{discrete random variables}, i.e., 
those concentrating on a countable discrete subspace.

\begin{lemma}\label{discrete-ex} 
Let $\rv X$ and $\rv Y$ be random variables in Polish spaces $S$ and $T$, respectively.
Suppose the distribution of $\rv X$ concentrates on a countable set $R \subseteq S$, i.e., $\Pr_{\rv X}(R) = 1$ and $x \in R$ implies $\Pr_{\rv X}\theset x > 0$.
Let $\nu$ be an arbitrary probability measure on $T$.  Define the function $\kappa \colon S \times \Borel_T \to [0,1]$
by
\[\label{kdef}
\kappa(x,B) \defas \Pr \{\rv Y \in B \given \rv X = x\}
\]
for all $x \in R$ and $\curry \kappa(x) = \nu$ for $x \not\in R$.
Then $\kappa$ is a probability kernel and $\curry \kappa(\rv X)$ is a regular version of the conditional distribution $\CondProb {\rv Y }{\rv X}$.
\end{lemma}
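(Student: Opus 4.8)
The plan is to verify the two defining conditions for $\kappa$ to be a probability kernel and then to check that $\curry\kappa(\rv X)$ satisfies the characterizing integral identity \eqref{abstracteq} of a regular version of $\CondProb{\rv Y}{\rv X}$.

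First I would check that $\kappa$ is a probability kernel. Fix $s \in S$. If $s \notin R$, then $\kappa(s,\pars) = \nu$ is a probability measure by hypothesis. If $s \in R$, then $\Pr\{\rv X = s\} = \Pr_{\rv X}\theset s > 0$, so $\kappa(s,\pars) = \Pr\{\rv Y \in \pars \given \rv X = s\}$ is a well-defined probability measure by the remark following Definition~\ref{Def:conditional-prob-set}. For the measurability of $\kappa(\pars, B)$ with $B \in \Borel_T$ fixed, I would use that $R$ is a countable subset of the Polish space $S$: singletons are closed, hence Borel, so $R$ and indeed every subset of $R$ is Borel. Consequently the restriction of $\kappa(\pars,B)$ to $R$ has Borel preimages (they are subsets of $R$), while on the Borel set $S\setminus R$ the function is the constant $\nu(B)$; splicing the two pieces gives measurability on all of $S$.

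Next I would verify the integral identity. Fix measurable $A \subseteq S$ and $B \subseteq T$. Since $\Pr_{\rv X}$ is concentrated on the countable set $R$ with $\Pr_{\rv X}\theset x > 0$ for each $x \in R$, it is purely atomic, and for the nonnegative integrand $\kappa(\pars,B)$ the integral reduces to a convergent sum, $\int_A \kappa(x,B)\,\Pr_{\rv X}(\dee x) = \sum_{x \in A\cap R} \kappa(x,B)\,\Pr_{\rv X}\theset x$. Substituting $\kappa(x,B) = \Pr\{\rv Y \in B,\ \rv X = x\}/\Pr_{\rv X}\theset x$ for $x \in R$ (Definition~\ref{Def:conditional-prob-set}) and then invoking countable additivity of $\Pr$ over the pairwise disjoint events $\{\rv Y \in B,\ \rv X = x\}$, $x \in A\cap R$, this sum equals $\Pr\{\rv Y \in B,\ \rv X \in A\cap R\}$, which in turn equals $\Pr\{\rv Y \in B,\ \rv X \in A\}$ because $\Pr_{\rv X}(R)=1$. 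This is exactly \eqref{abstracteq} with $f_B = \kappa(\pars,B)$, so $\curry\kappa$ is a version of $\CondProbFunc{\rv Y \in B}{\rv X}$ for every measurable $B$; since $\kappa$ is a probability kernel, $\curry\kappa(\rv X)$ is therefore a regular version of $\CondProb{\rv Y}{\rv X}$ in the sense of \eqref{regdefn} (cf.\ Proposition~\ref{conddef}).

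There is no substantial obstacle here; the calculations are routine. The only points requiring a little care are the measurability of $\kappa(\pars, B)$, which is exactly where countability of $R$ is used, and the reduction of the integral against the atomic measure $\Pr_{\rv X}$ to a sum, which should be spelled out (e.g.\ by monotone convergence over finite subsets of $A\cap R$) so that the interchange of summation used in the final identity is justified.
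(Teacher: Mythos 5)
Your proof is correct and takes essentially the same route as the paper's: verify the two kernel conditions (using countability of $R$ for measurability) and then check the defining integral identity by reducing the integral against the atomic measure $\Pr_{\rv X}$ to a sum over $R\cap A$, which is exactly the computation the paper performs. You simply spell out the measurability of $\kappa(\pars,B)$ and the reduction to a sum a bit more explicitly than the paper does.
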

\begin{proof}
The function $\kappa$ is well-defined because $\Pr\{\rv X = x\} > 0$ for all $x \in R$.
It follows that $\curry \kappa(x)$ is a probability measure for every $x$.  Because $R$ is countable, $\curry \kappa$ is also measurable and so $\kappa$ is a probability kernel from $S$ to $T$.  Note that $\Pr\{ \rv X
\in R\} = 1$ and so, for all measurable sets $A \subseteq S$ and $B \subseteq T$, we have
\[
\int_{A} \kappa(x,B) \, \Pr_{\rv X}(\dee x)
&= \sum_{x\in R \cap A} \Pr \{\rv Y \in B \given \rv X = x\} \, \Pr\{\rv X = x\}\\
&= \sum_{x\in R \cap A} \Pr \{\rv Y \in B,\ \rv X = x\}\\
&= \Pr \{\rv Y \in B,\ \rv X \in A \}.
\]
That is, $\kappa(\rv X, B)$ is the conditional probability of the event $\theset {\rv Y \in B}$ given $\rv X$, and so
$\curry \kappa(\rv X)$ is a regular version of the conditional distribution $\CondProb {\rv Y }{\rv X}$.
\end{proof}

\subsection{Dominated Families}
\label{sec3dom}

Beyond the setting of conditioning on discrete random variables,
explicit formulas for conditional distributions are also available when Bayes' rule applies.
We begin by introducing the notion of a dominated kernel. (The usual terms, such as dominated families or models, refers to measurable families of probability measures, i.e., probability kernels.)

\begin{definition}[dominated kernel]
A probability kernel $\kappa$ from $T$ to $S$
is \defn{dominated} when there is a $\sigma$-finite measure $\nu$ on $S$ such that $\curry\kappa(t) \ll \nu$ for every $t \in T$.
\end{definition}

Let $\rv X$ and $\rv Y$ be random variables in Polish spaces $S$ and $T$, respectively, and let $\curry \kappa_{\rv X | \rv Y}(\rv X)$ be a regular version of $\CondProb {\rv X }{\rv Y}$ such that $\kappa_{\rv X | \rv Y}$ is dominated.  Then there exists a (product) measurable function $p_{\rv X | \rv Y}(\spars|\spars) \colon S \times T \to \Reals^+$
such that $p_{\rv X | \rv Y}(\spars| y)$ is a Radon--Nikodym derivative of
$\curry\kappa_{\rv X | \rv Y}(y)$ with respect to $\nu$
for every $y \in T$,
i.e.,
\[
\label{rndef}
\kappa_{\rv X | \rv Y}(y,A) = \int_A p_{\rv X | \rv Y}(x| y) \, \nu(\dee x)
\]
for every measurable set $A \subseteq S$ and every $y \in T$.

\begin{definition}[conditional density]
We call any such function $p_{\rv X | \rv Y}$ a \defn{conditional density of $\rv X$ given $\rv Y$ (with respect to $\nu$)}.
\end{definition}

Common finite-dimensional, parametric families of distributions (e.g.,\ exponential families like Gaussian, gamma, etc.) are dominated, and  so, in probabilistic models composed from these families, conditional densities exist and Bayes' rule gives a formula for expressing the conditional distribution.
We give a proof of this classic result for completeness.

\begin{lemma}[Bayes' rule {\citep[][Thm.~1.13]{MR1354146}}] \label{bayesrule}
Let $\rv X$ and $\rv Y$ be random variables as in Proposition~\ref{conddef}, 
and assume that there exists a conditional density $p_{\rv X | \rv Y}$ of $\rv X$ given $\rv Y$ with respect to a $\sigma$-finite measure $\nu$.
Let $\mu$ be an arbitrary distribution on $T$ and define $\kappa \colon S \times \Borel_T \to [0,1]$ by
\[
\label{formalbayes}
\kappa(x,B)
=
\frac{\int_{B} p_{\rv X | \rv Y}(x | y) \, \Pr_{\rv Y}(\dee y)}
     {\int_{\phantom{B}}   p_{\rv X | \rv Y}(x | y) \, \Pr_{\rv Y}(\dee y)}, \qquad B \in \Borel_T,
\]
for those points $x \in S$ where the denominator is positive and finite, and by $\curry\kappa(x) = \mu$ otherwise.
Then $\kappa$ is a probability kernel and $\curry\kappa(\rv X)$ is a regular version of the conditional distribution $\CondProb {\rv Y }{\rv X}$.
\end{lemma}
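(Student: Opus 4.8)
The plan is to verify the two conditions that make the statement meaningful: that $\kappa$ is a probability kernel from $S$ to $T$, and that $\curry\kappa(\rv X)$ is a regular version of $\CondProb{\rv Y}{\rv X}$. By \eqref{regdefn} and \eqref{intdefnofcondprob}, the second part reduces to proving
\[
\Pr\{\rv Y \in B,\ \rv X \in A\} = \int_A \kappa(x, B)\, \Pr_{\rv X}(\dee x)
\]
for all measurable $A \subseteq S$ and $B \subseteq T$, the measurability of $x \mapsto \kappa(x,B)$ being subsumed in the kernel property. For the kernel property I would abbreviate the denominator by $m(x) \defas \int p_{\rv X|\rv Y}(x|y)\,\Pr_{\rv Y}(\dee y)$. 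When $0 < m(x) < \infty$, the set function $B \mapsto \int_B p_{\rv X|\rv Y}(x|y)\,\Pr_{\rv Y}(\dee y)$ is a finite Borel measure on $T$ by monotone convergence, so its normalization $\kappa(x,\pars)$ is a probability measure; for every other $x$ we set $\curry\kappa(x) = \mu$, again a probability measure. For fixed $B$, the numerator and $m$ are measurable in $x$ by Tonelli's theorem (using that $p_{\rv X|\rv Y}$ is product measurable and nonnegative), so $\{x : 0 < m(x) < \infty\}$ is measurable, and there $\kappa(\pars, B)$ is a ratio of measurable functions while off it it equals the constant $\mu(B)$; hence $\kappa(\pars, B)$ is measurable. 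Thus $\kappa$ is a probability kernel, equivalently $\curry\kappa$ is Borel measurable into $\ProbMeasures(T)$.

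For the regular-version property I would first disintegrate the joint law. By the definition of conditional density, there is a regular version $\curry\kappa_{\rv X|\rv Y}(\rv Y)$ of $\CondProb{\rv X}{\rv Y}$ with $\kappa_{\rv X|\rv Y}(y,A) = \int_A p_{\rv X|\rv Y}(x|y)\,\nu(\dee x)$, so applying \eqref{intdefnofcondprob} with the roles of $\rv X$ and $\rv Y$ exchanged gives, for measurable $A \subseteq S$ and $B \subseteq T$,
\[
\Pr\{\rv X \in A,\ \rv Y \in B\} = \int_B \kappa_{\rv X|\rv Y}(y, A)\,\Pr_{\rv Y}(\dee y) = \int_B \int_A p_{\rv X|\rv Y}(x|y)\,\nu(\dee x)\,\Pr_{\rv Y}(\dee y).
\]
Taking $B = T$ and applying Tonelli shows $\Pr_{\rv X}(A) = \int_A m(x)\,\nu(\dee x)$, so $m$ is a Radon--Nikodym derivative of $\Pr_{\rv X}$ with respect to $\nu$. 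Since $\Pr_{\rv X}$ is a probability measure, the exceptional set $N \defas \{x : m(x) = 0 \text{ or } m(x) = \infty\}$ has $\Pr_{\rv X}(N) = 0$; moreover $\nu(\{m = \infty\}) = 0$, and $m(x) = 0$ forces $p_{\rv X|\rv Y}(x|y) = 0$ for $\Pr_{\rv Y}$-a.e.\ $y$, so the numerator $x \mapsto \int_B p_{\rv X|\rv Y}(x|y)\,\Pr_{\rv Y}(\dee y)$ vanishes for $\nu$-a.e.\ $x \in N$. Given measurable $A$ and $B$, I would split $\int_A \kappa(x,B)\,\Pr_{\rv X}(\dee x)$ over $A \cap N$ and $A \setminus N$: the first piece is $0$ because $\Pr_{\rv X}(A \cap N) = 0$ and $0 \le \kappa \le 1$; on the second, substituting $\kappa(x,B) = m(x)^{-1}\int_B p_{\rv X|\rv Y}(x|y)\,\Pr_{\rv Y}(\dee y)$ and $\Pr_{\rv X}(\dee x) = m(x)\,\nu(\dee x)$ cancels the factors $m(x)$, after which I restore the integral over all of $A$ (the discrepancy is a $\nu$-integral over $N$ of something that is $0$ $\nu$-a.e.) and apply Tonelli to recover $\Pr\{\rv X \in A,\ \rv Y \in B\}$ via the displayed identity. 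This is exactly \eqref{intdefnofcondprob} for $\kappa(\rv X,B)$, so $\kappa(\rv X,B)$ is a version of $\CondProb{\rv Y \in B}{\rv X}$ for every $B$, whence $\curry\kappa(\rv X)$ is a regular version of $\CondProb{\rv Y}{\rv X}$.

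The main obstacle is the handling of the exceptional set $N$ on which Bayes' formula is not applied. One has to check both that $N$ is $\Pr_{\rv X}$-null --- so that redefining $\curry\kappa$ to equal $\mu$ there does not affect any integral against $\Pr_{\rv X}$ --- and that the unnormalized numerator vanishes $\nu$-a.e.\ on $N$, which is what lets the $\nu$-integral be enlarged back to all of $A$ before the final appeal to Tonelli. Everything else is routine measure-theoretic bookkeeping together with repeated, easily justified, applications of Tonelli's theorem to the nonnegative product-measurable function $p_{\rv X|\rv Y}$.
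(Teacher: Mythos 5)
Your proposal is correct and follows essentially the same line as the paper's proof: disintegrate the joint law via the conditional density and Fubini/Tonelli, observe that the denominator $m(x)=\int p_{\rv X|\rv Y}(x|y)\,\Pr_{\rv Y}(\dee y)$ is a Radon--Nikodym derivative of $\Pr_{\rv X}$ with respect to $\nu$, show that $\{m=0\}\cup\{m=\infty\}$ is $\Pr_{\rv X}$-null, and then substitute $\Pr_{\rv X}(\dee x)=m(x)\,\nu(\dee x)$ and cancel. The only substantive addition in your version is the explicit verification that $\kappa(\pars,B)$ is measurable (i.e., that $\kappa$ really is a probability kernel), which the paper leaves implicit; your slightly more careful bookkeeping about the exceptional set $N$ (noting $\nu(\{m=\infty\})=0$ and that the numerator vanishes $\nu$-a.e.\ on $N$, so the $\nu$-integral can be extended from $A\setminus N$ back to $A$) makes precise what the paper compresses into ``the denominator is positive and finite for $\Pr_{\rv X}$-almost every $x$.''
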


\begin{proof}
Let $\curry\kappa_{\rv X | \rv Y}(\rv Y)$ be a regular version of the conditional distribution $\CondProb {\rv X }{\rv Y}$.  By hypothesis,  
$\kappa_{\rv X | \rv Y}$ is dominated by $\nu$ and $p_{\rv X | \rv Y}$ is a conditional density with respect to $\nu$.
By Proposition~\ref{conddef} and Fubini's theorem, for measurable sets $A \subseteq S$ and $B \subseteq T$, we have that
\[
\Pr\{\rv X \in A,\ \rv Y \in B\}
&= \int_B \kappa_{\rv X | \rv Y}(y,A) \, \Pr_{\rv Y}(\dee y)\\
&= \int_B \left (\int_A p_{\rv X | \rv Y}(x| y) \,\nu(\dee x) \right)  \Pr_{\rv Y}(\dee y) \\
&= \int_A \left (\int_B p_{\rv X | \rv Y}(x| y) \,\Pr_{\rv Y}(\dee y) \right) \nu(\dee x). \label{finaleq}
\]
Taking $B=T$, we have
\[\label{marg122}
\Pr_{\rv X}(A)
= \int_A \left (\int p_{\rv X | \rv Y}(x| y) \, \Pr_{\rv Y}(\dee y) \right) \nu(\dee x).
\]
Because $\Pr_{\rv X}(S) = 1$, this implies that the set of points $x$ for which the denominator of the right-hand side of \eqref{formalbayes} is infinite has $\nu$-measure zero, and thus $\Pr_{\rv X}$-measure zero.  
Taking $A$ to be the set of points $x$ for which the denominator is zero,
we see that $\Pr_{\rv X}(A) = 0$.
It follows that \eqref{formalbayes} characterizes $\kappa$ up to a $\Pr_{\rv X}$-null set.

By \eqref{marg122}, we see that the denominator is a density of $\Pr_{\rv X}$ with respect to $\nu$, and so we have
\[
\int_A & \kappa(x, B) \, \Pr_{\rv X}(\dee x)
=
\int_A \kappa(x,B) \left (\int p_{\rv X | \rv Y}(x| y) \, \Pr_{\rv Y}(\dee y) \right) \nu(\dee x),
\]
for all measurable sets $A \subseteq S$ and $B \subseteq T$.
Finally, by the definition of $\kappa$, Equation~\eqref{finaleq}, and the fact that the denominator is positive and finite for $\Pr_{\rv X}$-almost every $x$, we see that  $\curry\kappa(\rv X)$ is a regular version of the conditional distribution 
$\CondProb {\rv Y }{\rv X}$.
\end{proof}


Comparing Bayes' rule
\eqref{formalbayes}
to the definition of conditional density
\eqref{rndef},
we see that
any conditional density of $\rv Y$ given $\rv X$ (with respect to
$\Pr_{\rv Y}$) satisfies
\[
p_{\rv Y | \rv X}(y|x) =
\frac{p_{\rv X | \rv Y}(x | y)}{\int   p_{\rv X | \rv Y}(x | y) \,\Pr_{\rv Y}(\dee y)},
\]
for $\Pr_{(\rv X,\rv Y)}$-almost every $(x,y)$.

The following result suggests why the mere a.e.\ definedness of conditional distributions 
can be ignored by those working entirely within the framework of dominated families.

\begin{proposition}\label{denstokern}
Let $\rv X$ and $\rv Y$ be random variables on Polish spaces $S$ and $T$, respectively, 
let $\curry\kappa(\rv X)$ be a regular version of the conditional distribution $\CondProb{\rv Y}{\rv X}$,
and let $R \subseteq S$.
If a conditional density
$p_{\rv X | \rv Y}(x | y)$ of $\rv X$
given $\rv Y$ is continuous on $R \times T$, positive, and bounded,
then $\kappa$ as defined in \eqref{formalbayes} is a version of $\curry\kappa$ that is continuous on $R$.
In particular, if $R$ is a $\Pr_{\rv X}$-measure one subset, then $\kappa$ is a $\Pr_{\rv X}$-almost continuous version.
\end{proposition}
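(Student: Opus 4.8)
The plan is to invoke Bayes' rule (Lemma~\ref{bayesrule}), which already guarantees that the kernel $\kappa$ defined in \eqref{formalbayes} is a probability kernel and that $\curry\kappa(\rv X)$ is a regular version of $\CondProb{\rv Y}{\rv X}$; since regular versions are unique up to a $\Pr_{\rv X}$-null set, $\kappa$ is in particular a version of the given $\curry\kappa$, so all that remains is to show that $x\mapsto\kappa(x,\pars)$ is continuous on $R$ as a map into $\ProbMeasures(T)$ with the weak topology. First I would observe that, because $p_{\rv X|\rv Y}$ is positive and bounded (say by a constant $M$), for every $x\in R$ the denominator $\int p_{\rv X|\rv Y}(x|y)\,\Pr_{\rv Y}(\dee y)$ appearing in \eqref{formalbayes} is strictly positive (a positive integrand against a probability measure) and at most $M<\infty$; hence on $R$ the kernel $\kappa$ is genuinely given by the ratio formula, never by the fallback measure $\mu$.

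Next, I would fix a bounded continuous $g\colon T\to\Reals$, say $|g|\le C$, and consider $\Phi_g(x)\defas\int g(y)\,\kappa(x,\dee y)$, which for $x\in R$ equals $\bigl(\int g(y)\,p_{\rv X|\rv Y}(x|y)\,\Pr_{\rv Y}(\dee y)\bigr)\big/\bigl(\int p_{\rv X|\rv Y}(x|y)\,\Pr_{\rv Y}(\dee y)\bigr)$. Because $T$ is Polish the weak topology on $\ProbMeasures(T)$ is metrizable, and $R$ carries a metrizable subspace topology, so continuity on $R$ is equivalent to sequential continuity; accordingly let $x_n\to x$ with all $x_n,x\in R$. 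Joint continuity of $p_{\rv X|\rv Y}$ on $R\times T$ yields $p_{\rv X|\rv Y}(x_n|y)\to p_{\rv X|\rv Y}(x|y)$ for every fixed $y\in T$, while $|g(y)\,p_{\rv X|\rv Y}(x_n|y)|\le CM$ and $|p_{\rv X|\rv Y}(x_n|y)|\le M$ uniformly in $n$; since $\Pr_{\rv Y}$ is a probability measure, the dominated convergence theorem gives convergence of both the numerator and the denominator defining $\Phi_g(x_n)$ to those defining $\Phi_g(x)$, and as the limiting denominator is positive we conclude $\Phi_g(x_n)\to\Phi_g(x)$. Since $g$ ranged over all bounded continuous functions on $T$, this is exactly weak convergence $\kappa(x_n,\pars)\to\kappa(x,\pars)$, so $\curry\kappa$ restricted to $R$ is continuous. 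The ``in particular'' clause then follows directly from Definition~\ref{tnhethues}: if $R$ has $\Pr_{\rv X}$-measure one, continuity on $R$ is precisely $\Pr_{\rv X}$-almost continuity.

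I do not expect a serious analytic obstacle: the core of the argument is the dominated-convergence estimate, which the boundedness and joint-continuity hypotheses are tailored to supply, with positivity of the density ensuring the denominator is bounded away from the degenerate case on $R$. The only points demanding care are bookkeeping ones — correctly unpacking ``continuous on $R$'' for the measure-valued map $\curry\kappa$ through the weak topology (equivalently, through integration against bounded continuous test functions), and verifying that the exceptional $\Pr_{\rv X}$-null set on which $\kappa$ is set equal to $\mu$ never meets $R$, which is exactly what positivity plus boundedness of the conditional density guarantee.
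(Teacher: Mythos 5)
Your proof is correct, but it takes a genuinely different route from the paper's. The paper proves the computable analogue, Proposition~\ref{denstokerncomp}, and then obtains Proposition~\ref{denstokern} in a single line by relativizing that argument to an arbitrary oracle (since a function is continuous on $R$ iff it is $A$-computable on $R$ for some oracle $A$). Your argument is instead a direct, self-contained analytic proof: you observe that positivity and boundedness force the Bayes denominator into $(0,M]$ for every $x \in R$, so on $R$ the kernel is genuinely given by the ratio; then, exploiting metrizability of both $R$ and $\ProbMeasures(T)$ (with the weak/Prokhorov topology), you reduce continuity to sequential continuity, and for $x_n \to x$ in $R$ you apply dominated convergence --- with the constant dominating function $M$ (or $CM$) integrable against the probability measure $\Pr_{\rv Y}$, and pointwise convergence supplied by joint continuity on $R \times T$ --- to get convergence of numerator and denominator, hence of the ratio, against every bounded continuous test function $g$. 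The paper's relativization is terser and fits the computability-theoretic framing, but requires the reader to accept the heavier machinery of Section~\ref{compprob} and the proof of Proposition~\ref{denstokerncomp} (which goes through Proposition~\ref{inttwovartoonever} and Lemma~\ref{phikappa}); your version is more elementary and accessible to a reader who only wants the topological statement. One small bookkeeping note: the joint continuity hypothesis on $R \times T$ is indeed what licenses the pointwise-in-$y$ convergence $p(x_n|y) \to p(x|y)$, since $(x_n,y) \to (x,y)$ in the subspace $R \times T$; mere separate continuity would not suffice, so it is worth flagging that you are using the full joint-continuity hypothesis here.
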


We defer the proof to Section~\ref{cds}.
We will use this result in the proof of Lemma~\ref{lem:effectiveversion}, towards 
our central result.


\section{Computable Conditional Probabilities and Distributions}
\label{ccd2}

Before we lay the foundations for the remainder of the paper and define notions of computability for conditional probability and conditional distributions in the abstract setting, we address the computability of distributions conditioned on positive-measure sets.
In order for the distributions 
obtained from positive measure sets
to be computable,
we will need the conditioning events to be almost decidable sets.

\begin{lemma}[{\citep[][Prop.~3.1.2]{MR2558734}}]
\label{condcontset}
Let $(S,\mu)$ be a computable probability space and let $A$ be a
$\mu$-almost
decidable subset of $S$ satisfying $\mu(A)>0$.
Then $\mu(\pars|A)$ is a
computable probability measure, uniformly in a witness to the
$\mu$-almost decidability of $A$.
\end{lemma}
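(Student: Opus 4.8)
The plan is to verify the criterion of Proposition~\ref{lowerbounds}: since $\mu(A)>0$, the set function $\mu(\pars\given A)$ is a genuine probability measure on the computable Polish space $S$, so it is enough to show that $\mu(V\given A)$ is a c.e.\ real uniformly in a c.e.\ open set $V\subseteq S$ and a witness $(U,W)$ to the $\mu$-almost decidability of $A$. Recall $\mu(V\given A)=\mu(V\cap A)/\mu(A)$, so there are two things to handle: the denominator $\mu(A)$, and the numerator $\mu(V\cap A)$.

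First I would pin down $\mu(A)$ using the witness. Since $U\subseteq A$ and $W\subseteq S\setminus A$ we have $\mu(U)\le\mu(A)$ and $\mu(W)\le 1-\mu(A)$, while $\mu(U)+\mu(W)=1$ by hypothesis; hence $\mu(U)=\mu(A)$ and $\mu(W)=1-\mu(A)$. Now $\mu(U)$ is a c.e.\ real uniformly in $U$ (Proposition~\ref{lowerbounds} applied to $\mu$), and the identity $\mu(A)=1-\mu(W)$ exhibits $\mu(A)$ as upper semicomputable uniformly in $W$; so $\mu(A)$ is computable uniformly in the witness (this is exactly Lemma~\ref{almostdec-comp}). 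Because $\mu(A)>0$, the real $1/\mu(A)$ is then also computable, uniformly in the witness.

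The key step is to replace the merely almost-decidable set $A$ by the honestly c.e.\ open set $U$: because $A\setminus U$ is $\mu$-null (its measure is $\mu(A)-\mu(U)=0$), we have $\mu(V\cap A)=\mu(V\cap U)$ for every $V$. The set $V\cap U$ is c.e.\ open uniformly in $V$ and $U$ (c.e.\ open sets are closed under finite intersection), so $\mu(V\cap U)$ is a c.e.\ real uniformly in $V$ and $U$, again by Proposition~\ref{lowerbounds}. Consequently
\[
\mu(V\given A)=\frac{\mu(V\cap A)}{\mu(A)}=\mu(V\cap U)\cdot\frac{1}{\mu(A)}
\]
is the product of a nonnegative c.e.\ real and a positive computable real, hence a c.e.\ real, uniformly in $V$ and the witness. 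Applying Proposition~\ref{lowerbounds} one last time yields that $\mu(\pars\given A)$ is a computable probability measure, uniformly in the witness.

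There is no serious obstacle here; the only points requiring care are the observation that $A\setminus U$ may be discarded because it is $\mu$-null, and the routine fact that dividing a nonnegative c.e.\ real by a positive computable real gives a c.e.\ real (indeed $q<\mu(V\cap U)/\mu(A)$ iff $q\cdot\mu(A)<\mu(V\cap U)$, which is semidecidable since $q\cdot\mu(A)$ is computable and $\mu(V\cap U)$ is c.e.).
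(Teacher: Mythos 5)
Your proof is correct, and it takes a genuinely different and in fact more elementary route than the paper's. The paper first invokes Lemma~\ref{almdecbasis} to produce a $\mu$-almost decidable basis, observes that $\mu(\pars\given A)\ll\mu$, and then appeals to Corollary~\ref{almostcomplementary-to-compmeas} to reduce the problem to showing that $\mu(B\cap A)/\mu(A)$ is a \emph{computable} real for basis elements $B$; this is done by showing $B\cap A$ is $\mu$-almost decidable with a computable witness. Your proof instead applies Proposition~\ref{lowerbounds} directly, reducing to showing $\mu(V\given A)$ is a \emph{c.e.}\ real uniformly in a c.e.\ open $V$, and the decisive step is the null-set replacement $A\leadsto U$: since $\mu(A\setminus U)=0$, the numerator $\mu(V\cap A)$ equals $\mu(V\cap U)$, and $V\cap U$ is honestly c.e.\ open, so its measure is a c.e.\ real with no further machinery. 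Your approach avoids the almost-decidable basis, Lemma~\ref{comppair}, and Corollary~\ref{almostcomplementary-to-compmeas} entirely, at the cost of having to check the small arithmetic fact that a nonnegative c.e.\ real divided by a positive computable real is c.e.\ (which you correctly handle via $q<\mu(V\cap U)/\mu(A)\iff q\cdot\mu(A)<\mu(V\cap U)$). Both proofs are sound; yours is shorter and more self-contained, while the paper's reuses general-purpose lemmas it has already set up and will use again elsewhere.
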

\begin{proof}
By Lemma~\ref{almdecbasis} there is a $\mu$-almost decidable basis
for $S$. Note that $\mu(\pars|A)$ is absolutely continuous with respect to
$\mu$. Hence
by Corollary~\ref{almostcomplementary-to-compmeas}, it suffices to show that
$\frac{\mu(B \cap A)}{\mu(A)}$
is computable for a $\mu$-almost decidable set $B$, uniformly in 
witnesses to the $\mu$-almost decidability of $A$ and $B$. All subsequent
statements in this proof are uniform in both.
Now,
$B \cap A$ is $\mu$-almost decidable 
with computable witness,
and so its measure, the numerator, is a computable  real.
The denominator is likewise the measure of a set that is almost
decidable with computable witness,
hence is a computable real.
Finally, the ratio of two computable reals is itself computable.
\end{proof}

In the abstract setting, conditional probabilities are random variables.
In many applications of probability, including statistics,
the conditional probability map, or some version of it, is the actual object of interest, and so the computability of this map is our focus.

Let $B \subseteq T$ be a measurable set.
Viewing $\CondProbFunc{\rv Y \in B}{\rv X}$ as a function from $S$ to $[0,1]$, 
recall that we can speak formally as to whether this function is
everywhere computable, $\Pr_{\rv X}$-almost computable, and/or $L^1$-computable.
Recall also that the function $\CondProbFunc{\rv Y \in B}{\rv X}$ may have many versions that agree only up to a null set.  Despite this, their almost computability does not differ (up to a change in domain by a null set).
\begin{lemma}
Let $f$ be a measurable function from a computable probability space $(S,\mu)$ to a computable Polish space $T$.
If any version of $f$ is computable on a $\mu$-measure $p$ set, then every version of $f$ is computable on a $\mu$-measure $p$ set.  In particular, if one version is $\mu$-almost computable, then all version are.
\end{lemma}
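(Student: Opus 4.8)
The plan is to transfer a witness of computability from one version of $f$ to another, after removing the null set on which the two versions disagree. Fix two versions $f_1$ and $f_2$ of $f$, and suppose $f_1$ is computable on a measurable set $R$ with $\mu(R) = p$, with witness $\{U_n\}_{n \in \Naturals}$. It suffices to produce a measurable set $R'$ with $\mu(R') = p$ on which $f_2$ is computable, since the roles of $f_1$ and $f_2$ are symmetric.

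First I would check that the disagreement set $N \defas \{x \in S \st f_1(x) \ne f_2(x)\}$ is measurable and $\mu$-null. Fixing a compatible metric $\delta_T$ on $T$, the map $x \mapsto \delta_T(f_1(x), f_2(x))$ is measurable: it is the composition of the measurable map $(f_1,f_2)\colon S \to T \times T$ (measurable into the Borel $\sigma$-algebra of $T \times T$, which coincides with the product $\sigma$-algebra since $T$ is separable) with the continuous function $\delta_T$. Hence $N$, the preimage of $(0,\infty)$ under this map, is measurable, and $\mu(N) = 0$ is exactly the hypothesis that $f_1$ and $f_2$ are versions of one another.

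Next, set $R' \defas R \setminus N$, which is measurable with $\mu(R') = \mu(R) - \mu(R \cap N) = p - 0 = p$. It remains to verify that the very same sequence $\{U_n\}_{n\in\Naturals}$ witnesses the computability of $f_2$ on $R'$, i.e.\ that $f_2^{-1}[B_n] \cap R' = U_n \cap R'$ for every $n$, where $\{B_n\}_{n\in\Naturals}$ enumerates the ideal open balls of $T$. Since $R' \subseteq S \setminus N$ we have $f_1 = f_2$ pointwise on $R'$, so $f_2^{-1}[B_n] \cap R' = f_1^{-1}[B_n] \cap R'$; and since $R' \subseteq R$, intersecting the witness identity $f_1^{-1}[B_n] \cap R = U_n \cap R$ with $R'$ gives $f_1^{-1}[B_n] \cap R' = U_n \cap R'$. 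Chaining these equalities yields the claim, so $f_2$ is computable on the $\mu$-measure $p$ set $R'$; taking $p = 1$ gives the final sentence. I do not expect any genuine obstacle here: the only points needing a moment's care are the measurability of $N$ and the elementary observation that a computability witness for a function on a set automatically restricts to a witness on any subset.
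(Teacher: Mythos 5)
Your proof is correct and takes essentially the same route as the paper's: remove the $\mu$-null disagreement set from the witnessing domain and observe that the same witness sequence works for the other version on the resulting set of the same measure. You spell out two details the paper leaves implicit (the measurability of the disagreement set and the verification that a computability witness restricts to subsets), but the underlying argument is identical.
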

\begin{proof}
Let $f$ be computable on a $\mu$-measure $p$ set $D$, and let $g$ be a version of $f$, i.e.,
\linebreak
$Z \defas \theset{ s \in S \st f(s) \neq g(s) }$ 
is a $\mu$-null set.  Therefore, $f=g$ on
$D \setminus Z$.  Hence $g$ is computable on the $\mu$-measure $p$ set $D \setminus Z$.  If $f$ is $\mu$-almost computable, then it is computable on a $\mu$-measure one set, and so $g$ is as well.
\end{proof}

We can develop notions of computability for conditional distributions in a similar way.  We begin by characterizing the computability of probability kernels.

\begin{definition}[Computable probability kernel]
\label{compkern}
Let $S$ and $T$ be computable Polish spaces and let $\kappa \colon S
\times \Borel_T \to [0,1]$ be a probability kernel from $S$ to $T$.
Then we say that $\kappa$ is a \defn{computable probability
kernel} when $\curry \kappa \colon S \to \ProbMeasures(T)$ given by
$\curry \kappa(s) \defas \kappa(s,\pars)$ is a computable function in the ordinary sense between $S$ and the computable Polish space $\ProbMeasures(T)$ induced by $T$.
Similarly, we say that $\kappa$ is computable on a subset $D
\subseteq S$ when $\curry \kappa$ is computable on $D$.
\end{definition}

As we will see, this notion of computability corresponds with a more direct notion of computability for $\kappa$, which we now develop. We begin by noting that the collection of sets of the form 
\[
\label{PaqDef}
      \ProkBallS{T}{A, q} \defas \{ \mu \in \ProbMeasures(T) \st \mu(A) > q \}
\]
for $A$ open and $q$ rational,
form a subbasis for the weak topology on $\ProbMeasures(T)$ (which is the topology induced by the Prokhorov metric).  Indeed, it suffices for $A$ to range over finite unions of some countable basis of $T$. We will also omit mention of $T$ when the ambient space is clear from context. 

The next result relates balls in the Prokhorov metric to the subbasis elements above.
Recall that $\delta_p$ denotes the Prokhorov metric and that the collection $ \cD_P$ of measures with finitely many point masses on elements $\cD_T$, each assigned rational mass, form a dense set.  
\begin{proposition}[{\citep[][Prop.~B.17]{MR2159646}}]
\label{Prokhorov-ball-condition}
Let $\nu,\mu \in \ProbMeasures(T)$, and assume that $\nu$ is supported on a finite set $S$.
Then the condition $\delta_p(\nu,\mu) < \epsilon$ is equivalent to the finite set of conditions
\[
\mu(A^\epsilon) > \nu(A) - \epsilon
\]
for all $A \subseteq S$.
\qed
\end{proposition}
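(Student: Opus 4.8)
The plan is to work straight from the definition $\delta_p(\nu,\mu)\defas\inf\{\,\epsilon'>0 : \nu(A)\le \mu(A^{\epsilon'})+\epsilon'\text{ for every Borel }A\,\}$, and to collapse the uncountable system of inequalities on the right-hand side to a \emph{finite} one that can be compared directly to the displayed conditions. Two elementary reductions do this for free. First, since $\nu$ is concentrated on the finite set $S$, for every Borel $A$ we have $\nu(A)=\nu(A\cap S)$ while $(A\cap S)^{\epsilon'}\subseteq A^{\epsilon'}$; hence the family of inequalities $\nu(A)\le\mu(A^{\epsilon'})+\epsilon'$ over all Borel $A$ is equivalent to the same family restricted to the $2^{|S|}$ subsets $A\subseteq S$. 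Second, the assignment $\epsilon'\mapsto A^{\epsilon'}$ is nondecreasing, and in fact $A^{\epsilon}=\bigcup_{0<\epsilon'<\epsilon}A^{\epsilon'}$ (a point $p$ with a witness $q\in A$ at distance $r<\epsilon$ already lies in $A^{\epsilon'}$ for any $\epsilon'\in(r,\epsilon)$), so by continuity of $\mu$ from below $\mu(A^{\epsilon'})\uparrow\mu(A^{\epsilon})$ as $\epsilon'\uparrow\epsilon$. I will also use that the set of parameters $\epsilon'$ satisfying all the defining inequalities is upward closed, again by monotonicity of $A^{\epsilon'}$ in $\epsilon'$.

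For the forward direction: if $\delta_p(\nu,\mu)<\epsilon$, then because the defining infimum is strictly below $\epsilon$ there is a valid parameter $\epsilon''<\epsilon$, i.e.\ $\nu(A)\le\mu(A^{\epsilon''})+\epsilon''$ for every Borel $A$, hence for every $A\subseteq S$; for such $A$, monotonicity gives $\mu(A^{\epsilon})\ge\mu(A^{\epsilon''})\ge\nu(A)-\epsilon''>\nu(A)-\epsilon$, which is exactly the displayed condition.

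For the converse: suppose $\mu(A^{\epsilon})>\nu(A)-\epsilon$ for every $A\subseteq S$. Fixing such an $A$, since $\mu(A^{\epsilon'})+\epsilon'\to\mu(A^{\epsilon})+\epsilon>\nu(A)$ as $\epsilon'\uparrow\epsilon$, there is $\epsilon'_A<\epsilon$ with $\nu(A)\le\mu(A^{\epsilon'_A})+\epsilon'_A$. Setting $\epsilon':=\max_{A\subseteq S}\epsilon'_A<\epsilon$ (legitimate because there are finitely many test sets) and invoking monotonicity in $\epsilon'$ once more, $\nu(A)\le\mu(A^{\epsilon'})+\epsilon'$ for all $A\subseteq S$, hence for all Borel $A$ by the first reduction; thus $\epsilon'$ is a valid parameter and $\delta_p(\nu,\mu)\le\epsilon'<\epsilon$.

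The one point that needs care — and the only genuine obstacle — is the mismatch between the strict inequalities in the statement and the non-strict inequalities together with the infimum in the definition of $\delta_p$. This is precisely what the continuity-from-below fact $\mu(A^{\epsilon'})\uparrow\mu(A^{\epsilon})$ handles: it converts the strict slack available at radius $\epsilon$ into a genuinely valid radius $\epsilon'<\epsilon$. Everything else is monotonicity bookkeeping, rendered harmless by the finiteness of $\supp(\nu)$, which is what licenses the passage to a maximum over the finitely many sets $A\subseteq S$.
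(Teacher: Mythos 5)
The paper states this proposition with a citation to G\'acs (Prop.~B.17) and supplies no proof of its own, so there is nothing to compare against; your argument fills the gap and it is correct. Both reductions are sound (restricting the test sets to subsets of $S$ via $\nu(A)=\nu(A\cap S)$ and $(A\cap S)^{\epsilon'}\subseteq A^{\epsilon'}$, and using $A^{\epsilon}=\bigcup_{\epsilon'<\epsilon}A^{\epsilon'}$ together with continuity of $\mu$ from below), the forward direction correctly extracts a valid parameter $\epsilon''<\epsilon$ from the strict infimum, and the converse correctly exploits finiteness of $S$ to take a maximum of the finitely many $\epsilon'_A<\epsilon$, which is exactly where the hypothesis that $\nu$ has finite support is needed.
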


The next corollary states that we can compute a representation for a Prokhorov ball in terms of the subbasis elements. The sets are easily defined from those in Proposition~\ref{Prokhorov-ball-condition}.

\begin{corollary}
\label{Prokhorov-ball-in-terms-of-P-A-q} 
Uniformly in $\nu \in \cD_P$ and $\epsilon \in \Rationals$, we can compute a finite collection of pairs $(A_i, q_i)_{i \leq n}$, each $A_i$ a finite union of open balls of radius $\epsilon$ around elements of $\cD_T$ and each $q_i$ a rational, 
such that 
\[
\{\mu \in \ProbMeasures(T)\st \delta_p(\mu, \nu) < \epsilon\} = \bigcap_{i \leq n}\ProkBall{A_i, q_i}.
\qed
\]
\end{corollary}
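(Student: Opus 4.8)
The plan is to read the collection off directly from Proposition~\ref{Prokhorov-ball-condition}, which already expresses a Prokhorov ball around a finitely supported measure as a finite conjunction of inequalities of the form $\mu(A^\epsilon) > \nu(A) - \epsilon$; the only work is to check that the data so produced has the stated shape and can be obtained uniformly. Recall that an element $\nu \in \cD_P$ has the form $\nu = q_1 \mbox{\boldmath$\delta$}_{t_1} + \dots + q_k \mbox{\boldmath$\delta$}_{t_k}$ with each $t_i \in \cD_T$, each $q_i \in \Rationals$, $q_i \ge 0$, and $\sum_i q_i = 1$; from a name for $\nu$ in the standard enumeration of $\cD_P$ we can compute the finite list $(t_1,q_1),\dots,(t_k,q_k)$, and hence the finite set $S \defas \{t_1,\dots,t_k\} \subseteq \cD_T$ on which $\nu$ is concentrated.

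For each of the finitely many subsets $A \subseteq S$, consider the pair $(A^\epsilon,\ \nu(A)-\epsilon)$, where $A^\epsilon = \bigcup_{p \in A} B_\epsilon(p)$ is a finite union of open balls of radius $\epsilon$ centred at elements of $\cD_T$ (in particular open, as required for it to play the role of $A$ in a set $\ProkBall{A,q}$) and $\nu(A)-\epsilon = \bigl(\sum_{i\,:\,t_i \in A} q_i\bigr) - \epsilon$ is a rational. By Proposition~\ref{Prokhorov-ball-condition} together with the symmetry of $\delta_p$, for every $\mu \in \ProbMeasures(T)$ we have $\delta_p(\mu,\nu) < \epsilon$ if and only if $\mu(A^\epsilon) > \nu(A)-\epsilon$ for all $A \subseteq S$, that is, if and only if $\mu \in \bigcap_{A \subseteq S} \ProkBall{A^\epsilon,\ \nu(A)-\epsilon}$. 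Enumerating the subsets of $S$ as $A^{(1)},\dots,A^{(n)}$ and setting $(A_i,q_i) \defas \bigl((A^{(i)})^\epsilon,\ \nu(A^{(i)})-\epsilon\bigr)$ yields the required finite collection; one may discard the term $A = \emptyset$, and more generally any $A$ with $\nu(A)-\epsilon < 0$, since the associated $\ProkBall{A^\epsilon,\ \nu(A)-\epsilon}$ is then all of $\ProbMeasures(T)$.

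Every step --- extracting $S$ and the masses $q_i$ from a name for $\nu$, enumerating the subsets of $S$, forming the finite unions of $\epsilon$-balls, and computing the rational thresholds --- is effective and uniform in $\nu$ and $\epsilon$, which establishes the uniformity claim. I do not expect a genuine obstacle here: the mathematical content lies entirely in Proposition~\ref{Prokhorov-ball-condition}, and the corollary amounts to the bookkeeping observations that $A^\epsilon$ is a finite union of ideal $\epsilon$-balls and that $\nu(A)-\epsilon$ is rational whenever $\nu \in \cD_P$ and $\epsilon \in \Rationals$. The only minor points needing attention are the appeal to symmetry of $\delta_p$ to pass from $\delta_p(\nu,\mu)$ (as in Proposition~\ref{Prokhorov-ball-condition}) to $\delta_p(\mu,\nu)$ (as in the statement), and the harmless presence of vacuous conditions with negative threshold.
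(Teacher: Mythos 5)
Your proof is correct and matches the route the paper intends: the paper gives no explicit argument for this corollary, only the preceding remark that ``the sets are easily defined from those in Proposition~\ref{Prokhorov-ball-condition},'' and your write-up is exactly the natural unpacking of that remark --- extracting the finite support $S$ from the name of $\nu\in\cD_P$, taking the pairs $\bigl(A^\epsilon,\ \nu(A)-\epsilon\bigr)$ over $A\subseteq S$, and invoking symmetry of $\delta_p$. The remarks about discarding vacuous conditions and the rationality of $\nu(A)-\epsilon$ are accurate bookkeeping and need no further justification.
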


Finally, as a direct consequence of \citep[][Prop.~4.2.1]{MR2519075}, these subbasis elements are c.e.\ open.

\begin{proposition}
\label{equiv-weak-top-and-metric-for-Prokhorov}
Let $A$ be a c.e. open subset of $T$ and $q$ be a rational. 
Then the set $\ProkBall{A,q}$ is c.e.\ open in the Prokhorov metric, uniformly in $A$ and $q$. 
\qed
\end{proposition}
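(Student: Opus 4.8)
The plan is to recognize that $\ProkBall{A,q}$ is precisely the superlevel set $\{\mu \in \ProbMeasures(T) \st \mu(A) > q\}$ of the evaluation functional $\mathsf{ev}_A \colon \ProbMeasures(T) \to [0,1]$ given by $\mu \mapsto \mu(A)$, and that the desired conclusion is exactly the assertion that $\mathsf{ev}_A$ is lower semicomputable, uniformly in a c.e.\ open code for $A$. Indeed, $\ProkBall{A,q} = \mathsf{ev}_A^{-1}\bigl((q,\infty)\bigr)$, and $(q,\infty)$ is uniformly c.e.\ open in $\Reals$ (for $q\in\Rationals$ it is the computable union $\bigcup_{k\in\Nats} B(q+k,k)$ of ideal balls with rational center and radius). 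So once we know that, given a representation of $\mu \in \ProbMeasures(T)$ together with an index for the c.e.\ set of ideal balls whose union is $A$, one can enumerate the rationals strictly below $\mu(A)$, it follows at once that $\ProkBall{A,q}$ is c.e.\ open uniformly in $A$ and $q$. This uniform (relativized) form of Proposition~\ref{lowerbounds}, namely \citep[Prop.~4.2.1]{MR2519075}, is precisely what we invoke, so the statement is a direct corollary.

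For a self-contained derivation of the lower semicomputability of $\mu \mapsto \mu(A)$, I would argue as follows. Write $A = \bigcup_n U_n$ where $(U_n)_{n\in\Nats}$ is an increasing, computably enumerated sequence of finite unions of ideal balls extracted from the c.e.\ code for $A$. By continuity of measures from below, $\mu(A) = \sup_n \mu(U_n)$, so it suffices to enumerate rationals below $\mu(U)$ for a fixed finite union $U$ of ideal balls, uniformly in $U$. From a representation of $\mu$ one reads off elements $\nu \in \cD_P$ with $\delta_p(\nu,\mu) < \epsilon$ for rational $\epsilon$ as small as desired. The certificate that $\mu(U) > q$ is then the following semidecidable data: a finite union $U'$ of ideal balls whose $\epsilon$-neighbourhood $(U')^{\epsilon}$ lies inside $U$ — a condition one can certify via the triangle inequality on ideal points and rational radii, e.g.\ by checking that each ball of $(U')^{\epsilon}$ sits inside a single ball of $U$ — together with $\nu(U') > q + \epsilon$, which is semidecidable since membership of an ideal point in an ideal ball is semidecidable and the weights of $\nu$ are rational. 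The Prokhorov inequality then yields $\mu(U) \ge \mu\bigl((U')^{\epsilon}\bigr) \ge \nu(U') - \epsilon > q$. Conversely, if $\mu(U) > q$, then by inner regularity there is a compact $K \subseteq U$ with $\mu(K) > q$, and covering $K$ by finitely many small ideal balls lying well inside $U$ produces such a certificate for a suitable $\nu$ close to $\mu$; this is the routine bookkeeping carried out in \citep[{\S}B.6]{MR2159646} and \citep[Ch.~4]{MR2519075}, and it is of a piece with Proposition~\ref{Prokhorov-ball-condition} and Corollary~\ref{Prokhorov-ball-in-terms-of-P-A-q}.

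The only delicate point — and the reason it is cleanest simply to quote Proposition~\ref{lowerbounds} — is this converse half of the certificate argument: one must match a compact inner approximation of the open set $U$ against ideal balls whose Prokhorov $\epsilon$-fattenings remain inside $U$ while still capturing more than $q$ of the $\mu$-mass, and then check that every inequality in sight (distances between ideal points, comparisons of rationals, the Prokhorov estimate) is genuinely semidecidable and that the resulting enumeration is uniform both in the c.e.\ code for $A$ and in $q$. No new idea is required beyond what is already packaged into the cited proposition, which establishes the uniform lower semicomputability of $(\mu,A)\mapsto\mu(A)$; the c.e.\ openness of $\ProkBall{A,q}$, uniformly in $A$ and $q$, is then immediate.
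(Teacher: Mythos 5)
Your proposal takes exactly the route the paper does: the paper proves this proposition simply by citing \citep[Prop.~4.2.1]{MR2519075} (the same result restated here as Proposition~\ref{lowerbounds}), observing that $\ProkBall{A,q}$ is the preimage of $(q,1]$ under the uniformly lower semicomputable map $(\mu,A)\mapsto\mu(A)$. Your additional paragraph unpacking the certificate argument behind that lower semicomputability is a correct elaboration of what the cited proposition packages, but it is not a different approach.
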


Recall that a lower semicomputable function from a computable Polish
space to $[0,1]$ is one for which
the preimage of $(q, 1]$
is c.e.\ open, uniformly in rationals $q$.
Furthermore, we say that a function $f$ from a computable Polish space $S$ to $[0,1]$ is \emph{lower semicomputable on $D\subseteq S$} when
there is a uniformly computable sequence
$\{U_q\}_{q\in\Rationals}$
of c.e.\ open sets
such that
\[f^{-1}\bigl[(q,1]\bigr] \cap D = U_q \cap D.\]

We can also interpret a
computable probability kernel
$\kappa$
as a computable map sending each c.e.\ open set $A \subseteq T$ to a lower semicomputable function
$\kappa(\pars,A)$.

\begin{lemma}\label{phikappa}
Let $S$ and $T$ be computable Polish spaces,
let $\kappa$ be a probability kernel from $S$ to $T$, and let $D \subseteq S$.
If
$\curry \kappa$ is computable on $D$
then
$\kappa(\pars,A)$ is lower semicomputable on $D$ uniformly in the c.e.\
open set $A$, and conversely.
\end{lemma}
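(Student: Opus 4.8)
The plan is to prove the two directions separately, using the subbasis characterization of the weak topology developed in Corollary~\ref{Prokhorov-ball-in-terms-of-P-A-q} and Proposition~\ref{equiv-weak-top-and-metric-for-Prokhorov}, together with the fact that $\curry\kappa$ computable on $D$ means there is a computable sequence $\{W_n\}$ of c.e.\ open subsets of $S$ with $\curry\kappa^{-1}[B_n] \cap D = W_n \cap D$ for the canonical enumeration $\{B_n\}$ of ideal balls of $\ProbMeasures(T)$ in the Prokhorov metric.

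\emph{($\curry\kappa$ computable on $D$ $\Rightarrow$ $\kappa(\pars,A)$ lower semicomputable on $D$, uniformly in $A$.)} Fix a c.e.\ open $A \subseteq T$ and a rational $q$. We must produce a c.e.\ open $U \subseteq S$ with $\{s : \kappa(s,A) > q\} \cap D = U \cap D$, uniformly. The key observation is that $\{s : \kappa(s,A) > q\} = \curry\kappa^{-1}\bigl[\ProkBall{A,q}\bigr]$, and by Proposition~\ref{equiv-weak-top-and-metric-for-Prokhorov} the set $\ProkBall{A,q}$ is c.e.\ open in $\ProbMeasures(T)$, uniformly in $A$ and $q$; say $\ProkBall{A,q} = \bigcup_{n \in E} B_n$ for a c.e.\ set $E$ that we can enumerate uniformly. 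Then, using a witness $\{W_n\}$ to the computability of $\curry\kappa$ on $D$, we have
\[
\curry\kappa^{-1}\bigl[\ProkBall{A,q}\bigr] \cap D
= \Bigl(\bigcup_{n \in E} \curry\kappa^{-1}[B_n]\Bigr) \cap D
= \Bigl(\bigcup_{n \in E} W_n\Bigr) \cap D,
\]
and $U \defas \bigcup_{n \in E} W_n$ is c.e.\ open, obtained uniformly from $A$ and $q$. This gives lower semicomputability of $\kappa(\pars,A)$ on $D$ uniformly in $A$.

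\emph{(Converse.)} Suppose $\kappa(\pars,A)$ is lower semicomputable on $D$ uniformly in the c.e.\ open set $A$. To show $\curry\kappa$ is computable on $D$ it suffices, by Definition~\ref{comp-partial-func}, to produce for each ideal ball $B_n$ of $\ProbMeasures(T)$ a c.e.\ open $U_n \subseteq S$ with $\curry\kappa^{-1}[B_n] \cap D = U_n \cap D$, uniformly in $n$. By Corollary~\ref{Prokhorov-ball-in-terms-of-P-A-q}, each such $B_n$ (a Prokhorov ball about an element of $\cD_P$) can be written uniformly as a finite intersection $\bigcap_{i \le m} \ProkBall{A_i, q_i}$ with each $A_i$ a finite union of ideal balls of $T$ — in particular a c.e.\ open subset of $T$ — and each $q_i$ rational. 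Hence $\curry\kappa^{-1}[B_n] = \bigcap_{i \le m}\{s : \kappa(s, A_i) > q_i\}$, and intersecting with $D$,
\[
\curry\kappa^{-1}[B_n] \cap D = \bigcap_{i \le m}\bigl(\{s : \kappa(s,A_i) > q_i\} \cap D\bigr) = \bigcap_{i \le m}\bigl(U^{(i)} \cap D\bigr),
\]
where $U^{(i)}$ is the c.e.\ open set witnessing lower semicomputability of $\kappa(\pars,A_i)$ on $D$ at threshold $q_i$, obtained uniformly in $i$. Since finite intersections of c.e.\ open sets are c.e.\ open, $U_n \defas \bigcap_{i \le m} U^{(i)}$ is a c.e.\ open set computed uniformly in $n$, as required.

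The routine pieces are the uniformity bookkeeping and the observation that preimages commute with unions and intersections; the one point deserving care — the main obstacle, such as it is — is making sure that when we write a Prokhorov ball as a Boolean combination of subbasis sets $\ProkBall{A,q}$ we only ever use \emph{finite intersections} (and c.e.\ \emph{unions}), never complements or infinite intersections, so that the class of c.e.\ open sets is preserved throughout; this is exactly what Corollary~\ref{Prokhorov-ball-in-terms-of-P-A-q} and Proposition~\ref{equiv-weak-top-and-metric-for-Prokhorov} guarantee, and it is why we route the argument through them rather than through the Prokhorov metric directly.
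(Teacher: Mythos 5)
Your proof is correct and takes essentially the same route as the paper's: both directions hinge on the identity $\{s : \kappa(s,A) > q\} = \curry\kappa^{-1}[\ProkBall{A,q}]$, with Proposition~\ref{equiv-weak-top-and-metric-for-Prokhorov} driving the forward direction and Corollary~\ref{Prokhorov-ball-in-terms-of-P-A-q} driving the converse. The only difference is that you unpack slightly more detail — explicitly writing $\ProkBall{A,q}$ as a c.e.\ union of ideal balls of $\ProbMeasures(T)$ and pushing the witness $\{W_n\}$ through the union — where the paper simply asserts the existence of $V_{A,q}$ uniformly in $A,q$; this extra bookkeeping is a harmless (and arguably clarifying) expansion of the same argument.
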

\begin{proof}
Let $q \in (0,1)$ be rational, 
let $A\subseteq T$ be c.e.\ open,
and define $I \defas (q,1]$.
Then 
\[
\label{invEq}
\kappa^{-1}\bigl(\cdot,A\bigr)[I] = \{ x\st \curry \kappa(x)(A) \in I\} = \curry \kappa^{-1}[\ProkBall{A,q}],
\]
where $\ProkBall{A,q}$ is as in \eqref{PaqDef}.
By Proposition~\ref{equiv-weak-top-and-metric-for-Prokhorov},
$\ProkBall{A,q}$ is even c.e.\ open.

Suppose $\curry \kappa$ is computable on $D$. Then
there is a c.e.\ open set $V_{A,q}$, uniformly computable in $q$ and $A$, such that
\[
\label{invIntersected}
V_{A,q} \cap D = \curry  \kappa^{-1}[\ProkBall{A,q}] \cap D =
\kappa(\pars,A)^{-1}[I] \cap D,
\]
and so $\kappa(\pars,A)$ is lower semicomputable
on $D$, uniformly in $A$.

Conversely, suppose
$\kappa(\pars,A)$ is lower semicomputable
on $D$, uniformly in $A$. 
Then by \eqref{invEq}, uniformly in $A$ and $q$, we can find a c.e.\ open
$V_{A,q}$ such that 
\eqref{invIntersected} holds. 

By Corollary~\ref{Prokhorov-ball-in-terms-of-P-A-q}, every  basic open ball in the Prokhorov metric is the finite intersection of sets of the form $\ProkBall{A, q}$,
which are c.e.\ open themselves because a finite intersection of c.e.\ open sets is c.e.\ open. Therefore, uniformly in a c.e.\ open set $U$ in the Prokhorov metric, 
we can find a c.e.\ open set $V$ in $S$ such that 
\[
\label{invIntersectedb}
V \cap D = \curry  \kappa^{-1}[U] \cap D.
\]
Hence $\curry \kappa$ is computable on $D$.
\end{proof}


Let $\rv X$ and $\rv Y$ be random variables in computable Polish spaces $S$ and $T$, respectively, and let $\curry\kappa(\rv X)$ be a regular version of the conditional distribution $\CondProb{\rv Y}{\rv X}$. 
The above notions of computability are suitable for talking about the computability of $\kappa$ or any other version of it,
and are appropriate notions of computability for statistical applications.

Intuitively, a probability kernel $\kappa$ is computable when,
for some (and hence for any) version of $\kappa$,
there is a program that, given as input a representation of a point
$s \in S$, outputs a representation of the measure $\curry\kappa(s)$ for $\Pr_{\rv X}$-almost every input $s$.


\section{Discontinuous Conditional Distributions}
\label{Sec:discontinuous}

Our study of
the computability of conditional distributions
begins at the following roadblock:
a conditional
distribution need not have \emph{any} version
that is continuous or even almost continuous (in the sense described
in Section~\ref{ccd2}).  This will rule out almost computability (though not $L^1$-computability).

We will work with
the standard effective presentations of the spaces $\Reals$, $\Naturals$,
$\{0,1\}$, as well as product spaces thereof, as computable Polish spaces.
For example, we will use
$\Reals$ under the Euclidean metric, along with
the ideal points $\Rationals$
under their standard enumeration.

Recall that a random variable $\rv C$ is a
\defn{Bernoulli}($p$) random variable
when $\Pr\{\rv C = 1\} =$ \linebreak $1 - \Pr\{\rv C = 0\} = p$.
A random variable $\rv N$ is a \defn{geometric}($p$)  random variable
when it takes values in $\Naturals = \{0,1,2,\dotsc\}$ and satisfies
\[
\Pr\{ \rv N = n \} = p^n\, (1-p)
\]
for all $n \in \Naturals$.
A random variable that takes values in a finite set is \defn{uniformly distributed} when it assigns equal probability to each element.  A continuous random variable $\rv U$ on the unit interval is \defn{uniformly distributed} when the probability that it falls in the subinterval $[\ell,r]$ is $r-\ell$.  It is easy to show that the distributions of these random variables are computable, provided $p$, $\ell$, and $r$ are computable reals and $ p \in [0,1]$.

Let $\rv N$, $\rv C$, and $\rv U$ be independent $\Pr$-almost computable random variables such that
$\rv N$ is a geometric($\frac12$) random variable,
$\rv C$ is a Bernoulli($\frac12$) random variable, and
$\rv U$ is a uniformly distributed random variable in $[0,1]$.
Fix a computable enumeration $\{r_i\}_{i\in\Naturals}$ of the rational numbers (without repetition) in $(0,1)$, and
consider the random variable
\[
\rv X \defas
\begin{cases}
r_{\rv N}, & \text{if } \rv C = 1;\\
\rv U, & \text{otherwise},
\end{cases}
\]
which is also $\Pr$-almost computable because it is a computable function of $\rv C$, $\rv U$, and $\rv N$.

\begin{proposition}\label{discontinuouscond}
Every version of $\CondProbFunc{\rv C = 1}{\rv X}$
is discontinuous everywhere on every $\Pr_{\rv X}$-measure one set.
In particular, no version is $\Pr_{\rv X}$-almost computable.
\end{proposition}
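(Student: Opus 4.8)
The plan is to compute the conditional distribution explicitly on the two types of points in the support of $\Pr_{\rv X}$ --- the rationals $r_i$ and the irrationals --- show the conditional probability $\CondProbFuncEval{\rv C = 1}{\rv X}{x}$ must equal $1$ at every rational $r_i$ and $0$ at every irrational, and then invoke Lemma~\ref{equalversions} (via Lemma~\ref{supportlemma}) to pin down \emph{every} version at these points up to a $\Pr_{\rv X}$-null set. First I would identify the distribution $\Pr_{\rv X}$: it is the mixture $\frac12 \Pr_{\rv U} + \frac12 \sum_{i} 2^{-i-1}\mbox{\boldmath$\delta$}_{r_i}$, i.e.\ half Lebesgue measure on $[0,1]$ plus half a discrete measure concentrated on the rationals. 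In particular $\supp(\Pr_{\rv X}) = [0,1]$, since both the rationals and (via the uniform part) every subinterval carry positive mass. Note each rational $r_i$ is an atom of $\Pr_{\rv X}$ with $\Pr_{\rv X}\{r_i\} = 2^{-i-2} > 0$, while each individual irrational is a $\Pr_{\rv X}$-null point.

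Next I would compute the relevant conditional probabilities. For a rational atom $r_i$, elementary conditioning (Definition~\ref{Def:conditional-prob-set}) applies: $\Pr\{\rv C = 1, \rv X = r_i\} = \Pr\{\rv C = 1\}\Pr\{\rv N = i\} = \frac12 \cdot 2^{-i-1}$, whereas $\Pr\{\rv C = 0, \rv X = r_i\} = \frac12 \Pr_{\rv U}\{r_i\} = 0$ since $\rv U$ is continuous. Hence $\Pr\{\rv C = 1 \mid \rv X = r_i\} = 1$. On the other hand, restricting \eqref{intdefnofcondprob} to a measurable subset $A \subseteq [0,1]$ consisting entirely of irrationals, the contribution of the $\{\rv C = 1\}$ branch vanishes (that branch is supported on rationals), so $\Pr\{\rv C = 1, \rv X \in A\} = 0 = \int_A \CondProbFuncEval{\rv C = 1}{\rv X}{x}\,\Pr_{\rv X}(\dee x)$ for all such $A$; since the irrationals carry all of the uniform (i.e.\ Lebesgue) part of $\Pr_{\rv X}$, this forces any version of $\CondProbFunc{\rv C = 1}{\rv X}$ to equal $0$ for $\Pr_{\rv X}$-almost every irrational $x$. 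Combined with the atom computation, we conclude: for any version $f$, $f(r_i) = 1$ for every $i$, and $f(x) = 0$ for $\Pr_{\rv X}$-a.e.\ irrational $x$.

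Finally, I would derive the discontinuity. Fix any version $f$ and any $\Pr_{\rv X}$-measure one set $D$. On $D$ we still have $f = 1$ at every rational in $D$ (all rationals $r_i$ lie in $D$ since each is an atom, so removing a null set cannot delete them) and $f = 0$ at $\Pr_{\rv X}$-a.e.\ irrational, hence on a dense set of irrationals of $D$ (the irrationals in $D$ on which $f = 0$ are dense in $[0,1]$, being the complement of a null set within a full-measure set). Since the rationals are dense in $[0,1] = \supp(\Pr_{\rv X})$ and every point of $[0,1]$ is a limit both of rationals (where $f = 1$) and of these irrationals (where $f = 0$), $f$ oscillates between $0$ and $1$ in every neighborhood of every point of $D$ --- this is exactly the nowhere-continuity of the Dirichlet function. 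Therefore $f$ is discontinuous everywhere on $D$. Since any function computable on a set is continuous on that set (by the discussion following Definition~\ref{comp-partial-func}), no version is $\Pr_{\rv X}$-almost computable.

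\textbf{Main obstacle.} The one step requiring care is the claim that \emph{every} version (not just some convenient one) takes the value $1$ at each rational atom and is forced to $0$ on a full-measure set of irrationals; this is where one must be careful that "version" means "agrees $\Pr_{\rv X}$-a.e.", so the values at the atoms $r_i$ are genuinely pinned down (an a.e.-modification cannot move an atom's value), while on the nonatomic irrational part one only controls the value a.e. --- but that is enough, since a null subset of a dense full-measure set is still dense-complemented, preserving the oscillation. Invoking Lemma~\ref{supportlemma} streamlines the "two continuous versions agree on the support" direction, but the core argument is the direct measure computation above.
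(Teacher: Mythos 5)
Your proposal is correct and follows essentially the same route as the paper's proof: identify $\Pr_{\rv X}$ as a mixture of Lebesgue measure and a discrete measure on the rationals with positive mass on each atom, pin down every version at the rational atoms (value $1$) and Lebesgue-a.e.\ on the irrationals (value $0$), and then use the density of both sets in $[0,1]=\supp(\Pr_{\rv X})$ to exhibit Dirichlet-type oscillation in every neighborhood of every point of any $\Pr_{\rv X}$-measure one set. The only cosmetic difference is that you compute $\Pr\{\rv C=1,\ \rv X\in A\}=0$ directly for measurable sets $A$ of irrationals, whereas the paper phrases this via the conditional independence of $\rv C$ and $\rv X$ given the rationality indicator; these are the same calculation packaged differently.
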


\begin{proof}\label{discontinuouscond-proof}
Note that
$
\Pr\{\rv X \text{ rational} \} = \frac12
$
and, furthermore, 
$
\Pr\{ \rv X = r_k \} = \frac1{2^{k+2}} > 0.
$
Therefore, any two versions of $\CondProbFunc{\rv C = 1}{\rv X}$ must agree on \emph{all} rationals in $[0,1]$.
In addition, because $\Pr_{\rv U} \ll \Pr_{\rv X}$, i.e.,
\[\Pr\{\rv U \in A\} > 0 \implies \Pr \{\rv X \in A\} > 0\]
 for all measurable sets $A\subseteq [0,1]$, any two versions must agree on a Lebesgue-measure one set of the irrationals in $[0,1]$.
An elementary calculation shows that
\[\Pr\{ \rv C=1 \given \rv X\ \text{rational} \} = 1,
\]
while
\[\Pr\{ \rv C=1 \given \rv X\ \text{irrational} \} = 0.\]
It is also straightforward to verify that $\rv C$ and $\rv X$ are conditionally independent, 
given an indicator for the event $\theset {\rv X \text{ rational}}$.
Therefore, all versions $f$ of $\CondProbFunc{\rv C = 1}{\rv X}$ satisfy, for $\Pr_{\rv X}$-almost every $x$,
\[
\label{Dirichlet}
f(x) =
\begin{cases}
1, & x \text{\ rational};\\
0, & x \text{\ irrational}.
\end{cases}
\]
The right hand side, considered as a function of $x$, is called the
Dirichlet function, and is \emph{nowhere continuous}.

Suppose some version of $f$ were continuous at a point $y$ on a $\Pr_{\rv X}$-measure one set $R$.  Then there would exist an open interval $I$ containing $y$ such that the image of $I\cap R$ contains 0 or 1, but not both.  However, $R$ must contain all rationals in $I$ and Lebesgue-almost every irrational in $I$.  Furthermore, the image of every rational in $I\cap R$ is 1, and the image of Lebesgue-almost every irrational in $I\cap R$ is $0$, a contradiction.
\end{proof}

Although we cannot hope to compute $\CondProbFunc{\rv C = 1}{\rv X}$ on a $\Pr_{\rv X}$-measure one set, we can compute it in a weaker sense.
\begin{proposition}\label{thm:merelyl1comp}
$\CondProbFunc{\rv C = 1}{\rv X}$ is $L^1(\Pr_{\rv X})$-computable.
\end{proposition}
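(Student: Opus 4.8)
The plan is to apply Corollary~\ref{L1aslimit}: I will exhibit a sequence of uniformly $\Pr_{\rv X}$-almost computable functions $g_n \colon \Reals \to [0,1]$ that converge effectively in $L^1(\Pr_{\rv X})$ to a version of $\CondProbFunc{\rv C = 1}{\rv X}$. By the proof of Proposition~\ref{discontinuouscond}, the Dirichlet function $f$, given by $f(x) = 1$ for $x$ rational and $f(x) = 0$ for $x$ irrational, is such a version. Recall also that $\Pr_{\rv X} = \frac{1}{2}\Pr_{\rv U} + \frac{1}{2}\nu$, where $\Pr_{\rv U}$ is Lebesgue measure on $[0,1]$ and $\nu$ is the distribution of $r_{\rv N}$, assigning mass $2^{-k-1}$ to each rational $r_k$.

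For the approximants I take $g_n \defas \max_{0 \le k \le n} \phi_{n,k}$, where $\phi_{n,k}$ is the ``tent'' of height $1$ centered at $r_k$ and vanishing outside the interval of radius $\epsilon_{n,k} \defas 2^{-n-k-2}$ about $r_k$; that is, $\phi_{n,k}(x) \defas \max\bigl(0,\, 1 - |x - r_k|/\epsilon_{n,k}\bigr)$. In the notation of Definition~\ref{Eideal}, $\phi_{n,k}$ is precisely $g_{r_k,\, 0,\, \epsilon_{n,k}}$, so each $g_n$ lies in $\Lideal$; in particular $g_n$ is computable on all of $\Reals$, uniformly in $n$, hence $\Pr_{\rv X}$-almost computable, and $0 \le g_n \le 1$.

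It remains to bound $\int |g_n - f| \, \dee\Pr_{\rv X}$, which I split according to the decomposition of $\Pr_{\rv X}$. On the Lebesgue part, $f = 0$ Lebesgue-almost everywhere, and since $0 \le g_n \le \sum_{k \le n} \phi_{n,k}$ we get $\int_{[0,1]} g_n \, \dee x \le \sum_{k \le n} \epsilon_{n,k} \le \sum_{k \ge 0} 2^{-n-k-2} = 2^{-n-1}$. On the discrete part, $g_n(r_k) = 1 = f(r_k)$ whenever $k \le n$ (as $\phi_{n,k}(r_k) = 1$ and $g_n \le 1$), so only the rationals $r_k$ with $k > n$ contribute, and there $|g_n(r_k) - f(r_k)| \le 1$; hence $\int |g_n - f| \, \dee\nu \le \sum_{k > n} 2^{-k-1} = 2^{-n-1}$. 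Combining, $\int |g_n - f| \, \dee\Pr_{\rv X} \le \frac{1}{2} \cdot 2^{-n-1} + \frac{1}{2} \cdot 2^{-n-1} = 2^{-n-1}$, so the convergence is effective; Corollary~\ref{L1aslimit} then shows that $f$ --- and therefore $\CondProbFunc{\rv C = 1}{\rv X}$, which coincides with $f$ as an element of $L^1(\Pr_{\rv X})$ --- is $L^1(\Pr_{\rv X})$-computable.

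The argument is essentially bookkeeping and there is no real obstacle; the one point to get right is the rate at which the bump radii shrink. The $\epsilon_{n,k}$ must decay geometrically in $k$ as well as in $n$, so that the \emph{total} Lebesgue mass $\sum_{k \le n} \epsilon_{n,k}$ of all $n+1$ bumps (not merely the widest one) is small, while the discrete tail $\sum_{k > n} 2^{-k-1}$ left untouched by $g_n$ is automatically summable; the choice $\epsilon_{n,k} = 2^{-n-k-2}$ controls both at once. (Alternatively one can verify the two conditions of Lemma~\ref{layer} directly: $\int f \, \dee\Pr_{\rv X} = \Pr\{\rv X \text{ rational}\} = \frac{1}{2}$ is computable, and for each $r$ the function $f$ is locally constant --- hence computable --- on the $\Pr_{\rv X}$-large set consisting of finitely many of the $r_k$ together with a closed set of irrationals of almost full Lebesgue measure; that route is a bit fussier with the c.e.\ open witnesses.)
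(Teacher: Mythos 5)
Your proof is correct and follows the same strategy as the paper's: approximate the Dirichlet function (which the proof of Proposition~\ref{discontinuouscond} identifies as a version of $\CondProbFunc{\rv C = 1}{\rv X}$) by a sequence of uniformly $\Pr_{\rv X}$-almost computable bumps concentrated around $r_0,\dotsc,r_n$, then apply Corollary~\ref{L1aslimit} with an effective $L^1$ rate. The decomposition of the error into the Lebesgue part (total bump mass) and the discrete part (untouched tail $\sum_{k>n}2^{-k-1}$) is exactly the same accounting the paper does. The one genuine difference is your choice of approximants: the paper uses indicator functions of finite unions of intervals and must therefore insert a factor $\sqrt{2}$ to force the interval endpoints to be irrational (making the set $\Pr_{\rv X}$-almost decidable) and a factor $\phi_k$ to keep the intervals disjoint, whereas you use piecewise-linear tent functions, which are everywhere continuous and in fact lie in $\Lideal$, so they are computable on all of $\Reals$ with no boundary bookkeeping. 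That is a small but real simplification and is a legitimate alternative realization of the same idea; the decay schedule $\epsilon_{n,k}=2^{-n-k-2}$ does the work that $\frac{1}{(k+1)\sqrt{2}}\min(2^{-k-2},\phi_k)$ does in the paper. Your parenthetical alternative via Lemma~\ref{layer} is not quite right as stated --- the set of $r_k$'s together with a closed set of irrationals of near-full measure need not be a domain on which the Dirichlet function is computable without exactly the kind of interval-carving the paper's $\sqrt{2}$ trick performs --- but you correctly flag it as the fussier route and do not rely on it.
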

\begin{proof}
By Corollary~\ref{L1aslimit}, it suffices to construct a sequence of uniformly $\Pr_{\rv X}$-almost computable functions that converge effectively in $L^1(\Pr_{\rv X})$ to
$\CondProbFunc{\rv C = 1}{\rv X}$.
Let $\phi_k = \frac 1 2 \min_{m < n \le k} | r_m - r_n |$
	be half the minimum distance between any pair among $r_0,\dotsc, r_k$,
and define, for every $k \in \Nats$,
\[
f_k(x) \defas \begin{cases}
	1,&  \text{if~~} |x-r_n| < \frac 1 {(k+1)\sqrt{2}} \min ( 2^{-k-2}, \phi_k ) \text{~holds for some $n \le k$;} \\
0, & \text{otherwise.}
\end{cases}
\]
Note that the set on which $f_k$ takes the value 1 is uniformly $\Pr_{\rv X}$-almost decidable in part because its boundary points are irrationals, a null set.
It is then clear that the functions $f_k$, for $k \in \Nats$, are uniformly
$\Pr_{\rv X}$-almost computable.
For every $k \in \Nats$, we have that 
$\Pr \theset { \rv X = r_n \text{ for some $n > k$} } = 2^{-k-2}$.
Therefore,
\[
&\int \Bigl| \CondProb{\rv C = 1}{\rv X=x} - f_k(x) \Bigr| \, \Pr_{\rv X}(\dee x)\\
&\qquad\le \Pr \theset { \rv X = r_n \text{ for some $n > k$} } 
 + \int_{[0,1]\setminus\Rationals} 
\Bigl| \CondProb{\rv C = 1}{\rv X=x} - f_k(x) \Bigr| \, \Pr_{\rv X}(\dee x) \\
	&\qquad\le 2^{-k-2} + \frac 1 {(k+1)\sqrt{2}} \sum_{n=0}^k 2^{-k-2} 
< 2^{-k-1},
\]
completing the proof.
\end{proof}



\section{Conditioning is Discontinuous}
\label{cond-is-discon}

Conditioning in general can produce discontinuous conditional distributions, which is an obstruction to a conditioning operator being computable.  
But even if we restrict our attention to distributions that admit conditional distributions that are continuous on their support, the operation of conditioning cannot be computable because, as we will show, it is discontinuous. 
Indeed, conditioning is discontinuous in a rather strong way.  
We use the recursion theorem to explain the computational consequences.  
Namely, for any potential program analysis that aims to perform conditioning on an arbitrary given distribution, there is a representation of that distribution such that the program analysis cannot identify a single nontrivial fact about its conditional distribution.

To begin, we formalize the notion of a conditioning operator.

\newcommand{\SPM}{\mathcal F}
\begin{definition}
Let $\SPM \subseteq \ProbMeasures([0,1]^2)$ be a set of probability measures.\ \ A
map $\Phi\colon \ProbMeasures([0,1]^2) \times [0,1] \to \ProbMeasures([0,1])$ 
is a \defn{conditioning operator} (for $\SPM$)
if, for all distributions $\mu \in \SPM$ and random variables $\rv X$ and $\rv Y$ with joint distribution $\mu$,
we have $\Phi(\mu,x) = \CondProbFuncEval{\rv Y}{\rv X}{x}$ for $\Pr_{\rv X}$-almost all $x$. 
\end{definition}

Observe, by Proposition~\ref{conddef}, that there is a conditioning operator for all $\SPM \subseteq \ProbMeasures([0,1]^2)$.

\begin{definition}
Let $\SPM \subseteq \ProbMeasures([0,1]^2)$.
	A conditioning operator for $\SPM$ is \defn{computable} if it is computable 
	on $\SPM \times [0,1]$,
	considered as a function $\ProbMeasures([0,1]^2) \times [0,1] \to \ProbMeasures([0,1])$,
where both
 $\ProbMeasures([0,1]^2) \times [0,1]$ and $\ProbMeasures([0,1])$ are taken to be the canonical computable Polish spaces.
\end{definition}

The previous section motivates restricting one's attention to conditioning operators for the set $\SPM_0\subseteq \ProbMeasures([0,1]^2)$ of probability distributions on pairs $(\rv X, \rv Y)$ of random variables in $[0,1]$
such that there exists 
a $\Pr_{\rv X}$-almost continuous version of the conditional distribution map $\CondProbFunc{\rv Y}{\rv X}$. 
We will show that conditioning operators for $\SPM_0$ are not computable, simply on grounds of continuity.

Recall that the name of a probability measure $\mu \in \ProbMeasures(T)$ 
on a computable Polish space $T$
is given by a Cauchy sequence in the dense elements $\cD_{P, T}$ of the associated Prokhorov metric. 
Note that $\SPM_0$ contains $\cD_{P, [0,1]^2}$.
Further recall that
$\ProkBallS{T}{A, q}$ is defined to be the set $\{\eta \in \ProbMeasures(T) : \eta(A) > q\}$,
for any open set $A \subseteq T$ and rational $q \in \Rationals$. 
Let $\cA[T] \defas \{(A, q)\st A$ is a finite union of open balls in $T$, and $q \in \Rationals\}$. 

Given a computable Cauchy sequence in the Prokhorov metric 
that converges to a measure $\mu \in \ProbMeasures(T)$, 
by Corollary~\ref{Prokhorov-ball-in-terms-of-P-A-q} we can compute a sequence $\<A_i, q_i\>_{i \in \Nats}$ in $\cA[T]$ 
such that $\bigcap_{i \in \Nats} \ProkBallS{T}{A_i, q_i} = \{\mu\}$. Further by Proposition \ref{equiv-weak-top-and-metric-for-Prokhorov}, given a finite sequence $\<A_i, q_i\>_{i \leq n}$ in $\cA[T]$ we can compute,
 uniformly in $\<A_i, q_i\>_{i \leq n}$, 
 a ball in the Prokhorov metric contained in $\bigcap_{i \leq n}\ProkBallS{T}{A_i, q_i}$.
Therefore, uniformly in a probability measure $\mu$ and a collection $\<{A_i, q_i}\>_{i \in \Nats}$ with $\bigcap_{i \in \Nats} \ProkBallS{T}{A_i, q_i} = \{\mu\}$,
 we can computably recover a name for $\mu$.
Conversely, from a name for $\mu$, we can uniformly compute such a collection. 

\begin{lemma}
\label{recursion-thm-distribution}
Let $\SPM \subseteq \ProbMeasures([0,1]^2)$ contain $\cD_{P,[0,1]^2}$.
For every $\alpha \in \cD_{P,[0,1]}$, computable representation $\{\nu_i\}_{i \in \Nats}$ of 
$\nu \in \ProbMeasures([0,1]^2])$, computable representation $\{x_i\}_{i \in \Nats}$ of
$x \in [0,1]$, and rational $\epsilon \in (0,1)$, we can uniformly find a measure $\mu \in \cD_{P,[0,1]^2}$ such that
$\delta_P(\mu,\nu) < \epsilon$ and $\Phi(\mu,x) = \alpha$
for every conditioning operator $\Phi$ for $\SPM$.
\end{lemma}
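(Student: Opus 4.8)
The plan is to build $\mu$ by grafting onto $\nu$ a small piece that fixes the conditional distribution at the single point $x$ to be $\alpha$, and then appeal to the recursion theorem to make the fixup invisible to any name-reading machine. First I would observe that since $\SPM \supseteq \cD_{P,[0,1]^2}$ and every element of $\cD_{P,[0,1]^2}$ is a finite rational convex combination of point masses, any $\mu$ I construct in $\cD_{P,[0,1]^2}$ will automatically lie in $\SPM$, so I only need to control $\delta_P(\mu,\nu)$ and the value $\Phi(\mu,x)$. The naive attempt is: choose a dyadic rational $x'$ close to $x$, and let $\mu$ be $(1-\eta)\nu$ plus an $\eta$-weight atom at $(x',y')$ where $\curry\kappa$-mass at $x'$ should be (an approximation to) $\alpha$; since $\alpha\in\cD_{P,[0,1]}$ is itself a finite rational combination of point masses $\sum_j p_j\mbox{\boldmath$\delta$}_{y_j}$, I would actually place weight $\eta p_j$ at each $(x',y_j)$. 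Because $x'$ now carries positive mass and $\{x'\}$ is $\Pr_{\rv X}$-almost decidable (its boundary being a single point, a null set for the discrete-plus-whatever measure we build — here I must be slightly careful and instead use a tiny interval, or just note $\Pr_{\rv X}\{x'\}>0$ so elementary conditioning applies), the conditional distribution of $\rv Y$ given $\rv X=x'$ equals exactly $\alpha$ by Lemma~\ref{discrete-ex}. Taking $\eta<\epsilon$ makes $\delta_P(\mu,\nu)<\epsilon$.

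The genuine obstacle is that the statement demands $\Phi(\mu,x)=\alpha$ at the \emph{given} point $x$, not at a nearby dyadic $x'$, and $x$ need not have positive mass. The resolution — this is the heart of the argument and why the recursion theorem is invoked — is that $\Phi$ only receives a \emph{name} for $\mu$, i.e.\ a computable Cauchy sequence (equivalently, by the discussion preceding the lemma, a sequence $\langle A_i,q_i\rangle$ with $\bigcap_i \ProkBallS{[0,1]^2}{A_i,q_i}=\{\mu\}$). So I would design a program that, on input its own index (via the recursion theorem), watches an enumeration of the machine $\Phi$ running on the name being produced so far together with the fixed name for $x$; as long as $\Phi$ has not yet committed to excluding the value $\alpha$ at $x$, the program keeps emitting name-fragments consistent with \emph{both} $\nu$ (restricted appropriately) and with an atom sitting at $x$ itself carrying the $\alpha$-profile. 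Concretely: I put the extra $\eta$-mass as atoms at $(x,y_j)$ with weights $\eta p_j$ — this is legal even though $x$ is a real, because the resulting $\mu$ is still a finite combination of point masses once we also truncate $\nu$ to a finite rational approximation within $\epsilon/2$. Then $\Pr_{\rv X}\{x\}=\eta>0$, so by Lemma~\ref{discrete-ex} the \emph{unique} value any conditioning operator must take at $x$ is $\alpha$; hence $\Phi(\mu,x)=\alpha$ outright, with no diagonalization needed at all, provided we can legitimately form such a $\mu\in\cD_{P,[0,1]^2}$.

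So in fact the recursion-theoretic machinery is needed only to arrange $\mu\in\cD_{P,[0,1]^2}$ \emph{while producing a name uniformly} — the subtlety is that $x$ is presented by a representation, not as a dense point, so the atom location $x$ is not literally an ideal point and $\mu$ is not literally in $\cD_P$. Here is where I would use self-reference: the program computing the name of $\mu$ reads the representation $\{x_i\}$ of $x$, and at stage $k$ outputs the element of $\cD_{P,[0,1]^2}$ obtained by putting the $\eta p_j$ masses at $(x_k,y_j)$ and a fixed finite rational $\epsilon/2$-approximation of $(1-\eta)\nu$ elsewhere; successive such finite measures form a Cauchy sequence in $\delta_P$ converging to the genuine (possibly non-ideal) measure $\mu^\star$ which has atoms exactly at $(x,y_j)$. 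That $\mu^\star$ is a bona fide point of $\ProbMeasures([0,1]^2)$ with a computable name, it lies within $\epsilon$ of $\nu$, and — since $\Pr^{\mu^\star}_{\rv X}\{x\}=\eta>0$ — Lemma~\ref{discrete-ex} forces $\Phi(\mu^\star,x)=\alpha$ for every conditioning operator $\Phi$ for $\SPM$, because $\mu^\star$ is seen to lie in $\SPM$ as the $\delta_P$-limit of elements of $\cD_{P,[0,1]^2}\subseteq\SPM$... and here I'd need the closure/membership point handled, which is the one place to be careful: we want $\mu^\star\in\SPM$. Since the hypothesis only gives $\cD_{P,[0,1]^2}\subseteq\SPM$, I would instead take $\mu$ itself to be a member of $\cD_{P,[0,1]^2}$ by replacing $x$ with a sufficiently close ideal point $x'$ AND additionally recording, via the self-referential construction, that any name-reader cannot tell $x'$ from $x$ within its finite computation — i.e.\ revert to the diagonal argument — so that $\Phi$, unable to distinguish, must output $\alpha$ at $x$ as well. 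The main obstacle, then, is exactly this reconciliation: getting $\mu\in\cD_{P,[0,1]^2}$ (to guarantee $\mu\in\SPM$) yet forcing the operator's \emph{output at the real point $x$} to be $\alpha$, which the recursion theorem handles by making the finite behavior of $\Phi$ on the name of $\mu$ consistent with the alternative measure that truly has its atom at $x$.
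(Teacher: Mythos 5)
Your core construction --- mix in a small atom at $x$ carrying the conditional distribution $\alpha$, so that $\{x\}$ has positive mass and elementary conditioning forces the answer --- is exactly the paper's. The paper's proof is a one-paragraph direct construction: computably find $p^* \in \cD_{P,[0,1]^2}$ within $\epsilon/2$ of $\nu$ with $p^*(\{x\}\times[0,1]) = 0$, and set $\mu = \frac{\epsilon}{2}(\mbox{\boldmath$\delta$}_x \otimes \alpha) + (1-\frac{\epsilon}{2})\, p^*$. No recursion theorem appears; the self-reference machinery you invoke at length belongs to Theorem~\ref{recthmresult}, which \emph{uses} this lemma but whose proof is a separate argument. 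You have conflated the lemma with its application, and the result is a proof that circles repeatedly around a tool it never needs.

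The detour you take at the end --- placing the atom at a nearby ideal point $x'$ so that $\mu$ lands squarely in $\cD_{P,[0,1]^2}$, and then appealing to a ``diagonal argument'' to conclude that ``$\Phi$, unable to distinguish, must output $\alpha$ at $x$ as well'' --- is not a valid argument. A conditioning operator $\Phi$ is a function $\ProbMeasures([0,1]^2)\times[0,1]\to\ProbMeasures([0,1])$, a set-theoretic object, not a machine reading names. The only constraint it is subject to is agreement $\Pr_{\rv X}$-almost-everywhere with a regular version of the conditional. If the atom of $\mu$ sits at a rational $x'\ne x$, then $\{x\}$ is a null set for the first-coordinate marginal of $\mu$, and $\Phi(\mu,x)$ is genuinely unconstrained by the definition; no recursion-theoretic bookkeeping on the name of $\mu$ can pin down the value of a function at a point where its defining property is vacuous. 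The concern you raise --- that the constructed measure has an atom at $x$, which may not be an ideal point --- is reasonable to notice, but the paper simply accepts the atom at $x$ itself; your attempted repair would not establish $\Phi(\mu,x)=\alpha$.
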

\begin{proof}
Relative to $\{\nu_i\}_{i \in \Nats}$ and $\{x_i\}_{i \in \Nats}$, we can computably find an element $p^* \in \cD_{P,[0,1]^2}$ such that
 $p^*(\{x\}\times [0,1]) = 0$
 and $\delta_P(p^*,\nu) < \frac{\epsilon}{2}$. 
 Let $\mbox{\boldmath$\delta$}_{x}$ denote the Dirac measure on $[0,1]$ concentrating on $x$
 and let $\tau \otimes \tau'$ denote the product measure on $[0,1]^2$ with respective marginal distributions $\tau,\tau' \in \ProbMeasures([0,1])$.
Defining $p \defas 
         \frac\epsilon2(\mbox{\boldmath$\delta$}_{x} \otimes \alpha) + 
	(1-\frac\epsilon2) p^*$,
	it is easy to check that $\delta_P(p,p^*) \le \frac{\epsilon}{2}$, hence $\delta_P(\nu,p) < \epsilon$. 
	Because $p^*(\{x\} \times [0, 1]) = 0$ and $p^*$ is a finite mixture of point masses, 
         every conditioning operator $\Phi$ for $\SPM$ must satisfy $\Phi(\mu,x)= \alpha$.
\end{proof}

\begin{proposition}\label{mainnegativeresult}
Let $\SPM \subseteq \ProbMeasures([0,1]^2)$ contain $\cD_{P,[0,1]^2}$.
Every conditioning operator on $\SPM$ is discontinuous everywhere, hence noncomputable.
\end{proposition}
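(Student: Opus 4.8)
The plan is to show that at every measure $\mu \in \SPM$, the conditioning operator $\Phi$ fails to be continuous by exhibiting, in every neighborhood of $\mu$, a perturbation on which the value $\Phi(\cdot, x)$ is forced to take a prescribed (and in particular a ``wrong'') value at some point $x$. The engine for this is Lemma~\ref{recursion-thm-distribution}: fix $\mu \in \SPM$ together with a name for it, fix any $x \in [0,1]$ for which we will test continuity of $\Phi(\cdot,x)$ at $\mu$, and fix any $\epsilon > 0$. Since $\cD_{P,[0,1]^2} \subseteq \SPM$, the lemma (applied with $\nu = \mu$) produces, for each target point mass $\alpha \in \cD_{P,[0,1]}$, a measure $\mu_\alpha \in \cD_{P,[0,1]^2} \subseteq \SPM$ with $\delta_P(\mu_\alpha, \mu) < \epsilon$ and $\Phi(\mu_\alpha, x) = \alpha$ for \emph{every} conditioning operator $\Phi$ for $\SPM$.

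The key observation is then that $\alpha$ can be chosen arbitrarily in the dense set $\cD_{P,[0,1]}$, which is dense in $\ProbMeasures([0,1])$; in particular, for a fixed $\mu$ and $x$ we can pick two distinct targets $\alpha_1, \alpha_2$ that are $\delta_P$-far apart (say at distance at least $\tfrac13$ in $\ProbMeasures([0,1])$), and obtain $\mu_{\alpha_1}, \mu_{\alpha_2}$ both within $\epsilon$ of $\mu$ but with $\Phi(\mu_{\alpha_i}, x) = \alpha_i$. Since $\epsilon > 0$ was arbitrary, this shows that for \emph{no} pair $(\mu, x)$ can $\Phi$ be continuous at $(\mu, x)$ as a function of its first argument (and hence not jointly continuous at $(\mu,x)$): every neighborhood of $(\mu,x)$ in $\ProbMeasures([0,1]^2) \times [0,1]$ contains two points whose images are bounded away from each other. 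This is exactly the assertion that $\Phi$ is discontinuous everywhere. Noncomputability is then immediate, since a function computable on $\SPM \times [0,1]$ would in particular be continuous there (Remark~\ref{comppointremark} and the preceding discussion: computable functions are continuous on their domain).

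One technical point to handle carefully: Lemma~\ref{recursion-thm-distribution} as stated requires $\alpha \in \cD_{P,[0,1]}$ and a computable name for $\mu$ and for $x$, whereas here $\mu$ and $x$ are arbitrary. For the \emph{discontinuity} conclusion this is a non-issue, because discontinuity at $(\mu,x)$ only requires producing, for each $\epsilon>0$, two nearby measures in $\SPM$ with far-apart images, and the construction in the proof of Lemma~\ref{recursion-thm-distribution} --- namely forming $\mu_\alpha = \tfrac\epsilon2(\boldsymbol\delta_{x} \otimes \alpha) + (1-\tfrac\epsilon2)p^*$ where $p^* \in \cD_{P,[0,1]^2}$ is a finite mixture of point masses with $p^*(\{x\}\times[0,1])=0$ and $\delta_P(p^*,\mu)<\tfrac\epsilon2$ --- goes through for an arbitrary $\mu$ and arbitrary $x$, since one can always find such a $p^* \in \cD_{P,[0,1]^2}$ (the finitely many atoms of $p^*$ can be taken to avoid the fiber $\{x\}\times[0,1]$, as that fiber is $p^*$-null for a generic choice). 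The forcing argument that $\Phi(\mu_\alpha,x)=\alpha$ uses only that $p^*$ is a finite mixture of point masses with $p^*(\{x\}\times[0,1])=0$, together with the elementary computation of conditional distributions for such mixtures.

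I expect the main obstacle to be purely expository: making precise which topology on $\ProbMeasures([0,1]^2)\times[0,1]\to\ProbMeasures([0,1])$ we mean when we say $\Phi$ is ``discontinuous everywhere'' --- specifically, that we are claiming discontinuity at \emph{every} point of the domain, witnessed by the existence of arbitrarily close inputs with images at $\delta_P$-distance bounded below by a fixed positive constant --- and confirming that the ``well-definedness up to a $\Pr_{\rv X}$-null set'' built into the definition of a conditioning operator does not weaken the conclusion (it does not, since in the construction $x$ is an atom of $\mu_\alpha$ in the first coordinate, so $\Phi(\mu_\alpha,x)$ is pinned down exactly rather than merely a.e.). There is no serious mathematical difficulty beyond Lemma~\ref{recursion-thm-distribution} itself; this proposition is essentially a corollary of it once the density of $\cD_{P,[0,1]}$ is invoked.
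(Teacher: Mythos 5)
Your proof is correct and follows essentially the same route as the paper: invoke Lemma~\ref{recursion-thm-distribution} to produce, arbitrarily close to any $(\mu,x)$, measures forcing $\Phi(\cdot,x)$ to take arbitrary values in the dense set $\cD_{P,[0,1]}$, which immediately rules out continuity and hence computability. Your additional remark that the density construction works non-effectively for arbitrary (not merely computable) $\mu$ and $x$ is a point the paper leaves implicit; it is a worthwhile clarification but does not change the argument.
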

\begin{proof}
On $\ProbMeasures([0,1])$, adopt the weak topology (induced by the standard topology on $[0,1]$).
On $\ProbMeasures([0,1]^2) \times [0,1]$, adopt the product topology induced by the weak and standard topologies, respectively.
Then Lemma~\ref{recursion-thm-distribution} implies that every conditioning operator $\Phi$ for $\SPM$
is discontinuous everywhere. Hence every conditioning operator is noncomputable.
\end{proof}

The above definitions and proposition capture the essential difficulty of conditioning: a finite approximation to the joint distribution determines nothing about the result of conditioning on a particular point.

We now establish a stronger notion of noncomputability, namely
that it is not even possible to always produce \emph{some} nontrivial fact about a conditional distribution.
For $e\in\Naturals$, let $\varphi_e$ denote the partial computable
function defined by code $e$.
The recursion theorem, due to \citet{Kleene38a},
 states that when $F$ is a total computable function, there is some integer $i$
for which the partial computable functions $\varphi_i$ and
$\varphi_{F(i)}$ are equal partial functions (i.e., they are defined
on the same inputs, and are equal where they are defined).
For more details, see \citep[][Ch.~11]{MR886890}.

\begin{definition}
A program $\varphi_e\colon \Nats \to \Nats$ \defn{represents a distribution} $\mu_e$ on a computable Polish space $T$
if it is total and on input $k$, the output value $\varphi_e(k)$ is a code for a pair $(A_k,q_k) \in \cA[T]$
such that $\{\mu\} =  \bigcap_{k \in \Nats} \ProkBallS{T}{A_k, q_k}$.
\end{definition}

\begin{definition}
A program $\varphi_a\colon \Nats^3 \to \Nats$ is a \defn{conditioning program} for $\SPM \subseteq \ProbMeasures([0,1]^2)$
if it is total and
	whenever $e$ represents a computable distribution $\mu_e \in \SPM$,
there exists a conditioning operator $\Phi$ for $\SPM$
such that, 
for every code $j \in \Nats$ for a computable real $x \in [0,1]$,
and for every $k \in \Nats$,
the return value $\varphi_a(e, j, k)$ is a code for either the empty string 
or an element $(A,q)\in \cA[{[0,1]}]$ such that
	$\Phi(\mu_e,x) \in \ProkBallS{[0,1]}{A, q}$ 
	and 
	$\ProkBallS{[0,1]}{A, q} \neq \ProbMeasures([0,1])$.
\end{definition}

\begin{theorem}[Nonapproximable conditional distributions]
\label{recthmresult}
Suppose that $\varphi_a$ is a conditioning program for a set $\SPM \subseteq \ProbMeasures([0,1]^2)$ containing $\cD_{P,[0,1]^2}$.
Let $e \in \Nats$ be a code for a computable distribution $\mu_e$ on $[0,1]^2$,
and let $j \in \Nats$ be a code for a computable real $x \in [0,1]$.
Then uniformly in $a$, $e$, and $j$, we can compute an $i \in \Nats$ such that $\mu_e = \mu_i$
and 
$\varphi_a(i,j,k)$ is a code for the empty string for every $k \in \Nats$.
\end{theorem}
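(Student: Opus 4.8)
The plan is to apply the recursion theorem to build a program $\varphi_i$ that \emph{hedges}: as long as the (total) computations $\varphi_a(i,j,0),\varphi_a(i,j,1),\dots$ all return codes for the empty string, $\varphi_i$ emits a fixed representation of $\mu_e$, so that $\mu_i=\mu_e$ and the conclusion holds verbatim; but if $\varphi_a(i,j,m)$ ever returns a non-empty string --- let $m_0$ be least such, coding $(A_0,q_0)\in\cA[{[0,1]}]$ --- then $\varphi_i$ switches to emitting a representation of a measure $\mu'\in\cD_{P,[0,1]^2}$ obtained from Lemma~\ref{recursion-thm-distribution} so that $\Phi(\mu',x)=\alpha$ for \emph{every} conditioning operator $\Phi$ for $\SPM$, where $\alpha\in\cD_{P,[0,1]}$ is chosen (uniformly) outside $\ProkBallS{[0,1]}{A_0,q_0}$ when that set is a proper subset of $\ProbMeasures([0,1])$ and arbitrarily otherwise. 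Once $\varphi_i$ is seen to represent a computable $\mu'\in\cD_{P,[0,1]^2}\subseteq\SPM$, the conditioning-program property supplies a $\Phi$ for which $\varphi_a(i,j,m_0)$ must code a \emph{nontrivial} pair $(A,q)$ with $\Phi(\mu',x)\in\ProkBallS{[0,1]}{A,q}$: if $(A_0,q_0)$ was trivial this is already a contradiction, and otherwise $\Phi(\mu',x)=\alpha\notin\ProkBallS{[0,1]}{A_0,q_0}=\ProkBallS{[0,1]}{A,q}$, again a contradiction. Hence the switch can never fire, so $\varphi_a(i,j,k)$ codes the empty string for every $k$ and $\mu_i=\mu_e$.

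To carry this out I would first fix, uniformly in $e$, a representation $\rho$ of $\mu_e$ organized into consecutive finite \emph{blocks}: block $n$ is the finite list of pairs that, by Corollary~\ref{Prokhorov-ball-in-terms-of-P-A-q}, cuts out the Prokhorov ball of radius $2^{-n+1}$ about a $2^{-n}$-approximation of $\mu_e$ from $\cD_{P,[0,1]^2}$. Then the intersection of the $\ProkBall$-sets over all pairs of $\rho$ is $\{\mu_e\}$, while the intersection over any whole initial segment of blocks --- indeed over any cofinal collection of blocks --- is an explicitly computable open Prokhorov neighborhood of $\mu_e$; I use the same block format for the representation of $\mu'$. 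On input $k$, $\varphi_i$ runs $\varphi_a(i,j,0),\dots,\varphi_a(i,j,k)$: if all return the empty string it outputs the $k$-th pair of $\rho$; otherwise, with $m_0,(A_0,q_0)$ as above, it computes $\alpha$, then a rational $\epsilon_{m_0}\in(0,1)$ small enough that the open $\epsilon_{m_0}$-Prokhorov-ball about $\mu_e$ sits inside the neighborhood carved out by the $\rho$-pairs already emitted at inputs $<m_0$, then the measure $\mu'$ from Lemma~\ref{recursion-thm-distribution} applied to $\alpha$, a name for $\mu_e$, a name for $x$, and $\epsilon_{m_0}$; finally it outputs the $k$-th pair of a block representation of $\mu'$. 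The recursion theorem in its uniform form (with parameters $a,e,j$) then yields $i$.

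For correctness: in the all-empty case $\varphi_i$ emits exactly $\rho$, so $\mu_i=\mu_e$ and all returns of $\varphi_a(i,j,\cdot)$ are empty. Otherwise $\varphi_i$ emits the first $m_0$ pairs of $\rho$ followed by a block representation of $\mu'$; since $\epsilon_{m_0}$ places $\mu'$ in the neighborhood of $\mu_e$ cut out by those first $m_0$ pairs, and since any cofinal set of blocks of a block representation of $\mu'$ already pins down $\mu'$, the whole output sequence has intersection $\{\mu'\}$. Thus $\varphi_i$ represents the computable distribution $\mu'\in\cD_{P,[0,1]^2}\subseteq\SPM$, and the contradiction of the first paragraph applies; so this case is impossible, which is exactly the claim.

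The hard part will be the consistency issue just described: the prefix of $\mu_e$'s representation already committed to must stay satisfiable by whatever measure $\mu'$ we switch to, no matter how late the switch occurs. The block organization (every finite prefix leaves an explicitly computable open Prokhorov neighborhood of $\mu_e$ uncommitted) together with the switch-time choice of $\epsilon_{m_0}$ (landing $\mu'$ inside that neighborhood) is what resolves it. The remaining ingredients --- selecting $\alpha$ outside a given nontrivial $\ProkBall$-set, invoking Lemma~\ref{recursion-thm-distribution} and Corollary~\ref{Prokhorov-ball-in-terms-of-P-A-q}, and the parametrized recursion theorem --- are routine.
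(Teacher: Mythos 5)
Your proposal is correct and follows essentially the same route as the paper's proof: the recursion theorem applied to a program that echoes $\varphi_e$ until the candidate conditioning program first emits a nontrivial assertion, whereupon it switches to a consistent measure from Lemma~\ref{recursion-thm-distribution} chosen so that the asserted fact is false for every conditioning operator, forcing the switch to never fire. The small presentational differences --- your explicit block organization of the representation to certify a Prokhorov ball around $\mu_e$ compatible with the prefix already emitted (the paper instead appeals directly to the fact that the finite intersection of $\ProkBall$-sets is c.e.\ open and contains $\mu_e$), your inlined search for the first non-empty output (the paper factors this into an auxiliary index $b$), and your explicit hedge in the choice of $\alpha$ when $(A_0,q_0)$ happens to be trivial --- do not change the substance; indeed the hedge quietly repairs a point the paper glosses over (the paper's prescription to pick $\alpha$ with $\alpha(A')=0$ is impossible when $(A',q')$ is trivial, which can occur for indices $n\neq i$ even though it cannot occur at the fixed point).
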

\begin{proof}
	Uniformly in $a$, we can compute some
	$b\in\Nats$ such that for all $n, m, r \in\Nats$:
	if the value $\varphi_a(n, m, r')$ is a code for the empty string
	for all $r' \le r$, then $\varphi_b(n, m, r)$ is also a code for the empty string;
	and otherwise
	$\varphi_b(n, m, r) = \varphi_a(n, m, r')$, where $r'$ is the least index such that 
	$\varphi_a(n, m, r')$ is not a code for the empty string.
	Note that for each $n, m\in\Nats$, the value $\varphi_b(n,m,k)$ is a code for the empty string for all $k\in\Nats$ if and only if $\varphi_a(n,m,k)$ is a code for the empty string for all $k$.
	Let $\eta_{n,j}$ denote the least index $k \in \Nats$ such that  $\varphi_{b}(n,j,k)$ is not a code for the empty string, if such $k$ exists, and $\infty$ otherwise.
	Note that for each $k\in\Nats$, we can compute (uniformly in $n$, $e$, $a$, and $j$) whether or not $k < \eta_{n,j}$ (even though the finiteness of $\eta_{n,j}$ may not be computable).
	For $k\in\Nats$, let $(A_k, q_k)$ be the pair coded by $\varphi_e(k)$.

Define the total computable function 
$F\colon \Nats \to \Nats$ 
	such that for $n,k\in\Nats$,
	\[
		\label{rec-cases}
		\varphi_{F(n)}(k)  = \begin{cases}
			\varphi_e(k) & \text{if} \quad k < \eta_{n, j};\text{~and}\\
			\varphi_{e'}(k-\eta_{n,j}) & \text{if} \quad k \ge \eta_{n, j},
		\end{cases}
			\]
			where $e'\in\Nats$ is defined as follows.
	Let $(A', q')$ be the pair coded by $\varphi_{b}(n,j,\eta_{n, j})$.
First, compute a Prokhorov ball $B \subseteq \ProbMeasures([0,1]^2)$ 
contained within $\bigcap_{\ell\le \eta_{n, j}} \ProkBallS{[0,1]^2}{A_{\ell}, q_{\ell}}$.
	Next, compute some $\alpha \in \ProbMeasures([0,1])$ such that $\alpha(A') = 0$, and
hence $\alpha \not\in \ProkBallS{[0,1]}{A',q'}$.
Then, by Lemma~\ref{recursion-thm-distribution}, compute a code $e'$ for a distribution $\nu \in B$ such that
$\Phi(\nu,x) = \alpha$
for every conditioning operator $\Phi$ for $\SPM$.

By the recursion theorem,
we can compute an index $i$, uniformly in $a$, $e$ and $j$,
such that $\varphi_{F(i)} = \varphi_i$.
We now argue
that
	$\eta_{i,j} = \infty$,
	which implies that 
	$\varphi_i = \varphi_e$ by \eqref{rec-cases}.

	Suppose, for a contradiction, that $\eta_{i,j} \in\Nats$. Then $\varphi_{b}(i,j,\eta_{i,j}) = (A',q')$ for some $(A', q') \in \cA[{[0,1]}]$. 
Hence, as $\varphi_a$ is a conditioning program, 
	there is some conditioning operator $\Phi$ for $\SPM$, such that $\Phi(\mu_i,x) \in \ProkBallS{[0,1]}{A',q'}$, where $\mu_i$ is the measure represented by $\varphi_i$. 
	By construction, 
for every conditioning operator $\Phi$ for $\SPM$,
we have $\Phi(\mu_i,x) \not \in \ProkBallS{[0,1]}{A', q'}$, a contradiction.
\end{proof}

These results rely on the density of the finitely supported discrete probability distributions $\cD_{P,[0,1]^2}$.  However, analogous results 
can be established if we restrict ourselves to absolutely continuous distributions admitting continuous joint density functions. In this case, the role of the finitely supported continuous distributions would be played by absolutely continuous distributions with sharp but continuous bump functions concentrating on small sets.
The fundamental obstruction is the same: partial information in the weak topology does not suffice to condition continuously.


\section{Noncomputable Almost-Continuous Conditional Distributions}
\label{Sec:c.e. neg}

In this section, we construct a pair of $\Pr$-almost computable random variables $\rv X$ in $[0,1]$ and 
$\rv N$ in $\Naturals$ such that the conditional probability map $\CondProbFunc{\rv N = k}{\rv X}$ is not even $L^1(\Pr_{\rv X})$-computable,
despite the existence of an $\Pr_{\rv X}$-almost continuous version.
Our construction in this section can be thought of as providing a single witness to the noncomputability of the conditioning operator.

Let $M_n$ denote the $n$th Turing machine, under a standard enumeration,
and let ${h\colon \Naturals \to \Naturals \cup \{\infty\}}$ be the map given by
$h(n)\defas\infty$ if $M_n$
does not halt (on
input 0) and $h(n)\defas k$ if
$M_n$
halts (on input 0)
at the $k$th step.
We may then take $\zj \colon \Nats \to \{0,1\}$ to denote the halting set 
\[\{ \ell \st M_\ell \text{~halts on input~}0\},\]
which is computably enumerable but not
computable.
The set $\zj$ and the function $h$ are 
computable from
each other because
\[
\zj = \{ n \in \Nats \st h(n) < \infty\}.
\]
We now use $h$ to define a pair of $\Pr$-almost computable random variables $(\rv N, \rv X)$ such that 
$\zj$ 
is computable from $\CondProbFunc{\rv N}{\rv X}$.

Let $\rv N$, $\rv C$, $\rv U$, and $\rv V$ be independent $\Pr$-almost computable random variables such that
$\rv N$ is a geometric($\frac15$) random variable, 
$\rv C$ is a Bernoulli($\frac13$) random variable, and
$\rv U$ and $\rv V$ are uniformly distributed random variables in $[0,1]$.

Let $\lfloor x \rfloor$ denote the greatest integer $y \le x$,
and note that $\lfloor 2^k \rv V \rfloor$ is
uniformly distributed in $\{0, 1, 2, \dotsc, 2^k -1 \}$ and is $\Pr$-almost computable. 
For each $k \in \Nats$, consider the derived random variable
\[
\rv X_k \defas \frac{2 \lfloor 2^k \rv V \rfloor + \rv C + \rv U}{2^{k+1}}\ .
\]
Note that $\lim_{k\to\infty} \rv X_k$ almost surely exists.
Define $\rv X_{\infty} \defas \lim_{k\to\infty} \rv X_k$, and observe that 
$\rv X_\infty  = \rv V \ \textrm{a.s.}$
Finally, define
$
\rv X \defas \rv X_{h(\rv N)}.
$


\begin{proposition}\label{thmcomp}
The random variable $\rv X$ is $\Pr$-almost computable.
\end{proposition}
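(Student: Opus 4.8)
My plan is to use the oracle-machine characterization of $\Pr$-almost computability from Remark~\ref{comppointremark}: it suffices to build a single oracle Turing machine that, given $\varpi \in \Cantor$ on its oracle tape and $k \in \Nats$ as input, outputs a rational within $2^{-k}$ of $\rv X(\varpi)$ for every $\varpi$ in some fixed $\Pr$-measure one set. First I would note that since $\rv N$, $\rv C$, $\rv U$, and $\rv V$ are each $\Pr$-almost computable and Cartesian products preserve $\Pr$-almost computability, the tuple $(\rv N,\rv C,\rv U,\rv V)$ is $\Pr$-almost computable; fix a $\Pr$-measure one set $D_0$ on which the corresponding machine works. On $D_0$ the machine can recover $\rv N(\varpi)$ outright (as $\Naturals$ carries a discrete metric, any sufficiently accurate approximation pins it down) and can approximate $\rv C(\varpi)$, $\rv U(\varpi)$, and $\rv V(\varpi)$ to any desired accuracy.

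The crux is a uniform a priori bound: from $2^k\rv V(\varpi) - 1 < \lfloor 2^k\rv V(\varpi)\rfloor \le 2^k\rv V(\varpi)$ together with $\rv C(\varpi) + \rv U(\varpi) \in [0,2]$ one gets $|\rv X_k(\varpi) - \rv V(\varpi)| \le 2^{-k}$ for every $k \in \Nats$ and every $\varpi$ at which $\rv V,\rv C,\rv U$ are defined. In particular $\rv X_\infty(\varpi) = \rv V(\varpi)$ there, so $\rv X(\varpi) = \rv V(\varpi)$ whenever $h(\rv N(\varpi)) = \infty$, and more generally $|\rv X(\varpi) - \rv V(\varpi)| \le 2^{-h(\rv N(\varpi))}$ under the convention $2^{-\infty} := 0$. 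This bound is what sidesteps the undecidability of halting. On input $k$, the machine reads off $n := \rv N(\varpi)$ and simulates $M_n$ on input $0$ for $k+1$ steps. If $M_n$ halts, say at step $h \le k+1$, then $h(n) = h$, so $\rv X(\varpi) = \rv X_h(\varpi) = \bigl(2\lfloor 2^h\rv V(\varpi)\rfloor + \rv C(\varpi) + \rv U(\varpi)\bigr)/2^{h+1}$, and the machine outputs a rational within $2^{-k}$ of this value, which it can do provided $2^h\rv V(\varpi) \notin \Naturals$, so that the floor is locally constant at $\rv V(\varpi)$ and hence $\rv X_h$ is computable there. If instead $M_n$ has not halted within $k+1$ steps, then $h(n) \ge k+2$, so $|\rv X(\varpi) - \rv V(\varpi)| \le 2^{-(k+2)}$ by the bound above, and the machine outputs a rational within $2^{-(k+2)}$ of $\rv V(\varpi)$, which is then within $2^{-(k+1)} < 2^{-k}$ of $\rv X(\varpi)$.

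Finally I would bound the exceptional set: the procedure returns a correct $2^{-k}$-approximation for every $\varpi \in D_0$ lying outside $\bigcup_{h \in \Naturals}\{\varpi \st 2^h\rv V(\varpi) \in \Naturals\}$, and since $\rv V$ is uniformly distributed each set in this union is $\Pr$-null, so the union is $\Pr$-null and the machine witnesses $\Pr$-almost computability of $\rv X$. I expect the only genuine obstacle to be the non-halting branch: one can never confirm $h(\rv N(\varpi)) = \infty$, but the construction is arranged so that one need not --- simulating $M_{\rv N(\varpi)}$ for just $k+1$ steps already determines $\rv X(\varpi)$ to within $2^{-k}$, either because it has halted by then (pinning $\rv X(\varpi)$ down exactly as some $\rv X_h(\varpi)$) or because it has not (forcing $\rv X(\varpi)$ to within $2^{-(k+2)}$ of $\rv V(\varpi)$). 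The remaining wrinkle, that $\rv X_h$ is computable only where $2^h\rv V \notin \Naturals$, is absorbed into the null-set bookkeeping just described.
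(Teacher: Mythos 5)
Your proof is correct and establishes the result, but it follows a somewhat different route from the paper's. The paper's argument is bit-expansion-based: it defines, for each $k$, a random bit $\rv D_k$ that equals $\rv V_k$, $\rv C$, or $\rv U_{k - h(\rv N) - 1}$ according to whether $h(\rv N)$ is greater than, equal to, or less than $k$ (decidable by simulating $M_{\rv N}$ for $k$ steps), shows that $\{\rv D_k\}_{k \ge 0}$ is $\Pr$-almost computable uniformly in $k$, and then verifies that with probability one $\{\rv D_k\}_{k \ge 0}$ is exactly the binary expansion of $\rv X$. Your argument instead works with rational approximations and leans on the explicit quantitative estimate $|\rv X_k - \rv V| \le 2^{-k}$ (and $|\rv X - \rv V| \le 2^{-h(\rv N)}$ with $2^{-\infty} = 0$): simulating $M_{\rv N}$ for $k+1$ steps either pins down $\rv X$ exactly as some $\rv X_h$ or certifies that $\rv X$ is within $2^{-(k+2)}$ of $\rv V$, which suffices for a $2^{-k}$-approximation either way. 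Both arguments hinge on the same design feature of the construction --- that the first $k$ binary levels of $\rv X$ are determined after $k$ steps of simulation --- but the paper's exact bit-by-bit bookkeeping avoids any appeal to the approximation bound, whereas your version is shorter on bookkeeping at the cost of carrying that bound (and the $2^{-(k+2)}+2^{-(k+2)}<2^{-k}$ arithmetic) explicitly. Your null-set accounting, via the a.s.\ nondyadicity of $\rv V$, matches the paper's.
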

\begin{proof}
Because $\rv U$ and $\rv V$ are computable on a $\Pr$-measure one set and a.s.\ nondyadic,
their 
binary expansions $\{\rv U_n \st n \in \Nats\}$ and $\{\rv V_n \st n \in \Nats\}$
(which are uniquely determined with probability 1)
are themselves $\Pr$-almost computable random variables in $\{0,1\}$, uniformly in $n$.

For each $k \ge 0$, define the random variable
\[
\rv D_k = \begin{cases}	
\rv V_k,           & h(\rv N) > k; \\
\rv C,             & h(\rv N) = k; \\
\rv U_{k-h(\rv N)-1}, & h(\rv N) < k.
\end{cases}
\]
By simulating $M_{\rv N}$ for $k$ steps, we can decide whether
$h(\rv N)$ is less than, equal to, or greater than $k$.
Therefore the random variables
$\{\rv D_k\}_{k\ge0}$ are $\Pr$-almost computable, uniformly in $k$.
We now show that, with probability one, $\{\rv D_k\}_{k\ge0}$ is the binary expansion of $\rv X$, thus demonstrating that $\rv X$ is itself a $\Pr$-almost computable random variable.

Let $\rv D$ denote the $\Pr$-almost computable random real whose binary expansion is $\{\rv D_k\}_{k\ge 0}$.
There are two cases to consider.

First, conditioned on the event $\theset {h(\rv N) = \infty}$, we have that $\rv D_k = \rv V_k$ for all $k\ge 0$, and so $\rv D = \rv V = \rv X_{\infty} = \rv X$ almost surely.

In the second case, let $m \in \Nats$, and condition on the event $\theset{ h(\rv N) = m }$.
We must then show that $\rv D = \rv X_m$ a.s.  Note that
\[
\floor{2^{m} \rv X_m} = \floor{2^m \rv V} =
\sum_{k=0}^{m-1} 2^{m-1-k} \rv V_k = \floor{2^m \rv D},
\]
and thus the binary expansions agree for the first $m$ digits.
Finally, notice that $2^{m+1}\rv X_m - 2 \floor{2^{m} \rv X_m} = \rv C + \rv U$, and so
the next binary digit of $\rv X_m$ is $\rv C$, followed
by the binary expansion of $\rv U$, thus agreeing with $\rv D$ for all $k\ge 0$.
\end{proof}

\begin{figure}[t]
\begin{center}
\hspace{-20pt}
\includegraphics[height=.70\linewidth,trim=0 18 0 20]{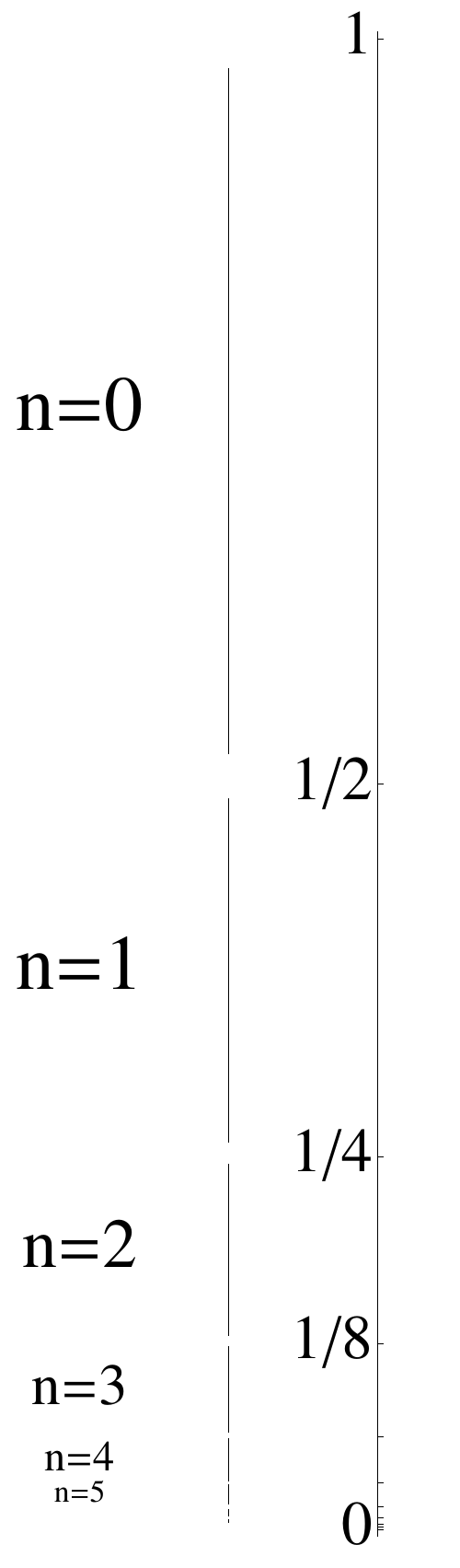}
\includegraphics[width=.70\linewidth]{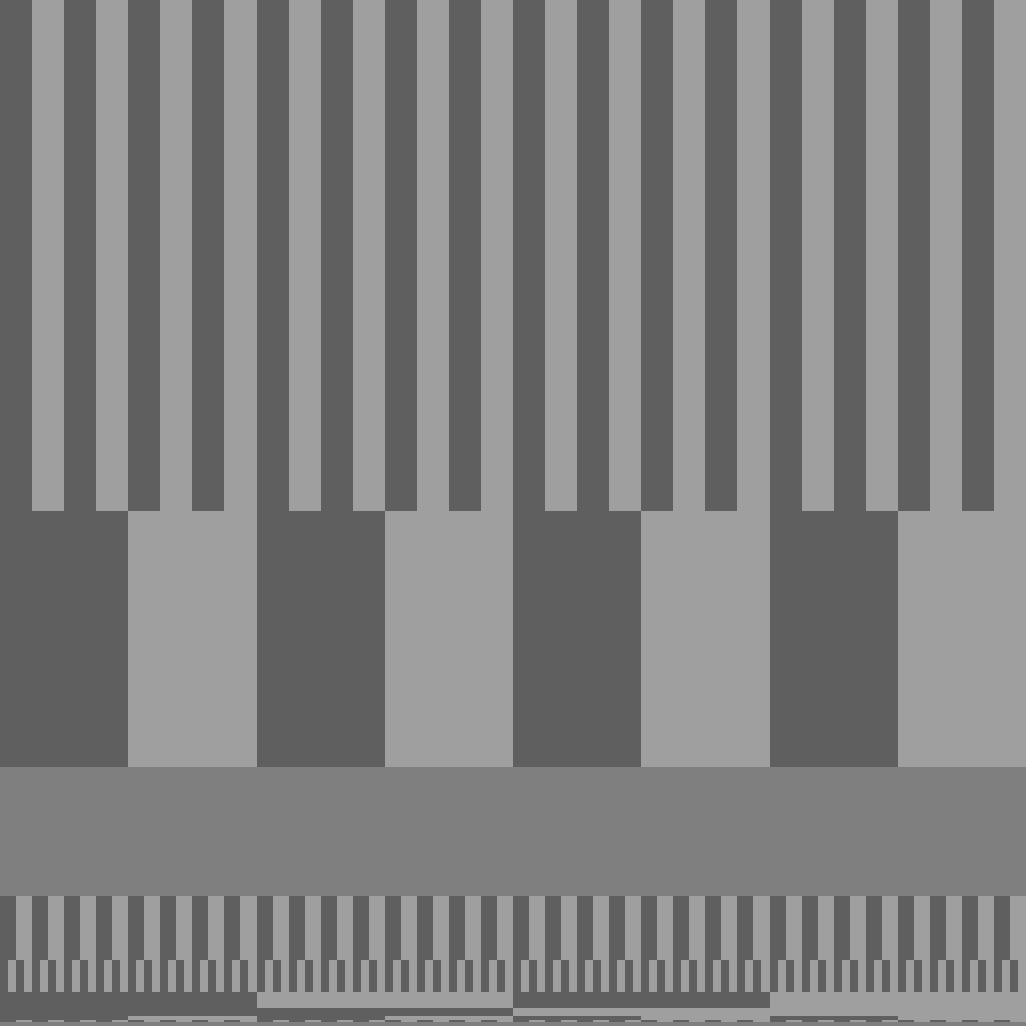}
\end{center}
\caption{
A visualization of the distribution of $(\rv W,\rv Y)$, as defined in
	Theorem~\ref{actualmainresult}.
	In this plot, the darkness of each pixel is proportional to the mass contained in that region. Note that this is not a plot of the (noncomputable) density, but rather of a discretization to a given pixel size.
Regions that appear (at low resolution) to be uniform can suddenly be revealed (at higher resolutions) to be patterned.
Deciding whether the pattern is in fact uniform (as opposed to nonuniform but at a finer granularity than the resolution of this printer/display) is tantamount to solving the halting problem, but it is possible to sample from this distribution nonetheless.
}
\label{haltingdist}
\end{figure}


We now show that 
$\CondProbFunc{\rv N}{\rv X}$ is $\Pr_{\rv X}$-almost continuous.
We begin by characterizing the conditional density of $\rv X$ given $\rv N$.

\begin{lemma}
\label{densitylemma}
For each $k\in\Nats \cup \{\infty\}$, the distribution of $\rv X_k$ admits a density $p_{\rv X_k}$
with respect to Lebesgue measure on $[0,1]$ given by
	$p_{\rv X_\infty}(x) = 1$ and
\[
p_{\rv X_k}(x) =
\begin{cases}
\frac 4 3, & \floor{2^{k+1} x} \mathrm{~even}; \\
\frac 2 3, & \floor{2^{k+1} x} \mathrm{~odd},
\end{cases}
\]
	for $k< \infty$. 
\end{lemma}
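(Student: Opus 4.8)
The plan is to obtain the density of $\rv X_k$ for finite $k$ by elementary conditioning on the two discrete quantities $\floor{2^k\rv V}$ and $\rv C$ and then averaging, and to dispatch the case $k=\infty$ using the already-noted identity $\rv X_\infty = \rv V$ a.s.

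First I would record the elementary distributional facts: $\floor{2^k\rv V}$ is uniformly distributed on $\{0,1,\dotsc,2^k-1\}$, so $\Pr\{\floor{2^k\rv V} = j\} = 2^{-k}$ for each such $j$; $\rv C$ takes the value $0$ with probability $\frac23$ and the value $1$ with probability $\frac13$; $\rv U$ is uniformly distributed on $[0,1]$; and these three are mutually independent (this uses the independence of $\rv N$, $\rv C$, $\rv U$, $\rv V$). All conditioning events below have positive probability, so no measure-theoretic subtlety arises. Conditioned on $\{\floor{2^k\rv V} = j\}\cap\{\rv C = c\}$, we have $\rv X_k = (2j+c+\rv U)/2^{k+1}$, which is an increasing affine function of $\rv U$ and hence uniformly distributed on the dyadic interval $J_{j,c} \defas \bigl[(2j+c)2^{-(k+1)},\,(2j+c+1)2^{-(k+1)}\bigr)$ of length $2^{-(k+1)}$; so the conditional density of $\rv X_k$ given $(j,c)$ is $2^{k+1}\,\Ind[\,\cdot\in J_{j,c}]$. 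In particular $\rv X_k$ is absolutely continuous, so a density exists.

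Next I would assemble the unconditional density by the law of total probability,
\[
p_{\rv X_k}(x) = \sum_{j=0}^{2^k-1}\,\sum_{c\in\{0,1\}} \Pr\{\floor{2^k\rv V} = j\}\,\Pr\{\rv C = c\}\cdot 2^{k+1}\,\Ind[x\in J_{j,c}],
\]
and observe that the $2^{k+1}$ intervals $J_{j,c}$, indexed bijectively by $2j+c\in\{0,\dotsc,2^{k+1}-1\}$, partition $[0,1)$. Hence for each $x\in[0,1)$ exactly one summand is nonzero, namely the one with $2j+c = \floor{2^{k+1}x}$; there $c$ equals the parity of $\floor{2^{k+1}x}$, and the surviving summand equals $2^{-k}\cdot\Pr\{\rv C = c\}\cdot 2^{k+1} = 2\,\Pr\{\rv C = c\}$. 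This gives $\frac43$ when $\floor{2^{k+1}x}$ is even and $\frac23$ when it is odd, matching the claim. To conclude that this function is genuinely the density of $\rv X_k$, I would verify the defining identity $\Pr\{\rv X_k\in A\} = \int_A p_{\rv X_k}(x)\,\dee x$ for $A$ a dyadic subinterval of $[0,1]$ — where both sides reduce to a finite sum of the per-interval contributions above — and then extend to all Borel $A$ by a routine $\pi$--$\lambda$ argument; as a sanity check, $p_{\rv X_k}$ takes the values $\frac43$ and $\frac23$ on equally many intervals of equal length, so $\int_0^1 p_{\rv X_k} = 1$.

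For $k=\infty$, it was already established that $\rv X_\infty = \rv V$ almost surely and $\rv V$ is uniform on $[0,1]$, so $p_{\rv X_\infty}\equiv 1$. I do not anticipate any real obstacle here; the only place demanding a little care is the bookkeeping that, for a given $x$, singles out the unique pair $(j,c)$ contributing to the sum and correctly reads off — from the parity of $\floor{2^{k+1}x}$ — whether the associated weight is $\Pr\{\rv C = 0\} = \frac23$ or $\Pr\{\rv C = 1\} = \frac13$.
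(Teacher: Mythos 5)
Your argument is correct and is essentially the paper's own: both proofs observe that, conditional on the discrete data, $\rv X_k$ is uniform on a length-$2^{-(k+1)}$ dyadic interval indexed by $2\floor{2^k\rv V}+\rv C$, note that the parity of $\floor{2^{k+1}x}$ recovers $\rv C$, and average over $\Pr\{\rv C=0\}=\frac23$, $\Pr\{\rv C=1\}=\frac13$. The only cosmetic difference is that the paper phrases the bookkeeping via the integer and fractional parts of $2^{k+1}\rv X_k$ and then rescales, whereas you work directly with $\rv X_k$ and the partition $\{J_{j,c}\}$ of $[0,1)$; the content is identical.
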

\begin{proof}
We have $\rv X_\infty = \rv V$ a.s.\ and so the constant function taking the value $1$ is a density of $\rv X_\infty$ with respect to Lebesgue measure on $[0,1]$.

Let $k\in\Naturals$.
With probability one, the integer part of $2^{k+1} \rv X_k$ is $2 \floor{2^k \rv V} + \rv C$ while the fractional part is $\rv U$.
Therefore, the distribution of $2^{k+1}\rv X_k$ (and hence $\rv X_k$) admits a piecewise constant density with respect to Lebesgue measure.

In particular, $\floor{2^{k+1} \rv X_k} \equiv \rv C \imod 2$ almost surely and
$2\floor{2^k\rv V}$ is independent of $\rv C$ and uniformly distributed on $\{0,2,\dotsc,2^{k+1}-2\}$.  Therefore,
\[
\Pr\{ \floor{2^{k+1} \rv X_k}  = \ell\}
= 2^{-k} \cdot
\begin{cases}
\frac 2 3, & \text{$\ell$ even;}\\
\frac 1 3, & \text{$\ell$ odd,}
\end{cases}
\]
for every $\ell \in \{0,1,\dotsc,2^{k+1}-1\}$.
It follows immediately that a density $p$ of $2^{k+1}\rv X_k$ with respect to Lebesgue measure on $[0,2^{k+1}]$ is given by
\[
p(x) =
2^{-k} \cdot
\begin{cases}
\frac 2 3, & \floor{x} \mathrm{~even}; \\
\frac 1 3, & \floor{x} \mathrm{~odd}.
\end{cases}
\]
A density of $\rv X_k$ is then obtained by the rescaling $p_{\rv X_k}(x) = 2^{k+1} \cdot p(2^{k+1}x)$.
\end{proof}

As $\rv X_k$ admits a density with respect to Lebesgue measure on $[0,1]$ for all $k \in \Nats \cup \{\infty\}$, it follows that the conditional distribution of $\rv X$ given $\rv N$ admits a conditional density $p_{\rv X|\rv N}$ (with respect to Lebesgue measure on $[0,1]$)
given by
\[
p_{\rv X|\rv N}(x|n) = p_{\rv X_{h(n)}}(x)
\]
for $x \in [0,1]$ and  $n \in \Nats$.
By Bayes' rule (Lemma~\ref{bayesrule}), 
for every $B \subseteq \Nats$ and $\Pr_{\rv X}$-almost every $x \in [0,1]$,
\[\label{aecontversion}
\CondProbFuncEval{\rv N \in B}{\rv X}{x}
 = \frac 
       {\phi(x,B)} 
       {\phi(x,\Nats)} , 
\]
where
\[ 
\phi (x,B) \defas \sum_{n \in B} \, p_{\rv X | \rv N}(x | n) \cdot \Pr\{\rv N = n \}.
\]

\begin{lemma} \label{lem:effectiveversion}
$\CondProbFunc{\rv N}{ \rv X}$ is $\Pr_{\rv X}$-almost continuous.
\end{lemma}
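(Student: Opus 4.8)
The plan is to show that $\CondProbFunc{\rv N}{\rv X}$ admits a version continuous on a $\Pr_{\rv X}$-measure one set by invoking Proposition~\ref{denstokern}, for which I need a conditional density of $\rv X$ given $\rv N$ that is continuous on a product $R \times \Nats$ (with $R$ of full $\Pr_{\rv X}$-measure), positive, and bounded. The subtlety is that the natural conditional density $p_{\rv X|\rv N}(x|n) = p_{\rv X_{h(n)}}(x)$ computed in Lemma~\ref{densitylemma} is \emph{not} continuous in $x$ for finite $h(n)$: it is a step function taking values $\tfrac43$ and $\tfrac23$ on dyadic subintervals. So I cannot apply Proposition~\ref{denstokern} to this particular density. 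Instead, I would reason directly with the version of $\CondProbFunc{\rv N}{\rv X}$ given by \eqref{aecontversion}, namely $x \mapsto \phi(x,\pars)/\phi(x,\Nats)$, and argue that it is continuous on the full-measure set $R$ of nondyadic reals in $[0,1]$.

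First I would fix $R = [0,1] \setminus \{\text{dyadic rationals}\}$, which has $\Pr_{\rv X}$-measure one since $\Pr_{\rv X} \ll$ Lebesgue (the densities $p_{\rv X_k}$ are bounded, so $\Pr_{\rv X}$ is a mixture of measures each absolutely continuous w.r.t. Lebesgue). Then for each fixed $n$ with $h(n) = \infty$, $p_{\rv X|\rv N}(\pars|n) \equiv 1$ is continuous everywhere; for each fixed $n$ with $h(n) = k < \infty$, the step function $p_{\rv X_k}$ is continuous at every nondyadic point, since its discontinuities lie among the dyadic rationals $\ell/2^{k+1}$. Next I would observe that for $x \in R$, the series $\phi(x,\Nats) = \sum_n p_{\rv X_{h(n)}}(x)\,\Pr\{\rv N = n\}$ is a sum of terms bounded by $\tfrac43 \cdot \Pr\{\rv N = n\}$, hence converges uniformly on $R$; since each summand is continuous on $R$ and bounded, the sum $\phi(\pars,\Nats)$ is continuous on $R$. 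The same argument applies to $\phi(\pars,B)$ for each fixed $B \subseteq \Nats$, and in fact uniformly in $B$, so the map $x \mapsto \phi(x,\pars) \in \ProbMeasures(\Nats) \hookrightarrow \Measures(\Nats)$ (equivalently, the $\ProbMeasures(\Nats)$-valued normalization) is continuous on $R$. Finally, $\phi(x,\Nats) \ge \tfrac23 > 0$ everywhere, so division is harmless and the quotient $x \mapsto \phi(x,\pars)/\phi(x,\Nats)$ is a $\ProbMeasures(\Nats)$-valued function continuous on $R$; by \eqref{aecontversion} it is a version of $\CondProbFunc{\rv N}{\rv X}$, establishing $\Pr_{\rv X}$-almost continuity.

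\textbf{The main obstacle} I expect is organizing the continuity-of-the-sum argument at the level of $\ProbMeasures(\Nats)$-valued maps rather than scalar maps — i.e., checking that continuity of $x \mapsto \CondProbFuncEval{\rv N = k}{\rv X}{x}$ for each $k$, together with the uniform summability, actually yields continuity into $\ProbMeasures(\Nats)$ with its weak topology (which for $\Nats$ discrete is just pointwise convergence of the mass function, controlled by the uniform tail bound $\sum_{n > m}\Pr\{\rv N = n\} \to 0$). This is routine but deserves a sentence. An alternative, perhaps cleaner, route would be to lightly perturb the construction or the density — replacing the step density $p_{\rv X_k}$ by a mollified version agreeing with it off a dyadic (hence $\Pr_{\rv X}$-null, hence irrelevant) set — so that Proposition~\ref{denstokern} applies verbatim; but since the dyadics are already Lebesgue-null, the direct argument above avoids introducing new notation and I would present that.
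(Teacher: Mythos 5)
Your direct argument is essentially correct, but it rests on a misreading of Proposition~\ref{denstokern} that causes you to take an unnecessary detour.

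You state (correctly) that Proposition~\ref{denstokern} requires the conditional density $p_{\rv X|\rv N}$ to be continuous on $R\times\Nats$ for some $R\subseteq[0,1]$, positive, and bounded, and that if $R$ has full $\Pr_{\rv X}$-measure then the Bayes-rule kernel is a $\Pr_{\rv X}$-almost continuous version. You then declare that the step density $p_{\rv X_{h(n)}}$ is ``not continuous in $x$,'' and so ``cannot apply Proposition~\ref{denstokern}.'' But two paragraphs later you yourself observe exactly what is needed: for each fixed $n$, the function $p_{\rv X_{h(n)}}$ is continuous at every nondyadic $x$, since the jumps occur only at dyadic rationals. Taking $R$ to be the nondyadic reals (which has $\Pr_{\rv X}$-measure one, since $\Pr_{\rv X}\ll$ Lebesgue and the dyadics are Lebesgue-null, as you note), and using that $\Nats$ is discrete, the density $p_{\rv X|\rv N}$ \emph{is} continuous on $R\times\Nats$ in the sense of Definition~\ref{comp-partial-func}. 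It is also bounded by $4/3$ and positive (bounded below by $2/3$). So the hypotheses of Proposition~\ref{denstokern} are met verbatim, and the lemma follows in one line. This is exactly the paper's proof.

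Your ``direct'' argument is therefore not wrong --- it essentially unpacks the proof of Proposition~\ref{denstokern} in this special case (uniform convergence of the series defining $\phi(\cdot,\Nats)$ from the bound $4/3\cdot\Pr\{\rv N=n\}$, uniformly bounded-below denominator, and the observation that for discrete $\Nats$ continuity into $\ProbMeasures(\Nats)$ reduces to pointwise continuity of the mass function plus uniform tail control). But this effort is spent re-deriving a proposition the paper already proved for precisely this purpose. Your rejected mollification alternative is also not needed: note in passing that a step function cannot be made globally continuous by redefinition on a null set, since a jump discontinuity at a dyadic point persists regardless of the value assigned there; the point is that such global continuity was never required.
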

\begin{proof}
For every $k \in \Nats \cup \theset \infty$,
it holds that $p_{\rv X_{k}}$ is bounded by $4/3$, positive, and continuous on 
the $\Pr_{\rv X}$-measure one set $R$ of nondyadic reals in the unit interval.
Therefore, $p_{\rv X|\rv N}$ is positive, bounded and continuous on $R \times \Nats$, and so, by Proposition~\ref{denstokern}, 
$\CondProbFunc{\rv N}{ \rv X}$ is $\Pr_{\rv X}$-almost continuous.
\end{proof}

\begin{lemma} \label{zjcomp}
$\CondProbFunc{\rv N}{ \rv X}$ is $\zj$-computable on a $\Pr_{\rv X}$-measure one set.
\end{lemma}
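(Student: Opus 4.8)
The plan is to check directly that the version of $\CondProbFunc{\rv N}{ \rv X}$ obtained from Bayes' rule (Lemma~\ref{bayesrule}) --- namely the map $\curry\kappa\colon[0,1]\to\ProbMeasures(\Nats)$ with $\curry\kappa(x)(B)=\phi(x,B)/\phi(x,\Nats)$, which satisfies \eqref{aecontversion} and is well defined since $\frac23\le\phi(x,\Nats)\le\frac43$ by Lemma~\ref{densitylemma} --- is $\zj$-computable on the set $R$ of nondyadic reals in $[0,1]$. Since any two versions of a measurable map agree off a $\Pr_{\rv X}$-null set, and computability on a set is inherited after removing a null set from that set, this suffices to conclude that $\CondProbFunc{\rv N}{ \rv X}$ is $\zj$-computable on a $\Pr_{\rv X}$-measure one set: here $R$ has full $\Pr_{\rv X}$-measure because, by \eqref{marg122} with $\nu$ Lebesgue measure, $\Pr_{\rv X}$ has density $\phi(\pars,\Nats)\in[\frac23,\frac43]$ with respect to Lebesgue measure and is therefore equivalent to it, while the dyadics are Lebesgue-null.

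The first substantive step is to show that, relative to the oracle $\zj$, the conditional density $p_{\rv X | \rv N}(x|n)=p_{\rv X_{h(n)}}(x)$ is computable on $R\times\Nats$, uniformly in $n$. The point is that $h$ is $\zj$-computable: on input $n$, query $\zj$ to decide whether $M_n$ halts on input $0$; if it does not, then $h(n)=\infty$ and $p_{\rv X_\infty}\equiv 1$; if it does, simulate $M_n$ to termination to recover $h(n)=k<\infty$, and evaluate $p_{\rv X_k}(x)$ using Lemma~\ref{densitylemma}. Evaluating $p_{\rv X_k}(x)$ for $x\in R$ only requires the parity of $\floor{2^{k+1}x}$, and the map $x\mapsto\floor{2^{k+1}x}$ is computable on the nondyadics, uniformly in $k$, because it is locally constant away from the dyadic grid $\{j\,2^{-(k+1)}\}_{j\in\Nats}$, which $R$ avoids. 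Thus $p_{\rv X | \rv N}$ is $\zj$-computable on $R\times\Nats$, uniformly in $n$, and by Lemma~\ref{densitylemma} it is bounded by $\frac43$ and at least $\frac23$.

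Next, because the singletons $\{n\}$ and the whole space $\Nats$ are (trivially) $\Pr_{\rv N}$-almost decidable subsets of the discrete space $\Nats$, Proposition~\ref{inttwovartoonever} relativized to $\zj$ shows that $x\mapsto\phi(x,\{n\})=p_{\rv X | \rv N}(x|n)\,\Pr\{\rv N=n\}$ and $x\mapsto\phi(x,\Nats)$ are $\zj$-computable on $R$, uniformly in $n$; since $\phi(\pars,\Nats)\ge\frac23$, it follows that $x\mapsto\curry\kappa(x)(\{n\})=\phi(x,\{n\})/\phi(x,\Nats)$ is $\zj$-computable on $R$, uniformly in $n$. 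To convert these pointwise masses into a representation of the measure $\curry\kappa(x)\in\ProbMeasures(\Nats)$, observe that $\curry\kappa(x)(\{n\})\le\frac43\Pr\{\rv N=n\}/\frac23=2\Pr\{\rv N=n\}=\frac85(\frac15)^{n}$, so the tail $\sum_{n>N}\curry\kappa(x)(\{n\})$ is at most $2(\frac15)^{N+1}$, uniformly in $x\in R$. Given a representation of $x\in R$ and a target accuracy $2^{-k}$, one therefore (relative to $\zj$) picks $N$ with this tail below $2^{-k-1}$, computes sufficiently accurate rational approximations of $\curry\kappa(x)(\{0\}),\dots,\curry\kappa(x)(\{N\})$, and renormalizes them to a finitely supported rational probability measure on $\Nats$ within $2^{-k}$ of $\curry\kappa(x)$ in the Prokhorov metric. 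This produces a representation of $\curry\kappa(x)$ computable from $\zj$ and a representation of $x$, so $\curry\kappa$ --- and hence $\CondProbFunc{\rv N}{ \rv X}$ --- is $\zj$-computable on $R$, which is the claim.

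I expect the only genuine obstacles to be (i) observing that $\zj$ lets one compute $h$ and thereby evaluate $p_{\rv X | \rv N}(\pars|n)$ on the nondyadics, and (ii) keeping the infinite sum defining $\phi(\pars,\Nats)$, and the subsequent division by it, under effective control; everything else is a routine application of Bayes' rule, Proposition~\ref{inttwovartoonever}, and the effective geometric tail bound for the $\operatorname{geometric}(\tfrac15)$ distribution.
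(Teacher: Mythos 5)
Your proposal is correct and follows essentially the same route as the paper's proof. The core ideas in both are (i) $\zj$ computes $h$, hence the conditional densities $p_{\rv X|\rv N}(\cdot\mid n)=p_{\rv X_{h(n)}}$ are $\zj$-computable on the nondyadics uniformly in $n$ (the paper packages this in the functions $a_n=3\,p_{\rv X|\rv N}(\cdot\mid n)$); (ii) the geometric($\tfrac15$) tail makes the sum $\phi(\cdot,B)$ effectively convergent; and (iii) dividing by $\phi(\cdot,\Nats)\in[\tfrac23,\tfrac43]$ is harmless. The only genuine difference is in how the computability of the kernel $\curry\kappa\colon[0,1]\to\ProbMeasures(\Nats)$ is finished off: the paper establishes $\zj$-almost-computability of $\phi_B$ for computable $B$ and then appeals to Lemma~\ref{phikappa} (the characterization via lower semicomputability of $\kappa(\cdot,A)$ for c.e.\ open $A$), whereas you cut out that lemma and directly construct a Prokhorov-metric representation of $\curry\kappa(x)$ by computing the point masses $\curry\kappa(x)(\{n\})$ for $n\le N$ (via Proposition~\ref{inttwovartoonever}) together with the explicit tail bound $\sum_{n>N}\curry\kappa(x)(\{n\})\le 2\cdot5^{-(N+1)}$. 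Both finishes work; yours is perhaps more self-contained for a discrete target space, while the paper's appeal to Lemma~\ref{phikappa} is the uniform machinery they use elsewhere. Your explicit remark that showing $\zj$-computability of the Bayes version on the nondyadics suffices (because versions agree off a null set, and restricting the domain by a null set preserves computability) is left implicit in the paper, but is correct and worth stating.
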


\begin{proof}
Let $B \subseteq \Nats$ be a computable set.  
By Lemma~\ref{phikappa}, Equation~\eqref{aecontversion}, 
and the computability of division, 
it suffices to show that $\phi_B \defas \phi(\spars,B)$
is $\Pr_{\rv X}$-almost computable from $\zj$. 
Note that $\Pr_{\rv X}$ is absolutely continuous with respect to Lebesgue measure (on $[0,1]$) and vice versa, and so $\Pr_{\rv X}$-almost and Lebesgue-almost computability coincide.  We will therefore drop the measure when referring to almost computability for the remainder of the proof.
Define, for each $n \in \Nats$, the function  $a_n$ on $[0,1]$ given by
\[
\label{an}
a_n(x) \defas
\begin{cases}
3, & h(n) = \infty; \\
2, & h(n) < \infty \text{~and~} \lfloor 2^{h(n)} x \rfloor \text{~even;~and}  \\
4, & h(n) < \infty \text{~and~} \lfloor 2^{h(n)} x \rfloor \text{~odd}.
\end{cases}
\]
Clearly the functions $\{a_n(x)\}_{n\in\Nats}$ are almost computable from $\zj$,
uniformly in $n$. Observe that, for all $n\in\Nats$ and $x \in [0,1]$,
\[
a_n(x) =3\, p_{\rv X | \rv N}(x | n)
\]
by Lemma~\ref{densitylemma}.
Also recall that $\Pr\{\rv N = n \} = \frac 45  \cdot 5^{-n}$
for all $n\in\Nats$.
Hence, for any finite set $F\subseteq \Nats$, the function
\[
\phi_F(x) \defas \phi(x,F) =
\sum_{n\in F} p_{\rv X | \rv N}(x | n) \cdot \Pr\{\rv N = n \} 
=
 \frac4{15} \sum_{n\in F} a_n(x) \cdot 5^{-n}
\]
is almost computable from $\zj$, uniformly in $F$.
However, for every $k\in\Nats$ and 
$x\in[0,1]$,
\[
|\phi_B(x) - \phi_{F_k}(x) | \le 
\sum_{n>k}
p_{\rv X | \rv N}(x | n) \cdot \Pr\{\rv N = n \} 
\le 
 \frac4{15} \cdot 5^{-k},
\]
where $F_{k} = B \cap \{1,\dotsc,k\}$.
It follows that $\phi_B$ is almost computable from $\zj$.
\end{proof}

\begin{proposition}\label{prevmainnegativeresult}
Let $R \subseteq [0,1]$ be a measurable subset of $\Pr_{\rv X}$-measure
greater than $\frac56$.
For each $k\in\Nats$, the conditional probability map $\CondProbFunc{\rv N=k}{\rv X}$ 
is neither lower nor upper semicomputable on $R$.
\end{proposition}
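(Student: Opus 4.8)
The plan is to argue by contradiction, and it suffices to rule out lower semicomputability on $R$ — the upper case follows by running the same argument with $\CondProbFunc{\rv N = k}{\rv X}$ replaced by $\CondProbFunc{\rv N \ne k}{\rv X} = 1 - \CondProbFunc{\rv N = k}{\rv X}$, which records the halting behaviour of the machines $M_n$ just as faithfully. So suppose $f$, taken to be the $\Pr_{\rv X}$-almost continuous version of $\CondProbFunc{\rv N = k}{\rv X}$ supplied by \eqref{aecontversion}, were lower semicomputable on $R$, witnessed by a uniformly computable family $\{U_q\}_{q\in\Rationals}$ of c.e.\ open sets with $f^{-1}[(q,1]]\cap R = U_q\cap R$. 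From this I want to extract a decision procedure for the halting set $\zj$, contradicting its noncomputability.

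Two facts drive the argument. First, the explicit Bayes form: by \eqref{aecontversion} and the conditional densities of Lemma~\ref{densitylemma}, $f$ agrees $\Pr_{\rv X}$-a.e.\ with $\phi(\cdot,\{k\})/\phi(\cdot,\Naturals)$, a ratio whose numerator is constant ($=\tfrac45 5^{-k}$) when $M_k$ fails to halt and otherwise oscillates between $\tfrac8{15}5^{-k}$ and $\tfrac{16}{15}5^{-k}$ at the dyadic scale $2^{-h(k)-1}$ set by the parity of $\floor{2^{h(k)}x}$, while the halting of \emph{every} machine is encoded — through the step densities $\tfrac23,\tfrac43$ — in the denominator $\phi(\cdot,\Naturals)$. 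Second, since $(\rv N,\rv X)$ is $\Pr$-almost computable (Proposition~\ref{thmcomp}), the joint law $\Pr_{(\rv N,\rv X)}$ and the marginal $\Pr_{\rv X}$ are computable, so $\Pr\{\rv N = k,\ \rv X\in I\} = \int_I f\,\dee\Pr_{\rv X}$ is a computable real and $\Pr_{\rv X}(U_q), \Pr_{\rv X}(I)$ are c.e.\ reals, uniformly over dyadic intervals $I$ and rationals $q$. Using that $5^{-m_0}$ exceeds $\sum_{m>m_0}5^{-m}$ for every $m_0$ (so the lowest-index term of the perturbation governing $\phi(\cdot,\Naturals)$ outweighs all later terms combined), a calculation shows that for the threshold $q_k := \tfrac45 5^{-k}$ the super-level set $\{f>q_k\}$ is, modulo a $\Pr_{\rv X}$-null set, a \emph{finite} union of dyadic blocks whose pattern reveals $h(k)$ when $M_k$ halts and reveals the first halting time otherwise; restricting to a single such block and iterating with further thresholds exposes the remaining halting data.

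The remaining step is to see that the witness $\{U_q\}$ actually lets us read off these patterns. From $U_q\cap R = \{f>q\}\cap R$ one gets $\bigl|\Pr_{\rv X}(U_q\cap I) - \Pr_{\rv X}(\{f>q\}\cap I)\bigr| \le \Pr_{\rv X}(R^{\Complement}\cap I)$, with total error over a level at most $\Pr_{\rv X}(R^{\Complement}) < \tfrac16$; combined with the computable values $\Pr\{\rv N=k,\rv X\in I\}$ and $\Pr_{\rv X}(I)$, this pins down $\Pr_{\rv X}(\{f>q\}\cap I)$ to within that error. The numerology of the construction ($\rv N$ geometric($\tfrac15$), $\rv C$ Bernoulli($\tfrac13$), block measures bounded away from $0$ and $1$ on the relevant scales) is arranged so that the two block-patterns to be distinguished at each stage differ by more than $\tfrac13$ in $\Pr_{\rv X}$-measure; hence the $\tfrac56$ hypothesis leaves exactly the slack needed to recover the pattern — and so $h(n)$ — for every $n$, which decides $\zj$.

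I expect the main obstacle to be this disentangling of $\phi(\cdot,\Naturals)$: since its value at a point mixes the halting times of all machines, no single global invariant of $f$ (its essential supremum, for instance, is merely a left-c.e.\ real and yields nothing) exposes an individual $h(n)$, so one must localize to dyadic blocks on which the denominator's fluctuation is swamped by the numerator's parity oscillation, \emph{and at the same time} absorb the at-most-$\tfrac16$ of $\Pr_{\rv X}$-mass on which lower semicomputability on $R$ is silent and which an adversarial $R$ may place exactly where it hurts most. Making the combined error budget fit under the $\tfrac56$ hypothesis, uniformly in $n$ so that one decides all of $\zj$ and not merely $M_k$, is the real work.
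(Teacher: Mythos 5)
Your overall intuition is right — the denominator $\phi(\cdot,\Naturals) = \tfrac{4}{15}\sum_n a_n(x)5^{-n}$ is where the halting set hides, the base-$5$ geometry makes the leading term dominate the tail, and the measure bound $\tfrac56$ gives just enough slack to absorb the mass on $R^{\Complement}$ — but the specific mechanism you propose has a gap you never close, and the claim it rests on is false.

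The pivotal sentence, that for $q_k = \tfrac45 5^{-k}$ the super-level set $\{f>q_k\}$ is ``modulo a $\Pr_{\rv X}$-null set, a \emph{finite} union of dyadic blocks whose pattern reveals $h(k)$,'' does not hold. Take $h(k)=\infty$: then the numerator $\phi(x,\{k\}) = \tfrac45 5^{-k}$ is constant, so $\{f>q_k\} = \{\phi(\cdot,\Naturals) < 1\}$, which is exactly $\{\tau(x)<\tfrac{15}{4}\}$ for the step function $\tau(x) = \sum_n a_n(x)5^{-n}$. That set has jumps at the dyadic scale $2^{-h(n)-1}$ for \emph{every} halting machine $M_n$, so it is an infinite, $\zj$-dependent union of intervals, not a finite dyadic pattern. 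Even when $h(k)<\infty$, the parity oscillation of $a_k$ at scale $2^{-h(k)-1}$ is superimposed on all the denominator's other oscillations; there is no threshold $q$ that cleanly factors out the $k$-th digit. You flag this entanglement yourself as ``the real work'' but never show how to carry it out, and the sketched remedy (``iterating with further thresholds exposes the remaining halting data'') is not a construction.

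The paper's proof resolves exactly this difficulty by working \emph{directly} with the denominator, not with level sets of $f$. It reduces l.s.c.\ of $\CondProbFunc{\rv N=k}{\rv X}$ on $R$ to l.s.c.\ of $\tau_0 = \tau$ on $R$, then induction on $\ell$: if $\tau_\ell(x) = \sum_n a_{n+\ell}(x)5^{-n}$ is l.s.c.\ on $R$, form a c.e.\ open $S$ with $S\cap R = \tau_\ell^{-1}[(3,\infty)]\cap R$ and race the Lebesgue measure of $S$ against a simulation of $M_\ell$: if $h(\ell)=\infty$ then $a_\ell\equiv 3$ and $S\supseteq R$, so its Lebesgue measure exceeds $\tfrac34$; if $h(\ell)<\infty$ then $S\cap R\subseteq\{a_\ell = 4\}$, Lebesgue measure $\le\tfrac12$, so $S$ has measure $<\tfrac12 + \tfrac14 = \tfrac34$. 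Either branch is eventually observed, which decides $h(\ell)$, makes $a_\ell$ computable, and then $\tau_{\ell+1} = 5(\tau_\ell - a_\ell)$ is again l.s.c.\ on $R$, propagating the induction. The key features missing from your proposal are precisely this telescoping (peeling one base-$5$ digit at a time, with the l.s.c.-on-$R$ hypothesis carried forward) and the choice of the invariant threshold $3$ for $\tau_\ell$, which is where the strict digit dominance ($a_\ell = 2 \Rightarrow \tau_\ell<3$, $a_\ell\ge 3 \Rightarrow \tau_\ell>3$) is actually used — not, as in your sketch, at the level of $f$ itself, where the denominator garbles it.
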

\begin{proof}
First note that if $R$ had Lebesgue measure no greater than $\frac 34$, then, for $k \in \Nats$,
\[
\Pr_{\rv X_k} (R) \le \frac12\cdot\frac43 + \frac14\cdot \frac23 = \frac56,
\]
and so $\Pr_{\rv X} (R) \le \frac56$, a contradiction.  Hence $R$ has Lebesgue measure greater that $\frac 3 4$.

Fix $k\in\Nats$.
By \eqref{aecontversion} and the definition of $\phi$,
\[\label{kappa-no-P}
\CondProbFuncEval{\rv N=k}{\rv X}{x} =
\frac{p_{\rv X | \rv N}(x | k) \cdot \Pr\{\rv N = k \}}
     {\phi(x,\Nats)}.
\]
for a.e.\ $x$.
The density $p_{\rv X | \rv N}(\cdot | k) = p_{\rv X_{h(k)}}$, 
given by Lemma~\ref{densitylemma},
is a
piecewise constant function and computable on a measure one set.
Furthermore, $\Pr\{\rv N = k \}= 4 \cdot 5^{-k-1}$
is a computable real.
Hence it remains to show that, as a function of $x$, the denominator $\phi(x,\Nats) \defas {\sum_{n \in \Nats} \, p_{\rv X | \rv N}(x | n)
\cdot \Pr\{\rv N = n\}}$ of the right-hand-side 
is neither lower nor upper semicomputable on $R$. We will show the former; the latter follows in a similar fashion.

Recall the sequence of functions $\{a_n\}_{n\in\Nats}$ on $[0,1]$ from \eqref{an},
and define the function $\tau\colon [0,1] \to \Reals$ by
\[
\label{fiveary}
\tau(x) \defas
\sum_{n \in \Nats} a_n(x) \cdot  5^{-n},
\]
which furthermore satisfies
\[
\label{fivearytwo}
\tau(x) 
=\sum_{n \in \Nats} \, 3\, p_{\rv X | \rv N}(x | n) \cdot \frac 5 4 \, \Pr\{\rv N = n\}
\]
for all $x\in[0,1]$.
Observe that for every $x \in [0,1]$, the real $\tau(x)$ has a unique base-5
expansion with all the digits contained in $\{2, 3, 4\}$, which must
necessarily be given by
the sequence
$a_0(x), a_1(x), \dotsc$.
For each $\ell \in \Nats$, define
\[
\tau_{\ell}(x)
      \defas 5^{\ell} \Bigl(\tau(x) - \sum_{n < \ell} a_n(x) \cdot 5^{-n} \Bigr)
      = \sum_{n \in \Nats} a_{n+\ell}(x) \cdot 5^{-n}.
\]
Note that 
$\tau_{0}$ ($= \tau$) is lower semicomputable on $R$ precisely when 
$\phi(x,\Nats)$
is lower semicomputable on $R$.
Further note that if $a_{\ell}(x) = 2$, then $\tau_{\ell}(x) < 3$
(where the inequality is strict because for every $n$, there exists an $m > n$ such that $h(m) = \infty$).
On the other hand, if $a_{\ell}(x) \ge 3$,
then $\tau_{\ell}(x) > 3$.

We now prove by induction,
uniformly in $\ell$, that, if $\tau_\ell$ is lower
semicomputable on $R$, 
then
$\tau_{\ell+1}$ is lower semicomputable on $R$,
and we can compute whether or not $h(\ell)$ is finite.
This then implies that if $\tau$ were lower semicomputable
on $R$, then $\zj$ would be computable, a contradiction.

Suppose
$\tau_{\ell}$ is lower semicomputable on $R$.  
Then we can compute (as
a c.e.\ open subset of $[0,1]$) a set $S$ such that $S \cap R = \tau_\ell^{-1}[(3,\infty)] \cap R$.
Simultaneously run $M_{\ell}$ (on input $0$).
If $h(\ell) < \infty$, then we will eventually notice this fact by
observing that $M_{\ell}$ halts. It is also the case that
\linebreak
$S \cap R \subseteq \{x \in [0,1] \st  a_{\ell}(x) = 4\}$,
and so, by the definition of the functions $\theset{a_n}$, the set $S \cap R$ has Lebesgue measure at most $\frac12$. Therefore $S$ has measure 
at most $\frac12$ plus
the measure of $[0,1] - R$,
i.e., $S$ has measure less than $\frac12 + (1 - \frac34) = \frac34$.
On the other hand, if $h(\ell) = \infty$, then
$S\supseteq R$, and hence $S$
has Lebesgue measure greater than $\frac34$, which we will eventually notice (and
which rules out the first case).
Hence we can compute whether or not $h(\ell)$ is finite, which in turn
implies that
$a_{\ell}$ is computable on a measure one set.
Therefore, as 
$\tau_{\ell+1}(x) = 5 \cdot \left(\tau_{\ell}(x) -
a_{\ell}(x)\right)$, the function
$\tau_{\ell+1}$ is lower semicomputable on $R$.
\end{proof}

We may summarize our central result as follows.
\begin{theorem}
	\label{actualmainresult}
	There are $\Pr$-almost computable random variables $\rv W$ and $\rv Y$ on $[0,1]$ such that the
conditional distribution map $\CondProbFunc{\rv Y}{\rv W}$ is
	$\Pr_{\rv W}$-almost continuous  but
	not $\Pr_{\rv W}$-almost computable.
\end{theorem}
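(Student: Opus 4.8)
The plan is to import the pair $(\rv N,\rv X)$ constructed above, transporting the $\Naturals$-valued coordinate into the unit interval along a fixed computable injection with discrete image. Concretely, I would let $g\colon\Naturals\to[0,1]$ be the computable map $g(n)\defas 2^{-n-1}$ and set $\rv W\defas\rv X$ and $\rv Y\defas g(\rv N)$. Then $\rv W$ is $\Pr$-almost computable by Proposition~\ref{thmcomp}, $\rv Y$ is $\Pr$-almost computable because $g$ is computable and $\rv N$ is $\Pr$-almost computable, and $\Pr_{\rv W}=\Pr_{\rv X}$. Note that $g[\Naturals]$ is a discrete subset of $[0,1]$: the only points of $g[\Naturals]$ within distance $2^{-n-2}$ of $g(n)$ are $g(n-1)$ and $g(n+1)$.

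For the positive half, let $\curry\kappa$ be the version of $\CondProbFunc{\rv N}{\rv X}$ supplied by Lemma~\ref{lem:effectiveversion} (through Proposition~\ref{denstokern}), which is continuous on a $\Pr_{\rv X}$-measure one set $D$. The pushforward map $g_*\colon\ProbMeasures(\Naturals)\to\ProbMeasures([0,1])$, $\nu\mapsto\nu\circ g^{-1}$, is continuous (indeed computable), and a routine computation using $\rv Y=g(\rv N)$, $\rv W=\rv X$, and $\Pr_{\rv W}=\Pr_{\rv X}$ shows that $g_*\circ\curry\kappa$ is a regular version of $\CondProb{\rv Y}{\rv W}$. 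Being the composite of $g_*$ with a map continuous on $D$, this version is continuous on $D$, and $\Pr_{\rv W}(D)=1$; hence $\CondProbFunc{\rv Y}{\rv W}$ is $\Pr_{\rv W}$-almost continuous.

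For the negative half, suppose toward a contradiction that some version $\curry\kappa'$ of $\CondProbFunc{\rv Y}{\rv W}$ is computable on a $\Pr_{\rv W}$-measure one set $R$. Using discreteness of $g[\Naturals]$, choose for each $n$ a rational open interval $I_n\subseteq[0,1]$ with $g(n)\in I_n$ and $I_n\cap g[\Naturals]=\{g(n)\}$; each $I_n$ is c.e.\ open, uniformly in $n$. On a $\Pr_{\rv W}$-measure one set the regular versions $\curry\kappa'$ and $g_*\circ\curry\kappa$ agree, and there $\kappa'(x,I_n)=\curry\kappa(x)\bigl(g^{-1}[I_n]\bigr)=\curry\kappa(x)(\{n\})=\CondProbFuncEval{\rv N=n}{\rv X}{x}$. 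By Lemma~\ref{phikappa}, $\kappa'(\pars,I_n)$ is lower semicomputable on $R$; hence $\CondProbFunc{\rv N=n}{\rv X}$ has a version that is lower semicomputable on a $\Pr_{\rv X}$-measure one subset of $R$, in particular on a set of $\Pr_{\rv X}$-measure greater than $\frac56$, contradicting Proposition~\ref{prevmainnegativeresult}. (The same argument, combined with Lemma~\ref{layer}, in fact rules out $L^1(\Pr_{\rv W})$-computability.)

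I expect the last step to be the main obstacle: one must pick the embedding $g$ so that each individual conditional mass $\CondProbFuncEval{\rv N=n}{\rv X}{x}$ is recovered as the $\curry\kappa'$-measure of a single c.e.\ open set --- so that Lemma~\ref{phikappa} applies and Proposition~\ref{prevmainnegativeresult} can be invoked --- while carefully tracking the several $\Pr_{\rv X}$-null sets on which the various versions may disagree, so that the set ultimately witnessing semicomputability still has $\Pr_{\rv X}$-measure exceeding $\frac56$.
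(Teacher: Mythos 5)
Your proof is correct and takes a genuinely different route from the paper's. The paper introduces a uniform $\rv Y$ on $[0,1]$, sets $\rv M \defas \lfloor -\log_2 \rv Y\rfloor$ (so $\rv M$ is geometric), and constructs $\rv W$ so that its conditional distribution given $\rv Y$ is $\CondProbFuncEval{\rv X}{\rv N}{\rv M}$; it then invokes the identity $\CondProbFuncEval{\rv Y \in (2^{-n-1},2^{-n})}{\rv W}{x} = \CondProbFuncEval{\rv N = n}{\rv X}{x}$ and finishes via Lemma~\ref{lem:effectiveversion} and Proposition~\ref{prevmainnegativeresult}. You instead keep $\rv W = \rv X$ fixed and push $\rv N$ forward into $[0,1]$ along a computable injection $g$ with discrete image, so that $\CondProb{\rv Y}{\rv W}$ is literally $g_*\circ\curry\kappa$ and the individual masses $\CondProbFuncEval{\rv N=n}{\rv X}{\cdot}$ are recovered as $\curry\kappa'(\cdot)(I_n)$ for separating rational intervals $I_n$; Lemma~\ref{phikappa} then applies directly. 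Both arguments reduce to Proposition~\ref{prevmainnegativeresult}, but yours is on firmer ground: since neither $\rv W$ nor the joint law of $(\rv X,\rv N)$ is altered, the posterior on $\rv N$ is literally unchanged. The paper's route replaces the geometric($\frac15$) prior on $\rv N$ with the geometric($\frac12$) prior that $\rv M$ inherits from the uniform $\rv Y$, and the asserted posterior identity is sensitive to that change of prior; it would hold exactly with $\rv M = \lfloor -\log_5 \rv Y\rfloor$, but with $\log_2$ the two posteriors are in general different functions of $x$. The null-set bookkeeping you flagged at the end is fine: $\curry\kappa'$ is computable on a $\Pr_{\rv W}$-measure one set $R$, agrees with $g_*\circ\curry\kappa$ outside a $\Pr_{\rv W}$-null set $Z$, and $R\setminus Z$ still has $\Pr_{\rv X}$-measure one, comfortably above the $\frac56$ threshold.
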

\begin{proof}
	Let $\rv X$ and $\rv N$ be as above, 
	let $\rv Y$ be uniformly distributed on $[0,1]$, and let
	$\rv M$ be the 
	geometric($\frac12$) random variable given by
$\rv M \defas  \lfloor - \log_2 \rv Y \rfloor$. 
Finally, let $\rv W$ be such that
$\CondProb{\rv W} {\rv Y}(\varpi) = \CondProbFuncEval{\rv X}{\rv N}{\rv M(\varpi)}$.
         Note that we have
         $\CondProbFuncEval{\rv Y \in (2^{-n-1},2^{-n})}{\rv W}{x}
          = \CondProbFuncEval{\rv Y \in [2^{-n-1},2^{-n}]}{\rv W}{x}$
\linebreak
$ = \CondProbFuncEval{\rv N = n}{\rv X}{x}$ for $n\in\Nats$ and $x\in[0,1]$.
	The result then follows from 
Lemma~\ref{lem:effectiveversion} and Proposition~\ref{prevmainnegativeresult}. 
\end{proof}

For a visualization of the distribution of $(\rv W, \rv Y)$, see Figure~\ref{haltingdist}.


The proof of Proposition~\ref{prevmainnegativeresult}
shows that not only is
the conditional distribution map $\CondProbFunc{\rv N}{\rv X}$ not $\Pr_{\rv X}$-almost computable,
but that, in fact,
it computes the halting set $\zj$;
to make this precise, we would define the notion of an oracle that encodes $\CondProbFunc{\rv N}{\rv X}$ using,
e.g., infinite strings as in the
Type-2 Theory of Effectivity.
Despite not having a
definition of computability \emph{from} the conditional distribution map,
we can easily relativize the notion of computability \emph{for} the
conditional distribution map,
to obtain
the following.
\begin{corollary}
If $\CondProbFunc{\rv N}{\rv X}$ is $A$-computable on a set of $\Pr_{\rv X}$-measure greater than $\frac 5 6$
for an oracle $A\subseteq
	\Naturals$, then $A$ computes the halting set, i.e., $A \ge_{\mathrm{T}} \zj$.
In particular, $\CondProbFunc{\rv N}{\rv X}$ is not computable on any set of $\Pr_{\rv X}$-measure greater than $\frac 5 6$, and hence no version of $\CondProbFunc{\rv N}{\rv X}$ is $\Pr_{\rv X}$-almost computable.
\label{cor-lowerbound}
\end{corollary}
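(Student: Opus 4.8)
The plan is to derive Corollary~\ref{cor-lowerbound} by relativizing the proof of Proposition~\ref{prevmainnegativeresult} to the oracle $A$, after first passing from the conditional \emph{distribution} map to the individual conditional \emph{probability} maps. Suppose $\CondProbFunc{\rv N}{\rv X}$ is $A$-computable on a set $R \subseteq [0,1]$ with $\Pr_{\rv X}(R) > \frac{5}{6}$. Since $\Naturals$ carries the discrete topology, each singleton $\{k\}$ is clopen, so both $\{k\}$ and $\Naturals \setminus \{k\}$ are c.e.\ open and their measures sum to $1$; hence by Proposition~\ref{lowerbounds} the evaluation map $\ProbMeasures(\Naturals) \to [0,1]$ given by $\nu \mapsto \nu(\{k\})$ is computable. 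Composing this with $\curry\kappa$ shows that $\CondProbFunc{\rv N = k}{\rv X}$ is $A$-computable on $R$ for every $k$, and in particular both $A$-lower and $A$-upper semicomputable on $R$ (up to the choice of version, which is immaterial here by the version lemma of Section~\ref{ccd2}).

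Next I would use \eqref{kappa-no-P} together with Lemma~\ref{densitylemma} to write, for $\Pr_{\rv X}$-a.e.\ $x$,
\[
\phi(x,\Nats) = \frac{p_{\rv X | \rv N}(x | 0)\cdot \Pr\{\rv N = 0\}}{\CondProbFuncEval{\rv N = 0}{\rv X}{x}}.
\]
The numerator $p_{\rv X | \rv N}(\cdot | 0) = p_{\rv X_{h(0)}}$ is a single fixed piecewise-constant function, hence (non-uniformly, so a fortiori relative to $A$) computable on the measure-one set of nondyadic reals, and it is bounded and bounded below by $\frac{2}{3}$. After shrinking $R$ by a $\Pr_{\rv X}$-null set so that it consists of nondyadic reals on which $\CondProbFunc{\rv N = 0}{\rv X}$ is bounded away from $0$ --- still a set of $\Pr_{\rv X}$-measure $> \frac{5}{6}$ --- the displayed identity exhibits $\phi(\cdot,\Nats)$ on $R$ as the product of a nonnegative $A$-computable-on-$R$ function with the reciprocal of the positive $A$-upper semicomputable-on-$R$ function $\CondProbFunc{\rv N = 0}{\rv X}$, and is therefore $A$-lower semicomputable on $R$.

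Finally I would run the inductive argument in the proof of Proposition~\ref{prevmainnegativeresult} verbatim, relativized to $A$ (replacing ``c.e.\ open'', ``computable'', and ``lower semicomputable'' by their $A$-versions throughout, and using the same translation of $\Pr_{\rv X}$-measure $>\frac{5}{6}$ into Lebesgue measure $>\frac{3}{4}$). That argument, fed the $A$-lower semicomputability of $\phi(\cdot,\Nats)$ on $R$, produces an $A$-oracle procedure deciding, for each $\ell$, whether $h(\ell)$ is finite; hence $A \ge_{\mathrm{T}} \zj$. The ``in particular'' statement then follows by taking $A$ trivial, so $\CondProbFunc{\rv N}{\rv X}$ is not computable on any set of $\Pr_{\rv X}$-measure $> \frac{5}{6}$; since every $\Pr_{\rv X}$-measure-one set has measure $>\frac{5}{6}$ and almost-computability is version-independent, no version of $\CondProbFunc{\rv N}{\rv X}$ is $\Pr_{\rv X}$-almost computable. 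The only step requiring attention --- rather than a genuine obstacle --- is confirming that the induction of Proposition~\ref{prevmainnegativeresult} relativizes cleanly, which it does because it invokes only generic closure properties of ($A$-)c.e.\ open sets, computability of measures of almost decidable sets, and elementary arithmetic of semicomputable reals, all of which are stable under relativization.
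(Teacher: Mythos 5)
Your proposal is correct and follows essentially the same route as the paper's own proof: pass from the conditional distribution map to a single conditional probability map $\CondProbFunc{\rv N=k}{\rv X}$ (the paper takes $k=1$, you take $k=0$; either works), use \eqref{kappa-no-P} to transfer the hypothesis into $A$-lower semicomputability of $\phi(\cdot,\Nats)$ on $R$, and then relativize the induction in the proof of Proposition~\ref{prevmainnegativeresult} to extract $h$, hence $\zj$, from the oracle $A$. The paper's proof is terser --- it simply cites ``the argument in the proof of Proposition~\ref{prevmainnegativeresult}'' --- whereas you spell out the intermediate steps (the evaluation map $\nu\mapsto\nu(\{k\})$, the non-uniform computability of $p_{\rv X|\rv N}(\cdot|0)$, the lower bound on $\CondProbFunc{\rv N=0}{\rv X}$), which is a harmless and arguably clarifying elaboration; the brief appeal to the version lemma in your first paragraph is unnecessary since the hypothesis already fixes the version being computed.
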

\begin{proof}
Suppose $A$ is such that $\CondProbFunc{\rv N}{\rv X}$ is $A$-computable on a 
set of $\Pr_{\rv X}$-measure greater than $5/6$.
Then
$\CondProbFunc{\rv N=1}{\rv X}$  is $A$-computable on the same set.
But then, by the argument in the proof of 
Proposition~\ref{prevmainnegativeresult},  the function $h$, and hence the
halting set $\zj$, is computable
from $A$.
\end{proof}

On the other hand, by 
Lemma~\ref{zjcomp}, 
the conditional distribution map $\CondProbFunc{\rv N}{\rv X}$ is
in fact $\zj$-computable on a $\Pr_{\rv X}$-measure one set, and so the bound in Corollary~\ref{cor-lowerbound} is
the best possible.

Computable operations map computable points to computable points,
and so 
Corollary~\ref{cor-lowerbound}
provides another context in which conditioning operators are noncomputable (cf.\ Proposition~\ref{mainnegativeresult}).

The next result shows that Proposition~\ref{prevmainnegativeresult} also rules out the 
computability of conditional probability maps in the weaker sense of $L^1(\Pr_{\rv X})$-computability.  
This extends
\citep[][Prop.~3]{HR11}, 
which states that there is a pair of computable measures $\mu \ll \nu$ on a computable Polish space 
such that the Radon--Nikodym derivative $\dee \mu / \dee \nu$ is not ${L^1\textrm{-computable}}$.  
Namely, we show that in the case of measures on
$[0,1]$, the measures can be taken to be of the form
$\mu = \Pr \{ \rv X \in \pars, \rv N = k\}$ 
and $\nu = \Pr\{\rv X \in \pars \} = \Pr_{\rv X}$.

\begin{proposition}\label{nonl1comp}
For every $k\in\Nats$,
the map $\CondProbFunc{\rv N = k}{\rv X}$
is not 
$L^1(\Pr_{\rv X})$-computable.
\end{proposition}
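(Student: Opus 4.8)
The plan is to deduce this at once from Proposition~\ref{prevmainnegativeresult}, using the characterization of $L^1$-computability provided by Lemma~\ref{layer}. First I would suppose, toward a contradiction, that $\CondProbFunc{\rv N = k}{\rv X}$ is $L^1(\Pr_{\rv X})$-computable. Then Lemma~\ref{layer} says that for every $r \in \Nats$ the map $\CondProbFunc{\rv N = k}{\rv X}$ is computable on some set of $\Pr_{\rv X}$-measure at least $1 - 2^{-r}$. I would specialize to $r = 3$, so that $1 - 2^{-r} = \frac{7}{8} > \frac{5}{6}$, thereby obtaining a measurable set $R \subseteq [0,1]$ with $\Pr_{\rv X}(R) > \frac{5}{6}$ on which $\CondProbFunc{\rv N = k}{\rv X}$ is computable.

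Next I would observe that a function computable on $R$ is, in particular, both lower and upper semicomputable on $R$: for each rational $q$, the set $(q,1]$ is open in $[0,1]$ and hence a c.e.\ union of ideal balls, so the preimage of $(q,1]$ under $\CondProbFunc{\rv N = k}{\rv X}$, intersected with $R$, equals $U_q \cap R$ for a c.e.\ open set $U_q$ computable uniformly in $q$, and symmetrically for the sets $[0,q)$. But this contradicts Proposition~\ref{prevmainnegativeresult}, which states precisely that $\CondProbFunc{\rv N = k}{\rv X}$ is neither lower nor upper semicomputable on any set of $\Pr_{\rv X}$-measure greater than $\frac{5}{6}$; this completes the argument.

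I do not expect a genuine obstacle here: the substantive work is already carried out in Proposition~\ref{prevmainnegativeresult}, and all that remains is the bookkeeping of plugging into Lemma~\ref{layer} with a single value of $r$ large enough to clear the threshold $\frac{5}{6}$ (so that no uniformity in $r$ is actually needed), together with the elementary passage from ordinary computability on a set to semicomputability on that set.
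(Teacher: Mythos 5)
Your proof is correct and is essentially the paper's own argument: invoke Lemma~\ref{layer} to get computability on a set of measure at least $1-2^{-r}$, observe that a single $r$ large enough to exceed $\frac{5}{6}$ suffices, and appeal to Proposition~\ref{prevmainnegativeresult} via the fact that computability on a set implies lower and upper semicomputability on that set. Your version spells out the choice $r=3$ and the passage from computability to semicomputability, which the paper leaves implicit, but the reasoning is identical.
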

\begin{proof}
Let $k\in\Nats$.
By Proposition~\ref{prevmainnegativeresult}, 
the conditional probability map $\CondProbFunc{\rv N = k}{\rv X}$
is not 
computable on any 
measurable
set $R$
of $\Pr_{\rv X}$-measure greater than $5/6$.
On the other hand, 
by Lemma~\ref{layer},
the conditional probability map is $L^1(\Pr_{\rv X})$-computable only if,
for each $r \in \Nats$, the map
is computable on some set of $\Pr_{\rv X}$-measure at least $1-2^{-r}$, uniformly in $r$.
This does not hold, and so every conditional probability map $\CondProbFunc{\rv N = k}{\rv X}$ 
is not $L^1(\Pr_{\rv X})$-computable.
\end{proof}

It is natural to ask whether this construction can be modified to
produce a pair of $\Pr$-almost computable random variables, like $\rv N$ and $\rv X$, such that the corresponding conditional
distribution map is not $\Pr_{\rv X}$-almost computable even though it has an
\emph{everywhere continuous} version.
We provide such a strengthening
in the next section.


\section{Noncomputable Everywhere Continuous Conditional Distributions}
\label{Sec:c.e. neg cont}

In Section~\ref{Sec:c.e. neg}, we demonstrated a pair of  
computable random variables 
that admit a conditional distribution 
that is continuous on a measure one set but still noncomputable on every measure one set.
It is therefore natural to ask whether we can
construct a pair of random variables $(\rv Z, \rv N)$ that is
computable and admits an \emph{everywhere} continuous version of the
conditional distribution map $\CondProbFunc{\rv N }{ \rv Z}$,
which is itself
nonetheless not computable.
In fact, we do so now, using a construction similar to that of $(\rv X, \rv N)$ in Section~\ref{Sec:c.e. neg}.

If we think of the construction of the $k$th bit of $\rv X$ as an iterative process, we see that there are two distinct stages. During the first stage, which occurs so long as $k < h(\rv N)$, the bits of $\rv X$ simply mimic those of the uniform random variable $\rv V$. Then during the second stage, once $k \geq h(\rv N)$, the bits mimic that of $\frac 1 2 (\rv C + \rv U)$.

Our construction of $\rv Z$ will differ in the second stage, where the bits of $\rv Z$ will instead mimic those of a random variable $\rv S$ specially designed  to smooth out the rough edges caused by the biased Bernoulli random variable $\rv C$, while still allowing us to encode the halting set.
In particular, $\rv S$ will be absolutely continuous and will have an infinitely differentiable density.

We now begin the construction.  Let $\rv N$, $\rv U$, $\rv V$, and $\rv C$ be as in the first construction.
We next define several random variables from which we will construct $\rv S$, and then $\rv Z$.

\begin{lemma}
\label{c-infinity-step}
There is a random variable $\rv F$ in $[0,1]$ with the following properties:
\begin{enumerate}
\item $\rv F$ is $\Pr$-almost computable.
\item $\Pr_{\rv F}$ admits a computable
density $p_{\rv F}$ with respect to Lebesgue
measure (on $[0,1])$ that is infinitely differentiable
everywhere.
\item $p_{\rv F}(0) = \frac 2 3$ and $p_{\rv F}(1) = \frac 4 3$.
\item $\frac{d^n_+}{dx^n} p_{\rv F}(0) = \frac{d^n_-}{dx^n} p_{\rv F}(1) = 0$, for all $n\ge 1$ (where $\frac{d^n_-}{dx^n}$ and $\frac{d^n_+}{dx^n}$ are the left and right derivatives respectively).
\qed
\end{enumerate}
\end{lemma}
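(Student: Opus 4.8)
The plan is to exhibit an explicit density built from the classical ``flat'' bump function. Let $f\colon\Reals\to\Reals$ be defined by $f(t)=\exp(-1/t)$ for $t>0$ and $f(t)=0$ for $t\le 0$, so that $f\in C^\infty(\Reals)$ and every derivative of $f$ vanishes at $0$. Since $f(x)+f(1-x)>0$ for all $x\in\Reals$ (at least one of $x>0$, $1-x>0$ always holds), the functions
\[
g(x)\defas\frac{f(x)}{f(x)+f(1-x)}\qquad\text{and}\qquad p_{\rv F}(x)\defas\tfrac23+\tfrac23\,g(x)
\]
are both in $C^\infty(\Reals)$, and $g$ satisfies $g(0)=0$, $g(1)=1$, and $g(1-x)=1-g(x)$. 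Because $f$ is flat at $0$ while $x\mapsto f(1-x)$ is smooth and bounded away from $0$ near $x=0$, all derivatives of $g$ of order $\ge 1$ vanish at $0$; applying $g(1-x)=1-g(x)$, the same holds at $x=1$.

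I would then verify the four properties in turn. Since $g\ge 0$ we have $p_{\rv F}\ge\tfrac23>0$, and the substitution $x\mapsto 1-x$ together with $g(1-x)=1-g(x)$ gives $\int_0^1 g=\tfrac12$, so $\int_0^1 p_{\rv F}=\tfrac23+\tfrac23\cdot\tfrac12=1$; thus $p_{\rv F}$ is a genuine probability density with respect to Lebesgue measure on $[0,1]$, and it is $C^\infty$ on $[0,1]$ because $g$ is, which gives property~(2). Property~(3) follows from $g(0)=0$ and $g(1)=1$, and property~(4) from the vanishing of the higher derivatives of $g$ at the endpoints, since $\frac{d^n}{dx^n}p_{\rv F}=\tfrac23\frac{d^n}{dx^n}g$. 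For property~(1): $p_{\rv F}$ is a bounded computable real-valued function on $[0,1]$ (see below), so its cumulative distribution $G(x)=\int_0^x p_{\rv F}$ is a computable, strictly increasing self-bijection of $[0,1]$ and hence has a computable inverse; taking $\rv F\defas G^{-1}(\rv U)$ for any $\Pr$-almost computable uniform random variable $\rv U$ on $[0,1]$ yields a $\Pr$-almost computable random variable whose distribution has density $p_{\rv F}$. (Alternatively, a bounded computable density makes $\Pr_{\rv F}$ a computable probability measure by Corollary~\ref{almostcomplementary-to-compmeas} together with Proposition~\ref{inttwovartoonever}, and then $\rv F$ exists by Proposition~\ref{disttorv}.)

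The only point requiring genuine care — and the main obstacle — is the computability of $f$ near the flat endpoint $t=0$, since evaluating $\exp(-1/t)$ naively there would amount to deciding whether $t>0$. This is handled by the estimate $0\le f(t)=\exp(-1/t)$ for $t>0$: given target precision $2^{-k}$, pick $n$ with $\exp(-2^{n})<2^{-k}$ and read a rational $q$ approximating the input to within $2^{-n-2}$; if $q\le 2^{-n-1}$ then the input is $<2^{-n}$ and outputting $0$ is within $2^{-k}$, whereas if $q>2^{-n-1}$ the input exceeds $2^{-n-2}>0$ and $\exp(-1/t)$ is evaluated directly. The same device applies to $x\mapsto f(1-x)$ and hence to $g$ and $p_{\rv F}$, so $p_{\rv F}$ is a computable function; its $C^\infty$-ness is a purely classical fact about $f$. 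Everything else in the lemma is routine bookkeeping.
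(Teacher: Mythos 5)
Your proof is correct and follows essentially the same approach as the paper's Figure~\ref{stepdensity}: both take $p_{\rv F}=\tfrac23\bigl(1+h(x)\bigr)$ for a $C^\infty$ step $h\colon[0,1]\to[0,1]$ that is flat at both endpoints, the paper realizing $h$ as the normalized integral $\Phi(2y-1)/\Phi(1)$ of the bump $\exp\{-(1-x^2)^{-1}\}$ while you realize it as the algebraic ratio $f(x)/(f(x)+f(1-x))$ with $f(t)=e^{-1/t}$. Your careful handling of the computability of $f$ near its flat point and of $\rv F$ via CDF inversion correctly fills in details the paper (which asserts the lemma by pointing at the figure) leaves implicit.
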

(See Figure~\ref{stepdensity} for one such random variable.)
Let $\rv F$ be as in Lemma~\ref{c-infinity-step},
and independent of all earlier random variables mentioned.
Note that $\rv F$ is almost surely nondyadic and so the $r$-th bit $\rv F_r$  of $\rv F$ is a $\Pr$-almost computable random variable, uniformly in $r$.

\newcommand{\bumpint}{\Phi}

\begin{figure}[t]
\centering
\includegraphics[height=5.05cm]{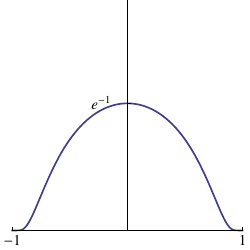}
\ \ \ \
\includegraphics[height=5.05cm]{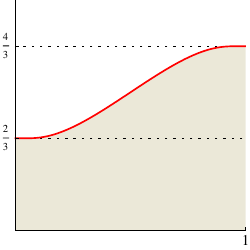}
\caption{
(left)\ \  The graph of the function defined by 
$f(x) = \exp\{- (1-x^2)^{-1} \}$, 
for $x \in (-1,1)$, and $0$ otherwise, a $C^\infty$ bump function whose derivatives at $\pm 1$ are all 0.
\ \ \ (right)\ \  A density 
$p(y) = 2/3 \, ( 1 + {\bumpint(2y-1)} / {\bumpint(1)} )$, 
for $y \in (0,1)$, of a random variable satisfying Lemma~\ref{c-infinity-step}, where $\bumpint(y) = \int_{-1}^y f(x)\,\dee x$ is the integral of the bump function.
}
\label{stepdensity}
\end{figure}

Let $\rv D$ be a $\Pr$-almost computable random variable,
independent of all earlier random variables mentioned,
and uniformly distributed on $\{0,1, \dotsc, 7 \}$. Consider
\[
\rv S = \frac 1 8 \times
\begin{cases}
\rv F, & \text{if } \rv D = 0; \\
4 + (1- \rv F), & \text{if } \rv D = 4; \\
4 \rv C + (\rv D \bmod 4) + \rv U, & \text{otherwise.}
\end{cases}
\]
It is clear that $\rv S$ is also $\Pr$-almost computable, and straightforward to show that
\begin{itemize}
\item[\emph{(i)}] $\Pr_{\rv S}$ admits an infinitely differentiable 
and computable
density $p_{\rv S}$ with respect to Lebesgue measure on $[0,1]$; and
\item[\emph{(ii)}] For all $n \ge 0$, we have $\frac{d^n_+}{dx^n}p_{\rv S}(0)=
\frac{d^n_-}{dx^n}p_{\rv S}(1)$.
\end{itemize}

(For a visualization of the density $p_{\rv S}$ see
Figure~\ref{smooth-dens}.)

Next we define, for every $k \in \Nats$, the random variables $\rv Z_k$ mimicking the construction of $\rv X_k$. Specifically, for $k \in \Nats$, define
\[
\rv Z_k \defas \frac{\lfloor 2^k \rv V \rfloor + \rv S}{2^{k}},
\]
and let $\rv Z_\infty \defas \lim_{k \rightarrow \infty} \rv Z_k = \rv V $ a.s.\ \
Then the $n$th bit of $\rv Z_k$ is
\[
(\rv Z_{k})_n =
\begin{cases}	
\rv V_n, & n < k;\\
\rv S_{n-k}, & n \geq k
\end{cases} \qquad \textrm{a.s.},
\]
where $S_\ell$ denotes the $\ell$th bit of $S$.
It is 
straightforward to show from \emph{(i)} and \emph{(ii)} above that
$\Pr_{\rv Z_k}$ admits an infinitely differentiable 
density $p_{\rv Z_k}$ with respect to Lebesgue measure on $[0,1]$.

To complete the construction, we define $\rv Z \defas \rv Z_{h(\rv N)}$.  The following results are analogous to those in the almost continuous construction.

\begin{lemma}
The random variable $\rv Z$ is $\Pr$-almost computable.
	\qed
\end{lemma}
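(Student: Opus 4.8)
The plan is to repeat the strategy of Proposition~\ref{thmcomp}: we exhibit a sequence $\{\rv B_k\}_{k \geq 0}$ of $\{0,1\}$-valued random variables that is uniformly $\Pr$-almost computable and, almost surely, equal to the binary expansion of $\rv Z$. Since a real whose binary expansion is a uniformly $\Pr$-almost computable sequence of bits is itself $\Pr$-almost computable, this suffices.

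First I would record the needed preliminaries. The random variable $\rv S$ is $\Pr$-almost computable, and $\Pr_{\rv S}$ is absolutely continuous with respect to Lebesgue measure on $[0,1]$, so $\rv S$ is almost surely nondyadic; hence its binary expansion $\{\rv S_\ell\}_{\ell \in \Nats}$ is uniquely determined with probability one, and each $\rv S_\ell$ is a $\Pr$-almost computable random variable in $\{0,1\}$, uniformly in $\ell$. The same holds for the bits $\{\rv V_n\}_{n \in \Nats}$ of $\rv V$. Since each $\rv Z_k = (\floor{2^k \rv V} + \rv S)/2^k$ and $\rv Z_\infty = \rv V$ are almost surely nondyadic, so is $\rv Z = \rv Z_{h(\rv N)}$, and thus its binary expansion is well-defined almost surely.

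Next, for each $k \geq 0$, define
\[
\rv B_k \defas
\begin{cases}
\rv V_k, & h(\rv N) > k; \\
\rv S_{k - h(\rv N)}, & h(\rv N) \le k.
\end{cases}
\]
By simulating $M_{\rv N}$ on input $0$ for $k$ steps we can decide whether $h(\rv N) > k$ or $h(\rv N) \le k$, and in the latter case recover the value $h(\rv N)$; combined with the uniform $\Pr$-almost computability of the bits of $\rv V$ and $\rv S$, this shows the $\rv B_k$ are $\Pr$-almost computable, uniformly in $k$. It then remains to check that $\{\rv B_k\}_{k \geq 0}$ is, almost surely, the binary expansion of $\rv Z$. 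This follows from the bit structure $(\rv Z_k)_n = \rv V_n$ for $n < k$ and $(\rv Z_k)_n = \rv S_{n-k}$ for $n \geq k$, split into two cases just as in Proposition~\ref{thmcomp}: conditioned on $\{h(\rv N) = \infty\}$ we have $\rv B_k = \rv V_k$ for all $k$, so the associated real equals $\rv V = \rv Z_\infty = \rv Z$ almost surely; conditioned on $\{h(\rv N) = m\}$ for $m \in \Nats$, the sequence $\{\rv B_k\}$ agrees digit for digit with the expansion of $\rv Z_m = \rv Z$ almost surely.

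This argument is routine given Proposition~\ref{thmcomp}; the only point requiring care is the bookkeeping of bit indices --- confirming that, in passing from $\rv X_k$ to $\rv Z_k$, the digits of the smoothing variable $\rv S$ (starting at position $k$) now play the role previously played by the single digit $\rv C$ followed by the digits of $\rv U$ --- together with verifying the various almost-sure nondyadicity claims so that every binary expansion appearing in the argument is genuinely well-defined. I do not anticipate a substantive obstacle beyond this.
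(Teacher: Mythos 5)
Your proposal is correct and is exactly the argument the paper intends: the lemma is stated with only a \qed\ because, as the paper remarks, it is "analogous" to Proposition~\ref{thmcomp}, and you have faithfully adapted that proof. Your bit-level bookkeeping is right --- in particular, the two-case definition of $\rv B_k$ (versus the three cases needed for $\rv X$) correctly reflects the fact that in $\rv Z_k$ the smoothing variable $\rv S$ replaces the pair $(\rv C, \rv U)$ starting at bit $k$ --- and your appeal to the absolute continuity of $\Pr_{\rv S}$ (property~(i) in the paper) to justify almost-sure nondyadicity and well-definedness of the binary expansions is the right supporting observation.
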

\begin{lemma}
There is an everywhere continuous version of $\CondProbFunc{\rv N}{\rv Z}$.
\end{lemma}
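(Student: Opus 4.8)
The plan is to produce the required version explicitly via Bayes' rule and then read off everywhere-continuity from Proposition~\ref{denstokern}, taking the ``exceptional'' set $R$ there to be the \emph{whole} interval. As in Section~\ref{Sec:c.e. neg}, since $\rv Z_k$ admits a density $p_{\rv Z_k}$ with respect to Lebesgue measure on $[0,1]$ for every $k\in\Nats\cup\theset\infty$, the conditional distribution of $\rv Z$ given $\rv N$ admits the conditional density (with respect to Lebesgue measure on $[0,1]$)
\[
p_{\rv Z\mid\rv N}(z\mid n)\defas p_{\rv Z_{h(n)}}(z),\qquad z\in[0,1],\ n\in\Nats.
\]
By Proposition~\ref{conddef} there is a regular version of $\CondProb{\rv N}{\rv Z}$, and Lemma~\ref{bayesrule} exhibits one such version as the explicit kernel $\kappa$ of \eqref{formalbayes} (with the roles of $\rv X$ and $\rv Y$ there played by $\rv Z$ and $\rv N$, and $\nu$ Lebesgue measure on $[0,1]$). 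I would then apply Proposition~\ref{denstokern} with $R=[0,1]$: it remains only to check that $p_{\rv Z\mid\rv N}$ is continuous on $[0,1]\times\Nats$, positive, and bounded.

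All three conditions reduce to properties of $p_{\rv S}$, via the identity $p_{\rv Z_k}(z)=p_{\rv S}\bigl(2^k z-\floor{2^k z}\bigr)$ for finite $k$ together with $p_{\rv Z_\infty}\equiv 1$. \emph{Positivity}: $p_{\rv S}$ is strictly positive on $[0,1]$ (a routine case analysis over the eight values of $\rv D$, using $p_{\rv F}\ge\frac23$ from Lemma~\ref{c-infinity-step}), whence every $p_{\rv Z_k}$ is strictly positive on $[0,1]$, and hence so is $p_{\rv Z\mid\rv N}$. \emph{Boundedness}: the same identity gives $p_{\rv Z_k}\le\|p_{\rv S}\|_\infty<\infty$ uniformly in $k$. \emph{Continuity on $[0,1]\times\Nats$} (with $\Nats$ discrete this is just continuity of $z\mapsto p_{\rv Z_{h(n)}}(z)$ on all of $[0,1]$ for each $n$): this is the one place where the present construction genuinely improves on Section~\ref{Sec:c.e. neg}. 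Because $p_{\rv S}$ is $C^\infty$ on $[0,1]$ with $\frac{d^m_+}{dx^m}p_{\rv S}(0)=\frac{d^m_-}{dx^m}p_{\rv S}(1)$ for all $m\ge0$ (properties \emph{(i)}--\emph{(ii)} above, which in turn rest on the vanishing endpoint derivatives of $p_{\rv F}$ in Lemma~\ref{c-infinity-step}), the function $p_{\rv Z_k}$ --- assembled by gluing $2^k$ rescaled copies of $p_{\rv S}$ along dyadic rationals --- has matching one-sided values and derivatives at each dyadic breakpoint, and so is $C^\infty$, in particular continuous, on all of $[0,1]$; this was already recorded for each $\rv Z_k$, and $p_{\rv Z_\infty}\equiv 1$ is trivially continuous.

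With these three facts, Proposition~\ref{denstokern} applied with $R=[0,1]$ shows that the kernel $\kappa$ of \eqref{formalbayes} is a version of $\CondProbFunc{\rv N}{\rv Z}$ that is continuous on $[0,1]$, i.e.\ everywhere continuous, which is exactly the claim. I do not expect a genuine obstacle here: the only mildly delicate point --- smoothness of $p_{\rv Z_k}$ \emph{across} dyadic points, which is precisely what fails for the piecewise-constant densities $p_{\rv X_k}$ of Section~\ref{Sec:c.e. neg} --- has already been engineered into $\rv F$ and $\rv S$ by Lemma~\ref{c-infinity-step} and properties \emph{(i)}--\emph{(ii)}, so the remaining argument is essentially just the verification of the hypotheses of Proposition~\ref{denstokern}.
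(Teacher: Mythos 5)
Your proposal is correct and takes essentially the same approach as the paper: the paper's proof simply observes that the conditional density of $\rv Z$ given $\rv N$ is everywhere continuous, bounded, and positive by construction, and invokes Proposition~\ref{denstokern} with $R=[0,1]$. You give a more explicit verification of these hypotheses (in particular, checking the gluing across dyadic breakpoints via properties \emph{(i)}--\emph{(ii)}), but the argument is the same.
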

\begin{proof}
By construction, the conditional density of $\rv Z$ given $\rv N$ is everywhere continuous, bounded, and positive.  The result follows from Proposition~\ref{denstokern} for $R=[0,1]$.
\end{proof}

We next show that, 
for each $k\in\Nats$, the conditional probability map $\CondProbFunc{\rv N=k}{\rv Z}$
is not computable.  Our proof relies on the fact that, 
for each $k\in\Nats$,
there is a large set of 
points $x\in[0,1]$ for which the density $p_{\rv Z | \rv N}(x | k) \defas
p_{\rv Z_{h(k)}}(x)$
agrees with 
$p_{\rv X | \rv N}(x | k)$. This will then allow us to use techniques
similar to those of Proposition~\ref{prevmainnegativeresult}.

We say a real $x \in [0,1]$ is \defn{valid for $\Pr_{\rv S}$}
if $x \in (\frac 1 8, \frac 1 2) \cup (\frac 5 8, 1)$.  In particular, when $\rv D \not\in \{0,4\}$, then $\rv S$ is valid for $\Pr_{\rv S}$.
The following are then 
consequences of the construction of $\rv S$ and the definition of valid points:
\begin{itemize}

\item[\emph{(iii)}] If $x$ is valid for $\Pr_{\rv S}$ then $p_{\rv S}(x) \in \{\frac 2 3, \frac 4 3\}$.
In particular, 
$p_{\rv S}(x) = \frac 43$ for
$x \in (\frac 18, \frac 12)$, and
$p_{\rv S}(x) = \frac 23$ for
$x\in (\frac 58, 1)$.

\item[\emph{(iv)}] The Lebesgue measure
of points valid for $\Pr_{\rv S}$ 
is $(\frac12 - \frac18) + (1 - \frac58) = \frac34$.

\end{itemize}

For $k < \infty$, we say that $x \in [0,1]$ is \defn{valid for $\Pr_{\rv
Z_{k}}$} if the fractional part of $2^k x$ is valid for $\Pr_{\rv S}$, and
we say that $x$ is \defn{valid for $\Pr_{\rv Z_\infty}$}, for all $x$.  Let
$A_k$ be the collection of $x$ valid for $\Pr_{\rv Z_k}$, and let $A_\infty = [0,1]$.  
Note that, at all points $x$ that are valid for $\Pr_{\rv Z_k}$,
the density $p_{\rv Z_k}(x)$ agrees with $p_{\rv X_k}(x)$, and, at all
points $x$ that are valid for $\Pr_{\rv Z_\infty}$,
the density $p_{\rv Z_\infty}(x)$ agrees with $p_{\rv X_\infty}(x)$.

For each $k< \infty$, the set $A_k$ consists of 
the set of points that are valid for $\Pr_{\rv S}$
first dilated by a factor of $2^{-k}$ and then tiled $2^k$-many
times, and so $A_k$ has Lebesgue measure $\frac34$; the
set $A_\infty = [0,1]$ has Lebesgue measure one.

\begin{figure}[t]
\centering
\includegraphics[height=5.05cm]{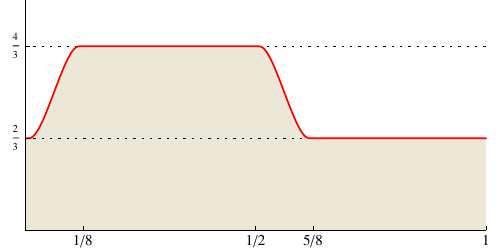} \\
\caption{
Graph of $p_{\rv S}$, the density of $\rv S$, when $\rv S$ is constructed from $\rv F$ as given in Figure~\ref{stepdensity}.
}
\label{smooth-dens}
\end{figure}

\begin{proposition}
Let $R \subseteq [0,1]$ be a measurable subset of $\Pr_{\rv Z}$-measure greater than $11/12$.
Then for each $k\in\Nats$, the conditional probability map $\CondProbFunc{\rv N=k}{\rv Z}$
is neither lower semicomputable nor upper semicomputable on $R$.
\end{proposition}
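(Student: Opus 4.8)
The plan is to transcribe the proof of Proposition~\ref{prevmainnegativeresult}, with the density $p_{\rv X_k}$ replaced everywhere by $p_{\rv Z_k}$ and the measure threshold $\tfrac56$ replaced by $\tfrac{11}{12}$. As in Section~\ref{Sec:c.e. neg}, Bayes' rule (Lemma~\ref{bayesrule}) applied to the everywhere-continuous, positive, bounded conditional density $p_{\rv Z|\rv N}(x|n)=p_{\rv Z_{h(n)}}(x)$ gives, for every $k$ and every $x\in[0,1]$, the identity $\CondProbFuncEval{\rv N=k}{\rv Z}{x}=p_{\rv Z_{h(k)}}(x)\,\Pr\{\rv N=k\}/\phi(x,\Nats)$, where $\phi(x,\Nats):=\sum_{n}p_{\rv Z_{h(n)}}(x)\,\Pr\{\rv N=n\}=\tfrac{4}{15}\tau(x)$ with $\tau(x):=\sum_{n}b_n(x)\,5^{-n}$ and $b_n:=3\,p_{\rv Z_{h(n)}}$. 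By properties (i)--(ii) of $\rv S$, $p_{\rv S}$ extends to a $C^\infty$ periodic function on $\Reals$, so each $p_{\rv Z_{h(k)}}$, hence the numerator $p_{\rv Z_{h(k)}}(x)\Pr\{\rv N=k\}$, is a positive, bounded, everywhere computable function of $x$ once $h(k)$ is known. Dividing, exactly as in Section~\ref{Sec:c.e. neg}, $\CondProbFuncEval{\rv N=k}{\rv Z}{\cdot}$ fails to be lower (resp.\ upper) semicomputable on $R$ as soon as $\tau$ fails to be upper (resp.\ lower) semicomputable on $R$; since $\tau$ does not depend on $k$, it therefore suffices to show that $\tau$ is neither lower nor upper semicomputable on $R$, and this handles all $k$ at once.

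First I would translate the hypothesis on $R$. Writing $\lambda$ for Lebesgue measure on $[0,1]$: by property (iii), $p_{\rv S}$ takes values in $[\tfrac23,\tfrac43]$, hence so do each $p_{\rv Z_k}$, $p_{\rv Z_\infty}\equiv1$, and the mixture density $p_{\rv Z}$ of $\Pr_{\rv Z}$ with respect to $\lambda$; thus $\lambda([0,1]\setminus R)\le\tfrac32\,\Pr_{\rv Z}([0,1]\setminus R)<\tfrac32\cdot\tfrac1{12}=\tfrac18$, so $\lambda(R)>\tfrac78$. The only other facts needed are elementary consequences of property (iii): $\{y\in[0,1]:p_{\rv S}(y)>\tfrac23\}=(0,\tfrac58)$ has measure $\tfrac58$, and $\{y:p_{\rv S}(y)<\tfrac76\}$ has measure at most $\tfrac58$ (it omits $[\tfrac18,\tfrac12]$, where $p_{\rv S}\equiv\tfrac43$). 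Since $b_n(x)=3\,p_{\rv S}(\{2^{h(n)}x\})$ when $h(n)<\infty$, a tiling argument gives $\lambda\{x:b_n(x)>2\}=\tfrac58$ and $\lambda\{x:b_n(x)<\tfrac72\}\le\tfrac58$, whereas $b_n\equiv3$ when $h(n)=\infty$; moreover $b_n\in[2,4]$ always, so, recalling $\tau_\ell(x)=\sum_{j\ge0}b_{\ell+j}(x)5^{-j}$, the tail $\sum_{j\ge1}b_{\ell+j}(x)5^{-j}$ always lies in $[\tfrac12,1]$.

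Now I run two inductions on $\ell$, uniform in $\ell$, patterned on the one in Proposition~\ref{prevmainnegativeresult}. For lower semicomputability: assuming $\tau_\ell$ is lower semicomputable on $R$, compute a c.e.\ open $S$ with $S\cap R=\tau_\ell^{-1}[(3,\infty)]\cap R$ and simultaneously run $M_\ell$. If $h(\ell)<\infty$, then (the tail being $\le1$) $\{\tau_\ell>3\}\subseteq\{b_\ell>2\}$, so $\lambda(S)\le\tfrac58+\lambda([0,1]\setminus R)<\tfrac34$, yet $M_\ell$ halts; if $h(\ell)=\infty$, then $b_\ell\equiv3$ forces $\tau_\ell\ge\tfrac72$ everywhere, so $S\cap R=R$ and $\lambda(S)\ge\lambda(R)>\tfrac78>\tfrac34$, yet $M_\ell$ never halts. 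Hence exactly one of the semidecidable events ``$M_\ell$ halts'' and ``$\lambda(S)>\tfrac34$'' occurs, which decides $h(\ell)$; knowing $h(\ell)$, the function $b_\ell$ is computable on $[0,1]$, so $\tau_{\ell+1}=5(\tau_\ell-b_\ell)$ is lower semicomputable on $R$. Iterating from $\tau_0=\tau$ would make $h$, and hence $\zj$, computable --- a contradiction, so $\tau$ is not lower semicomputable on $R$. For upper semicomputability the argument is symmetric with the threshold $4$ in place of $3$: from $\tau_\ell$ upper semicomputable on $R$ compute $S'$ with $S'\cap R=\tau_\ell^{-1}[(-\infty,4)]\cap R$; when $h(\ell)=\infty$ we have $\tau_\ell\in[\tfrac72,4)$ everywhere so $S'\cap R=R$ and $\lambda(S')\ge\lambda(R)>\tfrac34$, while when $h(\ell)<\infty$ we have (the tail being $\ge\tfrac12$) $\{\tau_\ell<4\}\subseteq\{b_\ell<\tfrac72\}$, so $\lambda(S')\le\tfrac58+\lambda([0,1]\setminus R)<\tfrac34$ and $M_\ell$ halts; the rest is verbatim, yielding that $\tau$ is not upper semicomputable on $R$.

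I expect the only genuine obstacle to be the measure bookkeeping: one must check that in the halting case the bound $\tfrac58+\lambda([0,1]\setminus R)$ on $\lambda(S)$ (and on $\lambda(S')$) stays strictly below the threshold $\tfrac34$, while in the non-halting case $\lambda(S)\ge\lambda(R)$ stays strictly above it. This is exactly what $\Pr_{\rv Z}(R)>\tfrac{11}{12}$ secures through the density bounds, since it forces $\tfrac58+\tfrac18=\tfrac34<\tfrac78<\lambda(R)$. The one new feature relative to Proposition~\ref{prevmainnegativeresult} --- that $p_{\rv S}$ is a $C^\infty$ periodic bump, so the ``digit'' functions $b_\ell$ are continuous on all of $[0,1]$ rather than merely on a measure-one set --- only streamlines the step producing $\tau_{\ell+1}$.
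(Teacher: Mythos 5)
Your proof is correct and follows the paper's own argument essentially step for step: derive $\lambda(R) > 7/8$, reduce via Bayes' rule to the non-semicomputability of the denominator $\tau$, and run the same induction on $\tau_\ell$ in which $h(\ell)$ is decided by racing $M_\ell$ against a lower bound on the Lebesgue measure of the c.e.\ open set $S$. The only departures are cosmetic --- you bound $\lambda\{p_{\rv S}>\tfrac23\}$ and $\lambda\{p_{\rv S}<\tfrac76\}$ by $\tfrac58$ directly rather than routing the measure bookkeeping through the validity sets $A_{h(\ell)}$ as the paper does (which is why you get the slightly tighter threshold $\tfrac34$ where the paper uses $\tfrac78$), and you spell out the upper-semicomputability half explicitly; do note that $\{p_{\rv S}>\tfrac23\}=(0,\tfrac58)$ as an \emph{equality} is not forced by Lemma~\ref{c-infinity-step}, though the containment $\subseteq(0,\tfrac58)$, and hence the bound $\le\tfrac58$ you actually use, is.
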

\begin{proof}
We proceed analogously to the proof of
Proposition~\ref{prevmainnegativeresult}.
First note that if $R$ had Lebesgue measure no greater than $\frac 78$, then, for $k \in \Nats$,
\[
\Pr_{\rv Z_k} (R) \le \frac12\cdot\frac43 + \frac38\cdot \frac23 = \frac{11}{12}
\]
and so $\Pr_{\rv Z} (R) \le \frac{11}{12}$, a contradiction.  Hence $R$ has Lebesgue measure greater that $\frac 7 8$.

Fix $k\in\Nats$.
By Bayes' rule (Lemma~\ref{bayesrule}),
\[
\label{bayesruleeq}
\CondProbFuncEval{\rv N=k}{\rv Z}{x} =
\frac{p_{\rv Z | \rv N}(x | k) \cdot \Pr\{\rv N = k \}}
     {\sum_{n \in \Nats} \, p_{\rv Z | \rv N}(x | n) \cdot \Pr\{\rv N = n\}}.
\]
for $\Pr_{\rv Z}$-a.e.\ $x$, and  hence for Lebesgue-a.e.\ $x$.

Now,
$p_{\rv Z | \rv N}(\cdot | k)$ is
piecewise
(on a finite number of pieces with rational endpoints)
the computable dilation and translation of the computable function $p_{\rv
S}$.  By \emph{(ii)} above, the endpoints of the pieces line up, and so
$p_{\rv Z | \rv N}(\cdot | k)$ is computable.

Because $\Pr\{\rv N = k \} = \frac 45 \cdot 5^{-k}$ is a computable real,
it remains to show that, as a function of $x$, the denominator 
${\sum_{n \in \Nats} \, p_{\rv Z | \rv N}(x | n)
\cdot \Pr\{\rv N = n\}}$ 
of the right-hand side of \eqref{bayesruleeq}
is neither lower nor upper semicomputable on $R$. We will show the former; the latter follows in a similar fashion.
Define, for each $\ell\in\Nats$ the function $\psi_\ell\colon [0,1] \to \Reals$ by
\[
\psi_\ell(x) 
=5^\ell
\sum_{n \ge \ell} 
\, 3\, p_{\rv Z | \rv N}(x | n) \cdot \frac 5 4 \, \Pr\{\rv N = n\}.
\]
Note that $\psi_0$ is equal to $\frac{15}4$ times the denominator.
Hence it suffices to show that $\psi_0$ is not lower semicomputable on $R$.

We now prove by induction,
uniformly in $\ell$, that, if $\psi_\ell$ is lower
semicomputable on $R$, 
then
$\psi_{\ell+1}$ is lower semicomputable on $R$,
and we can compute whether or not $h(\ell)$ is finite.
This then implies that if $\psi_0$ were lower semicomputable
on $R$, then $\zj$ would be computable, a contradiction.

Suppose that $\psi_\ell$ is lower semicomputable
on $R$.
Then we can compute (as a c.e.\ open subset of $[0,1]$) a set $S$ such that
$S \cap R = \psi_\ell^{-1}[(3,\infty)] \cap R$.

If $\ell$ is such that $h(\ell) = \infty$, then
for $x\in[0,1]$ we have
$
p_{\rv Z | \rv N}(x \,|\, \ell)  = 1
$,
and so
\[
\psi_\ell(x) 
&= 3 + \frac 1 5\, \psi_{\ell+1}(x) > 3.
\]
In particular, $S$ will contain $R$, and hence have 
Lebesgue measure
greater than 
$\frac78$.

On the other hand, if $\ell$ is instead such that $h(\ell) < \infty$, then
$S\cap R \cap A_{h(\ell)}$
has Lebesgue measure at most $\frac12$. Hence 
\[
S \subseteq (S\cap R \cap A_{h(\ell)}) 
\,\cup \,
([0,1] \setminus R) 
\,\cup \,
([0,1] \setminus A_{h(\ell)}),
\]
and so the 
Lebesgue measure
of $S$ is less than $\frac 12 + (1-\frac 78) + (1-\frac34) = \frac78$.

Simultaneously (1) enumerate a c.e.\ sequence of basic open sets whose union
is $S$, hence computing arbitrarily good lower bounds on
its measure, and (2) run $M_{\ell}$ (on input $0$).
Either $S$ will have measure greater than $\frac78$
or $M_{\ell}$ will halt,
but not both. Hence we will eventually learn whether 
or not $h(\ell)$ is finite, which in turn shows that
$p_{\rv Z | \rv N}(\cdot \,|\, \ell)$
is
computable on $[0,1]$,
and so $\psi_{\ell+1}$ is also lower
semicomputable on $R$.
\end{proof}


As before, it follows immediately that $\CondProbFunc{\rv N}{\rv Z}$ is not computable, even on a $\Pr_{\rv Z}$-measure one set.
It is possible to carry on the same development, showing that  
the conditional probability map
is not computable
as an element in
$L^1(\Pr_{\rv Z})$.
We state only the following strengthening of
Corollary~\ref{cor-lowerbound}.

\begin{corollary} \label{newmainnegativeresult}
Let $\Phi$ be a conditioning operator for the set of probability distributions on pairs $(\rv X,\rv Y)$ of 
random variables in $[0,1]$ such that there exists 
an everywhere continuous version of the conditional distribution map $\CondProbFunc{\rv Y}{\rv X}$. 
Then $\Phi$ is noncomputable.
\qed
\end{corollary}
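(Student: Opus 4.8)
The plan is to derive this from the noncomputability of $\CondProbFunc{\rv N}{\rv Z}$ established in the preceding Proposition, invoking the fact that a computable operation carries computable points to computable points, exactly as Corollary~\ref{cor-lowerbound} yields a noncomputable conditioning operator in the $\Pr_{\rv X}$-almost-continuous case.

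First I would transport the pair $(\rv Z,\rv N)$ into $[0,1]^2$ by the device used in the proof of Theorem~\ref{actualmainresult}: let $\rv Y$ be uniform on $[0,1]$, set $\rv M \defas \lfloor -\log_2 \rv Y\rfloor$, and choose $\rv W$ with $\CondProb{\rv W}{\rv Y}(\varpi)=\CondProbFuncEval{\rv Z}{\rv N}{\rv M(\varpi)}$, so that $\rv W,\rv Y$ are $\Pr$-almost computable random variables in $[0,1]$ and $\CondProbFuncEval{\rv Y\in(2^{-n-1},2^{-n})}{\rv W}{x}=\CondProbFuncEval{\rv N=n}{\rv Z}{x}$ for all $n\in\Nats$ and $x\in[0,1]$. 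Three points about this pair are needed: (a) its joint law $\mu$ is a computable point of $\ProbMeasures([0,1]^2)$, by Proposition~\ref{rvtodist}; (b) the map $x\mapsto\sum_{n\in\Nats}\CondProbFuncEval{\rv N=n}{\rv Z}{x}\cdot\mathrm{Unif}[2^{-n-1},2^{-n}]$ is an everywhere continuous version of $\CondProbFunc{\rv Y}{\rv W}$ — so $\mu$ lies in the class $\SPM$ of the corollary — since the everywhere continuous version of $\CondProbFunc{\rv N}{\rv Z}$ produced above supplies continuity in each coordinate and the uniform geometric bound $\CondProbFuncEval{\rv N=n}{\rv Z}{x}\le C\cdot 5^{-n}$ (read off from Bayes' rule, as $p_{\rv Z | \rv N}$ is bounded above, bounded below away from $0$, and $\Pr\{\rv N=n\}=\frac{4}{5}\cdot 5^{-n}$) lets continuity pass through the countable mixture; and (c) because $p_{\rv Z | \rv N}$ and the conditional densities of $\rv W$ given $\rv Y$ are bounded above and below away from $0$, the laws $\Pr_{\rv Z}$ and $\Pr_{\rv W}$ are each equivalent to Lebesgue measure, hence to each other, so the preceding Proposition — no version of $\CondProbFunc{\rv N=k}{\rv Z}$ is computable on any $\Pr_{\rv Z}$-measure-one set — shows that no version of $\CondProbFunc{\rv Y}{\rv W}$ is $\Pr_{\rv W}$-almost computable.

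Now suppose, toward a contradiction, that $\Phi$ is computable. Since $\mu\in\SPM$ is a computable point and $\{\mu\}\times[0,1]\subseteq\SPM\times[0,1]$, feeding the computable representation of $\mu$ to a machine witnessing the computability of $\Phi$ on $\SPM\times[0,1]$ (Remark~\ref{comppointremark}) shows that $x\mapsto\Phi(\mu,x)$ is computable on all of $[0,1]$. By the definition of a conditioning operator, $\Phi(\mu,x)=\CondProbFuncEval{\rv Y}{\rv W}{x}$ for $\Pr_{\rv W}$-almost every $x$, so $x\mapsto\Phi(\mu,x)$ is a $\Pr_{\rv W}$-almost computable version of $\CondProbFunc{\rv Y}{\rv W}$, contradicting (c). Hence $\Phi$ is not computable.

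The routine parts — constructing $(\rv W,\rv Y)$, checking that $\Pr$-almost computability and the computability of the joint law survive the reparametrization, and comparing $\Pr_{\rv W}$-null sets with Lebesgue-null sets — go exactly as in the proof of Theorem~\ref{actualmainresult} and Proposition~\ref{thmcomp}. The step I expect to need genuine care is (b): confirming that replacing the $\Nats$-valued $\rv N$ by the $[0,1]$-valued $\rv Y$ does not destroy the everywhere continuity of the conditional distribution map, which is where the uniform geometric tail estimate on the mixture weights is essential.
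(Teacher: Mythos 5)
Your proof is correct and spells out exactly the argument the paper leaves tacit behind the \texttt{\textbackslash qed}: transport $(\rv Z,\rv N)$ into $[0,1]^2$ via the $(\rv W,\rv Y)$ device of Theorem~\ref{actualmainresult}, observe that the resulting joint law $\mu$ is a computable point lying in the everywhere-continuous class $\SPM$, and then apply the observation recorded just after Corollary~\ref{cor-lowerbound} --- that computable operations send computable points to computable points --- to contradict the noncomputability of $\CondProbFunc{\rv N}{\rv Z}$ established by the unlabeled proposition at the end of Section~\ref{Sec:c.e. neg cont}. This is precisely the intended route; the only cosmetic point is that the mixture weights in your step~(b) are strictly the posterior over $\rv M=\lfloor -\log_2\rv Y\rfloor$ rather than over $\rv N$, but, as in the paper's own shorthand in Theorem~\ref{actualmainresult}, the geometric structure is the same and the argument is unaffected.
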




\section{Positive Results}
\label{positiveresults}

Despite the fact that conditioning is not computable in general, 
many practitioners view conditioning as a routine operation involving the application of Bayes' rule.
Indeed, below we show that
conditioning is computable under mild assumptions when the observation is discrete, or when Bayes' rule is applicable, or when the observation is corrupted by independent ``smooth'' computable noise.  We conclude by examining the setting where a notion of symmetry known as \emph{exchangeability} holds.

\subsection{Discrete Random Variables}
\label{discreterandomvariablessection}

We begin with the problem of conditioning on a random variable that takes
values in a discrete set.
Given an appropriate notion of computability for discrete sets, 
conditioning is always possible in this setting, as it reduces to the elementary notion of conditional probability with respect to single events.

\begin{definition}[Computably discrete set] \label{compdisc2}
Let $S$ be a computable Polish space.
A subset $D \subseteq S$ is \defn{computably discrete} 
when 
there exists a function $f \colon S \to \Nats$ that is computable and injective
on $D$. In particular, such a $D$ is countable.
We call $f$ the \defn{witness} to the discreteness of $D$. 
\end{definition}

\begin{proposition}\label{discreteconditioning}
Let $\rv X$ and $\rv Y$ be $\Pr$-almost computable random variables in computable Polish spaces $S$ and $T$, respectively, 
let $D \subseteq S$ be a computably discrete subset with witness $f$, 
and assume that $\Pr \{\rv X = d \} > 0$ for all $d \in D$.  
Then the conditional distribution map $\CondProbFunc{\rv Y }{ \rv X}$ is computable on $D$, uniformly in $\rv X$, $\rv Y$, and $f$.
\end{proposition}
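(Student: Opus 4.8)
The plan is to reduce the problem to the discrete-conditioning construction of Lemma~\ref{discrete-ex} and then verify that each ingredient of that construction is effective. First I would use the witness $f$ to transport the problem onto $\Nats$: since $f$ is computable and injective on $D$, the pushforward $f(\rv X)$ restricted to the event $\{\rv X \in D\}$ is a $\Pr$-almost computable random variable in $\Nats$, and the image $f(D) \subseteq \Nats$ is a set on which $f^{-1}$ is well-defined. The hypothesis $\Pr\{\rv X = d\} > 0$ for all $d \in D$ together with $\Pr_{\rv X}$ being a computable measure means the weights $w_d \defas \Pr\{\rv X = d\}$ are lower semicomputable uniformly in $d$ (via Proposition~\ref{lowerbounds}, since $\{\rv X \in B(d, r_j)\}$ shrinks to $\{\rv X = d\}$ along an almost-decidable basis). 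I should be careful here: whether $\Pr_{\rv X}(D) = 1$ is given or needs to be assumed is worth checking, but the statement only asks for computability of $\CondProbFunc{\rv Y}{\rv X}$ on $D$, and on $D$ the kernel $\kappa(d, B) = \Pr\{\rv Y \in B \mid \rv X = d\}$ from Lemma~\ref{discrete-ex} is the canonical version, so it suffices to compute that.

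Next I would show how to compute $\curry\kappa(d) = \Pr\{\rv Y \in \cdot \mid \rv X = d\} \in \ProbMeasures(T)$ from a representation of $d \in D$. Fix a $\Pr$-almost decidable basis $\{B(e_i, r_j)\}$ of $T$. By Lemma~\ref{condcontset}, for any $\Pr$-almost decidable event $E \subseteq \Cantor$ with $\Pr(E) > 0$, the conditional measure $\Pr(\cdot \mid E)$ is computable uniformly in a witness. The event $\{\rv X = d\}$ is not open, so I cannot use it directly; instead I approximate. Given a representation of $d$, I can enumerate ideal balls $B(s_m, q)$ around $d$ with $\delta_S(s_m, d)$ small, and since $f$ is computable on $D$, for $d' \in D$ near $d$ I can eventually certify $f(d') \neq f(d)$ unless $d' = d$ --- this lets me, for each $n$, compute an open set $O_n \ni d$ with $\{\rv X \in O_n\} \cap \{\rv X \in D\} = \{\rv X = d\}$ up to the null set $\{\rv X \notin D\}$, and such sets shrink appropriately. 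More precisely, $\Pr\{\rv X \in O_n\} \to \Pr\{\rv X = d\} > 0$, and because $w_d$ is lower semicomputable and bounded below by a positive computable-in-the-limit quantity, I can compute $\Pr(\cdot \mid \{\rv X \in O_n'\})$ for almost-decidable shrinkings $O_n'$ of $O_n$, and these converge in the Prokhorov metric to $\curry\kappa(d)$. The convergence is effective because the total variation between $\Pr(\cdot \mid \{\rv X \in O_n'\})$ and $\Pr(\cdot \mid \{\rv X = d\})$ is controlled by $(\Pr\{\rv X \in O_n'\} - \Pr\{\rv X = d\})/\Pr\{\rv X = d\}$, which I can drive below any target once I have a lower bound on $w_d$.

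The cleanest way to package all this is via Lemma~\ref{phikappa}: it suffices to show $\kappa(\cdot, A)$ is lower semicomputable on $D$, uniformly in a c.e.\ open set $A \subseteq T$. For this I would write $\kappa(d, A) = \lim_n \Pr\{\rv Y \in A \mid \rv X \in O_n'\}$ and observe that each term is a c.e.\ real uniformly (numerator $\Pr(\{\rv Y \in A\} \cap \{\rv X \in O_n'\})$ is c.e.\ since both events have c.e.-open-type witnesses, denominator is computable by almost-decidability); then monotone control on the error gives that the supremum over appropriate subsequences is a c.e.\ real, exhibiting $\kappa^{-1}(\cdot, A)[(q,1]] \cap D$ as $U_q \cap D$ for a c.e.\ open $U_q$. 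Uniformity in $\rv X$, $\rv Y$, $f$ is automatic since every step is uniform in the given data. I expect the main obstacle to be the bookkeeping around the lower bound on $w_d = \Pr\{\rv X = d\}$: it is only lower semicomputable, not computable, so I must phrase the error control so that a \emph{running} lower bound on $w_d$ suffices to produce each successive digit of $\curry\kappa(d)$ --- i.e., the construction must be monotone and ``patient'' in the same style as the proof of Lemma~\ref{condcontset}, never committing to an approximation before a positive lower bound on $w_d$ has been observed.
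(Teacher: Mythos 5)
Your proposal takes a genuinely different route from the paper's first proof: you try to compute $\curry\kappa(d) = \Pr\{\rv Y \in \cdot \mid \rv X = d\}$ as a limit of conditional measures $\Pr\{\rv Y \in \cdot \mid \rv X \in O_n'\}$ along almost-decidable neighborhoods $O_n'$ shrinking to $\{d\}$, controlling the error via $(\Pr\{\rv X \in O_n'\} - \Pr\{\rv X = d\})/\Pr\{\rv X = d\}$. This scheme has a genuine gap, and the place you flagged as "worth checking" is exactly where it breaks.

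First, the direction of semicomputability of $w_d := \Pr\{\rv X = d\}$ is backwards. Since $\{d\}$ is closed and $S\setminus\{d\}$ is c.e.\ open (for computable $d$), Proposition~\ref{lowerbounds} gives that $\Pr\{\rv X \ne d\}$ is a c.e.\ real, so $w_d$ is \emph{upper} semicomputable, not lower. Equivalently, $w_d = \inf_j \Pr\{\rv X \in B(d,r_j)\}$ is a \emph{decreasing} limit of computable reals along an almost-decidable basis, and a decreasing limit of computables is only upper semicomputable. More fundamentally, even with the positivity hypothesis $w_d>0$, you cannot effectively bound the numerator $\Pr\{\rv X \in O_n'\} - w_d$ from above: you have a decreasing sequence of computable reals converging to an unknown positive limit, and there is no uniform way to certify closeness. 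Your "running lower bound on $w_d$" never materializes, so your construction never commits to a digit of $\curry\kappa(d)$.

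The paper avoids all of this by exploiting what is implicitly assumed throughout Section~\ref{discreterandomvariablessection} (and is explicit in Lemma~\ref{discrete-ex}): $\Pr_{\rv X}$ concentrates on $D$, i.e.\ $\Pr_{\rv X}(D)=1$. Under that assumption, once you use the witness $f$ to build a single $\Pr_{\rv X}$-almost decidable set $B_x$ with $B_x \cap D = \{x\}$, you have $\Pr\{\rv X \in B_x\} = \Pr\{\rv X = x\}$ \emph{exactly} (the difference $B_x\setminus\{x\}$ lies outside $D$ and hence is $\Pr_{\rv X}$-null), so $\CondProb{\rv Y\in A}{\rv X\in B_x}$ is literally the desired value — no limit, no error control, just one application of Lemma~\ref{condcontset}. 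Your dismissal of the $\Pr_{\rv X}(D)=1$ question ("the statement only asks for computability on $D$") misses the role this assumption plays: it is not about whether the target $\curry\kappa(d)$ is well-defined (it is, by positivity of the atom), but about whether the accessible approximants $\Pr\{\rv Y\in\cdot\mid\rv X\in B_x\}$ equal the target rather than merely approach it noneffectively. Both the paper's proof and its alternate rejection-sampling proof rely on this in the same way, and without it your approximation scheme (and indeed the one-shot argument) would founder on exactly the semicomputability obstacle you half-noticed.
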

\begin{proof}
Let $A \subseteq T$ be a $\Pr_{\rv Y}$-almost decidable set.
	By Lemma~\ref{phikappa}, it suffices to show that \linebreak $\CondProbFunc{\rv Y \in A}{ \rv X}$ is computable on $D$, uniformly in 
	$\mu$, $f$, and
	(the witness for the $\Pr_{\rv Y}$-almost decidability of) $A$.
Let $x\in D$.
Uniformly in $f$ and $x$, 
we can find a witness 
to a $\Pr_{\rv X}$-almost decidable 
set $B_x \subseteq S$ such that 
$x \in B_x$  and no other element of $D$
is in $B_x$.
It then follows that
\[
\CondProbFuncEval{\rv Y \in A}{ \rv X}{x}
= 
\CondProb{\rv Y \in A}{\rv X\in B_x}
\ \ \as
\]
By Lemma~\ref{condcontset}, 
$\CondProb{\rv Y \in A}{\rv X\in B_x}$ is a computable real 
 uniformly in $\mu$ and
	(the witness for the $\Pr_{\rv X}$-almost decidability of) $B_x$,
 hence uniformly in $\mu$, $f$, and $x$.
\end{proof}

The proof above relies on the ability to compute probabilities for continuity sets. 
In practice, computing probabilities can be inefficient. We give an alternative proof of Proposition~\ref{discreteconditioning}
via a rejection-sampling argument, which yields an algorithm that can be much more efficient.
We begin with a version of a well-known result.

\begin{lemma}[rejection sampling]
\label{rejection}
Let $\rv X$ and $\rv Y$ be random variables in computable Polish spaces $S$ and $T$, respectively, 
let $B \subseteq S$ be a measurable set with positive $\Pr_{\rv X}$-measure,
let $(\rv X_0,\rv Y_0),(\rv X_1,\rv Y_1),\dotsc$ be a sequence of i.i.d.\ copies of $(\rv X,\rv Y)$,
and define $\nu$ to be the distribution of $(\rv X_0,\rv X_1,\dotsc)$.
Further define $g\colon S^\Nats \to \Nats$ to be the map
	\[(x_0, x_1, \dots) \mapsto \inf \, \{ n \in \Nats \st  x_n \in B \} \]
and set
\[ \zeta \defas g(\rv X_0,\rv X_1,\dots). \]
Then $\rv Y_\zeta$ has distribution $\Pr \{ \rv Y \in \pars \given \rv X \in B \}$
and, if $B$ is a $\Pr_{\rv X}$-almost decidable set, 
then $g$ is $\nu$-almost computable, uniformly in a witness for $B$.
\end{lemma}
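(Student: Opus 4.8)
The plan is to establish the two assertions of the lemma in turn: the distributional identity, then the $\nu$-almost computability of $g$.

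\emph{Distributional identity.} Write $p \defas \Pr_{\rv X}(B) > 0$. Since the pairs $(\rv X_i, \rv Y_i)$ are i.i.d., the event $\{\zeta = n\}$ is exactly $\{\rv X_0 \notin B,\ \dots,\ \rv X_{n-1} \notin B,\ \rv X_n \in B\}$, of probability $(1-p)^n p$; summing over $n$ shows $\zeta$ is almost surely finite, so $\rv Y_\zeta$ is almost surely well-defined (its value on the null event $\{\zeta = \infty\}$ being immaterial). Fix a measurable $A \subseteq T$ and decompose $\Pr\{\rv Y_\zeta \in A\} = \sum_{n} \Pr\{\zeta = n,\ \rv Y_n \in A\}$. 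The event $\{\rv X_0 \notin B, \dots, \rv X_{n-1} \notin B\}$ depends only on the pairs of index $< n$, hence is independent of $(\rv X_n, \rv Y_n)$, so $\Pr\{\zeta = n,\ \rv Y_n \in A\} = (1-p)^n\,\Pr\{\rv X \in B,\ \rv Y \in A\}$. Summing the geometric series gives $\Pr\{\rv Y_\zeta \in A\} = p^{-1}\,\Pr\{\rv X \in B,\ \rv Y \in A\} = \Pr\{\rv Y \in A \given \rv X \in B\}$ for every measurable $A$; that is, $\rv Y_\zeta$ has distribution $\Pr\{\rv Y \in \pars \given \rv X \in B\}$.

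\emph{Almost computability of $g$.} Let $(U,V)$ witness the $\Pr_{\rv X}$-almost decidability of $B$, so $U, V$ are c.e.\ open with $U \subseteq B$, $V \subseteq S \setminus B$, and $\Pr_{\rv X}(U) + \Pr_{\rv X}(V) = 1$. Since $\Pr_{\rv X}(U) \le p$ and $\Pr_{\rv X}(V) \le 1 - p$, this forces $\Pr_{\rv X}(U) = p$, and $U, V$ are disjoint with $\Pr_{\rv X}(U \cup V) = 1$. Let $R$ be the set of sequences $(x_0, x_1, \dots) \in S^\Nats$ with $x_n \in U \cup V$ for every $n$ and $x_n \in U$ for at least one $n$. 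The first condition holds $\nu$-almost surely because the $\rv X_i$ are independent and $\Pr_{\rv X}(U\cup V) = 1$; the second holds $\nu$-almost surely because the probability that no $x_n$ lands in $U$ is $\prod_n (1-p) = 0$. Hence $\nu(R) = 1$, and $g$ is finite on $R$. Writing $\pi_m \colon S^\Nats \to S$ for the (computable) $m$th coordinate projection, set $U_n \defas \pi_n^{-1}[U] \cap \bigcap_{m < n} \pi_m^{-1}[V]$, a c.e.\ open subset of $S^\Nats$, uniformly in $n$ and in the witness $(U,V)$. On $R$ we have $x_n \in B \iff x_n \in U$ and $x_n \notin B \iff x_n \in V$, so $g^{-1}[\{n\}] \cap R = U_n \cap R$. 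Treating $\Nats$ as the computable Polish space whose ideal open balls are the singletons, $\{U_n\}_{n \in \Nats}$ is a witness to the computability of $g$ on $R$ in the sense of Definition~\ref{comp-partial-func}, uniformly in the witness for $B$.

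\emph{Main obstacle.} The distributional part is a routine conditioning computation; the work is in the second part, and in particular in using almost-decidability the right way: one needs the \emph{disjoint} c.e.\ open pair $U \subseteq B$, $V \subseteq S \setminus B$ with $\Pr_{\rv X}(U\cup V) = 1$ to carve out the $\nu$-measure-one set $R$ of sequences on which ``$x_n \in B$?'' is semidecidable coordinatewise, and then to verify that on $R$ the preimages $g^{-1}[\{n\}]$ coincide with the manifestly c.e.-open cylinders $U_n$. (Equivalently, in algorithmic terms: dovetail the enumerations of $U$ and $V$ over successive coordinates until one lands in $U$, using that membership in a c.e.\ open set is semidecidable from a representation of a point --- cf.\ Remark~\ref{comppointremark}.)
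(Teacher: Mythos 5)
Your proof is correct and follows essentially the same approach as the paper's. The distributional part is the same geometric-series computation (you carry it out directly, while the paper phrases it via the chain rule of conditional expectation), and the computability part is the same idea: the paper invokes the $\Pr_{\rv X}$-almost computability of $\chi_B$ and factors $g^{-1}(n)$ through $\chi_B^{-1}(0)$ and $\chi_B^{-1}(1)$, whereas you unpack the almost-decidability witness $(U,V)$ explicitly to build the c.e.\ open cylinders $U_n$ — these are the same sets up to a null set, so the two presentations coincide.
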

\begin{proof}
	For all $n \in \Nats$, we have $\Pr \{\zeta = n \} = \Pr\{ \rv X_n \in B \} \prod_{i=0}^{n-1} \Pr\{ \rv X_j \not\in B \}  
	= {\Pr \{\rv X \in B \}}\, (1 - \Pr \{\rv X \in B \})^n$. 
Summing this expression over all $n\in\Nats$, we obtain	
\[\label{bcfact}
\Pr \{\zeta < \infty \} = 1.
\]
For $\Pr_{\zeta}$-almost all $n \in \Nats$,
\[
\Pr\{\rv Y_{\zeta} \in \pars |\, \zeta = n \}
	&= \Pr\{\rv Y_n \in \pars |\, \rv X_n \in B \text{ and } \rv X_j \not\in B \text{ for all $j < n$} \}  \\
&= \Pr\{\rv Y_n \in \pars |\, \rv X_n \in B \}  
= \Pr\{\rv Y \in \pars |\, \rv X \in B \},
\]
where the first equality follows from the definition of $\zeta$, the second from the i.i.d.\ property, and the last by definition.
As the right hand side has no dependence on $n$, it follows from the chain rule of conditional
expectation and \eqref{bcfact} that
$\Pr\{\rv Y_{\zeta} \in \pars  \} = \Pr\{\rv Y \in \pars |\, \rv X \in B \}$.

We now establish the $\nu$-almost computability of $g$. 
It follows from $\Pr\{\zeta < \infty \} =1$ that $g$ is $\Nats$-valued on a $\nu$-measure one set.
By definition, the characteristic function for $B$, written \linebreak $\chi_B \colon S \to \{0,1\}$, is $\Pr_{\rv X}$-almost computable uniformly in a witness for the almost decidability of $B$.
Hence $g(x_0,x_1,\dotsc) = \min \{ n \in \Nats \st \chi_B(x_n) = 1 \}$ 
on a $\nu$-measure one set $A\subseteq S^\Nats$ of input sequences.  
But then $A \cap g^{-1}(n) = A \cap \bigl( (\chi_B^{-1}(0))^{n-1} \times \chi_B^{-1}(1) \times S^\Nats \bigr)$,
and so $g$ is $\nu$-almost computable.
\end{proof}

We may now give an alternative proof of Proposition~\ref{discreteconditioning}.

\begin{proof}[Proof of Proposition~\ref{discreteconditioning} (via rejection sampling)]
Let $\mu$ be the distribution of $(\rv X, \rv Y)$, which is computable. 
Uniformly in $\mu$, we may compute a sequence $(\rv X_0,\rv Y_0), (\rv X_1,\rv Y_1), \dots$ 
of independent $\Pr$-almost computable random variables with distribution $\mu$. 
Let $f$ be a witness to the computable discreteness of $D$. 
Then $(f(\rv X_0),\rv Y_0), (f(\rv X_1),\rv Y_1), \dots$ is also a computable sequence of independent $\Pr$-almost computable random variables.
Uniformly in $f$, we can compute a sequence of disjoint $\Pr_{\rv X}$-continuity sets $\<B_k\>_{k \in \Nats}$ such that $x \in B_{f(x)}$ for $\Pr_{\rv X}$-almost all $x$. 
For $k \in \Nats$, let $g_k$ be the function from Lemma~\ref{rejection} with respect to the set $B_k$. 
Uniformly in $k$, the random variable $\zeta_k = g_k(f(\rv X_0),f(\rv X_1),\dots)$ is $\Pr$-almost computable,
and then so is the random variable $\rv Y_{\zeta_k}$.
For $k \in \Nats$, let $\gamma_k$ be the distribution of $\rv Y_{\zeta_k}$.
By Lemma~\ref{rejection}, $\gamma_k = \Pr \{ \rv Y \in \pars \given f(\rv X) = k \}$ and $\gamma_k$ is computable, uniformly in $k$, $f$, and $\mu$.
It follows that the map $\gamma \colon \Nats \to \ProbMeasures(T)$ given by $k \mapsto \gamma_k$ is computable, uniformly in $f$ and $\mu$.
Note, however, that for $\Pr_{\rv X}$-almost all $x$, we have 
$\CondProbFuncEval{\rv Y}{\rv X}{x} = \Pr \{ \rv Y \in \pars \given f(\rv X) = f(x) \}$.
Therefore $\CondProbFunc{\rv Y}{\rv X} = \gamma \circ f$ is $\Pr_{\rv X}$-almost computable, uniformly in $f$ and $\mu$.
\end{proof}

Rejection sampling has been used to give informal semantics of conditioning
operators for discrete random variables in probabilistic programming languages 
such as $\lambda_\circ$ \citep{1452048} and Church \citep{GooManRoyBonTen2008}.


\subsection{Continuous and Dominated Setting}
\label{cds}

The most common way
to to perform conditioning given
a continuous random variable is via Bayes' rule, which requires the existence of a conditional density.  
Using the characterization of computable measures in terms of the computability of integration,
we can characterize when Bayes' rule is computable.

\begin{proposition}\label{denstokerncomp}
Let $\rv X$ and $\rv Y$ be $\Pr$-almost computable random variables on computable Polish spaces $S$ and $T$, and let $R \subseteq S$.
If $p_{\rv X | \rv Y}(x | y)$ is a conditional density of $\rv X$ given $\rv Y$ that 
is positive, bounded, and computable on $R \times T$, 
then $\kappa$ as defined in \eqref{formalbayes} is computable on $R$. 
In particular, if $R$ is a $\Pr_{\rv X}$-measure one subset, then $\kappa$ is a $\Pr_{\rv X}$-almost computable version.
\end{proposition}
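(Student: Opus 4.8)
The plan is to invoke the direct characterization of kernel computability from Lemma~\ref{phikappa}: since $\kappa$ is computable on $R$ exactly when $\curry\kappa$ is (Definition~\ref{compkern}), it suffices to show that $x \mapsto \kappa(x,A)$ is lower semicomputable on $R$, uniformly in a c.e.\ open set $A \subseteq T$. First I would note that the hypotheses pin $\kappa$ down on all of $R$ by the Bayes ratio \eqref{formalbayes}: for $x \in R$ the density $p_{\rv X | \rv Y}(x | \spars)$ is everywhere positive and bounded, say by $M$, and $\Pr_{\rv Y}$ is a probability measure, so the denominator $\phi(x) \defas \int_T p_{\rv X | \rv Y}(x | y)\,\Pr_{\rv Y}(\dee y)$ satisfies $0 < \phi(x) \le M$ for every $x \in R$; hence the ``otherwise'' clause of \eqref{formalbayes} is never triggered on $R$ and $\kappa(x,A) = \phi(x,A)/\phi(x)$ there, writing $\phi(x,A) \defas \int_A p_{\rv X | \rv Y}(x | y)\,\Pr_{\rv Y}(\dee y)$.

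Next I would treat the denominator. Because $\rv Y$ is $\Pr$-almost computable, $\Pr_{\rv Y}$ is a computable measure on $T$ by Proposition~\ref{rvtodist}, and $p_{\rv X | \rv Y}$ is bounded and computable on $R \times T$ by hypothesis; taking the (trivially $\Pr_{\rv Y}$-almost decidable) conditioning set $T$ itself, Proposition~\ref{inttwovartoonever} shows that $\phi$ is computable on $R$, and we have just observed it is positive there. For the numerator, fix a c.e.\ open $A \subseteq T$. By Lemma~\ref{comppair} we can compute, uniformly in $A$, a nested sequence $A_0 \subseteq A_1 \subseteq \dotsb$ of $\Pr_{\rv Y}$-almost decidable sets with $\bigcup_k A_k = A$. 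Since $p_{\rv X | \rv Y} \ge 0$, monotone convergence gives $\phi(x,A) = \sup_k \int_{A_k} p_{\rv X | \rv Y}(x | y)\,\Pr_{\rv Y}(\dee y)$ for every $x \in R$, and by Proposition~\ref{inttwovartoonever} each map $x \mapsto \int_{A_k} p_{\rv X | \rv Y}(x | y)\,\Pr_{\rv Y}(\dee y)$ is computable on $R$, uniformly in $k$ and in $A$. A supremum of a uniformly-computable-on-$R$ sequence of reals is lower semicomputable on $R$, so $x \mapsto \phi(x,A)$ is lower semicomputable on $R$, uniformly in $A$.

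To conclude, dividing the nonnegative function $\phi(\spars,A)$, lower semicomputable on $R$, by the positive function $\phi$, computable on $R$, yields a function lower semicomputable on $R$, still uniformly in $A$; so $x \mapsto \kappa(x,A)$ is lower semicomputable on $R$ uniformly in the c.e.\ open set $A$, and Lemma~\ref{phikappa} gives that $\kappa$ is computable on $R$. The ``in particular'' is then immediate: when $\Pr_{\rv X}(R) = 1$ this says $\kappa$ is $\Pr_{\rv X}$-almost computable, and $\curry\kappa(\rv X)$ is a regular version of $\CondProb{\rv Y}{\rv X}$ by Bayes' rule (Lemma~\ref{bayesrule}), so $\kappa$ is a $\Pr_{\rv X}$-almost computable version of $\CondProbFunc{\rv Y}{\rv X}$.

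The one genuinely delicate step is the passage from $\Pr_{\rv Y}$-almost decidable conditioning sets (over which the relevant integrals are outright computable via Proposition~\ref{inttwovartoonever}) to an arbitrary c.e.\ open $A$, whose measure $\Pr_{\rv Y}(A)$ is only c.e.; this is exactly what forces ``lower semicomputable'' rather than ``computable'' in the hypothesis of Lemma~\ref{phikappa}, and it is handled by the almost-decidable exhaustion above, with the rest being bookkeeping. Finally, Proposition~\ref{denstokern} falls out of the same argument relativized: a function continuous on $R \times T$ is $A$-computable on $R \times T$ for some oracle $A$, and the entire derivation relativizes to $A$, producing an $A$-computable — hence continuous — version $\kappa$ of $\CondProbFunc{\rv Y}{\rv X}$ on $R$.
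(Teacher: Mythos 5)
Your proposal is correct and follows essentially the same route as the paper: reduce to Lemma~\ref{phikappa}, control the numerator and denominator via Proposition~\ref{inttwovartoonever} over $\Pr_{\rv Y}$-almost decidable sets, and pass from almost-decidable sets to arbitrary c.e.\ open sets by lower semicomputability. You are slightly more explicit than the paper in two respects: you spell out the exhaustion of a c.e.\ open $A$ by nested almost-decidable sets via Lemma~\ref{comppair} together with monotone convergence (the paper simply asserts the resulting lower semicomputability), and you correctly observe that the everywhere-positivity hypothesis on $p_{\rv X|\rv Y}$ forces the denominator to be strictly positive on all of $R$, which is exactly what is needed for the claimed conclusion ``computable on $R$'' (the paper's proof only notes the denominator is nonzero $\Pr_{\rv X}$-a.s., which is weaker than its stated claim). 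Your treatment is therefore a slightly more careful rendition of the paper's argument rather than a genuinely different one.
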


\begin{proof}
By Bayes' rule (Lemma~\ref{bayesrule}), a version of $\CondProbFunc{\rv Y}{\rv X}$ is given by the ratio 
in Equation~\eqref{formalbayes}:
\[
\label{aoeuformalbayes}
\kappa(x,B)
=
\frac{\int_{B} p_{\rv X | \rv Y}(x | y) \, \Pr_{\rv Y}(\dee y)}
     {\int_{\phantom{B}}   p_{\rv X | \rv Y}(x | y) \, \Pr_{\rv Y}(\dee y)}, \qquad B \in \Borel_T.
\]
By Proposition~\ref{inttwovartoonever}, when $B$ is an $\Pr_{\rv Y}$-almost decidable set, $x \mapsto \int_{B} p_{\rv X | \rv Y}(x | y) \, \Pr_{\rv Y}(\dee y)$ is a computable function of $x$ on $R$, uniformly in a witness for the $\Pr_{\rv Y}$-almost decidability of $B$. 
Further we know that $\int p_{\rv X | \rv Y}(x | y) \, \Pr_{\rv Y}(\dee y)$ is non-zero on a $\Pr_{\rv X}$-measure one set,
	and so the ratio $\kappa(\cdot, B)$ is an $\Pr_{\rv X}$-almost computable function, uniformly in a witness for the $\Pr_{\rv Y}$-almost decidability of $B$. 
	Hence for c.e.\ open sets $E$, the function $\kappa(\cdot, E)$ is lower semicomputable uniformly in a representation for $E$.
Therefore, by Lemma~\ref{phikappa}, the function $\curry \kappa$ is computable, as desired. 
\end{proof}

\begin{remark}
	One might likewise obtain an algorithm 
for conditioning in the context of Proposition~\ref{denstokerncomp}
using the proof of Proposition~\ref{discreteconditioning} via computable rejection sampling. 
In particular, the following classical argument may be carried out computably. For $x\in R$, let
	$\rv E_x$ be a $\Pr_{\rv X}$-almost continuous random variable in $\{0,1\}$ 
such that $\CondProbFuncEval{\rv E_x=1}{\rv Y}{y} =  \frac 1 M p_{\rv X | \rv Y}(x | y)$,
where $M$ satisfies $p_{\rv X | \rv Y}(x | y) < M$ for all $x\in R$ and $y\in T$. (Such an $M$ exists because of the boundedness condition.)
	Then, for every 
	Borel $B \subseteq T$ and every $x\in R$, we have
\[
\kappa (x, B) 
= \frac {\Pr \{ \rv Y \in B, \ \rv E_x = 1 \} }
           {\Pr \{ \rv E_x = 1 \} }
= \Pr \{ \rv Y \in B | \rv E_x = 1 \} ,
\]
providing the reduction to (classical) rejection sampling.
\end{remark}

Because the computability result Proposition~\ref{denstokerncomp} was established in a way that obviously relativizes to any oracle, 
we now obtain a proof of its analogue for a continuous conditional density, as promised in Section~\ref{sec3dom}.

\begin{proof}[Proof of Proposition~\ref{denstokern}]
Follows from the proof of Proposition~\ref{denstokerncomp} by relativizing with respect to an arbitrary oracle.
\end{proof}

\begin{corollary}[Density and independence]\label{independentdensity}
Let $\rv U$, $\rv V$, and $\rv X$ be $\Pr$-almost computable random variables in computable Polish spaces,
where $\rv X$ is independent of $\rv V$ given $\rv U$.  
Assume that there exists a bounded and computable conditional density $p_{\rv X | \rv U}(x|u)$ of $\rv X$ given $\rv U$.
Then the kernel $\CondProbFunc{(\rv U, \rv V)}{\rv X}$ is computable.
\end{corollary}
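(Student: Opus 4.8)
The plan is to reduce the statement to the computable form of Bayes' rule, Proposition~\ref{denstokerncomp}, applied with the pair $(\rv U,\rv V)$ in the role of the ``parameter''. The conditional independence hypothesis is exactly what is needed to promote the given conditional density of $\rv X$ given $\rv U$ to a conditional density of $\rv X$ given $(\rv U,\rv V)$; this transported density inherits boundedness and computability, so conditioning $(\rv U,\rv V)$ on $\rv X$ via \eqref{formalbayes} becomes a computable operation.

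First I would transport the kernel. Write $S_{\rv X},S_{\rv U},S_{\rv V}$ for the (computable Polish) ranges, let $\curry\kappa_{\rv X|\rv U}(\rv U)$ be a regular version of $\CondProb{\rv X}{\rv U}$ (Proposition~\ref{conddef}), and let $\nu$ be the $\sigma$-finite measure on $S_{\rv X}$ with respect to which $p_{\rv X|\rv U}(\cdot|u)$ is a Radon--Nikodym derivative of $\curry\kappa_{\rv X|\rv U}(u)$ for \emph{every} $u$. Define $\lambda\colon(S_{\rv U}\times S_{\rv V})\times\Borel_{S_{\rv X}}\to[0,1]$ by $\lambda\bigl((u,v),A\bigr)\defas\kappa_{\rv X|\rv U}(u,A)$; since it factors through the (continuous, hence measurable) projection $(u,v)\mapsto u$, it is a probability kernel, dominated by $\nu$ with conditional density $p_{\rv X|(\rv U,\rv V)}(x|(u,v))\defas p_{\rv X|\rv U}(x|u)$. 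Conditional independence of $\rv X$ and $\rv V$ given $\rv U$ says $\CondProb{\rv X\in A}{(\rv U,\rv V)}=\CondProb{\rv X\in A}{\rv U}=\lambda\bigl((\rv U,\rv V),A\bigr)$ a.s.\ for every Borel $A$, so $\curry\lambda\bigl((\rv U,\rv V)\bigr)$ is a regular version of $\CondProb{\rv X}{(\rv U,\rv V)}$.

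Next I would check computability for the pair $\bigl(\rv X,(\rv U,\rv V)\bigr)$: the pair $(\rv U,\rv V)$ is $\Pr$-almost computable, and $p_{\rv X|(\rv U,\rv V)}=p_{\rv X|\rv U}\circ\pi$ with $\pi\colon(x,(u,v))\mapsto(x,u)$ computable, so $p_{\rv X|(\rv U,\rv V)}$ is computable on $R\times(S_{\rv U}\times S_{\rv V})$, where $R\subseteq S_{\rv X}$ is the $\Pr_{\rv X}$-measure one set on which $p_{\rv X|\rv U}$ is computable, and it has the same bound as $p_{\rv X|\rv U}$. Proposition~\ref{denstokerncomp} then yields that the version $\kappa$ of $\CondProbFunc{(\rv U,\rv V)}{\rv X}$ given by \eqref{formalbayes} is computable on $R$, hence $\Pr_{\rv X}$-almost computable, which is the claim.

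The one subtlety is that Proposition~\ref{denstokerncomp} is stated with the density assumed \emph{positive}, whereas $p_{\rv X|\rv U}$ is here only assumed bounded. This is inessential: in that proof positivity is used only to ensure the denominator $\int p_{\rv X|(\rv U,\rv V)}(x|(u,v))\,\Pr_{(\rv U,\rv V)}(\dee(u,v))$ of \eqref{formalbayes} is nonzero, and by the argument in the proof of Bayes' rule (Lemma~\ref{bayesrule}) this denominator is a density of $\Pr_{\rv X}$ with respect to $\nu$ and hence is nonzero and finite on a $\Pr_{\rv X}$-measure one set for \emph{any} bounded conditional density; shrinking $R$ to this further measure one set and using computability of division as a partial operation on $\Reals\times(\Reals\setminus\{0\})$ recovers the argument verbatim. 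Thus the only real work is the conditional-independence bookkeeping of the second paragraph, and there is no genuine obstacle.
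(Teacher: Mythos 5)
Your proof follows the paper's own argument: set $\rv Y = (\rv U,\rv V)$, use conditional independence to promote $p_{\rv X|\rv U}$ to a conditional density $p_{\rv X|\rv Y}$, and invoke Proposition~\ref{denstokerncomp}. You additionally flag and repair a genuine (if minor) gap the paper's one-line proof glosses over---Proposition~\ref{denstokerncomp} as stated assumes the density is \emph{positive}, whereas the corollary assumes only boundedness---and your fix (restrict $R$ to the $\Pr_{\rv X}$-measure one set where the marginal density $\int p_{\rv X|\rv Y}(x|y)\,\Pr_{\rv Y}(\dee y)$ is nonzero, then apply computable division on $\Reals\times(\Reals\setminus\{0\})$) is exactly what is needed.
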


\begin{proof}
Let $\rv Y = (\rv U,\rv V)$.
Then $p_{\rv X|\rv Y}(x|(u,v)) = p_{\rv X | \rv U}(x|u)$ is the
conditional density of $\rv X$ given $\rv Y$ (with respect to
$\nu$).
The result then follows immediately from Proposition~\ref{denstokerncomp}.
\end{proof}

\subsection{Conditioning on Noisy Observations}

As an immediate consequence of Corollary~\ref{independentdensity},  we obtain the computability of the
following common situation in probabilistic modeling, where the
observed random variable has been corrupted by independent
absolutely continuous noise.

\begin{corollary}[Independent noise]
\label{indnoise}
Let $\rv U$ and $\rv E$ be $\Pr$-almost computable random variables in $\Reals$,
and
let $\rv V$ be a $\Pr$-almost computable random variable in a computable Polish space.
Define $\rv X = \rv U + \rv E$.
If $\Pr_{\rv E}$ is absolutely continuous (with respect to Lebesgue
measure) with a bounded computable density $p_{\rv E}$, and $\rv E$ is independent of $\rv U$ and $\rv V$, then the conditional distribution map
$\CondProbFunc{(\rv U, \rv V)}{\rv X}$
is computable.
\end{corollary}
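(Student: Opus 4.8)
The plan is to deduce the statement directly from Corollary~\ref{independentdensity}, applied to the random variables $\rv U$, $\rv V$, and $\rv X = \rv U + \rv E$. First note that $\rv X$ is $\Pr$-almost computable, being the sum of two $\Pr$-almost computable real-valued random variables. It therefore suffices to check the two substantive hypotheses of Corollary~\ref{independentdensity}: that $\rv X$ is independent of $\rv V$ given $\rv U$, and that there is a bounded computable conditional density $p_{\rv X \mid \rv U}(x \mid u)$ of $\rv X$ given $\rv U$ (with respect to Lebesgue measure on $\Reals$).

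Both facts follow from the independence of $\rv E$ from the pair $(\rv U, \rv V)$. Disintegrating over $\rv U = u$, we get that, conditionally on $\rv U = u$, the random variables $\rv E$ and $\rv V$ are independent and $\rv E$ still has law $\Pr_{\rv E}$. Since, conditionally on $\rv U = u$, we have $\rv X = u + \rv E$, which is a deterministic (measurable) function of $\rv E$, it follows on the one hand that $\rv X$ and $\rv V$ are conditionally independent given $\rv U$, and on the other hand that $\Law(\rv X \mid \rv U = u)$ is the pushforward of $\Pr_{\rv E}$ under the shift $e \mapsto u + e$, which has Lebesgue density $x \mapsto p_{\rv E}(x - u)$.

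Thus $p_{\rv X \mid \rv U}(x \mid u) \defas p_{\rv E}(x - u)$ is a conditional density of $\rv X$ given $\rv U$. It is bounded by the same bound as $p_{\rv E}$, and it is computable as a function of $(x, u)$ on $\Reals \times \Reals$, being the composition of the computable map $(x, u) \mapsto x - u$ with the computable function $p_{\rv E}$. Applying Corollary~\ref{independentdensity} with this conditional density then shows that $\CondProbFunc{(\rv U, \rv V)}{\rv X}$ is computable. The argument is essentially immediate once the convolution structure of $\Law(\rv X \mid \rv U = u)$ is recognized; the only point requiring care is the conditional-independence step, which genuinely uses that $\rv E$ is independent of $(\rv U, \rv V)$ \emph{jointly}, so that conditioning on $\rv U$ leaves $\rv E$ independent of $\rv V$ with its original law (if desired, one may also note that $x \mapsto \int p_{\rv E}(x-u)\,\Pr_{\rv U}(\dee u)$ is a density of $\Pr_{\rv X}$, hence positive $\Pr_{\rv X}$-a.e., which is all that is needed to obtain a $\Pr_{\rv X}$-almost computable version via Proposition~\ref{denstokerncomp}).
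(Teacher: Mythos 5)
Your proof is correct and follows the paper's own route exactly: identify $p_{\rv X\mid\rv U}(x\mid u) = p_{\rv E}(x-u)$ as a bounded computable conditional density and invoke Corollary~\ref{independentdensity}. You are somewhat more explicit than the paper in verifying the conditional-independence hypothesis ($\rv X \perp \rv V \mid \rv U$), which the paper leaves tacit; that verification via disintegration is right, and it does indeed use the joint independence of $\rv E$ from $(\rv U,\rv V)$ rather than merely pairwise independence.
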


\begin{proof}
We have that
\[
p_{\rv X|\rv U}(x|u) = p_{\rv E}(x-u)
\]
is the conditional density of $\rv X$ given $\rv U$ (with respect to Lebesgue measure).
The result then follows from Corollary~\ref{independentdensity}.
\end{proof}

Pour-El and Richards \citeyearpar[][Ch.~1, Thm.~2]{MR1005942} show
that a twice continuously differentiable computable function has a
computable derivative (despite the fact that Myhill~\citeyearpar{MR0280373} exhibits a computable function from $[0,1]$ to $\Reals$ whose derivative is continuous, but not computable).
Therefore, noise with a sufficiently smooth computable
distribution has a computable density, and by
Corollary~\ref{indnoise}, if such noise has a bounded density, then an almost computable random variable corrupted by
such noise still admits a computable conditional distribution map.

Furthermore, Corollary~\ref{indnoise} implies that the conditional distribution of a random variable given a continuous observation cannot
always be uniformly approximated using noise in the sense that one cannot computably tell how little noise must be present to obtain a given accuracy.
For example, consider our main construction (Theorem~\ref{actualmainresult})
corrupted with noise $\rv E = \sigma \rv Z$, where $\rv Z$ is a standard Gaussian noise and $\sigma > 0$ determines the standard deviation.
Even though, as 
$\sigma\to 0$, the conditional distribution given the corrupted observation 
converges weakly to the uncorrupted conditional distribution with $\sigma=0$,
the noncomputability of the uncorrupted conditional distribution implies that
one cannot
uniformly compute
a value of $\sigma$ from a desired bound on the error introduced to the conditional distribution corrupted by the noise $\sigma \rv Z$.

\subsection{Exchangeable Setting}
Freer and Roy \citeyearpar{FreerRoyAISTATS2010} show how to compute conditional distributions in the setting of \emph{exchangeable sequences}, i.e., sequences of random variables whose joint distribution is invariant to permutations of the indices.  A classic result by de~Finetti shows that exchangeable sequences of random variables are in fact conditionally i.i.d.\ sequences, conditioned on a random measure, often called the \emph{directing random measure}.   Freer and Roy describe how to transform an algorithm for sampling an \emph{exchangeable sequence} into a rule for computing the
posterior distribution of the directing random measure given observations.
The result is a corollary of a computable version of de Finetti's
theorem \citep{DBLP:conf/cie/FreerR09, MR2880271}, 
and covers a wide range of
common scenarios in nonparametric Bayesian statistics (often where
no conditional density exists).



\begin{acks}
A preliminary version of this article appears as ``Noncomputable conditional distributions'' in
\emph{Proceedings of the 26th Annual IEEE Symposium on Logic in Computer Science},
107--116 \citep{DBLP:conf/lics/AckermanFR11}, which was based upon results first appearing in an arXiv preprint \citep{AFR10}.

C.\ E.\ Freer was partially supported by NSF grants DMS-0901020 and
DMS-0800198, DARPA Contract Award No.\ FA8750-14-C-0001, and
a grant from the John Templeton Foundation.

D.\ M.\ Roy was partially supported
by graduate fellowships from the National Science Foundation and MIT Lincoln Laboratory, a Newton International Fellowship, a Research Fellowship at Emmanuel College, and an Ontario Early Researcher Award.

Work on this publication was also made possible through
the support of ARO grant W911NF-13-1-0212, ONR grant N00014-13-1-0333, 
DARPA Contract Award No.\ FA8750-14-2-0004,
and Google's ``Rethinking AI'' project. 
The opinions expressed in this publication are those of the authors and do not necessarily reflect the views of the John Templeton Foundation or the U.S.\ Government.

The authors would like to thank
Jeremy Avigad,
Henry Cohn,
Leslie Kaelbling,
Vikash Mansinghka,
Arno Pauly,
Hartley Rogers,
Michael Sipser,
and
Joshua Tenenbaum
for helpful discussions,
Bill Thurston for a useful comment regarding
Lemma~\ref{c-infinity-step},
Mart\'in Escard\'o for a helpful discussion regarding computable random variables,
and
Quinn Culver,
Noah Goodman,
Jonathan Huggins,
Leslie Kaelbling,
Bj{\o}rn Kjos-Hanssen,
Vikash Mansinghka,
Timothy O'Donnell,
Geoff Patterson,
and the anonymous referees
for comments on earlier drafts.
\end{acks}

\bibliographystyle{ACM-Reference-Format}



\def\cprime{$'$} \def\cprime{$'$}
  \def\polhk#1{\setbox0=\hbox{#1}{\ooalign{\hidewidth
  \lower1.5ex\hbox{`}\hidewidth\crcr\unhbox0}}}
  \def\polhk#1{\setbox0=\hbox{#1}{\ooalign{\hidewidth
  \lower1.5ex\hbox{`}\hidewidth\crcr\unhbox0}}} \def\cprime{$'$}
  \def\cprime{$'$} \def\cprime{$'$} \def\cprime{$'$}

\end{document}